\newtheorem{theorem}{Theorem}[section]
\newtheorem{lemma}[theorem]{Lemma}
\newtheorem{corollary}[theorem]{Corollary}
\newtheorem{proposition}[theorem]{Proposition}
\newtheorem{definition-proposition}[theorem]{Definition-Proposition}
\theoremstyle{definition}
\newtheorem{definition}[theorem]{Definition}
\newtheorem{example}[theorem]{Example}
\newtheorem{remark}[theorem]{Remark}
\DeclareMathOperator{\add}{\mathsf{add}}
\DeclareMathOperator{\Filt}{\mathsf{Filt}}
\DeclareMathOperator{\Gen}{\mathsf{Gen}}
\DeclareMathOperator{\Cogen}{\mathsf{Cogen}}
\DeclareMathOperator{\FiltGen}{\mathsf{T}}
\DeclareMathOperator{\FiltCogen}{\mathsf{F}}
\DeclareMathOperator{\WL}{\mathsf{W_L}}
\DeclareMathOperator{\WR}{\mathsf{W_R}}
\newcommand{\J}{\mathsf J}
\newcommand{\Jinv}{\J^\mathsf{d}}
\newcommand{\T}{\mathcal T}
\newcommand{\F}{\mathcal F}
\newcommand{\W}{\mathcal W}
\DeclareMathOperator{\stt}{\mathsf{s}\tau\text{-}\mathsf{tilt}}
\DeclareMathOperator{\mods}{\mathsf{mod}}
\DeclareMathOperator{\tex}{\mathsf{c}\text{-}\tau\text{-}\mathsf{ex}}
\DeclareMathOperator{\Hom}{\mathrm{Hom}}
\DeclareMathOperator{\Ext}{\mathrm{Ext}}
\DeclareMathOperator{\End}{\mathrm{End}}
\DeclareMathOperator{\ind}{\mathrm{ind}}
\renewcommand{\P}{\mathcal{P}}
\DeclareMathOperator{\Ps}{\mathcal{P}_\mathrm{s}}
\DeclareMathOperator{\Pns}{\mathcal{P}_\mathrm{ns}}
\newcommand{\Pmod}{P}
\newcommand{\Pmods}{P_\mathrm{s}}
\newcommand{\Pmodns}{P_\mathrm{ns}}
\DeclareMathOperator{\I}{\mathcal{I}}
\DeclareMathOperator{\Is}{\mathcal{I}_\mathrm{s}}
\DeclareMathOperator{\Ins}{\mathcal{I}_\mathrm{ns}}
\newcommand{\Imod}{I}
\newcommand{\Imods}{I_\mathrm{s}}
\newcommand{\Imodns}{I_\mathrm{ns}}
\newcommand{\Ebm}{\mathsf E}
\newcommand{\Fbm}{\mathsf{E}^{-1}}
\newcommand{\tfo}{\omega}
\DeclareMathOperator{\Hasse}{\mathrm{Hasse}}
\newcommand{\derD}{\mathcal{D}^b}
\newcommand{\SC}{\mathcal S}
\newcommand{\C}{\mathcal C}
\DeclareMathOperator{\rk}{\mathsf{rk}}
\title{Mutation of $\tau$-exceptional pairs and sequences}
\author{Aslak B. Buan}
\address{Department of Mathematical Sciences, Norwegian University of Science and Technology (NTNU), 7491 Trondheim, NORWAY}
\email{aslak.buan@ntnu.no}
\author{Eric J. Hanson}
\address{Department of Mathematics, North Carolina State University, Raleigh, NC 27695, USA}
\email{ejhanso3@ncsu.edu}
\author{Bethany R. Marsh}
\address{School of Mathematics, University of Leeds, Leeds LS2 9JT, U.K.}
\email{B.R.Marsh@leeds.ac.uk}
\thanks{In the spring semester of 2023, all authors were Fellows at the Centre for Advanced Study of the Norwegian Academy of Science and Letters, where a major part of the work for this paper was carried out.
ABB was supported by 
 grant number FRINAT 301375 from the Norwegian Research Council. 
 ABB would like to thank BRM and the School of Mathematics at the University of Leeds for their warm hospitality. EJH was supported by the Canada Research Chairs Program CRC-2021-00120 and by NSERC Discovery Grants RGPIN/04465-2019 and RGPIN-2022-03960. A portion of this work was carried out while EJH was a postdoc at l'Université de Sherbrooke (Sherbrooke, QC, Canada).
 This work was supported by the Engineering and Physical Sciences Research Council [grant number EP/W007509/1].}
\subjclass{16D90, 16G10, 16G20, 16S90}
\begin{document}
	
	\maketitle
\begin{abstract}
We introduce a notion of mutation for $\tau$-exceptional sequences of modules over arbitrary finite dimensional algebras. For hereditary algebras, we show that this coincides with the classical mutation of exceptional sequences. For rank two algebras, we show that mutation of $\tau$-exceptional sequences is transitive if and only if mutation of support $\tau$-tilting modules in the sense of Adachi-Iyama-Reiten is transitive.
\end{abstract}
	
	\tableofcontents
 
	\section*{Introduction}

 Exceptional sequences in the context of derived categories were introduced by Gorodentsev, Rudakov and others in the Moscow school of algebraic geometry in the late eighties; see~\cite{goru, rud}. They introduced the notion of {\em mutation}, an operation on such sequences. Bondal~\cite{bon} considered mutation of exceptional sequences in more general triangulated categories, and proved that these operations provide a braid group action.
 
Soon after this, Crawley-Boevey~\cite{cb} and Ringel~\cite{rin} considered exceptional sequences and their mutation in the setting of quiver representations, or, equivalently, hereditary module categories of finite-dimensional algebras. The
{\em perpendicular categories} studied by Geigle-Lenzing~\cite{gl} and Schofield~\cite{sch} are central for the theory in that setting. 

There are natural ways to generalize the concept to module categories of arbitrary global dimension; see e.g.\ \cite{kra}.
However, in general, with those definitions, the existence of complete exceptional sequences in the module category is rather rare.
In particular, the nice properties of the above-mentioned perpendicular categories do not generalize.

Inspired by connections to cluster algebras and cluster combinatorics~\cite{bmrrt,fz}, Adachi-Iyama-Reiten~\cite{air} introduced $\tau$-tilting and $\tau$-rigid modules for an arbitrary finite-dimensional algebra $\Lambda$ over a field $K$.
Recall that a finite-dimensional (left) $\Lambda$-module $M$ is called \emph{rigid} if $\Ext^1(M,M) = 0$ and is called \emph{exceptional} if it is rigid and $\End(M)$ is a division algebra.
Adachi-Iyama-Reiten define a module $M$ to be \emph{$\tau$-rigid} if $\Hom(M, \tau M)=0$,
where $\tau$ denotes the
	AR-translate for the category $\mods \Lambda$ of finite-dimensional (left) $\Lambda$-modules. 
 Then any $\tau$-rigid module is rigid, but in general
the converse holds only for hereditary algebras. In fact, the indecomposable $\tau$-rigid modules, indecomposable rigid modules, and exceptional modules all coincide for hereditary algebras.

 Now, Jasso~\cite{jasso} introduced \emph{$\tau$-perpendicular categories} as a natural generalization of the earlier mentioned perpendicular categories. Let ${}^\perp{M}$ denote the full subcategory generated by modules $N$ with $\Hom(N,M) = 0$, and
 define $M^\perp$ dually. The \emph{$\tau$-perpendicular category} of a $\tau$-rigid module $M$ is the subcategory
 $$\J(M) : = M^\perp \cap {}^\perp{\tau M},$$
 and we say that a subcategory $\W$ is \emph{$\tau$-perpendicular} if it can be realized as such. Jasso's main theorem states that every $\tau$-perpendicular category is equivalent to the module category of a finite-dimensional algebra. It is thus natural to consider objects in $\mods \Lambda$ that are \emph{$\tau_{\W}$-rigid} in some $\tau$-perpendicular category $\W$. 
 One can then iterate the construction   
	  of $\tau$-perpendicular categories: if $\W$ is a $\tau$-perpendicular category
	  and $M$ is a $\Lambda$-module in $\W$ which is $\tau_{\W}$-rigid, we let 
	  $$\J_{\W}(M) := \W \cap M^{\perp} \cap {}^\perp (\tau_{\W}M).$$
 It was proved in~\cite[Thm. 4.12]{dirrt} that $\tau$-perpendicular categories are \emph{wide}, or equivalently exact abelian, subcategories of $\mods \Lambda$. It follows from this that objects which are $\tau_{\W}$-rigid for a
 $\tau$-perpendicular subcategory $\W$ of $\mods \Lambda$ are rigid in $\mods \Lambda$, 
 but they are in general {\em not} necessarily $\tau$-rigid.

Using these ideas, the notion of \emph{$\tau$-exceptional sequences} was introduced in~\cite{bm} using the following recursive definition. Let $n:= \rk \ \Lambda$ be the number of nonisomorphic indecomposable projective $\Lambda$-modules. A sequence $\mathcal{M} = (M_s, M_{s+1}, \dots, M_n)$ of indecomposable $\Lambda$-modules with $s\in\mathbb{Z}$ is called a 
\emph{$\tau$-exceptional sequence} if $M_n$ is $\tau$-rigid and  $(M_s, M_{s+1}, \dots, M_{n-1})$
is a $\tau_{\J(M_n)}$-exceptional sequence (implicit in this notation is that $M_s, \dots, M_{n-1}$ all lie $\J(M_n)$). Note that, if $\Lambda$ is hereditary, then this notion is equivalent to the classical notion
of exceptional sequences in $\mods \Lambda$. Moreover, over any algebra, any $\tau$-exceptional sequence will satisfy $s \geq 1$ (justifying the indexing of the terms from $s$ to $n$). If $s = 1$, we say that the $\tau$-exceptional sequence is \emph{complete}. We refer to $\tau$-exceptional sequences of length $2$ (i.e., with $s = n-1$) as \emph{$\tau$-exceptional pairs}.

 Beyond being a natural generalization of exceptional sequences, $\tau$-exceptional sequences appear in many important contexts. For example, complete $\tau$-exceptional sequences are in bijection with maximal chains in the poset of $\tau$-perpendicular categories~\cite[Theorem~6.16]{bh}, and this bijection has been used to understand the corresponding poset topology in special cases~\cite{bh2,h}. An extension of this bijection can also be ``categorified'' using the \emph{$\tau$-cluster morphism category} associated to $\Lambda$. This category was introduced in~\cite{igto} (hereditary case),~\cite{bm2} ($\tau$-tilting finite case), and~\cite{bh} (general case). Its objects are the $\tau$-perpendicular subcategories and its morphisms can be described using the more general \emph{signed} $\tau$-exceptional sequences (see Section~\ref{sec:sequences} for the definition). In many cases, it has been shown that the classifying space of the $\tau$-cluster morphism category is a $K(\pi,1)$ for $\pi$ the \emph{picture group}~\cite{itw} of $\Lambda$; see~\cite{bh2,hi,hi_2}. There has also been recent interest in understanding and generalizing picture groups and $\tau$-cluster morphism categories using silting theory~\cite{borve} and (generalizations of) $g$-vector fans~\cite{kaipel,sttw}; $\tau$-exceptional sequences have also been shown to be related to stratifying systems~\cite{mt}.

The main aim of this paper is to introduce a notion of mutation on $\tau$-exceptional sequences for arbitrary finite-dimensional algebras. In order to do this, we first introduce a notion of mutation for $\tau$-exceptional pairs.

Similar to the triangulated case and the quiver algebra case, there will be mutually inverse notions of left and right mutation. However, unlike in the triangulated and quiver algebra cases, there sometimes exist $\tau$-exceptional pairs which can only be mutated in one direction. 
We therefore introduce notions of left/right mutable $\tau$-exceptional pairs; see Section \ref{sec:pair_mutation} for precise definitions.

    Then we define left and right mutation operations as maps 
    \[\{\text{left mutable $\tau$-exceptional pairs}\} \qquad \stackrel[\psi]{\varphi}{\rightleftarrows} \qquad \{\text{right mutable $\tau$-exceptional pairs}\}\]
    
and prove that:
 
	\begin{theorem}[Theorem~\ref{thm:mutation_pairs}]\label{thm:mutation_pairs_intro}
	The maps $\varphi$ and $\psi$ are mutually inverse bijections.
	\end{theorem}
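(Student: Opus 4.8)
The plan is to prove that $\varphi$ and $\psi$ are mutually inverse bijections by showing directly that $\psi \circ \varphi = \mathrm{id}$ on left mutable pairs and $\varphi \circ \psi = \mathrm{id}$ on right mutable pairs. Since the two operations are defined dually (left versus right mutation), once I establish one composite is the identity, the other should follow by a symmetric argument, so the crux is a single careful computation. First I would unwind the recursive definition of a $\tau$-exceptional pair $(M, N)$: here $N$ is $\tau$-rigid in $\mods\Lambda$, and $M$ is an indecomposable object of the $\tau$-perpendicular category $\J(N)$ that is $\tau_{\J(N)}$-rigid. Mutation should produce a new pair $(M', N')$ where the roles are rearranged; the natural expectation (matching the hereditary and triangulated cases) is that left mutation sends $(M,N)$ to a pair determined by the filtration/approximation theory inside the relevant $\tau$-perpendicular categories.

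The key structural tool I would rely on is the interplay between the $\tau$-perpendicular categories $\J(N)$ and $\J(M')$ (or whatever the new right-hand object is), using that both are wide subcategories by \cite[Thm. 4.12]{dirrt} and hence equivalent to module categories by Jasso's theorem. I would set up the argument around the observation that a $\tau$-exceptional pair determines, and is determined by, a rank-two $\tau$-perpendicular subcategory $\W$ together with a choice of how to ``factor'' it: one indecomposable $\tau_\W$-rigid object and its associated $\J_\W$. Both $(M,N)$ and its mutation $(M',N')$ should generate the same rank-two $\tau$-perpendicular category $\W$, reducing the global claim to a claim about the rank-two category $\W$. This reduction is the conceptual heart: it lets me replace $\mods\Lambda$ by the module category of a rank-two algebra, where the combinatorics of support $\tau$-tilting modules and of the two indecomposable ways to build a complete $\tau$-exceptional sequence are tightly constrained.

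Within this rank-two reduction, I would verify that applying $\psi$ to $\varphi(M,N)$ recovers $(M,N)$ by tracking the relevant objects through the approximation triangles or short exact sequences defining the mutation maps, and checking that the mutability hypotheses (left mutable versus right mutable) are precisely what is needed for the composite operation to be defined and to return to the starting pair. The definitions of left/right mutable were introduced exactly to handle the asymmetry where a pair can be mutated in only one direction, so I expect the bookkeeping of these conditions to be essential: I must confirm that $\varphi$ lands in the set of right mutable pairs (so that $\psi$ can be applied) and vice versa, which is really part of showing the maps are well-defined before showing they are inverse.

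The main obstacle I anticipate is the lack of a clean duality or self-perpendicular structure in the non-hereditary setting: unlike the classical case, $\tau_\W$-rigidity in a $\tau$-perpendicular category does not simply restrict from $\tau$-rigidity in $\mods\Lambda$ (as the excerpt emphasizes, such objects are rigid but generally not $\tau$-rigid). Consequently, when I pass between $\J(N)$ and the perpendicular category on the other side, I cannot assume the naive symmetry that makes the hereditary proof short; I will need to invoke the precise compatibility of Jasso's reduction with taking $\tau$-perpendicular categories, and confirm that the equivalences $\W \simeq \mods(\text{rank-two algebra})$ intertwine left and right mutation with a concrete combinatorial involution. Ensuring that this combinatorial involution is genuinely inverse to itself, rather than merely a bijection onto the target set, is where I expect the real work to lie.
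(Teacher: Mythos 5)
There is a genuine gap, and it is located exactly where you place the ``conceptual heart'' of your argument: the proposed reduction to a rank-two module category does not exist. What is true (and what the paper establishes in Definition-Propositions~\ref{defprop:forward}--\ref{defprop:backward2}) is that a pair and its mutation have the same $\tau$-perpendicular category, $\J(M,N) = \J(M',N')$. But this category has \emph{corank} two, not rank two, and it does not contain $M$, $N$, $M'$, $N'$ --- it is their perpendicular category --- so its equality for the two pairs gives no way to ``replace $\mods\Lambda$ by the module category of a rank-two algebra.'' If instead you mean a rank-two wide subcategory \emph{containing} the pair, then even when such a subcategory exists, mutation computed inside it does not agree with mutation in $\mods\Lambda$: in Example~\ref{ex:immutable} of the paper ($\Lambda = KQ/(ac)$, $Q\colon 1\rightrightarrows 2 \rightarrow 3$), the pair $(S_1,S_2)$ lies in the rank-two Serre subcategory ${}^\perp I_3$, which is equivalent to the module category of the Kronecker quiver; there $(S_1,S_2)$ is an exceptional pair over a hereditary rank-two algebra and hence mutable in both directions, yet as a $\tau$-exceptional pair in $\mods\Lambda$ it is left \emph{immutable}. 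So the maps $\varphi$ and $\psi$ genuinely depend on data computed in the ambient category ($\Fbm_N(M)$, $\Gen$, Bongartz complements, $\Pmods$ and $\Pmodns$ of the torsion classes ${}^\perp\J(L)$ and $\FiltGen(\J(L))$, \dots) and cannot be transported into any rank-two subcategory.

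Your proposal is also missing the uniqueness ingredients that the paper's proof actually turns on, and which cannot be replaced by ``tracking objects through approximation triangles.'' In the regular case, the paper computes only that the \emph{second} coordinate of $\psi(\varphi(B,C))$ returns to $C$, and then concludes that the first coordinates agree via Corollary~\ref{cor:unique} (a consequence of the uniqueness theorem of Hanson--Thomas, Theorem~\ref{thm:unique}) together with the fact that $\varphi$ and $\psi$ preserve $\J$. In the irregular case the maps are not defined by approximations at all but by direct-sum decompositions such as $\Pmodns(\FiltGen(\J(L))) = X' \amalg Y$; there the paper avoids computation entirely by proving that a left (resp.\ right) irregular pair with a prescribed perpendicular category is \emph{unique} (Propositions~\ref{prop:left_irregular_unique} and~\ref{prop:right_irregular_unique}), so that $\psi(\varphi(B,C))$ and $(B,C)$, both being left irregular with the same $\J$, must coincide. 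Finally, your expectation that $\varphi\circ\psi = \mathrm{id}$ follows from $\psi\circ\varphi = \mathrm{id}$ ``by a symmetric argument'' is not automatic: the definitions of $\varphi$ and $\psi$ are not exchanged by any duality applied to $\mods\Lambda$, and the paper proves both composites separately, each with its own regular/irregular case analysis.
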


The definitions of $\varphi$ and $\psi$ use the notions of \emph{support $\tau$-rigid objects} (see Section~\ref{sec:support}) and \emph{signed $\tau$-exceptional sequences} (see Definition~\ref{def:signed_sequences}). A symmetric group action on the set of signed $\tau$-exceptional sequences was introduced in \cite{bmpreprint} (see Theorem~\ref{thm:signed_mutation}). This action does not preserve (unsigned) $\tau$-exceptional sequences, but still plays a role in the definitions of $\varphi$ and $\psi$.
The definition of the action exploits a bijection $\Ebm_U$ between the set of ``complements'' of a support $\tau$-rigid object $U$ and the set of support $\tau_{\J(U)}$-rigid objects in a subcategory $\C(\J(U))$ of the bounded derived category $\derD(\mods\Lambda)$; see Theorem~\ref{thm:bmcorr}. Theorems~\ref{thm:E_map} and~\ref{thm:F_map_1} provide a new description of $\Ebm_U$ and an explicit description of its inverse $\Fbm_U$.

  In Section \ref{sec:examples} we provide an example of an algebra of rank three where non-mutable $\tau$-exceptional pairs exist. This example highlights that, while they certainly do exist, non-mutable pairs only occur in very specific contexts. Indeed, we prove the following.
  
  \begin{theorem}[Theorem~\ref{prop:one_mutation}]\label{thm:always_mutable_intro}
	Every $\tau$-exceptional pair is left or right mutable (or both).
\end{theorem}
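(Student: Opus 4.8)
The plan is to reduce the statement to a sign dichotomy and then to rule out the degenerate case using the structure of support $\tau$-rigid objects. By the definitions in Section~\ref{sec:pair_mutation}, whether a $\tau$-exceptional pair $(M_1,M_2)$ is left mutable, respectively right mutable, is detected by a single sign: applying the $S_2$-action of Theorem~\ref{thm:signed_mutation} produces a \emph{signed} $\tau$-exceptional pair $(N_1,N_2)$, each of whose entries is either a genuine module (\emph{positive}) or the shift of an indecomposable projective in the relevant $\tau$-perpendicular category (\emph{negative}), and a given direction is mutable precisely when the corresponding entry is positive. With this dictionary, Theorem~\ref{prop:one_mutation} becomes the assertion that $N_1$ and $N_2$ cannot \emph{both} be negative.

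First I would make the dictionary explicit using the maps $\Ebm$ and $\Fbm$. Via $\Fbm_{M_2}$ (Theorem~\ref{thm:F_map_1}) the datum $M_1\in\J(M_2)$ corresponds to a complement $Y$ of the support $\tau$-rigid object $M_2$, so that $(M_1,M_2)$ is encoded by a rank-two support $\tau$-rigid object $M_2\oplus Y$; the two mutation directions then correspond to re-expressing this object through the two relevant instances of $\Ebm$ (Theorems~\ref{thm:bmcorr} and~\ref{thm:E_map}), and the obstruction in a direction is exactly the appearance of a negative complement there. Since $M_2$ is a genuine module and the $S_2$-action is an involution, I would track how the signs of the two entries of $(N_1,N_2)$ are produced from $M_2\oplus Y$, reducing the whole problem to a statement purely about the signs of the complements involved.

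The crux --- and the step I expect to be the main obstacle --- is to rule out $N_1$ and $N_2$ both being negative. A negative entry records the deletion of a simple from the support, i.e.\ the underlying support $\tau$-rigid object acquiring a projective summand in place of a module; the ``fully negative'' configuration (zero module part, maximal projective support) is unique in any $\tau$-perpendicular category, being the analogue of the zero support $\tau$-tilting object. If both $N_1$ and $N_2$ were negative, the rank-two wide subcategory $\W$ generated by the pair --- which is $\tau$-perpendicular, hence exact abelian, by~\cite[Thm.~4.12]{dirrt} --- would contain no nonzero module realizing the pair, contradicting that $M_1$ and $M_2$ are nonzero modules with $M_1\in\J(M_2)$. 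Making this precise requires the explicit sign behaviour of $\Ebm_U$ and $\Fbm_U$ from Theorems~\ref{thm:E_map} and~\ref{thm:F_map_1} together with the count of complements of an almost complete support $\tau$-rigid object due to Adachi--Iyama--Reiten~\cite{air}: the content is that, among the two complements governing the two directions, the exchange relating them forces at least one to be a genuine module, since two independent support-reducing exchanges from a nonzero all-positive configuration would collapse $\W$ to the zero object. Once this incompatibility is established, at least one entry is positive, so the pair is mutable in at least one direction.
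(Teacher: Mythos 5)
There is a genuine gap, and it lies in your opening ``dictionary,'' which is false: in this paper mutability is \emph{not} detected by the signs of the entries produced by the $S_2$-action of Theorem~\ref{thm:signed_mutation}. By definition, $(B,C)$ is left mutable unless it is \emph{both} left irregular (a module-theoretic condition: $C \in \P({}^\perp \tau \Fbm_C(B))$ and $C \notin \P(\Lambda)$, see Definition~\ref{def:regular}) \emph{and} the wide subcategory $\J(B,C)$ fails to be left finite (a functorial-finiteness condition on the torsion class $\FiltGen(\J(B,C))$). Signs cause no obstruction at all: the regular-case mutation formulas simply strip them with $\lvert - \rvert$ and $(-)_+$. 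Indeed, your reduction proves the wrong statement: for an unsigned pair $(M_1,M_2)$ the second entry $\Fbm_{M_2}(M_1)$ of the signed mutation is always a genuine module (Theorem~\ref{thm:bmcorr}(a)(ii)), so ``both entries negative'' is vacuously impossible --- yet immutable pairs genuinely exist, e.g.\ $(S_1,S_2)$ in Example~\ref{ex:immutable}, which is left immutable because $\J(S_1,S_2) = \add I_3$ is not left finite. Worse, the sign correspondence runs nearly opposite to your claim: a negative first entry occurs exactly when $C \in \Gen \Fbm_C(B)$, which by Lemma~\ref{lem:regular} forces the pair to be left \emph{regular}, hence left mutable.

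Consequently your crux step (a counting argument on complements of support $\tau$-rigid objects, collapsing $\W$ to zero) cannot close the gap, because no finite counting of complements or supports can detect functorial finiteness of torsion classes, which is the entire content of the theorem: over $\tau$-tilting finite algebras every pair is mutable in both directions regardless of any signs (Theorem~\ref{prop:mutation_complete}), and immutability is purely a phenomenon of algebras admitting non-functorially-finite torsion classes. The paper's actual argument is short but of a completely different nature: if $(B,C)$ is not left mutable it is in particular left irregular; irregularity together with TF-ordering forces $L = \Fbm_C(B) \amalg C$ to be gen-minimal ($\Fbm_C(B) \notin \Gen C$ since the pair is TF-ordered, and $C \notin \Gen \Fbm_C(B)$ by Lemma~\ref{lem:regular}); gen-minimality gives ${}^\perp \J(B,C) = \Gen L$, which is functorially finite (Theorem~\ref{thm:gen_minimal}, Corollary~\ref{cor:gen_minimal}, Proposition~\ref{prop:JUsinceresetup1}(d)); hence $\J(B,C)$ is right finite, so the pair cannot be right immutable. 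Any correct proof must establish this implication ``left irregular $\Rightarrow$ $\J(B,C)$ right finite'' (or its mirror), and your proposal contains no step playing that role.
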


\begin{theorem}[Theoerm~\ref{prop:mutation_complete}]\label{thm:mutation_complete_intro}
     If $\Lambda$ is $\tau$-tilting finite, hereditary or has rank two, then all $\tau$-exceptional pairs are both left and right mutable.
\end{theorem}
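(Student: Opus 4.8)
The plan is to treat the three hypotheses separately, in each case excluding the one-directional behaviour exhibited by the rank-three example of Section~\ref{sec:examples}. The first task is to unwind the definitions of $\varphi$ and $\psi$ in terms of the signed mutation (Theorem~\ref{thm:signed_mutation}), which is everywhere defined on signed pairs, and the bijections $\Ebm_U$, $\Fbm_U$ (Theorems~\ref{thm:E_map} and~\ref{thm:F_map_1}), so as to isolate the precise obstruction to, say, right mutability of a pair $(M,N)$: the signed mutate of $(M,N)$ can always be formed, and the pair fails to be right mutable exactly when this signed mutate cannot be ``de-signed'', i.e.\ when the support $\tau_{\J(-)}$-rigid object returned by $\Fbm$ is forced to be negative (a shifted projective) with no positive counterpart available. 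In each case below I would show that this configuration never arises; where it is not immediate, I would aim to show that the set of left-mutable pairs and the set of right-mutable pairs each exhaust all $\tau$-exceptional pairs.

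For hereditary $\Lambda$ I would invoke the identification of $\tau$-exceptional sequences with classical exceptional sequences recalled in the introduction, under which $\varphi$ and $\psi$ restrict to the classical left and right mutation of exceptional pairs of Crawley-Boevey~\cite{cb} and Ringel~\cite{rin}. These are mutually inverse and everywhere defined --- they assemble into the braid group action of Bondal~\cite{bon} --- so every exceptional pair, and hence every $\tau$-exceptional pair, is both left and right mutable. From the viewpoint of the first paragraph, the obstruction cannot occur because over a hereditary algebra $\tau$-rigidity coincides with rigidity and the categories $\J(-)$ coincide with the classical perpendicular categories ${}^\perp(-)$ and $(-)^\perp$, in which complements always exist on both sides; the main point to verify is precisely that the two notions of mutation agree.

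For the rank-two case I would argue directly. If $(M,N)$ is a $\tau$-exceptional pair then $N$ is an indecomposable $\tau$-rigid module and, by Jasso's theorem, $\J(N)$ is equivalent to the module category of a rank-one, hence local, algebra. Over a local algebra the only indecomposable $\tau$-rigid module is the projective generator, so $M$ is forced to be the projective of $\J(N)$; thus $\tau$-exceptional pairs over a rank-two algebra are parametrised by the indecomposable $\tau$-rigid modules $N$, and the objects entering the mutation construction are severely constrained. With only finitely many sign patterns to inspect, I would verify by a short case analysis that the signed mutate of $(M,N)$ can be de-signed in both directions, so that $\varphi$ and $\psi$ are defined on every such pair.

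The $\tau$-tilting finite case is where I expect the real work. The starting point is that $\tau$-tilting finiteness is equivalent to brick-finiteness, and that bricks of a $\tau$-perpendicular subcategory $\J(U)$ are a subset of those of $\mods\Lambda$ (Hom-sets agree, $\J(U)$ being full); hence $\Lambda$ being $\tau$-tilting finite forces every $\J(-)$ occurring in the recursion to be $\tau$-tilting finite as well, so that all its torsion classes are functorially finite. I would use this functorial finiteness to guarantee the existence of the Bongartz-type completions and Ext-projective objects implicit in $\Ebm_U$ and $\Fbm_U$, and thereby show that the ``negative with no positive counterpart'' configuration of the first paragraph cannot occur: the positive object whose absence would obstruct de-signing always exists. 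This yields that every pair is left mutable; since a $\tau$-tilting finite algebra admits only finitely many $\tau$-exceptional pairs, $\varphi$ is then defined and injective on the set of all pairs, so its image --- the set of right-mutable pairs, by Theorem~\ref{thm:mutation_pairs_intro} --- has the same finite cardinality as, and is contained in, the set of all pairs, and therefore coincides with it. The main obstacle is pinning down exactly which functorial-finiteness statement is needed in the de-signing step and confirming that it is inherited by every $\J(-)$ in the recursion; this identification of the obstruction, carried out in the first step, is the technical heart of the argument.
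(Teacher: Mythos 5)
Your opening reduction misidentifies the obstruction, and this undermines the whole plan. In this paper mutability is \emph{not} about whether the signed mutate can be ``de-signed'': Definition-Proposition~\ref{defprop:forward} shows that for a left \emph{regular} pair the signed-mutation formula always produces an honest (unsigned) $\tau$-exceptional pair, with no hypothesis needed, and for left \emph{irregular} pairs the mutation (Definition-Proposition~\ref{defprop:forward2}) is given by an entirely different formula involving $\Pmodns(\FiltGen(\J(B,C)))$, which does not pass through signed mutation at all. The actual definition is: $(B,C)$ is left immutable if and only if it is left irregular \emph{and} $\J(B,C)$ is not left finite. So the only thing that can ever fail is functorial finiteness of the torsion class $\FiltGen(\J(B,C))$, without which the Ext-projectives used in the irregular formula have no additive generator; a ``negative object with no positive counterpart'' configuration is not the issue and never occurs. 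Consequently the theorem is equivalent to showing that the relevant $\tau$-perpendicular categories are left and right finite---which is exactly what the paper's proof does---and a proof organised around a de-signing obstruction is aimed at something that cannot fail.

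The case-by-case arguments inherit this problem and add another. In the hereditary case you invoke the statement that $\varphi$ and $\psi$ restrict to classical mutation; but that is Theorem~\ref{thm:hereditary_case}, proved \emph{later}, and Section~\ref{sec:hereditary} explicitly assumes Corollary~\ref{cor:mutation_complete} (i.e., the present result) in order to even have mutation defined everywhere---so this is circular. The non-circular ingredient, which you gesture at but never pin down, is Ingalls--Thomas~\cite{IngTho}: for hereditary algebras the $\tau$-perpendicular, left finite, and right finite wide subcategories all coincide. In rank two your parametrisation of pairs (via locality of rank-one algebras) is correct, but the promised ``sign-pattern case analysis'' is not the step that is needed; what finishes this case is that $\J(B,C)$ has rank zero, hence equals $0$ and is trivially left and right finite (the paper argues via~\cite[Cor.~4.8]{bh} for corank-one categories so as to also cover the sequence version). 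In the $\tau$-tilting finite case you do identify the correct ingredient (\cite[Thm.~1.2]{dij}: all torsion classes are functorially finite), and your counting argument via Theorem~\ref{thm:mutation_pairs} would be valid, but the bridge you propose from functorial finiteness to left mutability runs through the nonexistent de-signing obstruction. With the correct definition of immutability this case is immediate: functorially finite torsion classes imply every wide subcategory is left and right finite (Theorem~\ref{thm:ms}), so no irregular pair can be immutable. You flag precisely this bridge as the unresolved ``technical heart,'' which is the gap.
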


Consider now a (not necessarily complete) $\tau$-exceptional sequence $\mathcal{M} = (M_s, \dots, M_n)$,
and consider the subcategories defined recursively as follows:
$$\J(M_s, M_{s+1}, \dots, M_n) := \J_{\J(M_{s+1}, \dots, M_n)}(M_s).$$
We set $\J(M_s,\ldots,M_n) = \mods\Lambda$ if $s = n+1$.
We say that the sequence $\mathcal{M}$ is \emph{left (resp. right) $i$-mutable} (for $ s \leq i \leq n-1$) if
the the pair $(M_i, M_{i+1})$ is left (resp. right) mutable in $\J(M_{i+2}, \dots, M_n)$. We say that the algebra $\Lambda$ is \emph{mutation complete} if every $\tau$-exceptional sequence $(M_s,\ldots,M_n)$ is both right and left $i$-mutable for all $s \leq i < n$. The following is then a consequence of Theorem~\ref{thm:mutation_complete_intro}

 \begin{corollary}[Corollary~\ref{cor:mutation_complete}]\label{cor:mutation_complete_intro}
  If $\Lambda$ is $\tau$-tilting finite, hereditary or has rank two (i.e $\rk \Lambda =2$), then it is mutation complete.
   \end{corollary}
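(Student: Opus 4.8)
The plan is to deduce Corollary~\ref{cor:mutation_complete_intro} directly from Theorem~\ref{thm:mutation_complete_intro} by reducing the notion of $i$-mutability of a $\tau$-exceptional sequence to the (both-sided) mutability of a single $\tau$-exceptional pair, this time living inside a $\tau$-perpendicular subcategory rather than all of $\mods\Lambda$. The first step is to unwind the definitions: fix a $\tau$-exceptional sequence $\mathcal{M} = (M_s,\dots,M_n)$ and an index $i$ with $s \le i < n$. By definition, $\mathcal{M}$ is left (resp.\ right) $i$-mutable precisely when the pair $(M_i, M_{i+1})$ is left (resp.\ right) mutable \emph{as a $\tau$-exceptional pair in the category} $\W := \J(M_{i+2},\dots,M_n)$. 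So the corollary asserts that, for every such $\W$ arising in this recursive way, every $\tau$-exceptional pair in $\W$ is both left and right mutable.

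The key step is therefore to establish that $\W = \J(M_{i+2},\dots,M_n)$ inherits the relevant structural hypothesis from $\Lambda$, and then to invoke Theorem~\ref{thm:mutation_complete_intro} \emph{applied to $\W$ in place of $\mathbf{\Lambda}$}. Here I would use the fact, recalled in the introduction (Jasso~\cite{jasso} together with~\cite[Thm.~4.12]{dirrt}), that every $\tau$-perpendicular category $\W$ is a wide subcategory equivalent to $\mods\Gamma$ for some finite-dimensional algebra $\Gamma$, and moreover that iterated $\tau$-perpendicular subcategories are again $\tau$-perpendicular. Thus $\W$ is itself the module category of a finite-dimensional algebra, and all of the notions (support $\tau$-rigid objects, $\tau$-exceptional pairs, left/right mutation) make sense internally to $\W$ exactly as they do for $\mods\Lambda$. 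I would then need the three closure statements corresponding to the three cases of Theorem~\ref{thm:mutation_complete_intro}: first, if $\Lambda$ is hereditary then so is $\W$ (wide subcategories of hereditary module categories are hereditary); second, if $\Lambda$ is $\tau$-tilting finite then so is $\W$ (the $\tau$-perpendicular subcategories of a $\tau$-tilting finite algebra are $\tau$-tilting finite, which follows from the finiteness of the lattice of functorially finite torsion classes being inherited); and third, that $\rk \W \le \rk\Lambda = 2$, with the rank-two case being the only nontrivial possibility and already covered.

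For the rank-two hypothesis the argument is slightly more delicate, so this is where I expect the main obstacle to lie. When $\rk\Lambda = 2$, any proper $\tau$-perpendicular subcategory $\W = \J(M_{i+2},\dots,M_n)$ with $i+2 \le n$ has strictly smaller rank, hence rank $0$ or $1$; a rank-$\le 1$ module category has no $\tau$-exceptional pairs at all (a pair requires two terms and the ambient rank is at least the length of a $\tau$-exceptional sequence), so the $i$-mutability condition is vacuous there. The only case where $\W$ still has rank two is $i + 2 = n+1$, i.e.\ $i = n-1$, where $\W = \mods\Lambda$ itself and the pair $(M_{n-1}, M_n)$ is a genuine $\tau$-exceptional pair in $\mods\Lambda$; this is handled directly by Theorem~\ref{thm:mutation_complete_intro}. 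I would record this reduction carefully to confirm that no $i$-mutability condition is ever imposed on a pair living in a subcategory to which Theorem~\ref{thm:mutation_complete_intro} does not apply. Assembling these observations, every $i$-mutability condition for every $s \le i < n$ reduces to the both-sided mutability of a $\tau$-exceptional pair in a category ($\W$ or $\mods\Lambda$) falling under one of the three hypotheses, so Theorem~\ref{thm:mutation_complete_intro} yields the conclusion and $\Lambda$ is mutation complete.
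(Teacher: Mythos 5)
Your proposal is correct and follows essentially the same route as the paper: reduce each $i$-mutability condition to mutability of a $\tau$-exceptional pair in the $\tau$-perpendicular category $\W = \J(M_{i+2},\dots,M_n)$, observe that $\W$ inherits the hypothesis (hereditary via Geigle--Lenzing, $\tau$-tilting finite via Jasso's reduction, and in rank two $\W$ is either $\mods\Lambda$ or too small to contain pairs), and then apply Theorem~\ref{prop:mutation_complete} inside $\W$. The paper phrases the rank-two case more succinctly---there the $\tau$-exceptional pairs are precisely the complete $\tau$-exceptional sequences---but this is the same observation as your reduction.
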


 For hereditary algebras, the $\tau$-exceptional sequences are precisely the exceptional sequences, and hence they are mutable in the classical sense \cite{cb,rin} (see \cite[Sect.\ 1]{brt} for explicit formulas).
 We show that the new notion of mutation recovers the classical one in this case.
		
\begin{theorem}[Theorem~\ref{thm:hereditary_case}]\label{thm:hereditary_case_intro}
		If $\Lambda$ is hereditary, then left and right mutation of
 $\tau$-exceptional sequences coincides with classical left and right mutation of exceptional sequences.
		\end{theorem}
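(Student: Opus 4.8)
The plan is to reduce everything to the case of $\tau$-exceptional pairs and then match the new mutation against the classical formulas term-by-term. First I would recall that, for a hereditary algebra $\Lambda$, every $\tau$-rigid module is rigid and every indecomposable rigid module is exceptional, so the notion of $\tau$-exceptional sequence coincides with the classical notion of exceptional sequence in $\mods\Lambda$; moreover, for any exceptional module $M$, the $\tau$-perpendicular category $\J(M) = M^\perp \cap {}^\perp(\tau M)$ coincides with the classical right perpendicular category $M^\perp$ (equivalently the Geigle--Lenzing--Schofield perpendicular category), because in the hereditary setting $\Ext^1(M,-)$ vanishing is controlled by $\Hom(-,\tau M)$ via the Auslander--Reiten formula. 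Crucially, $\J(M)$ is again equivalent to a module category of a hereditary algebra, so the recursion defining both notions of mutation proceeds through the \emph{same} sequence of subcategories. This lets me peel off the rightmost term and argue by induction on the rank $n$, so that the whole theorem collapses to the statement about the mutation maps $\varphi$ and $\psi$ on $\tau$-exceptional pairs inside each perpendicular category.

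Next I would unwind the definitions of $\varphi$ and $\psi$ from Section~\ref{sec:pair_mutation} in the hereditary case. The key point is that the maps $\varphi,\psi$ are built from the bijection $\Ebm_U$ (Theorem~\ref{thm:bmcorr}) between complements of a support $\tau$-rigid object and support $\tau_{\J(U)}$-rigid objects in $\C(\J(U))$, together with the symmetric group action on signed $\tau$-exceptional sequences (Theorem~\ref{thm:signed_mutation}). In the hereditary case the support $\tau$-rigid objects are exactly the (classical) rigid objects, all modules are of projective dimension at most one, and the derived-category subcategory $\C(\J(U))$ simplifies: the relevant complements and their images under $\Ebm_U$ can be read off directly from short exact sequences and the hereditary Auslander--Reiten theory. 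The goal is to show that, after these simplifications, the explicit outputs of $\varphi$ and $\psi$ on a pair $(X,Y)$ reproduce the classical mutation formulas recorded in~\cite[Sect.\ 1]{brt}, namely that right mutation replaces $(X,Y)$ by $(Y, \mathrm{R}_Y X)$ where $\mathrm{R}_Y X$ is obtained from the (co)evaluation triangle of $X$ against $Y$, and symmetrically for left mutation.

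The technical heart is therefore the comparison of the evaluation/coevaluation construction underlying $\Ebm_U$ and $\Fbm_U$ with the classical mutation triangles. Concretely, I would take a $\tau$-exceptional pair $(X,Y)$ with $Y$ the $\tau$-rigid (hence exceptional) term, identify $X$ as living in $\J(Y) = Y^\perp$, and trace how $\Ebm_Y$ sends the complement $X$ to an object of $\C(Y^\perp)$; in the hereditary setting this object is a two-term complex whose cohomology realizes exactly the classical mutant. Matching signs and the direction of the defining triangle against Theorems~\ref{thm:E_map} and~\ref{thm:F_map_1} then gives the identification of $\varphi$ with classical left mutation and $\psi$ with classical right mutation (or the reverse, according to the paper's sign conventions, which I would fix once at the outset). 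Finally, since Theorem~\ref{thm:mutation_complete_intro} guarantees that all pairs over a hereditary algebra are both left and right mutable, there are no domain issues, and the inductive step applies uniformly across the perpendicular categories arising in the recursion.

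The main obstacle I anticipate is the bookkeeping in this last comparison: the classical mutation is phrased via minimal left/right approximations and short exact sequences in $\mods\Lambda$, whereas $\varphi$ and $\psi$ are phrased via the derived-category correspondence $\Ebm_U$ and the signed action, so I expect the genuine work to lie in verifying that the hereditary hypothesis makes the relevant complexes concentrated in the right degrees and that the signs produced by the symmetric group action are precisely those appearing (and then disappearing, on unsigned sequences) in the classical formulas.
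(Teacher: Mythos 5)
Your framing — reduction to pairs inside hereditary $\tau$-perpendicular categories, the identification of $\J(M)$ with the classical Geigle--Lenzing perpendicular category, and mutability via Theorem~\ref{thm:mutation_complete_intro} — agrees with the paper, but the heart of your argument is missing, and the route you sketch for it overlooks the idea that makes the paper's proof tractable. The paper never computes the new entry of $\varphi(B,C)$ and never matches it against the explicit formulas of~\cite{brt}. It proves only Proposition~\ref{prop:hereditary_move_left}: writing $\varphi(B,C)=(C',B')$, one has $B'=B$, i.e.\ the \emph{old} first entry reappears unchanged in the second slot. Once this is known, $\varphi_i(\mathcal{M})$ and $\sigma_i(\mathcal{M})$ both have the shape $(M_1,\ldots,M_{i-1},?,M_i,M_{i+2},\ldots,M_n)$, and the classical uniqueness theorem of Crawley-Boevey and Ringel~\cite{cb,rin} (an exceptional sequence with one term deleted admits a unique completion) forces them to be equal; the unknown entry is never touched. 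Your plan of tracing $\Ebm$ through the AR theory and matching it with (co)evaluation triangles proposes strictly more than this, gives no indication of how the match would be carried out, and the part of it you cannot avoid — that $\Fbm_{C_+}(B)=B$ in the regular cases — is itself not ``bookkeeping'': the paper needs Lemma~\ref{lem:brick_gen} (a brick argument when $C\in\Gen\Fbm_C(B)$), Lemma~\ref{lem:hereditary_inj_surj}, Proposition~\ref{prop:hereditary_trivial}, and Proposition~\ref{prop:hereditary_case_III} (to exclude $\Ext^1(B,C)\neq 0$ in the remaining regular case) to establish it.

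The second concrete gap is that your sketch only engages the regular mutation recipe of Definition-Proposition~\ref{defprop:forward}, built from $\Ebm$, $\Fbm$ and the signed action. But left \emph{irregular} pairs do occur over hereditary algebras (e.g.\ $(S_1,S_2)$ over the linearly oriented $A_3$ quiver), and the second half of the paper's proof of Proposition~\ref{prop:hereditary_move_left} is devoted to them: there $\Ext^1(B,C)\neq 0$, the module $\Fbm_C(B)$ is not $B$ at all but a universal extension $E'$ of $B$ by copies of $C$ (Proposition~\ref{prop:hereditary_case_IV}(a)), and $\varphi$ is defined by the entirely different formula of Definition-Proposition~\ref{defprop:forward2}, via $\Pmodns(\FiltGen(\J(L)))$. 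Recovering $B'=B$ in that case requires identifying $\Pmodns(\FiltGen(\J(L)))=B\amalg E$ with $B\in\Gen E$ and quotienting by $E=\Pmods(\Gen\Pmodns(\FiltGen(\J(L))))$ (Proposition~\ref{prop:hereditary_case_IV}(c),(f) together with Proposition~\ref{prop:JUsinceresetup1}(g)). Your picture of ``a two-term complex whose cohomology realizes the classical mutant,'' and any amount of sign-tracking in the signed action, simply does not apply to this case, since the map being computed is not $\Ebm$ composed with the signed mutation. Without the uniqueness shortcut, without any treatment of the irregular case, and with the regular-case verification left as anticipated bookkeeping, the proposal does not constitute a proof.
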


It is also natural to ask whether mutation of $\tau$-exceptional sequences is a transitive operation (as it  is for hereditary algebras~\cite[Thm.]{cb},~\cite[Sect.\ 7 Cor.]{rin}). For algebras of rank two, we give a complete answer to this question in Section~\ref{sec:support}. This comes from comparing the mutation theory of $\tau$-exceptional sequences to that of \emph{support $\tau$-tilting modules}. 
More precisely, we recall the definition of the \emph{oriented exchange graph} $\Hasse(\stt\Lambda)$ in Section~\ref{sec:rank_2_exchange}. For algebras of rank two, the connectivity of the oriented exchange graph fully characterises the transitivity of the mutation operation on $\tau$-exceptional sequences:

\begin{theorem}[Corollary~\ref{cor:connected}]\label{cor:connected_intro}
Suppose $\Lambda$ has rank two. Then the number of orbits of complete $\tau$-exceptional sequences (under the notions of left/right mutation) coincides with the number of connected components of $\Hasse(\stt\Lambda)$. In particular, mutation of $\tau$-exceptional sequences is transitive if and only if $\Hasse(\stt\Lambda)$ is connected.
\end{theorem}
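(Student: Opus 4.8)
The plan is to establish a bijection between the orbit structure of complete $\tau$-exceptional sequences under mutation and the connected components of $\Hasse(\stt\Lambda)$, and then deduce the transitivity characterization as an immediate corollary. First I would recall the explicit combinatorial picture in rank two: a complete $\tau$-exceptional sequence has the form $(M_1, M_2)$ with $M_2$ a $\tau$-rigid indecomposable and $M_1 \in \J(M_2)$, where $\J(M_2)$ has rank one. Since vertices of $\Hasse(\stt\Lambda)$ are basic support $\tau$-tilting modules (equivalently, by Adachi-Iyama-Reiten, functorially finite torsion classes), and in rank two each such module has exactly two indecomposable summands (allowing the zero summand encoded by support), I would set up a correspondence that sends a complete $\tau$-exceptional pair $(M_1,M_2)$ to an associated support $\tau$-tilting module, using the bijection $\Ebm_U$ of Theorem~\ref{thm:bmcorr} together with the completion of $M_2$ to a support $\tau$-tilting object. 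The key point is that mutation of the pair, as defined via $\varphi$ and $\psi$, should correspond exactly to traversing an arrow (in either orientation) of the oriented exchange graph.

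Next I would verify that this correspondence is compatible with the two notions of adjacency. Concretely, I would show that two complete $\tau$-exceptional sequences lie in the same mutation orbit if and only if their associated support $\tau$-tilting modules lie in the same connected component of $\Hasse(\stt\Lambda)$. The forward direction uses Theorem~\ref{thm:mutation_complete_intro} (all pairs are both left and right mutable in rank two) so that mutation is always defined and, by Theorem~\ref{thm:mutation_pairs_intro}, the maps $\varphi,\psi$ are mutually inverse bijections; each single mutation step changes the associated support $\tau$-tilting module by a single AIR-mutation, i.e.\ moves along one edge of $\Hasse(\stt\Lambda)$. For the reverse direction, I would argue that every edge of $\Hasse(\stt\Lambda)$ can be realized by a mutation of some complete $\tau$-exceptional sequence mapping to its endpoints, so that a path in the exchange graph lifts to a sequence of mutations. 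Counting orbits on both sides then yields equality of the number of orbits and the number of connected components.

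The main obstacle I anticipate is making the correspondence between complete $\tau$-exceptional pairs and support $\tau$-tilting modules genuinely bijective and mutation-equivariant, rather than merely surjective up to the $\Ebm_U$/$\Fbm_U$ machinery. In particular, a single support $\tau$-tilting module may be the target of more than one exceptional pair (the ordering and the choice of which summand plays the role of $M_2$ matter), so I would need to track this multiplicity carefully and check it is consistent across mutation, ensuring that the induced map on orbits (not on individual objects) is a bijection. Concretely, the subtlety is that $\varphi$ and $\psi$ act on \emph{signed} data via the symmetric group action of Theorem~\ref{thm:signed_mutation}, and one must confirm that restricting attention to unsigned complete sequences still produces a well-defined walk on $\Hasse(\stt\Lambda)$; handling the sign bookkeeping and the degenerate cases where a summand becomes zero (support) is where the real work lies.

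Once the orbit/component bijection is established, the final sentence of the statement is immediate: mutation is transitive precisely when there is a single orbit, which by the bijection happens precisely when $\Hasse(\stt\Lambda)$ has a single connected component, i.e.\ is connected.
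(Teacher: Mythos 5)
Your high-level plan (compare mutation of complete $\tau$-exceptional sequences with $\Hasse(\stt\Lambda)$ via the underlying $\tau$-rigid modules $\Fbm_C(B)\amalg C$, then count orbits and components) is the same as the paper's, but the step your argument hinges on is false as stated, and repairing it is where essentially all of the paper's work in Section~\ref{sec:rank_2_exchange} lies. You claim that each single mutation step moves the associated support $\tau$-tilting module along one edge of $\Hasse(\stt\Lambda)$, and conversely that every edge of $\Hasse(\stt\Lambda)$ lifts to a mutation. Neither holds near the degenerate vertices. First, the underlying module of a complete $\tau$-exceptional sequence is always a $\tau$-tilting module, so the vertices $0$, $S_1^e$, $S_2^e$ are never in the image of your correspondence, and the four edges of $\Hasse(\stt\Lambda)$ incident to $0$ and (partly) to $P(\Lambda)$ need not be realized by any mutation. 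Second, and more seriously, a single mutation can join pairs whose underlying modules are far apart in $\Hasse(\stt\Lambda)$: by Lemma~\ref{lem:simples_rank_2}, the unique pair $(X_i,S_i^e)$ ending in $S_i^e$ satisfies $\psi(X_i,S_i^e)=(S_i^e,P_{i+1})$, whose underlying module is $P(\Lambda)$, while the underlying module of $(X_i,S_i^e)$ is the Bongartz completion of $S_i^e$. For the Kronecker algebra these are $I_2\amalg S_1$ and $P_1\amalg P_2$, which share no indecomposable summand and sit at distance four in $\Hasse(\stt\Lambda)$; this mutation step therefore does not traverse an edge of the exchange graph at all, and your claimed mutation-equivariance breaks down exactly here.

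Moreover, the way the mutation graph reconnects around $P(\Lambda)$ and $0$ is ``crossed'': the pair ending in $S_1^e$ mutates toward the branch of $\Hasse(\stt\Lambda)$ containing $P_2\amalg R_2$, and the pair ending in $S_2^e$ toward the branch containing $P_1\amalg R_1$; this is governed by the extra $\Hom$-vanishing conditions in Propositions~\ref{prop:arrows_rank_2_b} and~\ref{prop:arrows_rank_2_c}, which have no counterpart in your proposal. Because of this rerouting, one cannot pass from ``mutation steps are edges up to multiplicity'' to ``orbits correspond to components'': the paper instead proves (Theorem~\ref{thm:rank_2_hasse}) that $\Hasse(\tex\Lambda)$ is isomorphic not to $\Hasse(\stt\Lambda)$ but to the surgered quiver $Q$ obtained by deleting the vertices $P(\Lambda)$ and $0$ and adding the two crossing arrows $S_1^e\to P_2\amalg R_2$ and $S_2^e\to P_1\amalg R_1$, and the corollary follows because this surgery preserves the number of connected components. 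Your ``obstacle'' paragraph anticipates the two-element fiber over $P(\Lambda)$ and some sign bookkeeping, but the genuine difficulty is this local surgery and crossing at the source and sink of $\Hasse(\stt\Lambda)$; without an argument handling it, the equivalence ``same mutation orbit if and only if same connected component'' does not follow.
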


In Section~\ref{sec:examples}, we recall from~\cite{terland} an example of an algebra of rank two for which $\Hasse(\stt\Lambda)$ is not connected. Thus this is also an example where the mutation of $\tau$-exceptional sequences is not transitive. We also explain, in Section~\ref{sec:brick_label}, how Theorem~\ref{cor:connected_intro} is related to Asai's ``brick labeling'' of $\Hasse(\stt\Lambda)$~\cite{asai} and the ``brick-$\tau$-rigid correspondence'' of Demonet-Iyama-Jasso~\cite{dij}.

Finally, we note that, in the hereditary case, the classical mutation operators on complete exceptional sequences are known to satisfy the braid relations~\cite{cb,rin}. In other words, the braid group acts transitively on the set of complete exceptional sequences in the hereditary case. Thus one may ask whether the mutation theory developed in this paper likewise defines a (possibly non-transitive) braid group action on the set of complete $\tau$-exceptional sequences of a non-hereditary algebra. In Section~\ref{sec:examples}, we show that this is generally not the case; that is, we present a non-hereditary ($\tau$-tilting finite) algebra for which the mutation operators we introduce do not satisfy the braid relations. 

The work in this paper, including some of the examples in Section~\ref{sec:examples}, was greatly aided by the applet~\cite{geuenich} which allows for the computation of AR-quivers and support $\tau$-tilting quivers as well as other features of finite-dimensional string algebras.
	
\section{\texorpdfstring{$\tau$}{Tau}-rigid objects and torsion pairs}\label{sec:torsion}

 Let $\Lambda$ be a finite-dimensional algebra over an arbitrary field $K$. We denote by $\mods\Lambda$ the category of finite-dimensional left $\Lambda$-modules, and the term ``module'' will always refer to an object of $\mods\Lambda$. All subcategories considered are full and closed under isomorphisms, finite direct sums, and direct summands, and all modules and algebras can be assumed to be basic. The \emph{rank} of a module $M \in \mods\Lambda$, denoted $\rk(M)$, is the number of isomorphism classes of indecomposable direct summands of $M$. The rank of the algebra $\Lambda$ is its rank as a module, and we denote $n := \rk(\Lambda)$.

 For a subcategory $\SC$ of $\mods \Lambda$, we let $\Gen \SC$ (resp. $\Cogen \SC$) denote the smallest subcategory containing $\SC$ and being closed under factor objects 
(resp. subobjects). We let $\Filt \SC$  denote the smallest subcategory whose objects have finite filtrations with subfactors
in $\SC$. As in the introduction, we let $\SC^\perp$ (resp. ${}^\perp \SC$) be the full subcategory whose objects are the modules $X$ satisfying $\Hom(-,X)|_{\SC} = 0$ (resp. $\Hom(X,-)|_{\SC} = 0$).
For a module $M$, we will write $\Gen M$, rather than $\Gen (\add M)$ etc., where $\add M$ is the full additive subcategory generated by $M$. We denote by $\ind \SC$ the set of (isomorphism classes of) indecomposable objects in $\SC$.
 
	\subsection{Torsion pairs} 
	
 Recall that a pair $(\T,\F)$ of full subcategories of $\mods \Lambda$ is called a 
 {\em torsion pair} if we have both $\T = {}^\perp \F$ and $\T^{\perp} = \F$. In this case, we say that $\T$ is a {\em torsion class} and that $\F$ is a {\em torsion-free class}.
 Given a torsion pair $(\T,\F)$ and an arbitrary module $M$, there is a canonical exact sequence, 
 $$0 \to tM \to M \to fM \to 0,$$
 unique up to isomorphism, with the property that $tM$ is in $\T$ and $fM$ is in $\F$.
 
	A subcategory $\SC$ gives rise
	to two torsion pairs in a natural way. Note that while \cite[Lem.~3.1]{ms} considers only the case where $\SC$ is a wide subcategory, it is well-known (see e.g.~\cite[Sect.\ 1.1]{asai}) that the same argument applies to an arbitrary subcategory $\SC$. 

\begin{definition-proposition}
	\label{defprop:torsionpairs}
	Let $\SC$ be a subcategory of $\mods \Lambda$. 
 \begin{itemize}
     \item[(a)]  \cite[VI.1.2]{ASS06} Then $({}^\perp(\SC^\perp), \SC^\perp)$ and $({}^\perp \SC,({}^\perp \SC)^\perp)$ are torsion pairs.
     \item[(b)] \cite[Lem.~3.1]{ms} The subcategory ${}^\perp(\SC^\perp) = \mathsf{Filt}(\Gen \SC) =: \FiltGen(\SC)$ is the smallest torsion class containing~$\SC$.
      \item[(c)] \cite[Lem.~3.1]{ms} The subcategory $({}^\perp \SC)^\perp = \mathsf{Filt}(\Cogen \SC) =: \FiltCogen(\SC)$ is the smallest torsion-free class containing~$\SC$.
 \end{itemize}
\end{definition-proposition}

A torsion pair $(\T, \F)$ is called functorially finite if $\T$
 is functorially finite. We have the following.

 \begin{proposition}\cite{as, smalo84}\label{prop:asff}
  Let $(\T, \F)$ be a torsion pair. Then the following are equivalent.
  \begin{enumerate}
      \item[(a)] $\T$ is functorially finite.
      \item[(b)] $\F$ is functorially finite.
      \item[(c)] $\T = \Gen M$ for some module $M$.
    \item[(d)] $\F = \Cogen N$ for some module $N$.
  \end{enumerate}
 \end{proposition}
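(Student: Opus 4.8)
The plan is to prove the equivalence of the four conditions in Proposition~\ref{prop:asff} by establishing a cycle of implications together with the definitional symmetry. First I would recall the key structural input: by Definition-Proposition~\ref{defprop:torsionpairs}, a torsion class is exactly a subcategory of the form $\FiltGen(\SC)$ and a torsion-free class is one of the form $\FiltCogen(\SC)$. The strategy is to show $(c) \Leftrightarrow (a)$, $(d) \Leftrightarrow (b)$, and then $(a) \Leftrightarrow (b)$, which closes the loop.

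\medskip

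First I would treat $(c) \Rightarrow (a)$. If $\T = \Gen M$, then $M$ is a generator of $\T$ and I claim this forces $\T$ to be covariantly finite: every module $X$ admits a right $\add M$-approximation obtained from a sufficiently large map $M^k \to X$ spanning $\Hom(M,X)$, and the image of this approximation is the torsion submodule $tX$ with respect to $(\T,\F)$, giving the minimal left $\T$-approximation $X \to fX$ dually. The contravariant finiteness of $\T$ follows because the torsion pair provides the left approximations $X \to tX$... here one must be careful about the direction: the canonical sequence $0 \to tX \to X \to fX \to 0$ shows $tX \to X$ is a right $\T$-approximation, so $\T$ is contravariantly finite, and the approximation by powers of $M$ gives covariant finiteness. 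Hence $\T$ is functorially finite. The converse $(a) \Rightarrow (c)$ is where I expect to lean on the cited literature \cite{as, smalo84}: a functorially finite torsion class has a minimal covariant approximation of $\Lambda$ itself (or more precisely one extracts a module $M$ whose $\Gen$ recovers $\T$, using that $\T$ is generated by the images of the left approximations of the indecomposable projectives). This uses the Auslander--Smalø theory of functorially finite subcategories, so I would cite it rather than reprove it.

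\medskip

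The implications $(d) \Leftrightarrow (b)$ are entirely dual to $(c) \Leftrightarrow (a)$, replacing $\Gen$ by $\Cogen$, generators by cogenerators, $\T$ by $\F$, and left approximations by right approximations; by the standard duality $D\colon \mods\Lambda \to \mods\Lambda^{\mathrm{op}}$ sending torsion pairs to torsion pairs with the roles of $\T$ and $\F$ swapped, this requires no separate argument beyond invoking the dual statement.

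\medskip

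The remaining and central step is $(a) \Leftrightarrow (b)$: functorial finiteness of $\T$ is equivalent to functorial finiteness of $\F$. This is the crux of the proposition and is precisely the content of the Auslander--Smalø result \cite{as, smalo84}: for a torsion pair, the canonical sequences let one convert an approximation by $\T$ into an approximation by $\F$ and vice versa, because the cokernel of a right $\T$-approximation lies in $\F$ and the kernel of a left $\F$-approximation lies in $\T$. The hard part, and the reason I would cite rather than redo it, is that proving contravariant finiteness of $\F$ from that of $\T$ (and covariant from covariant) requires the nontrivial fact that the class of functorially finite subcategories behaves well under forming kernels and cokernels of approximation sequences; this is exactly the kind of technical approximation-theoretic bookkeeping that the Auslander--Smalø framework was built to handle. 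Since the statement is attributed to \cite{as, smalo84}, my proof proposal is essentially to assemble these known equivalences into the four-way form stated here, carefully matching each of $(c),(d)$ to the functorial finiteness condition and invoking the torsion-pair duality to avoid duplicating arguments.
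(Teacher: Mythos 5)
The paper itself gives no proof of this proposition: it is stated as a quotation of Auslander--Smal\o{} and Smal\o{} (\cite{as,smalo84}), with the remark that the equivalences ``will be freely used''. So deferring to the literature is perfectly consistent with the paper; the question is whether the parts you do argue yourself are sound, and there the proposal has a genuine gap at its one substantive step, namely $(c)\Rightarrow(a)$. Your argument conflates left and right approximations exactly where the theorem has content. The universal map $M^k\to X$ spanning $\Hom(M,X)$ does have image $tX$, and $tX\hookrightarrow X$ is a right $\T$-approximation; this proves $\T$ is contravariantly finite. But contravariant finiteness of $\T$ (and covariant finiteness of $\F$) holds for \emph{every} torsion pair, functorially finite or not: the canonical sequence $0\to tX\to X\to fX\to 0$ always supplies these two approximations. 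What functorial finiteness of $\T$ actually requires beyond this is \emph{covariant} finiteness, i.e.\ left $\T$-approximations $X\to T_X$, and your closing clause ``the approximation by powers of $M$ gives covariant finiteness'' does not deliver this: the map $M^k\to X$ points \emph{into} $X$, not out of it, and no construction of a left $\Gen M$-approximation is given anywhere in the proposal. The existence of left $\Gen M$-approximations is precisely Smal\o{}'s theorem and is the entire nontrivial content of $(c)\Rightarrow(a)$; it cannot be extracted from the trace construction.

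There is also an inversion of where the difficulty lies. The direction you defer to the cited literature, $(a)\Rightarrow(c)$, is in fact the easy one, and your own parenthetical sketch of it is essentially a complete proof: if $f\colon\Lambda\to T_\Lambda$ is a left $\T$-approximation of the regular module, then every $T\in\T$ is a quotient of some $\Lambda^n$, hence a quotient of $T_\Lambda^n$ (factor the epimorphism $\Lambda^n\to T$ through $f^n$ and use that $\T$ is closed under quotients), so $\T=\Gen T_\Lambda$. Thus your division of labour is backwards: the step you claim to handle elementarily is where the cited theorem lives, while the step you cite is elementary. To repair the proposal, either invoke \cite{smalo84} for $(c)\Rightarrow(a)$ as well as for $(a)\Leftrightarrow(b)$ (both rest on the same nontrivial construction of left $\T$-approximations from a torsion-free side datum), or actually reproduce that construction; the dualization argument for $(d)\Leftrightarrow(b)$ is fine as stated.
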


We will freely use these equivalences throughout the paper.

\subsection{Projective and injective objects}\label{sec:proj_inj}
	
Let $\SC$ be a subcategory of $\mods\Lambda$. We denote by $\P(\SC)$ (respectively, $\Ps(\SC)$) the subcategory of $\SC$ consisting of the objects in $\SC$ which are Ext-projective (respectively, split Ext-projective) in $\SC$.
We denote by $\Pns(\SC)$ the subcategory of $\SC$ consisting of the objects in  $\P(\SC)$ none of whose indecomposable direct summands are split Ext-projective in $\SC$. Each of these subcategories is an additive subcategory of $\SC$. As a special case, we denote by $\P(\Lambda) := \P(\mods\Lambda)$ the category of projective $\Lambda$-modules. Note that in this case we have $\P(\Lambda) = \Ps(\Lambda)$ and $\Pns(\Lambda) = 0$.

Similarly, we denote by $\I(\SC)$ (respectively, $\I_{s}(\SC)$) the subcategory of $\SC$ consisting of the objects in $\SC$ which are Ext-injective (respectively, split Ext-injective) in $\SC$).
We denote by $\I_{ns}(\SC)$ the subcategory of $\SC$ consisting of the objects in $\I(\SC)$ 
none of whose indecomposable direct summands are split Ext-injective in $\SC$. Each of these subcategories is an additive subcategory of $\SC$.

We will mostly be interested in the
subcategories $\P(\T)$,  $\Ps(\T), \Pns(\T), \I(\F), \Is(\F)$, and $\Ins(\F)$ for $(\T,\F)$ a functorially finite torsion pair. By Proposition~\ref{prop:asff}, we have that 
$\P(\T) = \add M$ for some module $M$ in this case.  
So when $(\T, \F)$ is a functorially finite torsion pair, then $\P(\T)$, $\Ps(\T)$ and $\Pns(\T)$ have additive generators, and we denote these by
$\Pmod(\T)$, $\Pmods(\T)$ and $\Pmodns(\T)$, respectively.
 Similarly, we denote by $\Imod(\F)$, $\Imods(\F)$ and $\Imodns(\F)$ the additive generators of $\I(\F)$, $\Is(\F)$ and $\Ins(\F)$, respectively. We give a precise relationship between the additive generators for $\T$ and $\F$ in Corollary~\ref{cor:proj_inj}. Note also that, without the functorial finiteness assumption, we have the following:
 
 \begin{proposition} \label{prop:tauprojectiveinjective}
     \textnormal{(See~\cite[VI.1.11]{ASS06})}
     Let $(\T,\F)$ be a torsion pair. Then a module $M \in \T$ (resp. $M \in \F$) satisfies $M \in \P(\T)$ (resp. $M \in \I(\F)$) if and only if $\tau M \in \F$ (resp. $\tau^{-1}M \in \T$).
 \end{proposition}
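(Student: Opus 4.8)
The plan is to deduce both statements from Auslander--Reiten duality, in the form $\Ext^1(M,N) \cong D\,\overline{\Hom}(N,\tau M)$, where $D = \Hom_K(-,K)$ is the standard duality and $\overline{\Hom}(N,\tau M)$ denotes the quotient of $\Hom(N,\tau M)$ by the subspace of morphisms that factor through an injective module. Since $\P(\T)$ is additive, $\F$ is closed under direct sums and summands, and $\tau(M_1\oplus M_2)=\tau M_1\oplus \tau M_2$, I would first reduce to the case that $M$ is indecomposable. The projective case is trivial, since then $\tau M = 0 \in \F$ while $M$ is Ext-projective in every subcategory, so I may assume $M$ is indecomposable and non-projective, whence $\tau M$ is indecomposable and non-injective.

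For the direction ``$\tau M \in \F \Rightarrow M \in \P(\T)$'', note that $\F = \T^\perp$ gives $\Hom(T,\tau M)=0$ for every $T \in \T$, so a fortiori $\overline{\Hom}(T,\tau M)=0$; the AR formula then yields $\Ext^1(M,T)=0$ for all $T \in \T$, i.e.\ $M \in \P(\T)$. For the converse, assume $M \in \P(\T)$, so that $\Ext^1(M,T)=0$ for all $T \in \T$; the AR formula now gives only the weaker \emph{stable} vanishing $\overline{\Hom}(T,\tau M)=0$ for all $T \in \T$. The task is to upgrade this to the genuine vanishing $\Hom(T,\tau M) = 0$, equivalently to $t(\tau M) = 0$.

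This upgrade is the main obstacle, and I would handle it as follows. Suppose $T_0 := t(\tau M) \neq 0$ and consider the inclusion $\iota\colon T_0 \hookrightarrow \tau M$; since $T_0 \in \T$ we have $\overline{\Hom}(T_0,\tau M)=0$, so $\iota$ factors through an injective module. As $\iota$ is a monomorphism, its first factor is a monomorphism of $T_0$ into an injective, which I may replace by the injective envelope $E$ of $T_0$; thus $\iota = h\,g$ with $g\colon T_0 \hookrightarrow E$ an essential monomorphism and $h\colon E \to \tau M$. Then $\ker h \cap g(T_0) = 0$ because $hg = \iota$ is injective, and essentiality of $g$ forces $\ker h = 0$. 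Hence $h$ embeds the nonzero injective $E$ into $\tau M$, so $E$ is a nonzero injective direct summand of $\tau M$, contradicting that $\tau M$ is indecomposable and non-injective. Therefore $t(\tau M)=0$ and $\tau M \in \F$. Finally, the statement for $\I(\F)$ and $\tau^{-1}$ follows by applying this result over the opposite algebra (equivalently, by dualizing via $D$, which exchanges torsion and torsion-free classes, $\tau$ and $\tau^{-1}$, and Ext-projectives and Ext-injectives).
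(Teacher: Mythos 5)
Your proof is correct, and it is essentially the proof of the result the paper itself does not reprove but only cites, namely \cite[VI.1.11]{ASS06}: both directions follow from the Auslander--Reiten formula $\Ext^1(M,T)\cong D\,\overline{\Hom}(T,\tau M)$, with the non-trivial direction upgraded from stable to genuine Hom-vanishing by factoring the inclusion of the torsion submodule of $\tau M$ through its injective envelope. The only (cosmetic) difference is that the textbook argument invokes the general fact that $\tau M$ never has nonzero injective direct summands, whereas you first reduce to $M$ indecomposable non-projective so that $\tau M$ is indecomposable non-injective; both yield the same contradiction.
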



\subsection{Functorially finite torsion pairs and $\tau$-rigid modules}	\label{sec:ff}

The definition of $\tau$-rigidity  was
recalled in the introduction. We also recall, from~\cite{air}, that a $\tau$-rigid module $M$ is called \emph{support $\tau$-tilting} if there exists a projective module $P$ (possibly $P = 0$, in which case the word ``support'' may be removed) such that $\Hom(P,M) = 0$ and $\rk(P \amalg M) = \rk(\Lambda)$. We will also use the dual of each of these notions. From~\cite[Sect. 2]{air}, a module $N$ is \emph{$\tau^{-1}$-rigid} if $\Hom(\tau^{-1}N,N) = 0$. If in addition there is an injective module $I$ such that 
 $\Hom(N,I) = 0$ and $\rk (N \amalg I) = \rk \Lambda$, then $N$ is called a \emph{support $\tau^{-1}$-tilting module}. As with $\tau$-tilting, the word ``support'' may be removed in case $I = 0$.

The following result can be useful for determining whether a module is $\tau$-rigid.

\begin{proposition}\label{prop:ASExt} \cite[Props.~5.6 and~5.8]{as}
    Let $M, N \in \mods\Lambda$.
    \begin{enumerate}
        \item[(a)] Then $\Hom(N,\tau M) = 0$ if and only if $\Ext^1(M,-)|_{\Gen N} = 0$.
        \item[(b)] Then $\Hom(\tau^{-1}N,M) = 0$ if and only if $\Ext^1(-,N)|_{\Cogen M} = 0$.
    \end{enumerate}
\end{proposition}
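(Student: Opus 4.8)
The statement is the classical Auslander--Smalø characterisation, and the natural engine is Auslander--Reiten duality. Writing $D = \Hom_K(-,K)$ for the $K$-linear dual and $\overline{\Hom}(X,Y)$ (resp. $\underline{\Hom}(X,Y)$) for the morphisms $X \to Y$ modulo those factoring through an injective (resp. projective) module, the plan is to use the two functorial isomorphisms $\Ext^1(M,Y) \cong D\overline{\Hom}(Y,\tau M)$ and $\Ext^1(X,N) \cong D\underline{\Hom}(\tau^{-1}N, X)$. Since (b) is the image of (a) under the duality $\mods\Lambda \to \mods\Lambda^{\mathrm{op}}$ (which interchanges $\Gen$ with $\Cogen$, submodules with quotients, injectives with projectives, and $\tau$ with $\tau^{-1}$), I would prove (a) in full and then only indicate the dualisation for (b).

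For the implication $\Hom(N,\tau M) = 0 \Rightarrow \Ext^1(M,-)|_{\Gen N} = 0$, I would argue directly. Any $Y \in \Gen N$ admits an epimorphism $N^k \twoheadrightarrow Y$, so precomposing any morphism $Y \to \tau M$ with it yields a morphism $N^k \to \tau M$; the latter vanishes componentwise by hypothesis, and right-cancellability of the epimorphism forces the original map to be zero. Hence $\Hom(Y,\tau M) = 0$, and a fortiori $\overline{\Hom}(Y,\tau M) = 0$, so the AR formula gives $\Ext^1(M,Y) = 0$.

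The converse is the substantive direction, and the main obstacle is that the AR formula only controls $\overline{\Hom}$, not $\Hom$: vanishing of $\Ext^1(M,Y)$ merely says every map $Y \to \tau M$ factors through an injective, which is weaker than the desired vanishing. To bridge this gap I would introduce the trace $T := \mathrm{tr}_N(\tau M)$, the sum of the images of all morphisms $N \to \tau M$; by construction $\Hom(N,\tau M) = 0$ if and only if $T = 0$, and $T \in \Gen N$ (it is a quotient of a direct sum of quotients of $N$). Applying the hypothesis to $T$ gives $\Ext^1(M,T) = 0$, hence $\overline{\Hom}(T,\tau M) = 0$, so the inclusion $T \hookrightarrow \tau M$ factors through an injective. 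The key maneuver is then to upgrade this to a statement about the injective envelope: using that $T \hookrightarrow E(T)$ is an essential monomorphism and that $E(T)$ is injective, I would show the factorisation can be taken through $E(T)$ and that the resulting map $E(T) \to \tau M$ is itself a monomorphism. Thus $\tau M$ contains $E(T)$ as a submodule; being injective, $E(T)$ is a direct summand of $\tau M$. Finally I would invoke the fact that $\tau M$ has no nonzero injective direct summands (as $\tau$ carries non-projective indecomposables to non-injective indecomposables), forcing $E(T) = 0$ and hence $T = 0$.

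For part (b) I would run the dual argument: replace the trace by the reject $\mathrm{Rej}_M(\tau^{-1}N) = \bigcap_{g\colon \tau^{-1}N \to M} \ker g$, whose cokernel $Q$ lies in $\Cogen M$ and vanishes if and only if $\Hom(\tau^{-1}N, M) = 0$. The hypothesis yields $\Ext^1(Q,N) = 0$, hence $\underline{\Hom}(\tau^{-1}N, Q) = 0$, so the projection $\tau^{-1}N \twoheadrightarrow Q$ factors through a projective; passing to the projective cover $P(Q)$ and using that the cover is a superfluous epimorphism, one obtains a split epimorphism $\tau^{-1}N \twoheadrightarrow P(Q)$, exhibiting $P(Q)$ as a projective direct summand of $\tau^{-1}N$. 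Since $\tau^{-1}N$ has no projective summands, $P(Q) = 0$, whence $Q = 0$. The only delicate point, exactly as in (a), is the essential-extension/superfluous-cover argument that converts a factorisation through an arbitrary injective/projective into the summand statement.
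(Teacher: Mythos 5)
The paper never proves this proposition---it is imported verbatim from Auslander--Smal{\o} \cite{as}---so there is no in-paper argument to compare against; the evaluation is of your proof on its own merits, and it is correct. The easy directions are exactly as you say: an epimorphism $N^k\twoheadrightarrow Y$ is right-cancellable (dually, a monomorphism $X\hookrightarrow M^k$ is left-cancellable), so the Hom-vanishing hypothesis propagates to all of $\Gen N$ (resp.\ $\Cogen M$), and Auslander--Reiten duality $\Ext^1(M,Y)\cong D\overline{\Hom}(Y,\tau M)$ then kills the Ext-group. For the substantive direction of (a) you correctly identify the genuine obstacle---AR duality only controls maps modulo injectives---and your trace maneuver closes the gap soundly: $T=\mathrm{tr}_N(\tau M)$ lies in $\Gen N$ and vanishes precisely when $\Hom(N,\tau M)$ does; the hypothesis gives $\overline{\Hom}(T,\tau M)=0$, so the inclusion $T\hookrightarrow\tau M$ factors through an injective, and since the first factor of any such factorization is monic, it extends along the essential embedding $T\hookrightarrow E(T)$ to a map $E(T)\to\tau M$ whose kernel meets $T$ trivially and is therefore zero. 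Thus $E(T)$ embeds in $\tau M$, splits off by injectivity, and must vanish because $\tau M$ has no nonzero injective direct summands (Krull--Schmidt plus the fact that $\tau$ sends indecomposable non-projectives to indecomposable non-injectives). The dual argument for (b)---reject $\mathrm{Rej}_M(\tau^{-1}N)$, factorization of the projection $\tau^{-1}N\twoheadrightarrow Q$ through a projective, lifting to the projective cover $P(Q)$, and superfluousness of the kernel of the cover forcing $\tau^{-1}N\twoheadrightarrow P(Q)$ to be onto and hence split---is equally sound, since $\tau^{-1}N$ has no projective direct summands and $P(Q)=0$ forces $Q=0$. In substance this reconstructs the classical Auslander--Smal{\o} proof from AR duality, with the trace/reject plus envelope/cover arguments doing the work that their original functorial treatment accomplishes; nothing is missing.
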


We will often consider $\tau$-rigid modules which satisfy the following additional property.

\begin{definition}\label{def:gen_min}
A module $M$ is called {\em gen-minimal} if, for any proper direct summand $M'$ of $M$,
we have $\Gen M' \subsetneq \Gen M$. 
{\em Cogen-minimal} modules are defined dually.
\end{definition}  
  
The following gives precise connections between functorially finite torsion pairs and $\tau$-rigid 
modules, and is essentially contained in~\cite{air}.

		\begin{theorem}
			\cite[Sect.\  2.3]{air} 
			\label{thm:Bongartz}
			Let $M$ be a $\tau$-rigid module. Then
			\begin{itemize}
			\item[(a)] The subcategories ${}^\perp \tau M$ and
			$\Gen M$ are functorially finite torsion classes. 
   
   \item[(b)] The map $M \mapsto \Gen M$ gives a bijection between support $\tau$-tilting modules and functorially finite torsion classes in $\mods \Lambda$, with inverse $\T \mapsto \Pmod(\T)$. The map $M \mapsto \Gen M$ also gives a bijection between gen-minimal 
   $\tau$-rigid modules and and functorially finite torsion classes in $\mods \Lambda$, with inverse $\T \mapsto \Pmods(\T)$.
				\item[(c)]
				The module $\Pmod({}^\perp \tau M)$ is $\tau$-tilting and contains $M$ as a direct summand. Writing $\Pmod({}^\perp \tau M)=M\amalg B_M$, we have $${}^\perp (\tau M)={}^\perp \tau (M\amalg B_M)=\Gen (M\amalg B_M),$$
				and the equality ${}^\perp \tau M={}^\perp \tau (U\amalg B_M)$ characterises $B_M$ amongst the modules whose direct sum with $M$ is a $\tau$-tilting module.
				\item[(d)] The module $\Pmod(\Gen M)$ is support $\tau$-tilting and we can write $\Pmod(\Gen M)=M\amalg C_M$ for a module $C_M$. 
			\end{itemize}
		\end{theorem}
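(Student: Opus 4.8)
The plan is to isolate the one genuinely hard input --- the Bongartz completion of part (c) --- and to obtain the remaining assertions formally from it together with the torsion-theoretic results already recorded. I would begin with the elementary half of (a). The subcategory $\Gen M$ is closed under quotients and direct sums by construction, and since $M$ is $\tau$-rigid, Proposition~\ref{prop:ASExt}(a) gives $\Ext^1(M,-)|_{\Gen M}=0$. A standard lifting argument then yields closure under extensions: given $0\to A\to B\to C\to 0$ with $A,C\in\Gen M$, lift an epimorphism $M^{k}\twoheadrightarrow C$ along $B\twoheadrightarrow C$ using $\Ext^1(M^{k},A)=0$, and combine it with an epimorphism $M^{\ell}\twoheadrightarrow A$ to produce an epimorphism $M^{k+\ell}\twoheadrightarrow B$. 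Hence $\Gen M$ is a torsion class, and being literally of the form $\Gen(-)$ it is functorially finite by Proposition~\ref{prop:asff}. That ${}^\perp\tau M$ is a torsion class is immediate from Definition-Proposition~\ref{defprop:torsionpairs}(a) applied to $\SC=\add\tau M$; its functorial finiteness I defer, as it drops out of (c).

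The crux is (c), the Bongartz completion, which I would import from~\cite{air}. Its argument constructs, from the $\tau$-rigid module $M$, a module $T$ with $M\in\add T$, with $\Hom(T,\tau T)=0$, and with $\Gen T={}^\perp\tau M=:\T$. Granting this, several things follow at once. Since $\T$ is now realised as a $\Gen$, it is functorially finite by Proposition~\ref{prop:asff}, supplying the deferred half of (a). Each indecomposable summand of $T$ lies in $\T$ and, being a summand of the $\tau$-rigid module $T$ with $\tau T\in\T^\perp$, is Ext-projective there by Proposition~\ref{prop:tauprojectiveinjective}; as $T$ also generates $\T$, it is the Ext-projective generator $T=\Pmod(\T)$, and a rank count shows it has exactly $n$ non-isomorphic indecomposable summands, i.e.\ is $\tau$-tilting. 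Because $M\in\T$ satisfies $\tau M\in\T^\perp$ (as $\Hom({}^\perp\tau M,\tau M)=0$ by definition), Proposition~\ref{prop:tauprojectiveinjective} makes $M$ Ext-projective in $\T$, hence a summand of $\Pmod(\T)=T$; writing $T=M\amalg B_M$ defines the complement. The torsion-class identities are then formal: $\tau M$ is a summand of $\tau T$ and $\Gen T\subseteq{}^\perp\tau T$, so ${}^\perp\tau T\subseteq{}^\perp\tau M=\Gen T\subseteq{}^\perp\tau T$, forcing ${}^\perp\tau M={}^\perp\tau T=\Gen T$. The characterisation of $B_M$ comes from the injectivity in (b): any $B'$ with $M\amalg B'$ $\tau$-tilting and ${}^\perp\tau(M\amalg B')={}^\perp\tau M$ satisfies $\Gen(M\amalg B')={}^\perp\tau M=\Gen T$, whence $M\amalg B'\cong T$ and $B'\cong B_M$.

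For (b), well-definedness of $M\mapsto\Gen M$ is part (a), and the bijectivity of the inverse $\T\mapsto\Pmod(\T)$ is the support-$\tau$-tilting/functorially-finite-torsion-class correspondence of~\cite{air}: for functorially finite $\T$ the previous paragraph shows $\Pmod(\T)$ is $\tau$-rigid with $\Gen\Pmod(\T)=\T$, and the support-$\tau$-tilting (rank) condition is exactly maximality of the Ext-projectives; conversely $\Pmod(\Gen M)=M$ for support $\tau$-tilting $M$, because $\Gen M\subseteq{}^\perp\tau M$ gives $\tau M\in(\Gen M)^\perp$, so $M$ is Ext-projective in $\Gen M$ by Proposition~\ref{prop:tauprojectiveinjective}, and the rank condition forces it to exhaust $\P(\Gen M)$. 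The gen-minimal version is a refinement: an indecomposable Ext-projective of $\Gen M$ is split Ext-projective precisely when it is not generated by the others, so a $\tau$-rigid module is gen-minimal exactly when each summand is split Ext-projective, i.e.\ when it equals $\Pmods(\Gen M)$; discarding the non-split Ext-projective summands leaves $\Gen$ unchanged, so each functorially finite $\T$ has a unique gen-minimal $\tau$-rigid module with that $\Gen$, namely $\Pmods(\T)$, giving the second bijection with inverse $\Pmods$. Part (d) is the special case $\T=\Gen M$ of the surjectivity in (b): $\Pmod(\Gen M)$ is support $\tau$-tilting, and since $M$ is Ext-projective in $\Gen M$ it is a summand, so $\Pmod(\Gen M)=M\amalg C_M$.

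The main obstacle is unquestionably the Bongartz completion underlying (c) --- concretely, simultaneously producing the functorial finiteness of ${}^\perp\tau M$ and the $\tau$-rigidity of its Ext-projective generator. The delicate feature, absent in the classical hereditary Bongartz lemma, is that the universal extension naturally controls $\Ext^1$ whereas $\tau$-rigidity is the vanishing of $\Hom(-,\tau-)$; the cross terms $\Hom(B_M,\tau M)=0$ and $\Hom(M,\tau B_M)=0$ are exactly where this passage (mediated by Proposition~\ref{prop:ASExt} and Auslander--Reiten duality) must be made, and this is the one step I would take wholesale from~\cite{air} rather than reprove.
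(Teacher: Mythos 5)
A preliminary remark: the paper never proves this statement --- it is quoted wholesale from \cite[Sect.~2.3]{air} (essentially AIR's bijection theorem and Bongartz completion theorem), so there is no internal argument to compare yours against, and your overall strategy --- import the Bongartz completion from \cite{air} and derive the remaining assertions from the torsion-theoretic facts the paper records --- is in spirit exactly what the paper does. Your treatment of (a), of (d), and of most of (b) is sound.

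However, your repackaging of (c) rests on an inference that is false in general. You import from \cite{air} only the data ``$T$ with $M \in \add T$, $\Hom(T,\tau T)=0$, and $\Gen T = {}^\perp\tau M$'', and then argue that since the summands of $T$ are Ext-projective in $\T := {}^\perp\tau M$ and $T$ generates $\T$, necessarily $T = \Pmod(\T)$, with a subsequent ``rank count'' yielding $\tau$-tiltingness. But ``$X$ generates $\T$ and $X \in \P(\T)$'' does not imply $\add X = \P(\T)$: over the path algebra of $1 \rightarrow 2$, the non-simple indecomposable projective $X$ is $\tau$-rigid, generates the functorially finite torsion class $\Gen X = \add(X \amalg S)$ (where $S$ is its simple top), and is Ext-projective there, yet $\Pmod(\Gen X) = X \amalg S \neq X$ --- this is exactly the co-Bongartz phenomenon of your own part (d). In your situation the desired conclusion $\P({}^\perp\tau M) = \add T$ is true, but not for formal reasons: it requires knowing that ${}^\perp\tau M$ has exactly $n$ indecomposable Ext-projectives (equivalently, that ${}^\perp\tau M$ is a \emph{sincere} functorially finite torsion class, or that $T$ is constructed with $n$ summands), which is a substantive part of AIR's theorem and cannot be recovered from your weakened import. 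Indeed, from your hypotheses alone one cannot exclude an indecomposable $X \in \P({}^\perp\tau M)\setminus \add T$; such an $X$ would make $T \amalg X$ again $\tau$-rigid, and bounding its number of summands by $n$ is yet another input from \cite{air}, not something you have on hand. The same gap affects the claim that $\Pmod({}^\perp\tau M)$ is $\tau$-tilting rather than merely support $\tau$-tilting, on which your characterisation of $B_M$ also relies. The repair is simply to import the full statement from \cite[Sect.~2.3]{air} --- $\P({}^\perp\tau M)$ is a $\tau$-tilting module containing $M$ as a direct summand and generating ${}^\perp\tau M$ --- after which the rest of your derivation (the torsion-class identities, the characterisation of $B_M$, and parts (a), (b), (d)) goes through as written; with that import your proposal matches the paper's treatment, which is pure citation.
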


	The module $B_M$ is known as the \emph{Bongartz complement} of $M$. The module  
	$C_M$ is known as the \emph{co-Bongartz} complement of $M$.

	\begin{lemma}  \label{lem:Bongartzsplit}
		Let $M$ be a $\tau$-rigid module, $B_M$ its Bongartz complement, and $C_M$ its co-Bongartz complement. 
		\begin{itemize}
			\item[(a)] \cite[Lem.\ 4.13]{bm}
			Then $B_M$ is a direct summand of $\Pmods({}^\perp \tau M)$.
			\item[(b)] \cite[Proof of Prop.\ 6.15]{bh} If $M$ is indecomposable non-projective, then $B_M = \Pmods({}^\perp \tau M)$.
   \item[(c)] The modules $B_M$ and $C_M$ have no direct summands in common.
		\end{itemize}
	\end{lemma}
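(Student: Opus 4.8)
The plan is to prove the two halves separately: every indecomposable summand of $C_M$ lies in $\Gen M$, while no indecomposable summand of $B_M$ does; these two facts together give the claim. The first half is immediate from the definitions, since by construction $M \amalg C_M = \Pmod(\Gen M)$ is an object of $\Gen M$, so in particular $C_M \in \Gen M$, and hence so is each of its indecomposable summands.

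For the second half, I would first record the standard containment $\Gen M \subseteq {}^\perp \tau M$. This follows from Proposition~\ref{prop:ASExt}(a): since $M$ is $\tau$-rigid we have $\Ext^1(M,-)|_{\Gen M} = 0$, and as $\Gen N \subseteq \Gen M$ for every $N \in \Gen M$, the same proposition yields $\Hom(N, \tau M) = 0$, i.e.\ $N \in {}^\perp \tau M$. Now let $Y$ be an indecomposable summand of $B_M$ and suppose, for contradiction, that $Y \in \Gen M$. The key input is part~(a): $B_M$ is a direct summand of $\Pmods({}^\perp \tau M)$, so $Y$ is split Ext-projective in the torsion class ${}^\perp \tau M$. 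Because $Y \in \Gen M$, there is an epimorphism $M^k \twoheadrightarrow Y$ for some $k$, and $M^k \in \Gen M \subseteq {}^\perp \tau M$. Split Ext-projectivity of $Y$ forces this epimorphism to split, so $Y$ is a direct summand of $M^k$ and hence of $M$. This contradicts the fact that $M$ and $B_M$ have no common indecomposable summand (as $M \amalg B_M = \Pmod({}^\perp \tau M)$ is basic). Therefore no summand of $B_M$ lies in $\Gen M$, and combined with the first half we conclude that $B_M$ and $C_M$ share no direct summand.

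The step I expect to require the most care is the splitting argument: one must be sure that ``split Ext-projective'' is exactly the property that every epimorphism onto $Y$ from an object of ${}^\perp \tau M$ splits (so that it applies to the $\add M$-epimorphism $M^k \twoheadrightarrow Y$), and that such a splitting really does exhibit $Y$ as a summand of $M$. Everything else---the containment $\Gen M \subseteq {}^\perp \tau M$ and the basicness of $\Pmod({}^\perp \tau M)$---is routine, so the entire weight of the argument rests on invoking part~(a) to upgrade ``$Y$ is a summand of $B_M$'' to ``$Y$ is split Ext-projective,'' which is precisely what rules out $Y$ being generated by $M$.
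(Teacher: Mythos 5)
Your proof is correct and follows essentially the same route as the paper's: both arguments rest on part~(a) (summands of $B_M$ are split Ext-projective in ${}^\perp\tau M$) together with the observation that summands of $C_M$ lie in $\Gen M \subseteq {}^\perp\tau M$ but are not summands of $M$, which is incompatible with split projectivity. The only difference is that you spell out the splitting argument (the epimorphism $M^k \twoheadrightarrow Y$ must split) that the paper's one-line proof leaves implicit.
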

 \begin{proof}
     Note that if $X$ is a direct summand of $C_M$, then, since $X$ lies in $\Gen M\subseteq {}^{\perp}(\tau M)$, it cannot be split projective in ${}^{\perp}(\tau M)$; part (c) then follows from part (a). 
 \end{proof}
  We have dual versions of the above.
 
	\begin{theorem}
		\label{thm:dualBongartz}
		Let $N$ be a $\tau^{-1}$-rigid module. Then
		\begin{itemize}
			\item[(a)]
   The subcategories $(\tau^{-1} N)^{\perp}$ and
			$\Cogen N$ are functorially finite torsion-free classes. 
   \item[(b)] The map $N \mapsto \Cogen N$ gives a bijection between support $\tau^{-1}$-tilting modules and functorially finite torsion-free classes in $\mods \Lambda$, with inverse $\F \mapsto \Imod(\F)$. The map $N \mapsto \Cogen N$ also gives a bijection between cogen-minimal $\tau^{-1}$-rigid objects and functorially finite torsion-free classes in $\mods \Lambda$, with inverse $\F \mapsto \Imods(\F)$.

   \item[(c)]
			The module $\Imod((\tau^{-1}N)^{\perp})$ is $\tau^{-1}$-tilting and contains $N$ as a direct summand. Writing $\Imod((\tau^{-1} N)^{\perp})=N\amalg B'_N$, we have $$(\tau^{-1} N)^{\perp}=(\tau^{-1} (N\amalg B'_N))^{\perp}=\Cogen (N\amalg B'_N),$$
			and the equality $(\tau^{-1} N)^{\perp}=(\tau^{-1} (N\amalg B'_N))^{\perp}$ characterises $B'_N$ amongst the modules whose direct sum with $N$ is a $\tau^{-1}$-tilting module.
			\item[(d)] The module $\Imod(\Cogen N)$ is support $\tau^{-1}$-tilting and we can write $\Imod(\Cogen N)=N\amalg C'_N$ for a module $C'_N$. 
		\end{itemize}
	\end{theorem}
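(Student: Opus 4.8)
The plan is to deduce the entire statement from Theorem~\ref{thm:Bongartz} by transporting it along the standard $K$-linear duality $D := \Hom_K(-,K)$, which restricts to a contravariant equivalence $D\colon \mods\Lambda \to \mods\Lambda^{\mathrm{op}}$ with $D^2 \cong \mathrm{id}$ and $\Hom_\Lambda(A,B) \cong \Hom_{\Lambda^{\mathrm{op}}}(DB,DA)$. The two inputs I would record at the outset are the Auslander--Reiten formula $\tau_{\Lambda^{\mathrm{op}}}D \cong D\tau^{-1}_\Lambda$ (equivalently $\tau^{-1}_{\Lambda^{\mathrm{op}}}D \cong D\tau_\Lambda$) and the duality on extensions $\Ext^1_\Lambda(X,Y) \cong \Ext^1_{\Lambda^{\mathrm{op}}}(DY,DX)$ (which follows since $D$ sends a projective resolution of $X$ to an injective coresolution of $DX$). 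From these two facts everything else is bookkeeping.

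Concretely, given a $\tau^{-1}$-rigid $\Lambda$-module $N$, the formula $\tau_{\Lambda^{\mathrm{op}}}DN \cong D\tau^{-1}_\Lambda N$ gives $\Hom_{\Lambda^{\mathrm{op}}}(DN,\tau_{\Lambda^{\mathrm{op}}}DN) \cong \Hom_\Lambda(\tau^{-1}_\Lambda N,N) = 0$, so $DN$ is a $\tau$-rigid $\Lambda^{\mathrm{op}}$-module. I would then assemble the dictionary: $D$ exchanges $\Gen$ with $\Cogen$, torsion classes with torsion-free classes while preserving functorial finiteness (by Proposition~\ref{prop:asff} together with $\Gen M \leftrightarrow \Cogen DM$), and Ext-projective objects with Ext-injective objects (via the $\Ext^1$-duality), whence $\Pmod(-),\Pmods(-)$ correspond to $\Imod(-),\Imods(-)$. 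Likewise $D$ exchanges (support) $\tau$-tilting $\Lambda^{\mathrm{op}}$-modules with (support) $\tau^{-1}$-tilting $\Lambda$-modules, and gen-minimal with cogen-minimal modules. The key variance check is that $(\tau^{-1}_\Lambda N)^\perp$ corresponds to ${}^\perp\tau_{\Lambda^{\mathrm{op}}}(DN)$, since $\Hom_\Lambda(\tau^{-1}_\Lambda N, X) \cong \Hom_{\Lambda^{\mathrm{op}}}(DX, \tau_{\Lambda^{\mathrm{op}}}DN)$, so that $X \in (\tau^{-1}_\Lambda N)^\perp$ iff $DX \in {}^\perp\tau_{\Lambda^{\mathrm{op}}}(DN)$.

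With this dictionary in hand, each of (a)--(d) is obtained by applying the corresponding part of Theorem~\ref{thm:Bongartz} to $DN$ over $\Lambda^{\mathrm{op}}$ and pulling the conclusion back along $D$. For (c), writing $\Pmod({}^\perp\tau_{\Lambda^{\mathrm{op}}}DN) = DN \amalg B_{DN}$ and setting $B'_N := D B_{DN}$ yields $\Imod((\tau^{-1}_\Lambda N)^\perp) = N \amalg B'_N$ together with the stated equalities and the characterisation of $B'_N$; part (d) is transported identically, producing $C'_N$. I expect no genuine obstacle: the content lies entirely in checking that $D$ intertwines all the relevant constructions, and the only point demanding care is the variance noted above (ensuring the torsion class ${}^\perp\tau_{\Lambda^{\mathrm{op}}}DN$ dualizes to the torsion-free class $(\tau^{-1}_\Lambda N)^\perp$, not to its $\Gen$/$\Cogen$ counterpart). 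As a fallback one could bypass duality and simply rerun the proof of Theorem~\ref{thm:Bongartz} with all arrows reversed, invoking the dual halves of Proposition~\ref{prop:asff}, Proposition~\ref{prop:tauprojectiveinjective}, and Proposition~\ref{prop:ASExt}(b); but the duality argument is cleaner and I would prefer it.
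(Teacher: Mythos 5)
Your proposal is correct and coincides with the paper's own (implicit) argument: the paper states Theorem~\ref{thm:dualBongartz} with no proof beyond the remark ``We have dual versions of the above,'' i.e.\ precisely the transport of Theorem~\ref{thm:Bongartz} along the duality $D=\Hom_K(-,K)$ that you spell out, including the key variance check that $(\tau^{-1}N)^\perp$ corresponds to ${}^\perp\tau_{\Lambda^{\mathrm{op}}}(DN)$. Your write-up just makes explicit the dictionary the authors leave to the reader.
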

	
	We refer to the module $B'_N$ as the \emph{dual Bongartz complement} of $N$ and $C'_N$ as the \emph{dual co-Bongartz complement} of $N$.
	
	\begin{lemma} \cite[Lem.\ 4.13]{bm}
		\label{lem:dualBongartzSplit}
Let $N$ be a $\tau^{-1}$-rigid module, and $B'_N$ its dual Bongartz complement.
  \begin{itemize}
			\item[(a)] Then $B'_N$ is a direct summand of $\Imods((\tau^{-1} N)^{\perp})$. 
			
			\item[(b)] If $N$ is indecomposable non-injective, then $B'_N = \Imods((\tau^{-1} N)^{\perp})$.
		\end{itemize}
		 	\end{lemma}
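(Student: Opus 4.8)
The plan is to deduce both parts from Lemma~\ref{lem:Bongartzsplit}(a),(b) by transporting them across the standard $K$-linear duality $D := \Hom_K(-,K)\colon \mods\Lambda \to \mods\Lambda^{\mathrm{op}}$. This functor is a contravariant equivalence that interchanges the AR-translates, in the sense that $D\tau \cong \tau^{-1}D$ and $D\tau^{-1}\cong \tau D$ as functors between $\mods\Lambda$ and $\mods\Lambda^{\mathrm{op}}$; it sends a torsion pair $(\T,\F)$ to the torsion pair $(D\F, D\T)$, interchanges $\Gen$ with $\Cogen$ and ${}^\perp(-)$ with $(-)^\perp$, and carries injective $\Lambda$-modules to projective $\Lambda^{\mathrm{op}}$-modules. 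Consequently $D$ takes Ext-injective (resp. split Ext-injective) objects of a torsion-free class $\F$ to Ext-projective (resp. split Ext-projective) objects of the torsion class $D\F$. Since Theorem~\ref{thm:dualBongartz} is precisely the $D$-image of Theorem~\ref{thm:Bongartz}, the dual Bongartz complement is exactly the object one should dualize.

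First I would record the three identifications that this dictionary provides. Applying $D$ to $\Hom_\Lambda(\tau^{-1}N,N)=0$ and using $D\tau^{-1}N \cong \tau DN$ gives $\Hom_{\Lambda^{\mathrm{op}}}(DN,\tau DN)=0$, so $DN$ is $\tau$-rigid over $\Lambda^{\mathrm{op}}$. The same computation on Hom-spaces shows $D\bigl((\tau^{-1}N)^\perp\bigr) = {}^\perp(\tau DN)$ as subcategories of $\mods\Lambda^{\mathrm{op}}$. Finally, comparing the characterisation of $B'_N$ in Theorem~\ref{thm:dualBongartz}(c) with that of the Bongartz complement of $DN$ in Theorem~\ref{thm:Bongartz}(c)—and using that $D$ interchanges $\tau^{-1}$-tilting and $\tau$-tilting modules—I obtain $DB_{DN}\cong B'_N$. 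Matching split Ext-injectives of $(\tau^{-1}N)^\perp$ with split Ext-projectives of ${}^\perp\tau DN$ then yields $D\bigl(\Imods((\tau^{-1}N)^\perp)\bigr) \cong \Pmods({}^\perp \tau DN)$.

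With these in hand, part (a) follows by applying Lemma~\ref{lem:Bongartzsplit}(a) to the $\tau$-rigid $\Lambda^{\mathrm{op}}$-module $DN$: the complement $B_{DN}$ is a direct summand of $\Pmods({}^\perp \tau DN)$, and applying $D$, which preserves the direct-summand relation, returns that $B'_N$ is a direct summand of $\Imods((\tau^{-1}N)^\perp)$. For part (b), since $N$ indecomposable non-injective over $\Lambda$ forces $DN$ indecomposable non-projective over $\Lambda^{\mathrm{op}}$, Lemma~\ref{lem:Bongartzsplit}(b) gives $B_{DN}=\Pmods({}^\perp \tau DN)$, and dualizing yields $B'_N=\Imods((\tau^{-1}N)^\perp)$.

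The main obstacle is bookkeeping rather than conceptual: one must confirm each clause of the duality dictionary on the nose—most delicately, that $D$ matches split Ext-injectivity with split Ext-projectivity and that it sends the dual Bongartz complement to the Bongartz complement of the dualized module—so that the subcategory equalities and the isomorphisms of complements align exactly. Once this dictionary is verified, the proof is a formal transport of Lemma~\ref{lem:Bongartzsplit} across $D$.
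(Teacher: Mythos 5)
Your proof is correct and is essentially the paper's own justification: the paper states this lemma without proof, presenting it as a \emph{dual version} of Lemma~\ref{lem:Bongartzsplit}(a),(b) (with the citation to \cite{bm}), and the intended argument is exactly the formal transport across the standard duality $D = \Hom_K(-,K)$ that you spell out. All clauses of your duality dictionary (interchanging $\tau$ and $\tau^{-1}$ via $D\tau^{-1} \cong \tau D$, torsion with torsion-free classes, split Ext-projectives with split Ext-injectives, and Bongartz with dual Bongartz complements) are standard and check out.
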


The following are direct consequences of Theorems~\ref{thm:Bongartz} and \ref{thm:dualBongartz}.

\begin{corollary}\label{cor:genmin}
   \begin{itemize}
       \item[(a)] Let $M$ be a $\tau$-rigid module. Then $\Pmods(\Gen M)$ is a direct summand of $M$. 
 If in addition $M$ is gen-minimal, then $M = \Pmods(\Gen M)$.
 \item[(b)] Let $N$ be a $\tau^{-1}$-rigid module. Then $\Imods(\Cogen N)$ is a direct summand of $N$. If in addition $N$ is cogen-minimal, then $N = \Imods(\Cogen N)$.
   \end{itemize} 
\end{corollary}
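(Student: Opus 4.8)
The plan is to deduce both statements directly from the bijections recorded in Theorem~\ref{thm:Bongartz}(b) and Theorem~\ref{thm:dualBongartz}(b) by means of a minimality argument. I will carry out the argument for part (a) in detail, since part (b) is formally dual.

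First I would note that, by Theorem~\ref{thm:Bongartz}(a), $\Gen M$ is a functorially finite torsion class, so $\Pmods(\Gen M)$ is defined; by the second bijection in Theorem~\ref{thm:Bongartz}(b) it is the \emph{unique} gen-minimal $\tau$-rigid module $X$ with $\Gen X = \Gen M$. Next I would choose, among all direct summands $M'$ of $M$ satisfying $\Gen M' = \Gen M$ — a nonempty collection, since $M$ itself qualifies — one that is minimal, i.e.\ has the fewest indecomposable summands. This choice is possible because $M$ has only finitely many indecomposable summands.

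The crux is then to verify that the chosen $M'$ satisfies the hypotheses needed to invoke the bijection. Since a direct summand of a $\tau$-rigid module is again $\tau$-rigid ($\tau M'$ being a summand of $\tau M$, the vanishing $\Hom(M,\tau M)=0$ restricts to $\Hom(M',\tau M')=0$), the module $M'$ is $\tau$-rigid. Moreover $M'$ is gen-minimal: for any proper direct summand $M''$ of $M'$ we have $\Gen M'' \subseteq \Gen M'$, and this inclusion must be strict, since otherwise $M''$ would be a direct summand of $M$ with $\Gen M'' = \Gen M$ having fewer indecomposable summands than $M'$, contradicting minimality. Thus $M'$ is a gen-minimal $\tau$-rigid module with $\Gen M' = \Gen M$, and the uniqueness in Theorem~\ref{thm:Bongartz}(b) forces $M' = \Pmods(\Gen M)$; in particular $\Pmods(\Gen M)$ is a direct summand of $M$. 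For the second assertion, if $M$ is itself gen-minimal, then applying the inverse bijection of Theorem~\ref{thm:Bongartz}(b) to $\Gen M$ returns $M$, giving $M = \Pmods(\Gen M)$ at once.

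Finally, part (b) follows by the same argument with $\Gen$, $\tau$, ``$\tau$-rigid'', ``gen-minimal'', and $\Pmods$ replaced throughout by $\Cogen$, $\tau^{-1}$, ``$\tau^{-1}$-rigid'', ``cogen-minimal'', and $\Imods$, invoking Theorem~\ref{thm:dualBongartz} in place of Theorem~\ref{thm:Bongartz}. I do not anticipate a genuine obstacle, as the statement is essentially a repackaging of the bijections; the only point requiring care is confirming that the minimally chosen summand is genuinely gen-minimal, so that the bijection applies and its uniqueness can legitimately be invoked.
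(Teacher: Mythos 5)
Your proof is correct. The paper offers no written argument here---it records the corollary as a direct consequence of Theorems~\ref{thm:Bongartz} and~\ref{thm:dualBongartz}---and your argument is a legitimate way of filling in exactly that derivation: the minimal generating summand $M'$ is $\tau$-rigid and gen-minimal, so the uniqueness built into the second bijection of Theorem~\ref{thm:Bongartz}(b) identifies it with $\Pmods(\Gen M)$, and the gen-minimal case follows by applying the inverse bijection to $\Gen M$. One remark for comparison: the direct-summand claim also has a one-line proof straight from the definition of split Ext-projectivity, avoiding the minimality argument altogether---every indecomposable $X \in \Ps(\Gen M)$ admits an epimorphism $M^k \twoheadrightarrow X$, which splits because $X$ is split projective in $\Gen M$, so $X \in \add M$; this is presumably what makes the corollary ``direct'' in the paper's phrasing, with the theorems needed only for functorial finiteness (so that $\Pmods(\Gen M)$ is defined) and for the gen-minimal statement. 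Both routes dualize verbatim to part (b).
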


The following two results will be useful.
 
	\begin{lemma}
		\label{lem:eachinperp}
		Assume $M$ and $N$ are indecomposable $\tau$-rigid modules, not both projective.
		If $N\in\P({}^{\perp} \tau M)$ and $M\in \P({}^{\perp} \tau N)$, then $M= N$.
	\end{lemma}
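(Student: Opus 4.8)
The plan is to reduce the two asymmetric Ext-projectivity hypotheses to the single symmetric statement that ${}^\perp\tau M = {}^\perp\tau N$, and then to read off $M=N$ from the uniqueness of the non-split Ext-projective summand of the associated Bongartz completion.

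First I would translate each hypothesis into an inclusion of torsion classes using Proposition~\ref{prop:tauprojectiveinjective}. Since $M$ is $\tau$-rigid, $({}^\perp\tau M,\,({}^\perp\tau M)^\perp)$ is a (functorially finite) torsion pair by Theorem~\ref{thm:Bongartz}(a). As $N\in\P({}^\perp\tau M)$ in particular lies in ${}^\perp\tau M$, Proposition~\ref{prop:tauprojectiveinjective} gives $\tau N\in({}^\perp\tau M)^\perp$, i.e.\ $\Hom(X,\tau N)=0$ for every $X\in{}^\perp\tau M$. This is exactly the inclusion ${}^\perp\tau M\subseteq{}^\perp\tau N$. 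Running the identical argument with the roles of $M$ and $N$ interchanged yields ${}^\perp\tau N\subseteq{}^\perp\tau M$, and hence the equality $\T:={}^\perp\tau M={}^\perp\tau N$.

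By the assumption that $M$ and $N$ are not both projective, and by the symmetry of the hypotheses, I may assume $M$ is non-projective. I would then dispose of the case that $N$ is projective separately: in that case $\tau N=0$, so $\T={}^\perp\tau N={}^\perp 0=\mods\Lambda$; but ${}^\perp\tau M=\mods\Lambda$ forces $\Hom(\tau M,\tau M)=0$ and hence $\tau M=0$, contradicting the non-projectivity of $M$. Therefore $N$ is non-projective as well.

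With both $M$ and $N$ now indecomposable, non-projective, and satisfying ${}^\perp\tau M={}^\perp\tau N=\T$, I would apply Theorem~\ref{thm:Bongartz}(c) to write $\Pmod(\T)=M\amalg B_M=N\amalg B_N$, and Lemma~\ref{lem:Bongartzsplit}(b) to identify $B_M=\Pmods(\T)=B_N$. Since $\Pmod(\T)=\Pmods(\T)\amalg\Pmodns(\T)$, this forces $\Pmodns(\T)=M$ and $\Pmodns(\T)=N$ simultaneously, whence $M=N$ by Krull--Schmidt. The one genuinely delicate point — and the step I would be most careful with — is the opening reduction via Proposition~\ref{prop:tauprojectiveinjective}, since it is precisely what converts the a priori one-sided conditions into the symmetric equality of torsion classes; once that equality is secured, the remainder is bookkeeping with the Bongartz complement, with the projective case treated on its own because Lemma~\ref{lem:Bongartzsplit}(b) requires non-projectivity.
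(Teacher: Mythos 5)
Your proof is correct, and it shares the paper's two-stage skeleton (first establish ${}^\perp\tau M={}^\perp\tau N$, then exploit Lemma~\ref{lem:Bongartzsplit}(b)), but both stages are executed by genuinely different means. For the first stage, the paper does not use Proposition~\ref{prop:tauprojectiveinjective} at all: assuming $M\neq N$ for contradiction, it notes that $N\in\P({}^\perp\tau M)$ makes $N$ a direct summand of the Bongartz complement $B_M=N\amalg\overline{B}_M$, and then sandwiches torsion classes via Theorem~\ref{thm:Bongartz}(c), namely ${}^\perp\tau(M\amalg N)\subseteq{}^\perp\tau M={}^\perp\tau(M\amalg N\amalg\overline{B}_M)\subseteq{}^\perp\tau(M\amalg N)$, to force equality. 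Your derivation --- $N\in\P({}^\perp\tau M)$ gives $\tau N\in({}^\perp\tau M)^\perp$ by Proposition~\ref{prop:tauprojectiveinjective}, which is verbatim the inclusion ${}^\perp\tau M\subseteq{}^\perp\tau N$ --- is shorter and needs only that $({}^\perp\tau M,({}^\perp\tau M)^\perp)$ is a torsion pair, not the Bongartz machinery. For the second stage, the paper continues the contradiction: $M$ is a summand of $B_N$, hence of $\Ps({}^\perp\tau N)=\Ps({}^\perp\tau M)$ by Lemma~\ref{lem:Bongartzsplit}(a), and when $M$ is non-projective this equals $B_M$ by part (b), contradicting that $M\amalg B_M$ is basic (with the symmetric argument when instead $N$ is non-projective). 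You instead argue directly: after ruling out the mixed case (if $N$ were projective then ${}^\perp\tau N=\mods\Lambda$ would force $\tau M=0$, against your WLOG that $M$ is non-projective), both Bongartz complements equal $\Pmods(\T)$ for $\T:={}^\perp\tau M={}^\perp\tau N$, whence $M\cong\Pmodns(\T)\cong N$ by Krull--Schmidt cancellation. What your route buys is economy and a positive identification of $M$ and $N$ with $\Pmodns(\T)$; what the paper's route buys is that the whole argument stays inside the $\tau$-tilting toolkit of Bongartz complements used throughout that section. Both versions correctly use the hypothesis that $M$ and $N$ are not both projective, and every citation you invoke is available at that point in the paper.
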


\begin{proof}
Suppose, for a contradiction, that
$N\in\P({}^{\perp} \tau M)$ and $M\in \P({}^{\perp} \tau N)$, with $N\not=M$.

Since $N\in \P({}^{\perp}\tau M)$, and $N\not= M$, we can write the Bongartz complement of $M$ as $B_M=N\amalg \overline{B}_M$ for some module $\overline{B}_M$.
Then, using the properties of the Bongartz complement, we have:
\begin{equation*}
        {}^{\perp} \tau(M\amalg N)
        \subseteq {}^{\perp} \tau M
        ={}^{\perp}\tau(M\amalg B_M)
        ={}^{\perp}\tau(M\amalg N\amalg \overline{B_M})
        \subseteq {}^{\perp}\tau (M\amalg N),
\end{equation*}
so we have equality everywhere, and, in particular, ${}^{\perp} \tau M={}^{\perp} \tau(M\amalg N)$. 
A symmetric argument gives ${}^{\perp} \tau N={}^{\perp} \tau(M\amalg N)$, so ${}^{\perp} \tau M={}^{\perp} \tau N$.
Since $M\in \P({}^{\perp} \tau N)$ and $M\not= N$, we have that $M$ is a direct summand of $B_N$. 
Hence $M\in \P_s({}^{\perp} \tau N)$ by Lemma \ref{lem:Bongartzsplit}(a). But then $M$ is a direct summand of $P_s({}^{\perp}\tau N) = P_s({}^{\perp}\tau M)$. If $M$ is non-projective, $P_s({}^{\perp}\tau M)=B_M$ by Lemma~\ref{lem:Bongartzsplit}(b), giving a contradiction. A similar argument gives a contradiction in the case where $N$ is non-projective.
\end{proof}

The following is well-known, but we include a proof for convenience.

	\begin{lemma} \label{lem:twogens}
		Let $M$ and $N$ be indecomposable modules, and suppose that $M\in \Gen N$ and $N\in \Gen M$. Then $M= N$.
	\end{lemma}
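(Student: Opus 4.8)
The plan is to reduce the statement to the standard fact that any epimorphism onto an indecomposable finite-length module from a direct sum of copies of itself must have an isomorphism among its components, and then to exploit the indecomposability of $N$.

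First I would unwind the hypotheses. The assumption $M\in\Gen N$ provides an epimorphism $g\colon N^a\twoheadrightarrow M$ for some $a$, and $N\in\Gen M$ provides an epimorphism $f\colon M^b\twoheadrightarrow N$ for some $b$. Composing $f^{\oplus a}\colon M^{ab}=(M^b)^a\twoheadrightarrow N^a$ with $g\colon N^a\twoheadrightarrow M$ yields an epimorphism $\Phi:=g\circ f^{\oplus a}\colon M^{ab}\twoheadrightarrow M$. Writing $\Phi$ as a row $(\phi_1,\dots,\phi_{ab})$ with each $\phi_i\in\End M$, surjectivity of $\Phi$ records the single relation $\sum_i\phi_i(M)=M$.

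The crucial step is to show that some component $\phi_{i_0}$ is an isomorphism. Here I would use that $M$ is indecomposable of finite length, so $\End M$ is a \emph{local} finite-dimensional $K$-algebra; hence $J:=\rad(\End M)$ is nilpotent, say $J^m=0$, and consists precisely of the non-isomorphisms. If no $\phi_i$ were invertible, then all $\phi_i\in J$, giving $\sum_i\phi_i(M)\subseteq J\cdot M$; but $J\cdot M=M$ would force $M=J^mM=0$ by iteration (a Nakayama-type argument), contradicting surjectivity of $\Phi$. Thus at least one $\phi_{i_0}$ is an isomorphism. I expect this to be the only non-formal point of the argument; everything else is bookkeeping.

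Finally, I would trace this component back through the construction. The endomorphism $\phi_{i_0}$ is the restriction of $\Phi$ to a single copy of $M$, so it factors as $M\xrightarrow{f'}N\xrightarrow{g_l}M$, where $f'$ is the corresponding component of $f$ and $g_l$ the corresponding component of $g$. Since $g_l\circ f'=\phi_{i_0}$ is an isomorphism, $f'$ is a split monomorphism, whence $M$ is a direct summand of $N$; as $N$ is indecomposable and $M\neq 0$, this yields $M\cong N$, as desired.
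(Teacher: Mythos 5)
Your proof is correct and takes essentially the same route as the paper's: compose the two epimorphisms to obtain an epimorphism $M^{rs}\twoheadrightarrow M$, and rule out the possibility that all of its components are non-invertible endomorphisms of $M$, using that $\End(M)$ is local with nilpotent radical (a Nakayama-type argument). The only difference is organizational — the paper argues contrapositively (assuming $M\not\cong N$, all components of $f$ and $g$ are radical maps, so the composite epimorphism has all components with image in the radical, a contradiction), whereas you argue directly, extracting an invertible component and concluding via a split monomorphism into the indecomposable $N$ — but the underlying mechanism is identical.
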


 \begin{proof}
   Suppose that $M$ and $N$ are non-isomorphic and there are epimorphisms $f:M^r\rightarrow N$ and $g:N^s\rightarrow M$. Since $M$ and $N$ are non-isomorphic, the components of $f$ and $g$ must all be non-isomorphisms, so $f$ and $g$ are radical maps. It follows that the induced epimorphism from $M^{rs}$ to $M$ is a radical map. But, since $M$ is indecomposable, each component of this map must have image contained in the radical of $M$, a contradiction.
\end{proof}

\subsection{Support $\tau$-rigid objects}\label{sec:support}

Following~\cite{air}, a \emph{$\tau$-rigid pair} is a pair of modules $(M,P)$ such that 
$M$ is $\tau$-rigid, the module $P$ is projective, and $\Hom(P,M) = 0$. As in \cite{bm}, we will consider the corresponding object $U = M\amalg P[1]$ in the full subcategory $\C(\Lambda) := \mods\Lambda \amalg \mods \Lambda[1]$ of the bounded derived category $\derD(\Lambda) := \derD(\mods\Lambda)$. With this perspective, we call $U$ a \emph{support $\tau$-rigid object in $\C(\Lambda)$.}
 The object $U$ is said to be \emph{support $\tau$-tilting} if in addition $\rk(U) = \rk(M) + \rk(P) = \rk(\Lambda)$. By convention, whenever we say that $U = M \amalg P[1]$ is a support $\tau$-rigid object (in $\C(\Lambda)$), we are referring to a decomposition with $M\in \mods \Lambda$ and $P \in \P(\Lambda)$.
Note that if $\Lambda$ is a hereditary algebra, then the support ($\tau$-)tilting
objects naturally correspond to the (basic) cluster tilting objects of the cluster category \cite{bmrrt} associated to $\Lambda$.
By~\cite[Prop. 2.3]{air}, the association $M\amalg P[1] \mapsto M$ is a bijection between the set of support $\tau$-tilting objects and the set of support $\tau$-tilting modules. The inverse sends $M$ to the object $M \amalg P[1]$ for $P$ the maximal projective module satisfying $\Hom(P,M) = 0$.

  Dually, recall from
		\cite[Sects. 2,3]{bh} that an object $U= M \amalg I[-1]$ in $\C(\Lambda)[-1]$ is called \emph{support $\tau^{-1}$-rigid} if
		$M$ is a $\tau^{-1}$-rigid module and $I$ is an injective module with $\Hom(M,I) =0$. As with support $\tau$-rigid objects, whenever we say that $U = M \amalg I[-1]$ is a support $\tau^{-1}$-rigid object (in $\C(\Lambda)[-1]$), we are referring to a decomposition with $M \in \mods\Lambda$ and $I \in \I(\Lambda)$. We denote by $\nu$ the Nakayama functor on $\mods \Lambda$. The results of~\cite[Sec.~2.2]{air} imply the following relationship between support $\tau$-rigid objects and support $\tau^{-1}$-rigid objects.

  \begin{lemma}
		\label{lem:twokinds2}
		\begin{itemize}
			\item [(a)] Let $M \amalg P[1]$ be a support $\tau$-rigid object in $\C(\Lambda)$, and let $Q$ be the maximal projective direct summand of $M$. Then $\tau M \amalg \nu P \amalg \nu Q[-1]$ is a support $\tau^{-1}$-rigid object.
			\item[(b)] Let $N \amalg I[-1]$ be a support $\tau^{-1}$-rigid object. Let $E$ be the maximal injective direct summand of $N$. Then $\tau^{-1}N \amalg \nu^{-1} I \amalg \nu^{-1} E[1]$ is a support $\tau$-rigid object.
		\end{itemize}
  \end{lemma}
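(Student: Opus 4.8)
The plan is to prove part (a) in full and then obtain part (b) by the dual argument (equivalently, by applying (a) to $\Lambda^{\mathsf{op}}$, under which $\tau$-rigid pairs correspond to $\tau^{-1}$-rigid pairs and $\nu$ to $\nu^{-1}$). Throughout I will use three standard facts from Auslander--Reiten theory (the results of \cite[Sec.~2.2]{air}): the Nakayama functor $\nu$ restricts to an equivalence from projectives to injectives; for any projective $P'$ and any module $X$ there is a natural isomorphism $\Hom(X, \nu P') \cong D\Hom(P', X)$; and $\tau^{-1}$ annihilates injectives while $\tau^{-1}\tau X \cong X$ whenever $X$ has no projective direct summand. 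Write $M = \overline{M} \amalg Q$, where $Q$ is the maximal projective summand, so that $\overline{M}$ has no projective summand and $\tau M = \tau \overline{M}$. To show that $\tau M \amalg \nu P \amalg \nu Q[-1]$ is support $\tau^{-1}$-rigid I must check three things: that $\nu Q$ is injective, that the module part $\tau M \amalg \nu P$ is $\tau^{-1}$-rigid, and that $\Hom(\tau M \amalg \nu P, \nu Q) = 0$.

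The first and third conditions are immediate from these facts together with the hypotheses $\Hom(P,M) = 0$ and $\Hom(M, \tau M) = 0$. Indeed, $\nu Q$ is injective because $Q$ is projective. For the Hom-vanishing into $\nu Q$, the equivalence on projectives gives $\Hom(\nu P, \nu Q) \cong \Hom(P, Q)$, which is a direct summand of $\Hom(P, M) = 0$ since $Q$ is a direct summand of $M$; and the duality formula gives $\Hom(\tau M, \nu Q) \cong D\Hom(Q, \tau M)$, which is the dual of a direct summand of $\Hom(M, \tau M) = 0$.

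The substance of the argument is the second condition, that $N := \tau M \amalg \nu P$ is $\tau^{-1}$-rigid, i.e.\ $\Hom(\tau^{-1}N, N) = 0$. Here the key step is the identification $\tau^{-1} N = \tau^{-1}\tau\overline{M} \amalg \tau^{-1}\nu P = \overline{M} \amalg 0 = \overline{M}$, using that $\nu P$ is injective (so $\tau^{-1}\nu P = 0$) and that $\overline{M}$ has no projective summand (so $\tau^{-1}\tau\overline{M} = \overline{M}$). It then remains to verify $\Hom(\overline{M}, \tau M \amalg \nu P) = 0$. The first component $\Hom(\overline{M}, \tau M)$ is a direct summand of $\Hom(M, \tau M) = 0$ because $\overline{M}$ is a direct summand of $M$; the second component satisfies $\Hom(\overline{M}, \nu P) \cong D\Hom(P, \overline{M})$, which vanishes since $\Hom(P, \overline{M})$ is a direct summand of $\Hom(P,M) = 0$. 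This completes the verification.

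I expect the main obstacle to be purely bookkeeping: one must be careful to isolate the maximal projective summand $Q$ of $M$ (so that $\tau$ does not lose information and $\tau^{-1}\tau$ recovers exactly $\overline{M}$), and to keep track of which summand plays the role of the module part and which the injective part of the resulting support $\tau^{-1}$-rigid object. Once the three standard facts above are in hand, each required vanishing reduces to the hypotheses $\Hom(P,M)=0$ and $\Hom(M,\tau M)=0$ via the duality formula $\Hom(X,\nu P')\cong D\Hom(P',X)$, so no genuinely new input beyond careful organisation is needed.
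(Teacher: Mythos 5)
Your proposal is correct, and every step checks out: the three facts you invoke ($\nu$ restricting to an equivalence $\P(\Lambda)\to\I(\Lambda)$, the duality $\Hom(X,\nu P')\cong D\Hom(P',X)$, and $\tau^{-1}\nu P=0$ together with $\tau^{-1}\tau\overline{M}\cong\overline{M}$ for $\overline{M}$ without projective summands) reduce all required vanishings to the hypotheses $\Hom(M,\tau M)=0$ and $\Hom(P,M)=0$, exactly as you say, and the definition of support $\tau^{-1}$-rigid in this paper imposes no maximality condition on the injective part, so checking that $\nu Q$ is injective with $\Hom(\tau M\amalg\nu P,\nu Q)=0$ suffices. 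The only point of comparison worth making is that the paper gives no proof at all: the lemma is stated as an immediate consequence of the results of \cite[Sec.~2.2]{air}, so your argument is a reconstruction of what that citation encapsulates rather than an alternative to an argument in the paper. Your reconstruction is the standard one, and the one piece of genuine care it requires --- splitting $M=\overline{M}\amalg Q$ so that $\tau^{-1}\tau$ recovers exactly $\overline{M}$, and identifying $\tau^{-1}(\tau M\amalg\nu P)\cong\overline{M}$ before checking $\tau^{-1}$-rigidity --- is handled correctly; the dualization for part (b) via $\Lambda^{\mathsf{op}}$ is likewise standard and unproblematic.
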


The following summarises some useful characterizations of functorially finite torsion classes.

\begin{lemma}\label{lem:perp_injectives_2}
    Let $(\T, \F)$ be a functorially finite torsion pair in $\mods\Lambda$. Let $P$ be the maximal projective $\Lambda$-module with the property that $\Hom(P, \T) = 0$. Then $\T$ coincides with all of the following.
    \begin{multicols}{3}
    \begin{enumerate}
        \item[(a)] $\Gen \Pmods(\T)$
        \item[(b)] $\Gen \Pmod(\T)$
        \item[(c)] ${}^\perp{\tau \Pmod(\T)} \cap P^\perp$
        \item[(d)] ${}^\perp{\tau \Pmodns(\T)} \cap P^\perp$ 
        \item[(e)] ${}^\perp {\Imod(\F)}$
        \item[(f)] ${}^\perp{\Imods(\F)}$
    \end{enumerate}
    \end{multicols}
\end{lemma}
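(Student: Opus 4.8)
The plan is to establish the six equalities separately, dispatching (a), (b), (e), (f) quickly and spending the effort on (c) and (d). First I would record two preliminaries: since $\T$ is functorially finite, so is $\F$ by Proposition~\ref{prop:asff}, so all of $\Pmod(\T),\Pmods(\T),\Pmodns(\T),\Imod(\F),\Imods(\F)$ exist; and for projective $P$ the subcategory $P^\perp$ is closed under subobjects, quotients and extensions (as $\Hom(P,-)$ is exact), as is every subcategory of the form ${}^\perp\tau N$ by Theorem~\ref{thm:Bongartz}(a). In particular each candidate subcategory is closed under the torsion-free quotients $fX$ attached to $(\T,\F)$, which is what will let me reduce each reverse inclusion to a statement about a single torsion-free module.

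For (a) and (b) I would simply invoke the two bijections of Theorem~\ref{thm:Bongartz}(b), which give $\Gen\Pmods(\T)=\T=\Gen\Pmod(\T)$. For (e) and (f), Theorem~\ref{thm:dualBongartz}(b) gives $\F=\Cogen\Imod(\F)=\Cogen\Imods(\F)$, and for any module $N$ one has ${}^\perp(\Cogen N)={}^\perp N$ (the inclusion $\subseteq$ is clear as $N\in\Cogen N$; conversely each $Y\in\Cogen N$ embeds in some $N^k$, so $\Hom(X,N)=0$ forces $\Hom(X,Y)=0$). Hence ${}^\perp\Imod(\F)={}^\perp\F=\T$ and ${}^\perp\Imods(\F)={}^\perp\F=\T$, using $\T={}^\perp\F$.

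The substance is (c). Put $M=\Pmod(\T)$ and let $T=M\amalg B_M$ be its Bongartz completion, so $T$ is $\tau$-tilting and ${}^\perp\tau M=\Gen T$ by Theorem~\ref{thm:Bongartz}(c). The inclusion $\T=\Gen M\subseteq{}^\perp\tau M\cap P^\perp$ is immediate. For the reverse inclusion I would take $X\in{}^\perp\tau M\cap P^\perp$, form $0\to tX\to X\to fX\to0$, and show $fX=0$. Here $fX\in\F\cap{}^\perp\tau M\cap P^\perp$, so $\Hom(M,fX)=0$ (as $\F=M^\perp$) and $\Hom(P,fX)=0$. The crucial input is the exchange (mutation) sequences of~\cite{air}: each indecomposable summand $B_i$ of $B_M$ fits into a short exact sequence $0\to P_i\to B_i\to M_i\to0$ with $P_i\in\add P$ and $M_i\in\Gen M$. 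Applying $\Hom(-,fX)$ and using $\Hom(M,fX)=0=\Hom(P,fX)$ yields $\Hom(B_i,fX)=0$, hence $\Hom(T,fX)=0$; but $fX\in{}^\perp\tau M=\Gen T$, so $fX\in\Gen T\cap(\Gen T)^\perp=0$.

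Finally, (d). As $\Pmodns(\T)$ is a summand of $\Pmod(\T)$ we get ${}^\perp\tau\Pmod(\T)\subseteq{}^\perp\tau\Pmodns(\T)$, so (c) already gives $\T\subseteq{}^\perp\tau\Pmodns(\T)\cap P^\perp$; the content is the reverse inclusion, and by (c) it suffices to prove that $\Hom(X,\tau\Pmods(\T))=0$ for every $X\in{}^\perp\tau\Pmodns(\T)\cap P^\perp$ (so that such $X$ lies in ${}^\perp\tau\Pmod(\T)\cap P^\perp=\T$). The redundant Ext-projectives are generated by the essential ones, $\Pmodns(\T)\in\Gen\Pmods(\T)=\T$, and the plan is to convert this generation relation, via the formula of Proposition~\ref{prop:ASExt} and the condition $\Hom(P,-)=0$, into the desired vanishing against $\tau\Pmods(\T)$. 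I expect this to be the main obstacle: unlike in (c), the torsion-free quotient $fX$ need not lie in $\Gen T$, so the clean intersection argument is unavailable, and $\tau\Pmods(\T)$ must be controlled genuinely against both $\tau\Pmodns(\T)$ and $P$. A systematic way to handle it is to pass to the quotient $\bar\Lambda=\Lambda/\langle e\rangle$ with $e$ the idempotent supporting $P$: then $P^\perp=\mods\bar\Lambda$, $M$ becomes an honest $\tau$-tilting $\bar\Lambda$-module, and the claim reduces to comparing ${}^\perp\tau\Pmod$ and ${}^\perp\tau\Pmodns$ over $\bar\Lambda$; the remaining difficulty, and the crux of the whole lemma, is to match the Auslander--Reiten translates of $\Lambda$ and $\bar\Lambda$ on the relevant modules.
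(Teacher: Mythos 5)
Your treatment of (a), (b), (e), (f) is correct and coincides with the paper's: (a), (b) are Theorem~\ref{thm:Bongartz}(b), and (e), (f) follow from Theorem~\ref{thm:dualBongartz}(b) together with ${}^\perp\Cogen(-)={}^\perp(-)$. The skeleton of your argument for (c) is also sound (reduce to showing $fX=0$ for $X\in{}^\perp\tau M\cap P^\perp$, using $fX\in\Gen T\cap(\Gen T)^\perp$). The gap is the ``crucial input'': the short exact sequences $0\to P_i\to B_i\to M_i\to 0$ with $P_i\in\add P$ and $M_i\in\Gen M$ for the indecomposable summands $B_i$ of the Bongartz complement. No such result appears in~\cite{air} (the exchange sequences there relate the two indecomposables swapped in a single mutation via a left approximation into the common part, which is not what you need), and the statement is false in general. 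Take $\Lambda$ to be the algebra given by the quiver with vertices $1,2$ and arrows $a\colon 1\to 2$, $b\colon 2\to 1$, modulo $ab=0=ba$ (the preprojective algebra of type $A_2$); its indecomposables are $S_1,S_2$ and the projective-injectives $P_1,P_2$, which are uniserial of length two. Let $\T=\Gen S_1=\add S_1$, so $M=\Pmod(\T)=S_1$ (note $\tau S_1=S_2$, so $S_1$ is $\tau$-rigid) and $P=P_2$. Then ${}^\perp\tau S_1={}^\perp S_2=\add(S_1\amalg P_1)$, so $B_M=P_1$. The only proper nonzero submodule of $P_1$ is $\rad P_1=S_2$, which lies neither in $\add P_2$, and $P_1\notin\Gen S_1$; hence no sequence of your required form exists, even though the conclusion of (c) of course still holds here.

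One could try to repair this by weakening the requirement to $P_i\in\Gen P$ (in the example above, $0\to S_2\to P_1\to S_1\to 0$ works, since $S_2\in\Gen P_2$), and that weaker claim would indeed suffice for your Hom-vanishing argument. But asserting that every $B_i$ lies in $\FiltGen(M\amalg P)$ is equivalent to $\F\cap P^\perp\subseteq({}^\perp\tau M)^\perp$, which is essentially a reformulation of (c) itself; I see no way to establish it that does not already prove the lemma, so it cannot serve as an input. Separately, part (d) is not proven at all: your reduction (it suffices to show $\Hom(X,\tau\Pmods(\T))=0$ for $X\in{}^\perp\tau\Pmodns(\T)\cap P^\perp$) is correct, but what follows is only a plan, and you yourself flag the comparison of $\tau_\Lambda$ with $\tau_{\bar\Lambda}$ over $\bar\Lambda=\Lambda/\langle e\rangle$ as an unresolved crux. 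The paper sidesteps both difficulties: it observes that $\Pmod(\T)\amalg P[1]$ is a support $\tau$-tilting object (Theorem~\ref{thm:Bongartz}(d) and the discussion preceding Lemma~\ref{lem:twokinds2}) and then quotes~\cite[Prop.~2.7]{bh} for the two equalities $\Gen\Pmod(\T)={}^\perp\tau\Pmod(\T)\cap P^\perp={}^\perp\tau\Pmodns(\T)\cap P^\perp$; that cited statement (resting on the corresponding result of~\cite{air}) is exactly the content your proposal would need to reconstruct, and as written it does not.
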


\begin{proof}
   The fact that $\T = \Gen \Pmods(\T) = \Gen \Pmod(\T)$ is contained in Theorem~\ref{thm:Bongartz}(b). Similarly, we have that $\F = \Cogen\Imods(\F) = \Cogen\Imod(\F)$ by Theorem~\ref{thm:dualBongartz}(b). Since ${}^\perp(-) = {}^\perp \Cogen(-)$, it follows that $\T = {}^\perp \F = {}^\perp \Imods(\F) = {}^\perp \Imod(\F)$. Finally, note that $\Pmod(\T) \amalg P[1]$ is a support $\tau$-tilting object by Theorem~\ref{thm:Bongartz}(d) and the discussion preceding Lemma~\ref{lem:twokinds2}. The fact that $\Gen \Pmod(\T) = {}^\perp{\tau \Pmod(\T)} \cap P^\perp = {}^\perp{\tau \Pmodns(\T)} \cap P^\perp$ is thus contained in~\cite[Prop.~2.7]{bh}.
\end{proof}

\section{Wide subcategories}\label{sec:wide}

 \subsection{$\tau$-perpendicular wide subcategories}

 Recall that a subcategory $\W$ is called \emph{wide} if it is closed under extensions, kernels, and cokernels. Alternatively, $\W$ is wide if and only if it is an exact embedded abelian subcategory.

Using the notation introduced at the end of the previous section, it will be convenient to consider the following more general construction of $\tau$-perpendicular categories. 

\begin{definition-proposition}\cite[Cor. 3.25]{bst}\cite[Thm.~1.4]{jasso}\cite[Thm.~4.12]{dirrt}\label{prop:J_def}
Let  $U = M \amalg P[1]$ be a support $\tau$-rigid object in $\C(\Lambda)$. 
Then the $\tau$-perpendicular category of $U$ is 
$$\J(U) := \J(M) \cap \J(P) = M^\perp \cap {}^\perp \tau M \cap P^\perp.$$ This is a wide subcategory of $\mods \Lambda$ and is equivalent to the module category of a finite-dimensional algebra. Moreover, 
$$\rk(\J(U)) := \rk(\Pmod(\J(U))) = \rk(\Lambda) - \rk(M) - \rk(P).$$
\end{definition-proposition}

\begin{remark}\label{rem:tau_perp}
    Note that every $\tau$-perpendicular category $\W$ can be realized as $\W = \J(M)$ for some $\tau$-rigid module $M$ (with no shifted projective summand). See~\cite[Remark~4.3]{bh2} for an explicit construction. As a consequence, results from \cite{jasso} (which considers only $\tau$-perpendicular categories of the form $\J(M)$ for $M$ a module) can be freely applied to $\tau$-perpendicular subcategories of the form $\J(M\amalg P[1])$.
\end{remark}

In the notation of Definition-Proposition~\ref{prop:J_def} above, we refer to $\rk(\J(U))$ as the \emph{rank} of $\J(U)$ and to $\rk(\Lambda) - \rk(\J(U)) = \rk(U)$ as the \emph{corank} of $\J(U)$.

We also have the following dual construction.

\begin{definition}\label{def:Jinv_def}
Let  $V = N \amalg I[-1]$ be a support $\tau^{-1}$-rigid object in $\C(\Lambda)$. Then the \emph{$\tau^{-1}$-perpendicular category} of $V$ is
$$\Jinv(V) := {}^\perp N \cap (\tau^{-1} N)^\perp \cap {}^\perp I.$$
\end{definition}

The following is a straightforward consequence of the definitions (noting Lemma~\ref{lem:twokinds2}). See also~\cite[Theorem~1.1]{bh}.

\begin{lemma}\label{lem:Jtauminus}
    	\begin{itemize}
			\item [(a)] Let $M \amalg P[1]$ be a support $\tau$-rigid object (with $M, P \in \mods \Lambda)$. Let $Q$ be the maximal projective direct summand of $M$. Then $\J(M\amalg P[1]) = \Jinv(\tau M \amalg \nu P \amalg \nu Q[-1])$.
			\item[(b)] Let $N \amalg I[-1]$ be a support $\tau^{-1}$-rigid object (with $N, I \in \mods \Lambda)$. Let $E$ be the maximal injective direct summand of $N$. Then $\Jinv(N \amalg I[-1]) = \J(\tau^{-1}N \amalg \nu^{-1} I \amalg \nu^{-1} E[1])$.
		\end{itemize}
\end{lemma}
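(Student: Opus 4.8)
The plan is to prove both equalities by expanding each side using the defining formulas in Definition-Proposition~\ref{prop:J_def} and Definition~\ref{def:Jinv_def}, and then matching the resulting intersections of perpendicular subcategories term by term. First I would invoke Lemma~\ref{lem:twokinds2}(a) (resp.\ (b)) to guarantee that the object appearing on the right-hand side is genuinely a support $\tau^{-1}$-rigid (resp.\ support $\tau$-rigid) object, so that $\Jinv$ (resp.\ $\J$) is even defined there. For part (a), writing $U = M \amalg P[1]$ and recalling $\J(U) = M^\perp \cap {}^\perp\tau M \cap P^\perp$, I would expand
\[
\Jinv(\tau M \amalg \nu P \amalg \nu Q[-1]) = {}^\perp(\tau M \amalg \nu P) \cap \big(\tau^{-1}(\tau M \amalg \nu P)\big)^\perp \cap {}^\perp(\nu Q),
\]
using that in this support $\tau^{-1}$-rigid object the $\tau^{-1}$-rigid module is $\tau M \amalg \nu P$ and the injective part is $\nu Q$.

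The key reductions are then the following three standard facts, which I would state and apply. Decomposing $M = Q \amalg M'$ with $M'$ having no projective summand, the translate $\tau$ kills projective summands, so $\tau^{-1}\tau M = M'$ and hence $(\tau^{-1}\tau M)^\perp = (M')^\perp$; moreover $\tau^{-1}(\nu P) = 0$ since $\nu P$ is injective, so the middle factor collapses to $(M')^\perp$. Next, the Nakayama functor satisfies the duality $\Hom(P,X) \cong \Hom(X,\nu P)^\ast$ for $P$ projective (the usual adjunction/Auslander--Reiten formula), which yields the perpendicularity identities ${}^\perp(\nu P) = P^\perp$ and ${}^\perp(\nu Q) = Q^\perp$. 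Substituting these gives
\[
\Jinv(\tau M \amalg \nu P \amalg \nu Q[-1]) = {}^\perp\tau M \cap P^\perp \cap (M')^\perp \cap Q^\perp.
\]
Since $M = Q \amalg M'$ forces $M^\perp = Q^\perp \cap (M')^\perp$, the right-hand side is exactly $M^\perp \cap {}^\perp\tau M \cap P^\perp = \J(U)$, proving (a).

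Part (b) I would prove by the dual argument: writing $V = N \amalg I[-1]$ and $N = E \amalg N'$ with $N'$ having no injective summand, I would use $\tau\tau^{-1}N = N'$, the vanishing $\tau(\nu^{-1}I) = 0$ on the projective module $\nu^{-1}I$, and the dual duality ${}^\perp I = (\nu^{-1}I)^\perp$ and ${}^\perp E = (\nu^{-1}E)^\perp$ for injectives, to rewrite $\J(\tau^{-1}N \amalg \nu^{-1}I \amalg \nu^{-1}E[1])$ and recognize it as ${}^\perp N \cap (\tau^{-1}N)^\perp \cap {}^\perp I = \Jinv(V)$. The only genuine content beyond unwinding definitions is keeping precise track of which summand of each object plays the role of the module versus the (co)shifted projective/injective part, and correctly citing the two Nakayama-duality perpendicularity identities; the $\tau/\tau^{-1}$-vanishing statements and the decomposition $M^\perp = Q^\perp \cap (M')^\perp$ are routine. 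Thus the main (and mild) obstacle is bookkeeping the role of the maximal projective summand $Q$ (resp.\ injective summand $E$), since it simultaneously appears through the $\tau^{-1}\tau$ term and through the Nakayama-dual injective $\nu Q$ (resp.\ $\nu^{-1}E$), and the equality hinges on these two contributions recombining into $M^\perp$ (resp.\ ${}^\perp N$).
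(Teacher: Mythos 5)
Your proof is correct and follows exactly the route the paper intends: the paper dismisses this lemma as "a straightforward consequence of the definitions (noting Lemma~\ref{lem:twokinds2})", and your argument is precisely that unwinding — expand $\Jinv$ via Definition~\ref{def:Jinv_def}, use $\tau^{-1}\tau M = M'$, $\tau^{-1}(\nu P)=0$, the Nakayama dualities ${}^\perp(\nu P)=P^\perp$, ${}^\perp(\nu Q)=Q^\perp$, and recombine $Q^\perp \cap (M')^\perp = M^\perp$, with the dual bookkeeping for part (b).
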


In~\cite{jasso}, Jasso compared the functorially finite torsion classes in $\mods \Lambda$ and in $\J(M)$ for a $\tau$-rigid module
$M$, and in~\cite{bm} it was shown that his technique in particular gives rise to a useful 
bijection in terms of indecomposable $\tau$-rigid modules.

\begin{theorem}\cite[Prop.~4.5]{bm} \textnormal{(see also \cite[Thm~3.14]{jasso})}\label{thm:jasso}
Let $M$ be a $\tau$-rigid module, and let $f_M$ denote the torsion free functor relative
to the torsion pair $(\Gen M, M^{\perp})$.
Then $X \mapsto f_M X $ induces a bijection \[f_M: \{X \in \ind \mods \Lambda \mid
 X \amalg M \text{ $\tau$-rigid and $X \not \in \Gen M$}\} \rightarrow \{Y \in \ind \J(M) \mid Y \text{ $\tau_{\J(M)}$-rigid}\}.\]
\end{theorem}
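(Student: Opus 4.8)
The plan is to exhibit the map as the restriction of the torsion-free functor of a torsion pair, check directly that it lands in the stated target, and then construct a two-sided inverse by a universal extension; the heart of the argument is this last, reduction-theoretic, step.

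First I would record the ambient torsion pairs. Since $M$ is $\tau$-rigid, $(\Gen M, M^\perp)$ is a functorially finite torsion pair and, by Theorem~\ref{thm:Bongartz}(a), ${}^\perp\tau M$ is a torsion class containing $M$, so $\Gen M \subseteq {}^\perp\tau M$. Consequently, if $Z \in {}^\perp\tau M$ then its torsion-free quotient $f_M Z = Z/tZ$ again lies in ${}^\perp\tau M$ (a torsion class is closed under quotients) and lies in $M^\perp$ by construction; hence $f_M Z \in M^\perp \cap {}^\perp\tau M = \J(M)$. Thus $f_M$ restricts to a functor ${}^\perp\tau M \to \J(M)$, and this is the map to analyse.

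Next I would prove that $\Phi := f_M$ carries the source into the target. Let $X$ be indecomposable with $X \amalg M$ $\tau$-rigid and $X \notin \Gen M$, and write $Y := f_M X$ in the canonical sequence $0 \to tX \to X \to Y \to 0$ with $tX \in \Gen M$. Then $Y \neq 0$ (else $X = tX \in \Gen M$) and $Y \in \J(M)$ by the previous paragraph. For indecomposability, note $tX \in \Gen M \subseteq \Gen(X\amalg M)$, so $\Ext^1(X, tX)=0$ because $X \amalg M$ is $\tau$-rigid (Proposition~\ref{prop:ASExt}(a)); applying $\Hom(X,-)$ to the canonical sequence shows $\End(X) \to \Hom(X,Y)$ is onto, and a diagram chase (each lift preserves $tX = \ker(X \to Y)$) promotes this to a surjection of rings $\End(X) \to \End_{\J(M)}(Y)$. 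As $\End(X)$ is local and $Y\neq0$, $\End_{\J(M)}(Y)$ is local, so $Y$ is indecomposable. For $\tau_{\J(M)}$-rigidity I would use Proposition~\ref{prop:ASExt}(a) inside $\J(M)$: since $\J(M)$ is wide, $\Ext^1_{\J(M)}$ agrees with $\Ext^1_\Lambda$ on $\J(M)$ and $\Gen_{\J(M)} Y = \Gen Y \cap \J(M)$. For $W \in \Gen Y \cap \J(M)$ one has $W \in \Gen Y \subseteq \Gen(X \amalg M)$, so $\Ext^1(X,W)=0$, while $\Hom(tX, W)=0$ because $tX \in \Gen M$ and $W \in M^\perp$; the exact sequence $\Hom(tX,W) \to \Ext^1(Y,W) \to \Ext^1(X,W)$ then forces $\Ext^1(Y,W)=0$, so $Y$ is $\tau_{\J(M)}$-rigid.

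Finally I would build the inverse. Given an indecomposable $\tau_{\J(M)}$-rigid $Y \in \J(M)$, form the universal extension of $Y$ by $M$ along a generating set of $\Ext^1(Y,M)$ over $\End(M)$,
\[0 \to M^t \to X \to Y \to 0,\]
so that $\Ext^1(X,M)=0$ (using $\Ext^1(M,M)=0$ and surjectivity of the connecting map in the long exact sequence); since $M^t \in \Gen M$ and $Y \in M^\perp$ this is the canonical torsion sequence, whence $f_M X = Y$ and $X \notin \Gen M$. One then checks $X \amalg M$ is $\tau$-rigid and (as in the previous paragraph, in reverse) that $X$ is indecomposable. The main obstacle is to verify that these two constructions are mutually inverse: concretely, that for $X$ in the source the canonical sequence $0 \to tX \to X \to Y \to 0$ is itself a universal extension (i.e.\ $tX \in \add M$ and $\Hom(tX,M) \to \Ext^1(Y,M)$ is onto). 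This is precisely the torsion-theoretic content of Jasso's $\tau$-tilting reduction — the order isomorphism between the interval $[\Gen M, {}^\perp\tau M]$ of torsion classes and $\tors \J(M)$, matching split Ext-projectives on the two sides — and establishing it cleanly (for instance by tracking $\Pmods$ through this order isomorphism via Theorem~\ref{thm:Bongartz}(b) and Remark~\ref{rem:tau_perp}) is where the real work lies.
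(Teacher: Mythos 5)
This theorem is not proved in the paper at all: it is quoted from \cite[Prop.~4.5]{bm} (with roots in \cite[Thm.~3.14]{jasso}), so the only benchmark is the cited proof, which deduces the statement from Jasso's reduction theorem at the level of support $\tau$-tilting modules and torsion classes. Measured on its own terms, the first half of your proposal is complete and correct: $X \in {}^\perp\tau M$, so $f_MX \in \J(M)$; the vanishing $\Ext^1(X,tX)=0$ from Proposition~\ref{prop:ASExt}(a) gives the ring surjection $\End(X)\twoheadrightarrow\End(Y)$ and hence indecomposability of $Y$; and the sequence $\Hom(tX,W)\to\Ext^1(Y,W)\to\Ext^1(X,W)$ correctly yields $\tau_{\J(M)}$-rigidity.

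The genuine gap is in the inverse direction, and it is larger than your closing paragraph suggests. Three things are asserted but not established. First, $\tau$-rigidity of $X\amalg M$: this is not a routine check from $\Ext^1(X,M)=0$. For instance, $\Hom(X,\tau X)=0$ requires $\Ext^1(X,-)$ to vanish on $\Gen X$, and the natural dimension shift along $0\to M^t\to X\to Y\to 0$ reduces this to the vanishing of $\Ext^1(Y,-)$ on $\Gen Y$ computed in $\mods\Lambda$ — which is exactly the statement that $Y$ is $\tau$-rigid in $\mods\Lambda$, and this is \emph{not} available: as the paper stresses, $\tau_{\W}$-rigid objects of a $\tau$-perpendicular category $\W$ are rigid but in general not $\tau$-rigid in $\mods\Lambda$. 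So the verification genuinely needs a finer argument (this is one reason the proofs in \cite{jasso,bm} route through Bongartz completions and torsion classes rather than Ext-chasing). Second, indecomposability of $X$: "as in the previous paragraph, in reverse" does not work, because the ring surjection one gets is again $\End(X)\twoheadrightarrow\End(Y)$, which goes the wrong way; the correct argument needs minimality of the universal extension together with a Krull--Schmidt splitting argument (a summand $X_2$ of $X$ with $f_MX_2=0$ lies in $\add M$ and splits off a trivial subextension, contradicting minimality). Third, the mutual-inverse property — that for $X$ in the source one has $tX\in\add M$ and the canonical sequence is itself a universal extension — is precisely what you defer to "Jasso's $\tau$-tilting reduction". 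But that reduction (the isomorphism $[\,\Gen M,{}^\perp\tau M\,]\cong\tors\J(M)$ matching Ext-projectives, equivalently the bijection on support $\tau$-tilting objects) is a result of essentially the same depth as the theorem being proved, and is how \cite{bm} proves it. So your outline correctly identifies the strategy and correctly locates the crux, but as written the crux is named rather than proved; the proposal is a reduction of the statement to the cited theorem, not a proof of it.
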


An extended version of Theorem \ref{thm:jasso} was given in~\cite{bm}. In order to recall the precise statement, we first establish some additional notation and terminology. For $\W$ a $\tau$-perpendicular subcategory, we denote $\C(\W):= \W \amalg \W[1] \subseteq \derD(\Lambda)$. We emphasise that $[1]$ denotes the shift functor in $\derD(\Lambda)$.

\begin{definition}\label{def:support_tau_rigid_reduction}
    Let $\W \subseteq \mods \Lambda$ be a $\tau$-perpendicular category. An object $V = N \amalg Q[1] \in \C(\W)$ is then said to be \emph{support $\tau_{\W}$-rigid} if $N$ is a $\tau_{\W}$-rigid module, $Q \in \P(\W)[1]$, and $\Hom(Q,N) = 0$.
\end{definition}

\begin{remark}\label{rem:CW}
    As recalled in the introduction, \cite[Thm.~1.4]{jasso} says that every $\tau$-perpendicular subcategory $\W$ of $\mods \Lambda$ admits an exact equivalence of categories $F_\Lambda: \W \rightarrow \mods\Lambda_\W$ for some finite-dimensional algebra $\Lambda_\W$. For clarity, denote by $\langle1\rangle$ the shift functor in $\derD(\Lambda_\W)$. As explained in \cite[Sect.\ 1]{bm}, the functor $F_\Lambda$ induces an equivalence of categories $\C(\W) \rightarrow \mods\Lambda_\W \amalg \mods\Lambda_\W\langle1\rangle$ given on objects by $N\amalg Q[1] \mapsto F_\W N \amalg F_\W Q\langle 1\rangle$. Moreover, this functor gives a bijection between the set of support $\tau_{\J(U)}$-rigid objects in $\C(\W)$ and the set of support $\tau$-rigid objects in $\mods\Lambda_\W \amalg \mods\Lambda_\W\langle1\rangle$. As a result, the general theory of support $\tau$-rigid objects in $\C(\Lambda)$ can also be applied to support $\tau_\W$-rigid objects in $\C(\W)$.
\end{remark}

\begin{theorem}
\label{thm:bmcorr}
Let $U = M \amalg P[1]$ be a support $\tau$-rigid object in $\C(\Lambda)$. Then
\begin{itemize}
\item[(a)]
\cite[Sect.\ 3]{bm2}~\cite[Prop.~5.6]{bm} 
\begin{itemize}
\item[(i)] 
 There is a bijection 
 $$\{V \in \ind \C(\Lambda) \mid V \amalg U \text{ support $\tau$-rigid}\}$$ 
 $$\downarrow \Ebm_U$$ 
 $$\{W \in \ind \C(\J(U)) \mid W 
 \text{ support $\tau_{\J(U)}$-rigid}\}.$$
\item[(ii)] For $V\in \ind \C(\Lambda)$ with $V\amalg U$ support $\tau$-rigid, we have $E_U(V)\in \P(\J(U))[1]$ if and only if $V\in \Gen M$ or $V\in \P(\Lambda)[1]$.
\item[(iii)] If $U\in \P(\Lambda)[1]$ and $V\in \ind(\C(\Lambda))$ with $V\amalg U$ support $\tau$-rigid, we have $\Ebm_U(V)=V$. Equivalently, if $W$ is a $\tau_{J(P[1])}$-rigid module then $\Fbm_{P[1]}(W)=W$.
\end{itemize}
\item[(b)]
The bijection in (a) extends the bijection from Theorem~\ref{thm:jasso} in the sense that $\Ebm_U(V) = \Ebm_M(V) = f_MV$ for $V \in (\ind\mods\Lambda) \setminus \Gen M$ with $V \amalg U$ support $\tau$-rigid. 
\end{itemize}
\end{theorem}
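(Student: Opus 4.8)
The plan is to construct $\Ebm_U$ by $\tau$-tilting reduction, taking Jasso's bijection (Theorem~\ref{thm:jasso}) as the engine and reducing the general support $\tau$-rigid object $U = M \amalg P[1]$ to the two extreme cases $U = M$ (a module) and $U = P[1]$ (a shifted projective). Since Definition-Proposition~\ref{prop:J_def} gives $\J(M \amalg P[1]) = \J(M) \cap \J(P) = \J(M) \cap P^\perp$, I would first pass to the wide subcategory $P^\perp$ and then perform a $\tau$-perpendicular reduction by $M$ inside $P^\perp$, defining the general $\Ebm_U$ as the composite. Remark~\ref{rem:CW} is what makes this legitimate: it transports the entire support $\tau$-rigid formalism into any $\tau$-perpendicular subcategory through the equivalence $F_\Lambda$, so that ``reduction inside $P^\perp$'' is literally an instance of the statement applied to $\Lambda_{P^\perp}$.

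The heart of the argument is the module case $U = M$. Here the indecomposable complements $V$ (those with $V \amalg M$ support $\tau$-rigid) fall into three classes: modules $V \notin \Gen M$, modules $V \in \Gen M$, and shifted projectives $V = Q[1]$. On the first class I would simply \emph{define} $\Ebm_M(V) := f_M V$ and invoke Theorem~\ref{thm:jasso} to obtain a bijection onto the indecomposable $\tau_{\J(M)}$-rigid modules of $\J(M)$; this is exactly assertion~(b). The second and third classes must be sent bijectively onto the split Ext-projective shifted objects $\P(\J(M))[1]$, which is assertion~(ii). Here I would use that the complements lying in $\Gen M$ are exactly the indecomposable summands of the co-Bongartz complement $C_M$ from Theorem~\ref{thm:Bongartz}(d), so that $M \amalg C_M = \Pmod(\Gen M)$ records the simples ``deleted'' in passing to $\J(M)$; tracking these and the shifted projectives through the support-$\tau$-tilting description of $\J(M)$ identifies them with $\P(\J(M))[1]$.

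For the shifted-projective case $U = P[1]$, I would identify $\J(P[1]) = P^\perp$ and argue that restriction along $P^\perp \hookrightarrow \mods\Lambda$ matches $\tau$-rigidity with $\tau_{P^\perp}$-rigidity, so that the complements of $P[1]$ in $\C(\Lambda)$ coincide on the nose with the indecomposable support $\tau_{P^\perp}$-rigid objects of $\C(P^\perp)$; the map $\Ebm_{P[1]}$ is then the identity, giving assertion~(iii). The last step is to verify that composing the $P[1]$-reduction with the $M$-reduction inside $P^\perp$ gives a well-defined bijection independent of the chosen $\tau$-tilting completions, that the value on module complements outside $\Gen M$ is still $f_M V$ (so that~(b) survives the presence of $P$), and that the image-type dichotomy of~(ii) is preserved.

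The hard part will be assertion~(ii): Theorem~\ref{thm:jasso} only accounts for the module complements \emph{outside} $\Gen M$, so the new content is showing that the complements in $\Gen M$ together with the shifted projectives are absorbed bijectively into the split Ext-projective part of $\C(\J(U))$. I expect this to rest on a rank count --- matching $\rk(\J(U)) = \rk(\Lambda) - \rk(M) - \rk(P)$ from Definition-Proposition~\ref{prop:J_def} against the number of indecomposable complements of each type --- combined with the Bongartz and co-Bongartz complement results (Theorem~\ref{thm:Bongartz}(b),(c),(d) and Lemma~\ref{lem:Bongartzsplit}) to pin down which summands become split projective and to guarantee independence of the completion.
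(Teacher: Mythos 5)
Your high-level architecture is not unreasonable, but you should first note what the paper actually does with this statement: part (a) is not proved there at all --- it is imported wholesale from \cite[Sect.~3]{bm2} and \cite[Prop.~5.6]{bm} --- and the paper's only proof obligation is (b), discharged in one line via the composition identity $\Ebm_{M\amalg P[1]} = \Ebm^{\J(P[1])}_{\Ebm_{P[1]}(M)}\circ \Ebm_{P[1]}$ of \cite[Thm.~6.12]{bh}, together with (a)(iii) and the $P=0$ case \cite[Prop.~5.5]{bm}. Your proposal builds that same composition into the \emph{definition} of $\Ebm_U$, which is fine in spirit for (b); the trouble is that you are simultaneously trying to re-derive (a) from scratch, and there the argument has genuine gaps.

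The concrete failure is your base case $U=P[1]$. You claim the complements of $P[1]$ ``coincide on the nose'' with the indecomposable support $\tau_{P^\perp}$-rigid objects of $\C(P^\perp)$, so that $\Ebm_{P[1]}$ is the identity. This is true for module complements (Serre-ness of $P^\perp$ matches $\tau$-rigidity with $\tau_{P^\perp}$-rigidity, via Proposition~\ref{prop:ASExt}, exactly as in the proof of Lemma~\ref{lem:fUbijection}), but it is false for shifted-projective complements: if $Q[1]\amalg P[1]$ is support $\tau$-rigid, i.e.\ $Q\amalg P$ is projective, then $Q$ need not lie in $P^\perp$ at all, and the Ext-projectives of $P^\perp$ are the trace quotients $Q/\mathrm{tr}_P(Q)$ (the projectives of the quotient algebra), not projective $\Lambda$-modules. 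For $\Lambda = K(1\to 2)$ and $U = P_2[1]$, the object $P_1[1]$ is a complement, $\J(U)=\add S_1$, and $\Ebm_U(P_1[1]) = S_1[1]\neq P_1[1]$; indeed $P_1[1]$ is not even an object of $\C(\J(U))$. (This is why the ``equivalently'' clause of (a)(iii), and every application of (iii) in the paper, concerns modules only.) Since your global map is a composite whose first factor is this identification, it is ill-defined on precisely these complements. Separately, your plan for the part you rightly flag as hard, (ii), stops at a rank count. The count itself is correct --- complements inside $\Gen M$ are exactly $\ind C_M$, the shifted complements number $\rk P_0 - \rk P$ for $P_0$ the maximal projective with $\Hom(P_0,M)=0$, and $\rk M + \rk C_M + \rk P_0 = n$ --- but equal cardinality of finite sets only yields the existence of \emph{some} bijection. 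You never specify the actual assignment on $\Gen M$-complements and shifted projectives (the paper's Theorem~\ref{thm:E_map} does, but it is proved \emph{using} the present theorem), so you cannot verify compatibility with your composite definition, independence of choices, or agreement with the canonical map $\Ebm_U$ whose further properties the paper later cites (Theorems~\ref{thm:J_E} and~\ref{thm:signed_mutation}).
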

\begin{proof}
    For (b), we use the notation $\Ebm^{\J(P[1])}$ to mean the version of the bijection $\Ebm$ associated to the $\tau$-perpendicular category $\J(P[1])$. Then
    we have
    $$\Ebm_{M\amalg P[1]}(V) = \Ebm_{\Ebm_{P[1]}(M)}^{\J(P[1])}\circ \Ebm_{P[1]}(V) = \Ebm_M^{\J(P[1])}(V) = f_M(V)$$ by~\cite[Theorem~6.12]{bh}, the $P = 0$ case~\cite[Prop.\ 5.5]{bm} and (a)(iii).
\end{proof}

We will give a new description of $\Ebm$ and an explicit description of its inverse $\Fbm$ in Theorems~\ref{thm:E_map} and~\ref{thm:F_map_1}.
Note also that the maps $\Ebm$ and $\Fbm$ are denoted by $\mathcal{E}$ and $\mathcal{F}$, respectively, in~\cite{bm,bm2}.
 
We will use the following relationship between the operators $\J$ and $\Ebm$.

 \begin{theorem}\label{thm:J_E}\cite[Thm.~6.4]{bh}
    \begin{enumerate}
         \item[(a)] Let $U \amalg V$ be a support $\tau$-rigid object in $\C(\Lambda)$ with $U$ indecomposable. Then $\J(U\amalg V) = \J(\Ebm_V(U),V)$.
         \item[(b)] Let $V$ be a support $\tau$-rigid object in $\C(\Lambda)$ and let $U'$ be an indecomposable support $\tau_{\J(V)}$-rigid object in $\C(\J(V))$. Then $\J(U',V) = \J(\Fbm_V(U')\amalg V)$.
     \end{enumerate}
 \end{theorem}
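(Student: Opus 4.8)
The plan is to establish part (a) in the more general form in which the indecomposable $U$ is replaced by an arbitrary support $\tau$-rigid object $W$ (with $\Ebm_V$ extended additively over indecomposable summands), namely
\[\J(W \amalg V) = \J_{\J(V)}(\Ebm_V(W)) =: \J(\Ebm_V(W), V),\]
where the last equality is the recursive definition of $\J$ from the introduction, and then to deduce (b) formally. I would work throughout with the non-recursive description of Definition-Proposition~\ref{prop:J_def}; it shows at once that $\J$ is sub-additive, $\J(A \amalg B) = \J(A) \cap \J(B)$, and the same holds for $\J_\W$ computed inside a $\tau$-perpendicular subcategory $\W$ (using $\W \simeq \mods\Lambda_\W$, Remark~\ref{rem:CW}). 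Since $\Ebm_V$ is additive and $\J, \J_{\J(V)}$ are sub-additive, intersecting over the indecomposable summands of $W$ reduces the general statement to the case $W = U$ indecomposable, at every rank. Thus the theorem as stated and the general form are equivalent, and the only genuinely non-formal input will turn out to be the case in which \emph{both} $U$ and $V$ are indecomposable.

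I would then induct on $\rk(V)$, with the statement quantified over all finite-dimensional algebras so that it may be applied inside the module categories $\J(V_0) \simeq \mods\Lambda_{\J(V_0)}$. The case $\rk(V)=0$ is trivial ($V=0$, $\J(V)=\mods\Lambda$, $\Ebm_0 = \mathrm{id}$). For $\rk(V)=r\geq 2$ write $V = V'\amalg V_0$ with $V_0$ indecomposable and $\rk(V')=r-1$. Applying the rank-one case with reducer $V_0$ and numerator $W\amalg V'$ gives $\J(W\amalg V) = \J_{\J(V_0)}\big(\Ebm_{V_0}(W)\amalg \Ebm_{V_0}(V')\big)$. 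Working now inside $\J(V_0)$ and applying the induction hypothesis there to the reducer $\Ebm_{V_0}(V')$ (of rank $r-1$) rewrites this as
\[\J_{\,\J_{\J(V_0)}(\Ebm_{V_0}(V'))}\big(\Ebm^{\J(V_0)}_{\Ebm_{V_0}(V')}(\Ebm_{V_0}(W))\big).\]
Finally I identify the two ingredients: the rank-one case (reducer $V_0$, numerator $V'$) gives $\J_{\J(V_0)}(\Ebm_{V_0}(V')) = \J(V'\amalg V_0) = \J(V)$, while the composition (associativity) law for $\Ebm$ used in the proof of Theorem~\ref{thm:bmcorr}(b), namely \cite[Thm.~6.12]{bh}, gives $\Ebm^{\J(V_0)}_{\Ebm_{V_0}(V')}\circ \Ebm_{V_0} = \Ebm_V$ and hence $\Ebm^{\J(V_0)}_{\Ebm_{V_0}(V')}(\Ebm_{V_0}(W)) = \Ebm_V(W)$. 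Assembling these yields $\J(W\amalg V)=\J_{\J(V)}(\Ebm_V(W))$, completing the inductive step.

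The base case $\rk(V)=1$, with $V$ and $U$ both indecomposable, is the crux. If $V = P_0[1]$ for $P_0$ indecomposable projective, then $\J(V) = P_0^\perp$ and $\Ebm_V = \mathrm{id}$ by Theorem~\ref{thm:bmcorr}(a)(iii); here $\J(U\amalg P_0[1]) = \J_{P_0^\perp}(U)$ should follow directly from Definition-Proposition~\ref{prop:J_def} together with the compatibility of $\tau$ for $\Lambda$ and for the vertex-deletion quotient $\Lambda_{P_0^\perp}$. If $V = M_0$ is an indecomposable $\tau$-rigid module, then $\J(M_0) = M_0^\perp \cap {}^\perp \tau M_0$ is Jasso's $\tau$-perpendicular category, and by Theorem~\ref{thm:bmcorr}(b) the reduction $\Ebm_{M_0}$ agrees with the torsion-free functor $f_{M_0}$ on indecomposable $\tau$-rigid modules outside $\Gen M_0$, sending the remaining compatible indecomposables into $\P(\J(M_0))[1]$ by Theorem~\ref{thm:bmcorr}(a)(ii). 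The required equality $\J(U\amalg M_0) = \J_{\J(M_0)}(\Ebm_{M_0}(U))$ then amounts to showing that Jasso's reduction carries the $\tau$-perpendicular subcategory $\J(U\amalg M_0)$ of $\mods\Lambda$ onto the $\tau$-perpendicular subcategory $\J_{\J(M_0)}(\Ebm_{M_0}(U))$ of $\J(M_0)$. I expect this to be the hardest and most technical step: it requires tracking, through Jasso's reduction (Theorem~\ref{thm:jasso} and its extension Theorem~\ref{thm:bmcorr}), how each of the defining conditions $U^\perp$, ${}^\perp\tau U$ and the support condition transforms, and in particular treating separately the case $U \in \Gen M_0$ (equivalently, the case in which $\Ebm_{M_0}(U)$ is a shifted projective of $\J(M_0)$). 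Once this is settled, the rest of the argument is purely formal.

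Part (b) follows from (a) by bijectivity of $\Ebm_V$ (Theorem~\ref{thm:bmcorr}(a)). Given an indecomposable support $\tau_{\J(V)}$-rigid object $U'$, set $U := \Fbm_V(U')$; then $U$ is indecomposable, $U\amalg V$ is support $\tau$-rigid, and $\Ebm_V(U)=U'$. Part (a) applied to $U\amalg V$ then gives $\J(\Fbm_V(U')\amalg V) = \J(U\amalg V) = \J(\Ebm_V(U),V) = \J(U',V)$, as required.
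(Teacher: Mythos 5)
The paper contains no proof of this statement to compare against: it is imported wholesale from \cite[Thm.~6.4]{bh} (the citation is part of the theorem header), so your attempt must stand on its own. Judged that way, it has a genuine gap. The formal scaffolding is fine: additivity of $\J$, $\J_{\J(V)}$ and $\Ebm_V$ does reduce everything to indecomposable numerators, the deduction of (b) from (a) via bijectivity of $\Ebm_V$ (Theorem~\ref{thm:bmcorr}(a)) is correct, and the shifted-projective base case $V=P_0[1]$ can indeed be made rigorous exactly as you suggest (the needed computation $\J(M\amalg P[1])=\J_{\J(P[1])}(M)$ appears, via Proposition~\ref{prop:ASExt}, in the paper's proof of Lemma~\ref{lem:E_cases}). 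But the base case $V=M_0$ an indecomposable $\tau$-rigid module --- which you yourself flag as ``the hardest and most technical step'' and then defer --- is not a technical remainder: it \emph{is} the theorem. The compatibility of Jasso reduction with iterated $\tau$-perpendicular categories, including the case $U\in\Gen M_0$ where $\Ebm_{M_0}(U)$ lands in $\P(\J(M_0))[1]$, is the entire non-formal content; everything you actually establish is bookkeeping around it. A proof that reduces the statement to its essential case and stops there is not a proof.

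There is also a circularity problem in your inductive step. You invoke the composition law $\Ebm^{\J(V_0)}_{\Ebm_{V_0}(V')}\circ\Ebm_{V_0}=\Ebm_{V}$ from \cite[Thm.~6.12]{bh}. But for that identity to be well-posed --- for the two sides to be maps into the same category $\C(\J(V))$ --- one needs precisely $\J_{\J(V_0)}(\Ebm_{V_0}(V'))=\J(V'\amalg V_0)$, which is the statement you are proving; correspondingly, in \cite{bh} Theorem~6.12 sits downstream of Theorem~6.4. So the induction leans on a result that presupposes its conclusion, and would need an independent proof of the composition law to be salvaged. A smaller point in the same step: you apply $\Ebm_{V_0}$ to the decomposable object $W\amalg V'$ and need the result to be support $\tau_{\J(V_0)}$-rigid, whereas Theorem~\ref{thm:bmcorr} as quoted in this paper only gives the bijection on indecomposable complements; the direct-sum statement is true but must be cited from \cite{bm} rather than assumed.
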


 \subsection{Left and right finite wide subcategories}

Precise results concerning the relationship between torsion pairs and wide subcategories were given in~\cite{ms}.
We recall first their main results, adopting the notation and definitions of~\cite{asai}.

A wide subcategory $\W$ is called {\em left finite} (resp. {\em right finite}) if the torsion class $\FiltGen(\W)$ (resp. the torsion-free class $\FiltCogen(\W)$) is functorially finite. Note that both left and right finite wide subcategories are also themselves functorially finite.

In~\cite{ms}, the authors also consider a map $\WL$ (resp. $\WR$) from the set of torsion classes (resp. torsion-free classes) to the set of wide subcategories. We will not need the explicit definitions of $\WL$ and $\WR$, but instead give an alternative formulation in Theorem \ref{thm:WLtorsionclass}.

\begin{theorem}\cite[Props. 3.3, 3.10; Thm. 3.10]{ms}\label{thm:ms}
With notation as above, the following hold.
\begin{itemize}
\item[(a)] The map $\W \mapsto \FiltGen(\W)$ (resp. $\W \mapsto \FiltCogen(\W)$) is an injection from the set of wide subcategories of $\mods \Lambda$ to the set of
torsion classes (resp. torsion-free classes).
\item[(b)] The map $\WL$ (resp. $\WR$) gives an injection from the set of functorially finite torsion classes (resp. torsion-free classes) to the set of functorially finite wide subcategories. 
\item[(c)] The maps of (a) and (b) restrict to inverse bijections between the functorially finite torsion classes (resp. torsion-free classes) and the left finite (resp. right finite) wide subcategories. Furthermore, for any wide subcategory $\W$, we have $\WL(\FiltGen(\W))=\W = \WR(\FiltCogen(\W))$.
\end{itemize}    
\end{theorem}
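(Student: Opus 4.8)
The plan is to reconstruct the arguments of~\cite{ms} from the explicit definitions of $\WL$ and $\WR$. Recall that for a torsion class $\T$ one sets
\[\WL(\T) = \{X \in \T : \ker g \in \T \text{ for every } g\colon Y \to X \text{ with } Y \in \T\},\]
and dually $\WR(\F) = \{X \in \F : \coker g \in \F \text{ for every } g\colon X \to Y \text{ with } Y \in \F\}$ for a torsion-free class $\F$. Since every statement about $\WR$ and $\FiltCogen$ follows from the corresponding statement about $\WL$ and $\FiltGen$ applied to $\Lambda^{\mathrm{op}}$, I will only treat the latter.

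The first step is to check that $\WL(\T)$ is wide; this is a routine diagram chase. Closure under kernels is immediate: if $f\colon X_1 \to X_2$ with $X_1, X_2 \in \WL(\T)$, then $\ker f \in \T$ because $X_1 \in \T$ and $X_2 \in \WL(\T)$, and any $h\colon Z \to \ker f$ with $Z \in \T$ satisfies $\ker h = \ker(\iota h) \in \T$, where $\iota\colon \ker f \hookrightarrow X_1$. Closure under cokernels and extensions is obtained by pulling back the given morphism along the relevant epimorphism and using that $\T$ is closed under extensions and quotients. As $\WL(\T) \subseteq \T$, this already yields $\FiltGen(\WL(\T)) \subseteq \T$.

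The substance of the theorem lies in two recovery identities: $\WL(\FiltGen(\W)) = \W$ for every wide $\W$, and $\FiltGen(\WL(\T)) = \T$ for every functorially finite torsion class $\T$. For the first, the inclusion $\W \subseteq \WL(\FiltGen(\W))$ begins with the case $g\colon Y \to X$ where $X \in \W$ and $Y \in \Gen\W$: choosing an epimorphism $p\colon P' \twoheadrightarrow Y$ with $P' \in \W$, the composite $gp$ lies in the abelian category $\W$, so $\ker(gp) \in \W$, and then $\ker g = p(\ker(gp))$ is a quotient of an object of $\W$, hence lies in $\Gen\W \subseteq \FiltGen(\W)$. The remaining work is to promote this to arbitrary $Y \in \FiltGen(\W) = \Filt(\Gen\W)$ and, more seriously, to establish the reverse inclusion $\WL(\FiltGen(\W)) \subseteq \W$, i.e.\ that an object of $\FiltGen(\W)$ all of whose ``$\T$-kernels'' remain in $\FiltGen(\W)$ is forced into the smaller category $\W$. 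This reverse inclusion, which requires a careful analysis of the canonical torsion sequences together with the exactness of $\W$, is the first main obstacle.

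For the second identity, the inclusion $\supseteq$ was observed above, and functorial finiteness is exactly what supplies the reverse inclusion $\T \subseteq \FiltGen(\WL(\T))$. Using Proposition~\ref{prop:asff} and Theorem~\ref{thm:Bongartz}, write $\T = \Gen M$ with $M = \Pmods(\T)$ the gen-minimal $\tau$-rigid generator; since $\FiltGen(\WL(\T))$ is a torsion class contained in $\T$, it suffices to show $M \in \FiltGen(\WL(\T))$, as this forces $\Gen M = \T \subseteq \FiltGen(\WL(\T))$. The plan is to filter $M$ by objects of $\Gen(\WL(\T))$, building these from the split Ext-projective generators of $\T$; this construction fails for non-functorially-finite torsion classes and constitutes the second main obstacle. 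Granting the two identities, the theorem follows formally. Part~(a) is immediate from $\WL \circ \FiltGen = \mathrm{id}$ on wide subcategories. For part~(b), if $\WL(\T_1) = \WL(\T_2)$ with $\T_1, \T_2$ functorially finite, applying $\FiltGen$ and the second identity gives $\T_1 = \T_2$, while $\FiltGen(\WL(\T)) = \T$ shows $\WL(\T)$ is left finite (hence functorially finite). Part~(c), including its final clause, is precisely the conjunction of the two recovery identities, and the torsion-free statements follow by the duality noted above.
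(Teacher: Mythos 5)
Your proposal is not a complete proof: it is an outline that correctly reduces the theorem to two ``recovery identities'' --- namely $\WL(\FiltGen(\W)) = \W$ for every wide subcategory $\W$, and $\FiltGen(\WL(\T)) = \T$ for every functorially finite torsion class $\T$ --- together with wideness of $\WL(\T)$, and the formal deductions of (a), (b), (c) from these identities are sound. But the two identities, which you yourself call ``the substance of the theorem'' and flag as the two ``main obstacles'', are never established. For the first identity, you only verify that $\ker g \in \FiltGen(\W)$ when $g\colon Y \to X$ has $X \in \W$ and $Y \in \Gen\W$; both the promotion to arbitrary $Y \in \Filt(\Gen\W)$ and, far more seriously, the entire reverse inclusion $\WL(\FiltGen(\W)) \subseteq \W$ are left unproven, and that reverse inclusion is exactly where the abelian exactness of $\W$ and the canonical torsion sequences must be used --- nothing in your sketch indicates how. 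For the second identity, your reduction to showing $M = \Pmods(\T) \in \FiltGen(\WL(\T))$ is valid (a torsion class containing $M$ contains $\Gen M = \T$), but the claimed filtration of $M$ by objects of $\Gen(\WL(\T))$ is only announced (``the plan is to filter\ldots''), not constructed; this construction is precisely the step where functorial finiteness enters in an essential way and where the statement fails for general torsion classes, so it cannot be waved through.

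A remark on the comparison you were asked to withstand: the paper itself gives no proof of this statement; it is imported wholesale from Marks--\v{S}\v{t}ov\'{\i}\v{c}ek \cite{ms}, with the paper even declining to recall the definitions of $\WL$ and $\WR$ (it works instead with the reformulation in Theorem~\ref{thm:WLtorsionclass}, also cited). So there is no paper-internal argument against which to measure your outline; judged on its own terms as a reconstruction of the proof in \cite{ms}, it identifies the right skeleton and the right division of labour, but leaves the mathematical core as self-acknowledged gaps, and therefore does not constitute a proof.
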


Descriptions of the above maps in terms of $\tau$-perpendicular wide subcategories were given in~\cite{bh}.
	
	\begin{theorem}
		\label{thm:WLtorsionclass}\cite[Lem.~4.3]{bh}
		Let $(\T,\F)$ be a functorially finite torsion pair. Let $P$ (resp. $I$) be the maximal projective (resp. injective) module which satisfies $\Hom(P,\Pmod(\T))=0$ (resp. $\Hom(\Imod(\F),I) = 0$), so that $\Pmod(\T)\amalg P[1]$ (resp. $\Imod(\F) \amalg I[-1]$) is the support $\tau$-tilting object (resp. support $\tau^{-1}$-tilting object) corresponding to $\Pmod(\T)$ (resp. $\Imod(\F)$).
		Then
		\begin{itemize}
			\item[(a)] $\WL(\T)=\J(\Pmodns(\T)\amalg P[1]) = \Jinv(\Imods(\F))$.
			\item[(b)] $\WR(\F)=\J(\Pmods(\T)) = \Jinv(\Imodns(\F)\amalg I[-1])$.
		\end{itemize}
	\end{theorem}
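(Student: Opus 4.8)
The plan is to deduce all four equalities from the Marks–Šťovíček correspondence recorded in Theorem~\ref{thm:ms}(c), which gives $\WL(\FiltGen(\W)) = \W$ for \emph{every} wide subcategory $\W$. Setting $\W := \J(\Pmodns(\T)\amalg P[1])$, which is wide by Definition-Proposition~\ref{prop:J_def}, it therefore suffices for the first equality of (a) to prove that $\FiltGen(\W) = \T$; then $\WL(\T) = \WL(\FiltGen(\W)) = \W$, and the left finiteness of $\W$ is automatic because $\FiltGen(\W) = \T$ is functorially finite. Likewise, the first equality of (b) reduces to $\FiltCogen(\J(\Pmods(\T))) = \F$.

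The inclusion $\FiltGen(\W)\subseteq\T$ is immediate: by Lemma~\ref{lem:perp_injectives_2}(d), $\T = {}^\perp\tau\Pmodns(\T)\cap P^\perp$, so
\[
\W = \Pmodns(\T)^\perp\cap{}^\perp\tau\Pmodns(\T)\cap P^\perp = \Pmodns(\T)^\perp\cap\T\subseteq\T,
\]
and $\T$ is a torsion class. For the reverse inclusion, since $\T = \Gen\Pmods(\T)$ by Lemma~\ref{lem:perp_injectives_2}(a) and $\FiltGen(\W)$ is a torsion class, it is enough to establish the membership $\Pmods(\T)\subseteq\W$, for then $\T = \Gen\Pmods(\T)\subseteq\FiltGen(\W)$. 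Of the three conditions defining $\W$, two hold for free: $\Hom(\Pmods(\T),\tau\Pmodns(\T)) = 0$ because $\Pmod(\T) = \Pmods(\T)\amalg\Pmodns(\T)$ is $\tau$-rigid, and $\Hom(P,\Pmods(\T)) = 0$ because $\Pmods(\T)\in\T$ and $\Hom(P,\T) = 0$.

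The remaining condition, $\Pmods(\T)\in\Pmodns(\T)^\perp$, i.e.\ the vanishing $\Hom(\Pmodns(\T),\Pmods(\T)) = 0$, is the heart of the matter and the step I expect to be the main obstacle. Writing $R := \Pmodns(\T)$ and $S := \Pmods(\T)$, it reduces to showing that any map $g\colon R_i\to S_j$ between indecomposable summands vanishes. If $g$ is an epimorphism it splits, since $S_j$ is split Ext-projective and $R_i\in\T$; this forces $R_i\cong S_j$, which is impossible because a single indecomposable Ext-projective cannot lie in both $\Pmods(\T)$ and $\Pmodns(\T)$. To handle a non-surjective $g$ I would exploit the gen-minimality of $S$ (so $\Gen S = \T$ with no redundant summand) together with the $\tau$-rigidity of $S\amalg R$: Proposition~\ref{prop:ASExt}(a) turns $\Hom(R,\tau S) = 0$ into $\Ext^1(S,-)|_{\Gen R} = 0$, giving precise control over the image of $g$ inside $S_j$ and, I expect, a contradiction with minimality.

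The first equality of (b) is proved by the same argument with $\Cogen$, $\Imods(\F)$, $\Imodns(\F)$ and $I$ replacing $\Gen$, $\Pmods(\T)$, $\Pmodns(\T)$ and $P$; there the decisive vanishing is $\Hom(\Imods(\F),\Imodns(\F)) = 0$, and the orthogonality $\Hom(\T,\F) = 0$ of the torsion pair supplies one of the free conditions. For the two $\Jinv$-equalities I would apply the duality $D = \Hom_K(-,K)\colon\mods\Lambda\to\mods\Lambda^{\mathrm{op}}$, which interchanges $\T$ with $\F$, $\J$ with $\Jinv$, $\WL$ with $\WR$, and the split Ext-projectives of $\T$ with the split Ext-injectives of $\F$; thus $D$ sends the first equality of (b) to $\WL(\T) = \Jinv(\Imods(\F))$ and the first equality of (a) to $\WR(\F) = \Jinv(\Imodns(\F)\amalg I[-1])$. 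Alternatively, these identifications can be read off directly from Lemma~\ref{lem:Jtauminus} and Lemma~\ref{lem:twokinds2}.
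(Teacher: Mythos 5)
Your reduction is sound as far as it goes: by Theorem~\ref{thm:ms}(c) it does suffice to prove $\FiltGen(\W)=\T$ for $\W:=\J(\Pmodns(\T)\amalg P[1])$, your proof of the inclusion $\FiltGen(\W)\subseteq\T$ (via Lemma~\ref{lem:perp_injectives_2}(d), which gives $\W=\Pmodns(\T)^\perp\cap\T$) is correct, and so are the two ``free'' vanishings. But the step you yourself single out as the heart of the matter, namely $\Hom(\Pmodns(\T),\Pmods(\T))=0$, is \emph{false} in general, so the argument cannot be completed along these lines. Take $\Lambda=KQ/\rad^2$ with $Q$ the cyclic quiver $1\to2\to3\to1$ (a self-injective Nakayama algebra), and let $\T=\Gen(P_1\amalg P_2)=\add(P_1\oplus P_2\oplus S_1\oplus S_2)$. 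One checks $\tau S_2=S_3$ and $\Hom(P_1\oplus P_2\oplus S_2,S_3)=0$, so $\Pmod(\T)=P_1\oplus P_2\oplus S_2$ is $\tau$-tilting and $\T$ is functorially finite; here $\Pmods(\T)=P_1\amalg P_2$ (projectives are always split Ext-projective), while $S_2$ is Ext-projective but not split because of the epimorphism $P_2\twoheadrightarrow S_2$, so $\Pmodns(\T)=S_2$. However $\Hom(S_2,P_1)\neq0$, since $S_2$ is the socle of $P_1$; this nonzero map is a monomorphism, i.e.\ exactly the non-surjective case that your sketch leaves open, and no gen-minimality or $\tau$-rigidity argument can rule it out.

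Moreover the failure is not repairable within your strategy: in this example $P=0$ (as $\Hom(P_3,P_2)\neq0$), so $\W=\J(S_2)=S_2^\perp\cap{}^\perp S_3=\add(P_2\oplus S_1)$, and hence $\Pmods(\T)\notin\W$, indeed $P_1\notin\Gen\W$. The equality $\T=\FiltGen(\W)$ does hold (and one can verify directly that $\WL(\T)=\add(P_2\oplus S_1)=\J(S_2)$, consistent with the theorem), but only because $P_1$ admits a genuine two-step filtration $0\subset S_2\subset P_1$ with $S_2\in\Gen P_2$ and $P_1/S_2=S_1\in\W$. So the reverse inclusion $\T\subseteq\FiltGen(\W)$ really requires producing such filtrations, not a membership $\Pmods(\T)\in\W$; this is the actual content of the cited result. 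Note also that the paper offers no proof to compare against --- Theorem~\ref{thm:WLtorsionclass} is imported from \cite[Lem.~4.3]{bh} --- so your argument has to stand on its own, and with the key vanishing false (its dual $\Hom(\Imods(\F),\Imodns(\F))=0$ fails for the same reason, which undermines part (b) and the duality step you build on it), it does not.
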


It was pointed out by Asai~\cite{asai} that Serre subcategories
of left (or right) finite wide subcategories are not necessarily
left (or right) finite. In particular, $\tau$-perpendicular wide subcategories are not necessarily left (or right) finite. Moreover, it was proved in~\cite[Theorem~1.1]{bh} that a wide category is 
 $\tau$-perpendicular if and only if it is a Serre subcategory of a left (or right) finite wide subcategory. Note that part of~\cite[Theorem~1.1]{bh} says that this condition is left/right symmetric, i.e., that every Serre subcategory of a left finite wide subcategory is also a Serre subcategory of right finite wide subcategory and vice versa.

Theorem~\ref{thm:WLtorsionclass} allows us to deduce the following.

\begin{corollary}\label{cor:proj_inj}
    Let $(\T, \F)$ be a functorially finite torsion pair. Let $P$ be the maximal projective module with the property that $\Hom(P,\T) = 0$. Then the following hold.
    \begin{enumerate}
        \item[(a)] $\Imods(\F) = \tau(\Pmodns(\T)) \amalg \nu P$.
        \item[(b)] $\Imodns(\F) = \tau(\Pmods(\T))$.
    \end{enumerate}
\end{corollary}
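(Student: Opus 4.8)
The plan is to prove both parts simultaneously by identifying the support $\tau^{-1}$-tilting object attached to $\F$ as the Nakayama--AR transform of the support $\tau$-tilting object attached to $\T$, and then separating the split from the non-split summands by a Krull--Schmidt cancellation. Concretely, recall that $\Pmod(\T)\amalg P[1]$ is the support $\tau$-tilting object corresponding to $\T$; let $Q$ denote the maximal projective direct summand of $\Pmod(\T)$. By Lemma~\ref{lem:twokinds2}(a), $W:=\tau\Pmod(\T)\amalg\nu P\amalg\nu Q[-1]$ is support $\tau^{-1}$-rigid, and a rank count shows it has full rank, so it is support $\tau^{-1}$-tilting. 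The first key step is to show that its module part $\tau\Pmod(\T)\amalg\nu P$ cogenerates $\F$. Using the standard fact that $\Hom(X,\nu P)$ vanishes precisely when $\Hom(P,X)$ does (so that $P^\perp={}^\perp(\nu P)$), I compute ${}^\perp(\tau\Pmod(\T)\amalg\nu P)={}^\perp\tau\Pmod(\T)\cap P^\perp=\T$ by Lemma~\ref{lem:perp_injectives_2}(c); since the module is $\tau^{-1}$-rigid its $\Cogen$ is already a functorially finite torsion-free class, forcing $\Cogen(\tau\Pmod(\T)\amalg\nu P)=\T^\perp=\F$. By the bijection between support $\tau^{-1}$-tilting modules and functorially finite torsion-free classes (Theorem~\ref{thm:dualBongartz}(b)), I conclude $\Imod(\F)=\tau\Pmod(\T)\amalg\nu P$ (the ``total'' statement), together with $I=\nu Q$.

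For part (a), I would first observe that $\Pmodns(\T)$ has no projective direct summand: any indecomposable projective $\Pi\in\T=\Gen\Pmods(\T)$ receives a split epimorphism from a power of $\Pmods(\T)$, hence is a summand of $\Pmods(\T)$ and therefore split Ext-projective, so it cannot appear in $\Pmodns(\T)$. Consequently $\tau\Pmodns(\T)\amalg\nu P$ is a genuine $\tau^{-1}$-rigid module (Lemma~\ref{lem:twokinds2}(a), with no shifted-injective contribution), and the same perpendicular computation as above---now via Lemma~\ref{lem:perp_injectives_2}(d)---gives $\Cogen(\tau\Pmodns(\T)\amalg\nu P)=\F$. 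Corollary~\ref{cor:genmin}(b) then shows $\Imods(\F)$ is a direct summand of $\tau\Pmodns(\T)\amalg\nu P$. To upgrade this to equality I compare ranks: from Theorem~\ref{thm:WLtorsionclass}(a) we have $\WL(\T)=\J(\Pmodns(\T)\amalg P[1])=\Jinv(\Imods(\F))$, and the corank formula of Definition-Proposition~\ref{prop:J_def} (extended to $\Jinv$ through Lemma~\ref{lem:Jtauminus}) yields $\rk\Imods(\F)=\rk\Pmodns(\T)+\rk P=\rk(\tau\Pmodns(\T)\amalg\nu P)$, whence $\Imods(\F)=\tau\Pmodns(\T)\amalg\nu P$.

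Part (b) then follows formally: writing $\Imod(\F)=\Imods(\F)\amalg\Imodns(\F)$ and substituting the total statement $\Imod(\F)=\tau\Pmods(\T)\amalg\tau\Pmodns(\T)\amalg\nu P$ together with part (a), Krull--Schmidt cancellation of the common summand $\Imods(\F)=\tau\Pmodns(\T)\amalg\nu P$ leaves $\Imodns(\F)=\tau\Pmods(\T)$. I expect the main obstacle to be exactly this passage from equalities of (dual) $\tau$-perpendicular categories to equalities of modules: Theorem~\ref{thm:WLtorsionclass} and Lemma~\ref{lem:Jtauminus} only identify $\WL(\T)$ and $\WR(\F)$ as $\tau^{-1}$-perpendicular categories of two a priori different support $\tau^{-1}$-rigid objects, and since $\Jinv$ is not injective this is not enough on its own. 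The real work lies in pinning down the actual objects---via the cogen-minimal uniqueness in Corollary~\ref{cor:genmin}(b), the rank/corank bookkeeping, and the cancellation above---and in the small but essential observation that $\Pmodns(\T)$ carries no projective summand, which is precisely what guarantees that the Nakayama transform in part (a) leaves no stray shifted-injective term.
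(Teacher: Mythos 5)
Your proposal is correct. For part (a) it is essentially the paper's own argument: the same chain of steps---$\Pmodns(\T)\amalg P[1]$ is support $\tau$-rigid, hence $\tau\Pmodns(\T)\amalg\nu P$ is $\tau^{-1}$-rigid by Lemma~\ref{lem:twokinds2}; the computation $\F=\Cogen(\tau\Pmodns(\T)\amalg\nu P)$ via Lemma~\ref{lem:perp_injectives_2}(d) and ${}^{\perp}(\nu P)=P^{\perp}$; the summand statement from Corollary~\ref{cor:genmin}(b); and the rank count through $\WL(\T)=\J(\Pmodns(\T)\amalg P[1])=\Jinv(\Imods(\F))$, Definition-Proposition~\ref{prop:J_def} and Lemma~\ref{lem:Jtauminus}---appears there almost verbatim. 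Your explicit verification that $\Pmodns(\T)$ has no projective direct summands (needed so that applying $\tau$ does not drop summands in the final rank comparison) is left implicit in the paper and is a worthwhile addition. Where you genuinely diverge is part (b): the paper deduces it from (a) in one line by invoking Proposition~\ref{prop:tauprojectiveinjective}, i.e.\ the classical AR-theoretic correspondence under $\tau$ between Ext-projectives of $\T$ and Ext-injectives of $\F$. You instead first establish the ``total'' identity $\Imod(\F)=\tau\Pmod(\T)\amalg\nu P$, by checking that $\tau\Pmod(\T)\amalg\nu P\amalg\nu Q[-1]$ is a full-rank support $\tau^{-1}$-rigid object whose module part cogenerates $\F$, so that the bijection of Theorem~\ref{thm:dualBongartz}(b) pins it down as $\Imod(\F)$; then (b) follows by Krull--Schmidt cancellation of the common summand $\Imods(\F)=\tau\Pmodns(\T)\amalg\nu P$ from the two decompositions of $\Imod(\F)$. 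Both routes are valid. Yours costs an extra step (identifying the support $\tau^{-1}$-tilting object of $\F$), but it makes (b) purely formal and records the total statement, which is of independent interest; the paper's is shorter but leaves the reader to unpack how Proposition~\ref{prop:tauprojectiveinjective} distributes the Ext-injectives of $\F$ between $\Imods(\F)$ and $\Imodns(\F)$.
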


\begin{proof}
We have that $\Pmodns(\T) \amalg P[1]$ is a support $\tau$-rigid object by Theorem~\ref{thm:Bongartz}(b). Thus $\tau(\Pmodns(\T)) \amalg \nu P$ is a $\tau^{-1}$-rigid module by Lemma~\ref{lem:twokinds2}.
Definition-Proposition~\ref{defprop:torsionpairs} and Theorem~\ref{thm:dualBongartz}(a) then imply that $\FiltCogen(\tau \Pmodns(\T) \amalg \nu P)
        = \Cogen(\tau \Pmodns(\T) \amalg \nu P)$. Using this and Lemma~\ref{lem:perp_injectives_2}(d),  we then compute
\begin{align*}
    \F &= \T^\perp\\
        &= \left({}^\perp\tau\Pmodns(\T) \cap P^\perp\right)^\perp\\
        &= \left({}^\perp\tau\Pmodns(\T) \cap 
        {^{\perp}(\nu P)}\right)^\perp \\
        &= \left({}^\perp(\tau \Pmodns(\T) \amalg \nu P)\right)^\perp\\
        &= \FiltCogen(\tau \Pmodns(\T) \amalg \nu P)\\
        &= \Cogen(\tau \Pmodns(\T) \amalg \nu P).
\end{align*}
 Thus $\Imods(\F)$ is a direct summand of $\tau \Pmodns(\T) \amalg \nu P$ by Corollary~\ref{cor:genmin}(b). Now note that $\J(\Pmodns(\T) \amalg P[1]) = \WL(\T) = \Jinv(\Imods(\F))$ by Theorem~\ref{thm:WLtorsionclass}(a). 
 Thus, using Definition-Proposition \ref{prop:J_def} and Lemma \ref{lem:Jtauminus} we conclude that $\tau \Pmodns(\T) \amalg \nu P$ and $\Imods(\F)$ have the same number of indecomposable direct summands. We conclude that $\Imods(\F) = \tau(\Pmodns(\T)) \amalg \nu P$, proving item (a). Given item (a), item (b) then follows from Proposition~\ref{prop:tauprojectiveinjective}.
\end{proof}

	\subsection{Projectives in $\tau$-perpendicular subcategories}

Let $M$ be a $\tau$-rigid module. Recall from Definition-Proposition \ref{defprop:torsionpairs} that the wide subcategory $\J(M)$ generates two minimal torsion pairs:
$$(\FiltGen(\J(M)), \J(M)^\perp)\qquad \text{   and   }\qquad ({}^\perp{\J(M)}, \FiltCogen(\J(M))).$$

\begin{proposition}\label{prop:gen_minimal}\
		Let $(\T,\F)$ be a functorially finite torsion pair. Then $\T = {}^\perp \J(\Pmods(\T))$ and $\F = \Jinv(\Imods(\F))^\perp$.
	\end{proposition}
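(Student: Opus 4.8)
The plan is to prove the two identities in parallel, deducing each from the identification of $\J(\Pmods(\T))$ (resp. $\Jinv(\Imods(\F))$) with one of the wide subcategories $\WR(\F)$ (resp. $\WL(\T)$) provided by Theorem~\ref{thm:WLtorsionclass}, and then transporting back along the inverse bijections of Theorem~\ref{thm:ms}(c). Since $(\T,\F)$ is functorially finite, $\F$ is a functorially finite torsion-free class and $\T$ a functorially finite torsion class (Proposition~\ref{prop:asff}), so these subcategories lie in the domains of the relevant bijections; the whole argument is then a bookkeeping assembly of results already established, using only the general torsion-pair facts of Definition-Proposition~\ref{defprop:torsionpairs}.

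For the first identity, I would set $M := \Pmods(\T)$ and invoke Theorem~\ref{thm:WLtorsionclass}(b) to write $\J(M) = \WR(\F)$. Because $\F$ is a functorially finite torsion-free class, Theorem~\ref{thm:ms}(c) says that $\WR$ and $\W \mapsto \FiltCogen(\W)$ are mutually inverse bijections between functorially finite torsion-free classes and right finite wide subcategories; hence $\FiltCogen(\J(M)) = \FiltCogen(\WR(\F)) = \F$. By Definition-Proposition~\ref{defprop:torsionpairs}(c), the smallest torsion-free class containing $\J(M)$ is $\FiltCogen(\J(M)) = ({}^\perp \J(M))^\perp$, so $\F = ({}^\perp \J(M))^\perp$. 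Finally I apply ${}^\perp(-)$: since $(\T,\F)$ is a torsion pair we have $\T = {}^\perp \F$, and since $({}^\perp \J(M), ({}^\perp \J(M))^\perp)$ is itself a torsion pair by Definition-Proposition~\ref{defprop:torsionpairs}(a), its first component satisfies ${}^\perp(({}^\perp \J(M))^\perp) = {}^\perp \J(M)$. Combining these gives $\T = {}^\perp \F = {}^\perp \J(M) = {}^\perp \J(\Pmods(\T))$.

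The second identity is the exact dual. Setting $N := \Imods(\F)$, Theorem~\ref{thm:WLtorsionclass}(a) gives $\Jinv(N) = \WL(\T)$, and Theorem~\ref{thm:ms}(c) (now for torsion classes and left finite wide subcategories, with inverse map $\W \mapsto \FiltGen(\W)$) yields $\FiltGen(\Jinv(N)) = \FiltGen(\WL(\T)) = \T$. By Definition-Proposition~\ref{defprop:torsionpairs}(b) the smallest torsion class containing $\Jinv(N)$ is $\FiltGen(\Jinv(N)) = {}^\perp(\Jinv(N)^\perp)$, so $\T = {}^\perp(\Jinv(N)^\perp)$. Applying $(-)^\perp$, and using both $\F = \T^\perp$ and the fact that $({}^\perp(\Jinv(N)^\perp), \Jinv(N)^\perp)$ is a torsion pair (so that $({}^\perp(\Jinv(N)^\perp))^\perp = \Jinv(N)^\perp$), I obtain $\F = \Jinv(N)^\perp = \Jinv(\Imods(\F))^\perp$.

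I do not expect a genuine obstacle here, as each step is a direct citation. The only points requiring care are administrative: making sure $\F$ and $\T$ genuinely lie in the domains of the bijections (functorial finiteness), selecting the correct direction of the $\WR/\FiltCogen$ and $\WL/\FiltGen$ bijections, and keeping the one-sided orthogonals ${}^\perp(-)$ and $(-)^\perp$ on the correct side throughout. If any friction arises it will be in verifying the equality $\FiltCogen(\WR(\F)) = \F$ (and its dual) against the precise wording of Theorem~\ref{thm:ms}(c), which I would state explicitly to avoid a direction error.
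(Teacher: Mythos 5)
Your proof is correct and follows essentially the same route as the paper's: identifying $\J(\Pmods(\T))$ with $\WR(\F)$ via Theorem~\ref{thm:WLtorsionclass}(b), recovering $\F$ through the inverse bijection of Theorem~\ref{thm:ms}(c), and converting between $\FiltCogen$ and the perpendicular categories using Definition-Proposition~\ref{defprop:torsionpairs}. The only differences are cosmetic: the paper proves one identity and invokes duality for the other, and inserts the intermediate reformulation $\F = \Pmods(\T)^\perp$ (via Lemma~\ref{lem:perp_injectives_2}), which your argument harmlessly bypasses.
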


\begin{proof}
We prove $\T = {}^\perp \J(\Pmods(\T))$. The argument that $\F = \Jinv(\Imods(\FiltCogen))^\perp$ is dual.
By Lemma~\ref{lem:perp_injectives_2}, we have $\T=\Gen \Pmods(\T)$, and hence $\F = \T^\perp = \Pmods(\T)^{\perp}$. 
By Theorem~\ref{thm:WLtorsionclass}(b), $$\WR(\Pmods(\T)^\perp)=\WR(\F)=\J(P_s(\T)).$$
Since $({}^\perp{\J(P_s(\T))}, \FiltCogen(\J(P_s(\T))))$ is a torsion pair (see Definition-Proposition~\ref{defprop:torsionpairs}), we 
have that ${}^\perp\J(\Pmods(\T))={}^{\perp} \FiltCogen(\J(\Pmods(\T)))$.  
Using Theorem~\ref{thm:ms}, we have that $\Pmods(\T)^{\perp} = \FiltCogen(\WR(\Pmods(\T)^{\perp}))$, and hence, using Theorem~\ref{thm:WLtorsionclass}(b), we have
\begin{align*}
\T&= {}^{\perp}{\F}\\
    &= {}^{\perp}(\Pmods(\T)^{\perp}) \\  
&={}^{\perp} \FiltCogen(\WR(\Pmods(\T)^{\perp}) )\\
&={}^\perp \FiltCogen(\J(\Pmods(\T)) \\
&={}^\perp \J(\Pmods(\T)).\qedhere
\end{align*}
\end{proof}

	We now state characterisations of gen-minimal $\tau$-rigid
 modules. These will be crucial for our main results. We also include the dual version of each statement.
	
	\begin{theorem}\label{thm:gen_minimal}
 \begin{itemize}
\item[(a)]		Let $M$ be $\tau$-rigid. Then $M$ is gen-minimal if and only if $\Gen M = {}^\perp{\J(M)}$.
\item[(b)]		Let $N$ be a $\tau^{-1}$-rigid $\Lambda$-module. Then $N$ is cogen-minimal if and only if $\Cogen N=\Jinv(N)^{\perp}$.
  \end{itemize}
\end{theorem}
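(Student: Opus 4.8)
The plan is to reduce both implications of part (a) to Proposition~\ref{prop:gen_minimal}, which already records that $\T = {}^\perp \J(\Pmods(\T))$ for every functorially finite torsion class $\T$. The key observation is that the ``gen-minimal representative'' of $M$ is exactly $M' := \Pmods(\Gen M)$. Indeed, $M'$ is a direct summand of $M$ by Corollary~\ref{cor:genmin}(a); it satisfies $\Gen M' = \Gen M$ by Lemma~\ref{lem:perp_injectives_2}(a); and it is itself gen-minimal, being in the image of the inverse bijection $\T \mapsto \Pmods(\T)$ of Theorem~\ref{thm:Bongartz}(b). Hence $M$ is gen-minimal if and only if $M = M'$. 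Since $\Gen M$ is functorially finite by Theorem~\ref{thm:Bongartz}(a), applying Proposition~\ref{prop:gen_minimal} to $\T = \Gen M$ yields the identity $\Gen M = {}^\perp \J(M')$, which serves as the common starting point for both directions.

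For the forward implication this is immediate: if $M$ is gen-minimal then $M = M'$, so the identity reads $\Gen M = {}^\perp \J(M)$, as desired.

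For the converse I would argue by contradiction. Assume $\Gen M = {}^\perp \J(M)$ but $M \neq M'$. Combined with $\Gen M = {}^\perp \J(M')$, this gives ${}^\perp \J(M) = {}^\perp \J(M')$. Applying $(-)^\perp$ and invoking Definition-Proposition~\ref{defprop:torsionpairs}(c), which identifies $({}^\perp\SC)^\perp$ with $\FiltCogen(\SC)$, this becomes $\FiltCogen(\J(M)) = \FiltCogen(\J(M'))$. As $\J(M)$ and $\J(M')$ are wide subcategories, the injectivity of $\W \mapsto \FiltCogen(\W)$ from Theorem~\ref{thm:ms}(a) forces $\J(M) = \J(M')$. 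On the other hand, since $M'$ is a proper direct summand of the basic module $M$, we have $\rk M > \rk M'$, so the rank formula of Definition-Proposition~\ref{prop:J_def} gives $\rk\J(M) = \rk\Lambda - \rk M < \rk\Lambda - \rk M' = \rk\J(M')$, contradicting $\J(M) = \J(M')$. Therefore $M = M'$ and $M$ is gen-minimal.

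The step I expect to be the crux — and the one that makes the whole argument go through — is recovering $\J(M)$ from its left perpendicular category ${}^\perp\J(M)$, since in general passing to a left perpendicular loses information. This is precisely where the injectivity of $\W \mapsto \FiltCogen(\W)$ (together with the torsion-pair identity $({}^\perp\SC)^\perp = \FiltCogen\SC$) is indispensable, and the rank formula for $\tau$-perpendicular categories then eliminates the only remaining degenerate possibility. Finally, part (b) follows by a routine dualization: replace $\Gen$, $\Pmods$, ${}^\perp(-)$, $\J$, $\FiltCogen$ by $\Cogen$, $\Imods$, $(-)^\perp$, $\Jinv$, $\FiltGen$ throughout, now starting from the second identity $\F = \Jinv(\Imods(\F))^\perp$ of Proposition~\ref{prop:gen_minimal} and invoking the dual inputs (Corollary~\ref{cor:genmin}(b), Theorem~\ref{thm:dualBongartz}(b), the $\FiltGen$ half of Theorem~\ref{thm:ms}(a), and the identity ${}^\perp(\SC^\perp) = \FiltGen(\SC)$ of Definition-Proposition~\ref{defprop:torsionpairs}(b)).
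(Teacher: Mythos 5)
Your proposal is correct and follows essentially the same route as the paper's proof: both directions reduce to Proposition~\ref{prop:gen_minimal} applied to $\T = \Gen M$ via $\Pmods(\Gen M)$, recover $\J(M) = \J(\Pmods(\Gen M))$ from the equality of the perpendicular torsion classes using Definition-Proposition~\ref{defprop:torsionpairs} and Theorem~\ref{thm:ms}, and finish with the rank formula of Definition-Proposition~\ref{prop:J_def} together with Corollary~\ref{cor:genmin}(a). The only cosmetic differences are that you invoke injectivity of $\W \mapsto \FiltCogen(\W)$ (Theorem~\ref{thm:ms}(a)) where the paper applies the retraction $\WR$ (Theorem~\ref{thm:ms}(c)), and you phrase the final step as a contradiction rather than a direct count of indecomposable summands.
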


\begin{proof}
  We prove only (a) since (b) is its dual. Assume $M$ is gen-minimal and note that the torsion class $\Gen M$ is functorially finite, so we have we have $\Pmods(\Gen M )= M$ by Corollary~\ref{cor:genmin}(a). Hence, by Proposition~\ref{prop:gen_minimal}, we have $\Gen M={}^\perp \J(\Pmods(\Gen M))={}^\perp \J(M)$.
   
    Conversely, assume that $\Gen M= {}^\perp\J(M)$.
 Then, by Proposition~\ref{prop:gen_minimal}, we have that $\Gen M={}^\perp \J(\Pmods(\Gen M))$.
    Combining this, we have ${}^\perp \J(M)={}^\perp \J(\Pmods(\Gen M))$, so by Definition-Proposition~\ref{defprop:torsionpairs} we also have
    $\FiltCogen(\J(M)) = \FiltCogen(\J(\Pmods(\Gen M)))$.
     Thus
     $$\WR(\FiltCogen(\J(M)))=\WR(\FiltCogen(\J(\Pmods(\Gen M)))),$$
and therefore $$\J(M)=\J(\Pmods(\Gen M)),$$
by Theorem~\ref{thm:ms}.
By Definition-Proposition~\ref{prop:J_def}, the modules $M$ and $\Pmods(\Gen M)$ have the same number of indecomposable direct summands.
Since $\Pmods(\Gen M)$ is a direct summand of $M$ (by Corollary~\ref{cor:genmin}(a)), we have $M=\Pmods(\Gen M)$ is gen-minimal as required.
\end{proof}

As an immediate consequence of Theorem~\ref{thm:gen_minimal}, we obtain the following.

\begin{corollary}\label{cor:gen_minimal}
  \begin{itemize}
\item[(a)] Suppose $M$ is a $\tau$-rigid gen-minimal module. Then the torsion pair 
$({}^\perp{\J(M)}, \FiltCogen(\J(M)))$
is functorially finite, and thus $\J(M)$ is right finite.
\item[(b)]  Suppose $N$ is a $\tau^{-1}$-rigid cogen-minimal module. Then the torsion pair $(\FiltGen(\Jinv(N)), \Jinv(N)^\perp)$ is functorially finite, and thus $\Jinv(N)$ is left finite. 
   \end{itemize}       
 \end{corollary}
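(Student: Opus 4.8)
The plan is to read off each claim directly from Theorem~\ref{thm:gen_minimal} once the correct torsion pair is identified. The key observation is that the pair appearing in part (a) is precisely the second torsion pair attached to the subcategory $\J(M)$ in Definition-Proposition~\ref{defprop:torsionpairs}, and that gen-minimality lets us replace its torsion class by $\Gen M$, whose functorial finiteness is already known from the $\tau$-rigidity of $M$.

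For part (a), I would first record that $({}^\perp \J(M), \FiltCogen(\J(M)))$ is genuinely a torsion pair: by Definition-Proposition~\ref{defprop:torsionpairs}(a),(c) we have $({}^\perp \J(M))^\perp = \FiltCogen(\J(M))$, so the displayed pair is exactly the torsion pair generated by the wide subcategory $\J(M)$. Since $M$ is $\tau$-rigid and gen-minimal, Theorem~\ref{thm:gen_minimal}(a) supplies the equality ${}^\perp \J(M) = \Gen M$. But $\Gen M$ is a functorially finite torsion class by Theorem~\ref{thm:Bongartz}(a), so the torsion class of our pair is functorially finite, and hence the torsion pair itself is functorially finite. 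By Proposition~\ref{prop:asff} this forces the torsion-free class $\FiltCogen(\J(M))$ to be functorially finite as well, which is exactly the statement that $\J(M)$ is right finite.

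Part (b) follows by the dual argument: one replaces Theorem~\ref{thm:gen_minimal}(a) by Theorem~\ref{thm:gen_minimal}(b) to obtain $\Jinv(N)^\perp = \Cogen N$, replaces Theorem~\ref{thm:Bongartz}(a) by Theorem~\ref{thm:dualBongartz}(a) to see that $\Cogen N$ is a functorially finite torsion-free class, and then reads off left finiteness of $\Jinv(N)$ from the resulting functorial finiteness of $\FiltGen(\Jinv(N))$, again via Proposition~\ref{prop:asff}.

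The argument is essentially bookkeeping, and I do not anticipate a genuine obstacle: everything hinges on matching the torsion pairs displayed in the statement to those produced by $\J(M)$ and $\Jinv(N)$, and then invoking the equivalence of functorial finiteness for the two halves of a torsion pair. The one point that warrants explicit care is confirming that the torsion-free class $\FiltCogen(\J(M))$ in the displayed pair literally equals $({}^\perp \J(M))^\perp$ (and dually for $\FiltGen(\Jinv(N))$), so that functorial finiteness transfers cleanly between the two components; this is immediate from Definition-Proposition~\ref{defprop:torsionpairs}.
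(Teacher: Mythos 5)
Your proof is correct and follows exactly the route the paper intends: the paper states this corollary as an immediate consequence of Theorem~\ref{thm:gen_minimal}, and your argument simply spells out that deduction (identify the torsion pair via Definition-Proposition~\ref{defprop:torsionpairs}, substitute $\Gen M$ for ${}^\perp\J(M)$ using gen-minimality, invoke Theorem~\ref{thm:Bongartz}(a) and Proposition~\ref{prop:asff}, and dualize for part (b)).
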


 Together with Corollary~\ref{cor:proj_inj}, the above results allow us to deduce the following descriptions of the bijections $\Ebm$ and $\Fbm$ recalled in Theorem~\ref{thm:bmcorr}.

 \begin{theorem}\label{thm:E_map}
    Let $U = M \amalg P[1]$ be a support $\tau$-rigid object in $\C(\Lambda)$, and let $V \in \ind(\mods\Lambda \amalg\P(\Lambda)[1])$ be such that $U \amalg V$ is also a support $\tau$-rigid object in $\C(\Lambda)$. Then
    $$\Ebm_{U}(V) = \begin{cases} \Pmods\left(\J(U) \cap {}^\perp{\J(U \amalg V)}\right) & V \in \mods\Lambda \textnormal{ and } V \notin \Gen M\\\Pmods\left(\J(U) \cap {}^\perp{\J(U \amalg V)}\right)[1] & \textnormal{$V\in \Gen M$ or $V\in \P(\Lambda)[1]$}.\end{cases}$$
\end{theorem}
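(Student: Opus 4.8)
The plan is to prove the two cases of the formula for $\Ebm_U(V)$ by reducing to the characterizations of gen-minimal modules from Theorem~\ref{thm:gen_minimal} and the relationship between $\J$ and $\Ebm$ from Theorem~\ref{thm:J_E}. The key structural fact I would exploit is Theorem~\ref{thm:J_E}(a), which says that $\J(U \amalg V) = \J(\Ebm_V(U), V)$; but I want to express things the other way around, using that $\J(U \amalg V) = \J_{\J(U)}(\Ebm_U(V))$. Indeed, by the recursive definition of $\J$ applied inside $\C(\J(U))$, the object $W := \Ebm_U(V)$ is exactly the support $\tau_{\J(U)}$-rigid object whose $\tau_{\J(U)}$-perpendicular category (taken inside $\J(U)$) equals $\J(U \amalg V)$. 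So the whole problem is to identify this $W$ concretely as the split split-Ext-projective generator of the torsion class $\J(U) \cap {}^\perp \J(U \amalg V)$ computed relative to $\J(U)$.

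**First I would** set up the reduction to working inside the wide subcategory $\W := \J(U)$, using Remark~\ref{rem:CW} to transport all the $\tau$-tilting theory from $\mods\Lambda$ to $\mods\Lambda_\W \cong \W$; this lets me apply Theorem~\ref{thm:gen_minimal} and Corollary~\ref{cor:genmin} with $\W$ in place of $\mods\Lambda$. Write $W = \Ebm_U(V)$, which by Theorem~\ref{thm:bmcorr}(a) is a support $\tau_\W$-rigid object in $\C(\W)$; by Theorem~\ref{thm:bmcorr}(a)(ii), $W$ is a (shifted projective) object of $\P(\W)[1]$ precisely when $V \in \Gen M$ or $V \in \P(\Lambda)[1]$, which matches the case split in the statement. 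In the first case, where $V \in \mods\Lambda$ and $V \notin \Gen M$, the object $W$ is an honest $\tau_\W$-rigid \emph{module} in $\W$.

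**The core computation** is then: in the first case, I claim $W = \Pmods^\W(\Gen_\W W)$ is gen-minimal as an object of $\W$ (any single indecomposable support $\tau$-rigid object is automatically gen-minimal, since it has no proper nonzero summands with strictly smaller $\Gen$), so by the $\W$-version of Theorem~\ref{thm:gen_minimal}(a) we get $\Gen_\W W = {}^{\perp_\W} \J_\W(W)$. By Theorem~\ref{thm:J_E}(b) (or rather its restatement via $\J_\W(W) = \J(U \amalg V)$ using Theorem~\ref{thm:J_E}(a)), the category $\J_\W(W)$ equals $\J(U \amalg V)$, and hence inside $\W$ we have $\Gen_\W W = \W \cap {}^\perp \J(U \amalg V) = \J(U) \cap {}^\perp \J(U \amalg V)$. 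Applying the $\W$-analogue of Corollary~\ref{cor:genmin}(a), namely $W = \Pmods^\W(\Gen_\W W)$, yields exactly $W = \Pmods\big(\J(U) \cap {}^\perp \J(U \amalg V)\big)$, as required. The second case is handled identically after noting that $W[-1]$ (the underlying projective object of $\W$) plays the role of the gen-minimal module, and the formula acquires the shift $[1]$; here I would pay attention to the degenerate situation $U \in \P(\Lambda)[1]$ via Theorem~\ref{thm:bmcorr}(a)(iii).

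**The main obstacle I anticipate** is bookkeeping the relativization carefully: Theorem~\ref{thm:gen_minimal} and Corollary~\ref{cor:genmin} are stated for $\mods\Lambda$, so I must verify that they transport faithfully to $\W$ via the equivalence $F_\Lambda \colon \W \to \mods\Lambda_\W$ of Remark~\ref{rem:CW}, and in particular that $\Pmods$, $\Gen$, and ${}^\perp$ computed relative to $\W$ behave correctly under this equivalence and that the torsion class $\J(U) \cap {}^\perp \J(U \amalg V)$ is genuinely functorially finite inside $\W$ (this last point follows from Corollary~\ref{cor:gen_minimal}(a) applied in $\W$, once $W$ is known to be gen-minimal). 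The interplay between the global ${}^\perp$ (in $\mods\Lambda$) appearing in the statement and the relative ${}^{\perp_\W}$ used in the proof requires the identity $\W \cap {}^\perp \SC = {}^{\perp_\W} \SC$ for $\SC \subseteq \W$, which holds because $\Hom$ in $\W$ is the restriction of $\Hom$ in $\mods\Lambda$; confirming this compatibility is where the care lies, but it is routine once stated precisely.
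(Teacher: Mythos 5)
Your proposal is correct and follows essentially the same route as the paper's proof: both use Theorem~\ref{thm:bmcorr}(a)(ii) to match the case split, Theorem~\ref{thm:J_E} to get $\J_{\J(U)}(\Ebm_U(V)) = \J(U\amalg V)$, and then the observation that an indecomposable $\tau_{\J(U)}$-rigid module is gen-minimal together with Theorem~\ref{thm:gen_minimal} and Corollary~\ref{cor:genmin}(a) applied inside $\J(U)$. The only cosmetic difference is that the paper first invokes the uniqueness statement of \cite[Prop.~6.15(2)]{bh} to name the module $N$ with $\J(N,U)=\J(U\amalg V)$ and then identifies both cases with $N$, whereas you run the gen-minimality argument directly on $\Ebm_U(V)$ (resp.\ its shift $\Ebm_U(V)[-1]$), which is equally valid.
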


\begin{proof}
We recall the following:
    \begin{itemize}
        \item Since $\rk(\J(U\amalg V)) = \rk(\J(U)) - 1$ (by Definition-Proposition~\ref{prop:J_def}), there exists a unique module $N$ which is $\tau_{\J(U)}$-rigid and satisfies $\J(N,U) = \J(U \amalg V)$, by~\cite[Thm.\ 6.4, Prop.\ 6.15(2)]{bh} (see also~\cite[Thm.\ 1.4, Prop.\ 10.7]{bm2} for the $\tau$-tilting finite case).
        \item If $V \in \mods\Lambda$ and $V \notin \Gen M$, then $\Ebm_U(V)$ is an indecomposable module which is $\tau_{\J(U)}$-rigid (by Theorem~\ref{thm:bmcorr})
        and satisfies $\J(\Ebm_U(V),U) = \J(U \amalg V)$ (by Theorem~\ref{thm:J_E}). Thus $\Ebm_U(V) = N$ in this case.
        \item If either $V \in \Gen M$ or $V \in \mods\Lambda[1]$, then $\Ebm_U(V)$ is the shift of an indecomposable module and satisfies $\J(\Ebm_U(V),U) = \J(\Ebm_U(V)[-1], U) = \J(U\amalg V)$ (by Theorem~\ref{thm:J_E}). Thus $\Ebm_U(V) = N[1]$ in this case.
    \end{itemize}

    It remains to show that the module $N$ from the first bullet point is $P_s\left(\J(U) \cap {}^\perp{\left[\J(U \amalg V)\right]}\right)$. To see this, note that $N$ is indecomposable, and thus gen-minimal. Thus the desired equality follows from applying Theorem~\ref{thm:gen_minimal} and Corollary~\ref{cor:genmin}(a) in the subcategory $\J(U)$ and recalling that $\J(U\amalg V) = \J_{\J(U)}(N)$ by assumption.
\end{proof}

As before, let $f_M$ denote the torsion-free functor associated to the torsion pair $(\Gen M,M^{\perp})$,
 where $M$ is $\tau$-rigid. 
 To characterize the map $\Fbm$, it will be useful to have the following.

\begin{lemma}\label{lem:E_cases}\label{lem:fUbijection}
    Let $U = M \amalg P[1]$ be a support $\tau$-rigid object in $\C(\Lambda)$.
    \begin{itemize}
        \item[(a)] The functor $f_M$ induces a bijection
            $$\ind \P(P^\perp \cap {}^\perp \tau M)\setminus \{\ind \add M\} \rightarrow \P(\J(U)).$$
        \item[(b)] If $X \in \ind \P(P^\perp \cap {}^\perp \tau M)\setminus \{\ind \add M\}$, then $\Ebm_U(X) = f_M(X)$.
    \end{itemize}
\end{lemma}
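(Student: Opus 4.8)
The plan is to realize the domain as the indecomposable Ext-projectives of a single functorially finite torsion class and to transport them through $f_M$ via the correspondence $\Ebm_U$. Set $\T:=P^\perp\cap{}^\perp\tau M$. Since $P$ is projective, $P^\perp$ is a torsion class, and $\T$ is the functorially finite torsion class determined by the support $\tau$-rigid object $U$ (obtained by Bongartz-completing $U$; see~\cite{air} and Lemma~\ref{lem:perp_injectives_2}); moreover $\Gen M\subseteq\T\subseteq{}^\perp\tau M$, the first inclusion using $\Hom(P,M)=0$ and exactness of $\Hom(P,-)$. First I would pin down the ranks. I claim $P$ is the maximal projective with $\Hom(P,\T)=0$: if some indecomposable projective $R\nmid P$ satisfied $\Hom(R,\T)=0$, then, as $M\in\T$, the object $M\amalg(P\amalg R)[1]$ would be support $\tau$-rigid with $\T\subseteq R^\perp$, whence $\J(M\amalg(P\amalg R)[1])=M^\perp\cap{}^\perp\tau M\cap(P\amalg R)^\perp=\J(U)$, contradicting the rank formula of Definition-Proposition~\ref{prop:J_def}. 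Hence the support $\tau$-tilting object attached to $\T$ is $\Pmod(\T)\amalg P[1]$, so $\rk\Pmod(\T)=n-\rk P$ and $\ind\P(\T)\setminus\ind\add M$ has exactly $n-\rk M-\rk P=\rk\J(U)$ elements.

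Next I would connect $\P(\T)$ to $\Ebm_U$. For $X\in\ind\P(\T)$, Proposition~\ref{prop:tauprojectiveinjective} gives $\tau X\in\T^\perp$, so $\Hom(M,\tau X)=0=\Hom(X,\tau X)$; together with $X\in{}^\perp\tau M$ and $\Hom(P,X)=0$ this shows $X\amalg M$ is $\tau$-rigid with $\Hom(P,X)=0$, i.e. $X\amalg U$ is support $\tau$-rigid and $\Ebm_U(X)$ is defined. On the target side, $\J(U)=\T\cap\J(M)$ is (being $\J(M)\cap P^\perp$) a functorially finite torsion class in the wide subcategory $\J(M)$; since $\J(U)$ is itself wide, an extension in $\J(M)$ of two objects of $\J(U)$ again lies in $\J(U)$, so the Ext-projectives of $\J(U)$ viewed as a torsion class in $\J(M)$ coincide with the projective objects $\P(\J(U))$ of the module category $\J(U)$, of which there are exactly $\rk\J(U)$.

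The core step is the Ext-projective compatibility of Jasso's reduction. By Theorem~\ref{thm:bmcorr}(a)(ii),(b), the map $\Ebm_U$ carries the indecomposable modules $V\notin\Gen M$ with $V\amalg U$ support $\tau$-rigid bijectively, via $\Ebm_U=f_M$, onto the indecomposable $\tau_{\J(U)}$-rigid modules, and $\P(\J(U))$ sits inside this target. It therefore suffices to show that, through $f_M$, the indecomposables of $\P(\T)$ other than summands of $M$ match precisely $\P(\J(U))$. This is exactly the torsion-theoretic strengthening of Theorem~\ref{thm:jasso} from~\cite{jasso}: the reduction sends the interval $[\Gen M,{}^\perp\tau M]$ onto the torsion classes of $\J(M)$, carries $\T$ to $\J(U)=\T\cap\J(M)$, and matches Ext-projectives via $f_M$ once the summands of $M$ (which $f_M$ annihilates) are discarded. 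In particular every $X\in\ind\P(\T)\setminus\ind\add M$ satisfies $X\notin\Gen M$, for otherwise $f_MX=0$ could not be a nonzero projective of $\J(U)$; this gives $\Ebm_U(X)=f_M(X)$ by Theorem~\ref{thm:bmcorr}(b), proving (b), and places $f_M(\ind\P(\T)\setminus\ind\add M)$ inside $\P(\J(U))$. As $f_M$ is injective on indecomposables outside $\Gen M$ and both sides have cardinality $\rk\J(U)$, this map is a bijection, establishing (a).

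I expect the main obstacle to be precisely this last matching, namely the equivalence $V\in\P(\T)\iff f_MV\in\P(\J(U))$ for a complement $V$ outside $\Gen M$, together with the fact that no non-$M$ Ext-projective of $\T$ lies in $\Gen M$. If one prefers to argue internally rather than quote the torsion-theoretic form of~\cite{jasso}, the delicate implication $V\in\P(\T)\Rightarrow f_MV\in\P(\J(U))$ is best handled by rewriting Ext-projectivity as the vanishing $\tau V\in\T^\perp$ (Proposition~\ref{prop:tauprojectiveinjective}) and comparing it with the vanishing of the relative AR translate $\tau_{\J(U)}(f_MV)$ in $\J(U)$; the reverse implication then follows formally from the rank count established in the first paragraph.
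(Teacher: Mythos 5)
Your proposal is correct in substance, but it takes a genuinely different route from the paper's. The paper disposes of (a) by reduction to the case $P = 0$: using Proposition~\ref{prop:ASExt} it identifies $P^\perp \cap {}^\perp \tau M$ with $P^\perp \cap {}^\perp \tau_{\J(P[1])} M$ and $\J(U)$ with $\J_{\J(P[1])}(M)$, and then simply cites the $P=0$ statement \cite[Lem.~4.9]{bm} inside the module category $\J(P[1]) = P^\perp$; for (b) it observes that any $X$ in the domain is a summand of the relative Bongartz complement of $M$ in $P^\perp$, hence split Ext-projective by Lemma~\ref{lem:Bongartzsplit}(a), hence not in $\Gen M$, so that Theorem~\ref{thm:bmcorr}(b) applies. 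You instead re-prove (a) from scratch: you realize the domain as $\ind\P(\T)\setminus\ind\add M$ for the functorially finite torsion class $\T = P^\perp\cap{}^\perp\tau M$, pin down the cardinalities on both sides (your argument that $P$ is the maximal projective annihilating $\T$, via the rank formula of Definition-Proposition~\ref{prop:J_def}, is correct), produce complements of $U$ via Proposition~\ref{prop:tauprojectiveinjective} (a clean step, replacing the paper's Bongartz-complement argument), and then invoke the torsion-class-level form of Jasso's reduction --- that the bijection on support $\tau$-tilting data is implemented by $f_M$ compatibly with $\T \mapsto \T\cap\J(M)$ --- to match Ext-projectives. That compatibility is indeed available in \cite{jasso} (his reduction of functorially finite torsion pairs together with its compatibility with the reduction of support $\tau$-tilting modules), but note that it is strictly stronger than anything this paper recalls: Theorem~\ref{thm:jasso} here is only the bijection on indecomposable objects. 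So your proof trades the external input \cite[Lem.~4.9]{bm} for a different, heavier external input from \cite{jasso}; what it buys is independence from the $P=0$ case and from the reduction-to-$P^\perp$ bookkeeping.

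One local slip: the assertion that every $X \in \ind\P(\T)\setminus\ind\add M$ avoids $\Gen M$ ``for otherwise $f_M X = 0$ could not be a nonzero projective of $\J(U)$'' is circular as written --- the Jasso compatibility only gives $\add f_M(\Pmod(\T)) = \P(\J(U))$ at the level of additive subcategories, which by itself does not prevent individual summands from being killed by $f_M$. Your own counting argument is what repairs this: since $f_M$ is injective and indecomposability-preserving on complements outside $\Gen M$ (Theorem~\ref{thm:jasso}), and $\P(\J(U))$ has exactly $\rk\J(U)$ indecomposables while $\ind\P(\T)\setminus\ind\add M$ has at most that many, no such $X$ can lie in $\Gen M$ and the induced map must be a bijection. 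You should make that ordering of the logic explicit; the paper avoids the issue entirely by deducing $X \notin \Gen M$ directly from split projectivity via Lemma~\ref{lem:Bongartzsplit}.
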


\begin{proof}

   (a) If $P = 0$, then this is \cite[Lem.~4.9]{bm}. Thus suppose $P \neq 0$. Because $M \amalg P[1]$ is support $\tau$-rigid and $\J(P[1]) = P^\perp$ is closed under taking quotients, Proposition~\ref{prop:ASExt} immediately implies that $M$ is $\tau_{\J(P[1])}$-rigid and that $P^\perp \cap {}^\perp \tau M = P^\perp \cap {}^\perp \tau_{\J(P[1])}M$. We then compute
    \begin{align*}
        \J(U) &= M^\perp \cap {}^\perp \tau M \cap P^\perp\\
            &= M^\perp \cap {}^\perp \tau_{\J(P[1])}M \cap \J(P[1])\\
            &= \J_{\J(P[1])}(M).
    \end{align*}
    Item (a) thus follows from applying \cite[Lem.~4.9]{bm} in the subcategory $\J(P[1])$.

    (b) Now let $X \in \ind \P(P^\perp \cap {}^\perp \tau M)\setminus \{\ind \add M\} = \ind \P(P^\perp \cap {}^\perp \tau_{\J(P[1])} M)\setminus \{\ind \add M\}$. Recall that the (relative) Bongartz complement $B_M$ of $M$ in the subcategory $P^\perp$ is defined so that $P(P^\perp \cap {}^\perp\tau_{\J(P[1])} M) = M\amalg B_M$ (see Theorem~\ref{thm:Bongartz}(c)). So, by definition, $X$ is a direct summand of $B_M$. Lemma~\ref{lem:Bongartzsplit} then says that $X \in \Ps({}^\perp \tau M)$, and therefore that $X \amalg M$ is $\tau_{\J(P[1])}$-rigid with $X \notin \Gen M$. Again using Proposition~\ref{prop:ASExt} and the fact that $\J(P[1]) = P^\perp$ is closed under taking quotients, this means $X \amalg M$ is also $\tau$-rigid. Since $X \notin \Gen M$, item (b) then follows from Theorem~\ref{thm:bmcorr}.
\end{proof}

In the result that follows, we use the notation $N/\add M$ to denote the quotient of a module $N$ by its maximal direct summand which lies in $\add M$. Moreover, for an indecomposable support $\tau$-rigid object $U$ or an indecomposable support $\tau^{-1}$-rigid object $V$, we denote
$$\overline{\tau} U = \begin{cases} \tau U & U \in \mods\Lambda \setminus \P(\Lambda) \\ (\nu U)[-1] & U \in \P(\Lambda) \\ \nu(U[-1]) & U\in \P(\Lambda)[1], \qquad\qquad\qquad\end{cases}
\overline{\tau}^{-1}V = \begin{cases} \tau^{-1} V & V \in \mods\Lambda \setminus \I(\Lambda)\\ (\nu^{-1}V)[1] & V \in \I(\Lambda) \\ \nu^{-1}(V[1]) & V\in \I(\Lambda)[-1],\end{cases}$$
extended additively. We likewise denote by $\overline{\tau}_{\J(U)}$ and $\overline{\tau}^{-1}_{\J(U)}$ the versions of these operators defined in the $\tau$-perpendicular subcategory $\J(U)$.
\begin{remark} \label{rem:twokinds2indec}
Note that, by Lemma~\ref{lem:twokinds2}, if $U$ is a support $\tau$-rigid object, then $\overline{\tau}U$ is a support $\tau^{-1}$-rigid object, and if $V$ is a support $\tau^{-1}$-rigid object, then $\overline{\tau}^{-1} V$ is a support $\tau$-rigid object.\end{remark}

\begin{theorem}\label{thm:F_map_1}
    Let $U = M \amalg P[1]$ be a support $\tau$-rigid object in $\C(\Lambda)$, and let $V$ be an indecomposable support $\tau_{\J(U)}$-rigid object in $\C(\J(U))$. Then
    $$\Fbm_U(V) = \begin{cases}
        \Pmods(\FiltGen(M \amalg V))/\add M & \text{if $V \in \mods\Lambda$}\\
        \overline{\tau}^{-1}(\Imods(\FiltCogen(\tau M \amalg \nu P\amalg \overline{\tau}_{\J(U)} V)/\add(\tau M\amalg \nu P)) & \text{if $V \in \P(\J(U))[1]$}
    \end{cases}$$
\end{theorem}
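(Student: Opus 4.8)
The plan is to use that $\Ebm_U$ and $\Fbm_U$ are mutually inverse bijections (Theorem~\ref{thm:bmcorr}(a)), so that it suffices to prove the claimed expression equals $W:=\Fbm_U(V)$. Applying Theorem~\ref{thm:bmcorr}(a)(ii) to $W$ (using $\Ebm_U(W)=V$) shows that $W$ is a module with $W\notin\Gen M$ precisely when $V\in\mods\Lambda$, and that $W\in\Gen M$ or $W\in\P(\Lambda)[1]$ precisely when $V\in\P(\J(U))[1]$. This is exactly the dichotomy in the statement, and I would prove the module case directly and reduce the shifted-projective case to it by duality.

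For Case~1 ($V\in\mods\Lambda$), Theorem~\ref{thm:bmcorr}(b) gives $V=\Ebm_U(W)=f_MW$, so $V$ is the torsion-free quotient of $W$ for the pair $(\Gen M,M^\perp)$; thus $V\in\Gen W$ and the canonical sequence $0\to t_MW\to W\to V\to 0$ has $t_MW\in\Gen M$. Since $W\amalg U$ is support $\tau$-rigid, $M\amalg W$ is $\tau$-rigid, whence $\FiltGen(M\amalg W)=\Gen(M\amalg W)$. Using this sequence together with $V\in\Gen W$, I would check the two containments yielding $\FiltGen(M\amalg V)=\Gen(M\amalg W)$. By Corollary~\ref{cor:genmin}(a), $\Pmods(\Gen(M\amalg W))$ is a direct summand of $M\amalg W$; and as $W\notin\Gen M$ we have $\Gen M\subsetneq\Gen(M\amalg W)$, which forces $W$ to be a summand of $\Pmods(\Gen(M\amalg W))$. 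Since $W$ is the only indecomposable summand of $M\amalg W$ outside $\add M$, this gives $\Pmods(\FiltGen(M\amalg V))/\add M=W=\Fbm_U(V)$, as required.

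For Case~2 ($V=Q[1]$ with $Q\in\P(\J(U))$), I would pass to the $\tau^{-1}$-world via $\overline{\tau}$. Let $Q_0$ be the maximal projective summand of $M$; by Lemma~\ref{lem:twokinds2}(a) the object $\overline{\tau}U=\tau M\amalg\nu P\amalg\nu Q_0[-1]$ is support $\tau^{-1}$-rigid with module part $N:=\tau M\amalg\nu P$, and $\Jinv(\overline{\tau}U)=\J(U)$ by Lemma~\ref{lem:Jtauminus}(a). The object $V^{\mathsf d}:=\overline{\tau}_{\J(U)}V$ is an injective, hence $\tau^{-1}_{\J(U)}$-rigid, module of $\J(U)=\Jinv(\overline{\tau}U)$, and the claimed formula is exactly $\overline{\tau}^{-1}$ applied to the dual of the Case~1 expression, $\Imods(\FiltCogen(N\amalg V^{\mathsf d}))/\add N$. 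I would first record the $\tau^{-1}$-analogue of Case~1 — which follows verbatim by applying the duality $D=\Hom_K(-,K)$ (interchanging $\tau\leftrightarrow\tau^{-1}$, projectives $\leftrightarrow$ injectives, $\Gen\leftrightarrow\Cogen$, and $\J\leftrightarrow\Jinv$) — describing the inverse of the dual reduction bijection on $\tau^{-1}_{\J(U)}$-rigid modules. It then remains to show that $\overline{\tau}$ intertwines the two reduction bijections, so that $\overline{\tau}W$ equals the value of the dual bijection at $V^{\mathsf d}$; applying $\overline{\tau}^{-1}$ then produces the formula.

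The main obstacle is precisely this intertwining. I would establish it through the $\J$-characterization of Theorem~\ref{thm:J_E}(b): $W=\Fbm_U(Q[1])$ is the unique complement of $U$ with $\J(W\amalg U)=\J_{\J(U)}(Q[1])$, and dually the value of the dual bijection at $V^{\mathsf d}$ is characterized by the analogous identity of $\Jinv$-perpendicular categories. Applying Lemma~\ref{lem:Jtauminus}(a) to the support $\tau$-rigid object $W\amalg U$ gives $\J(W\amalg U)=\Jinv(\overline{\tau}W\amalg\overline{\tau}U)$, while the internal version of Lemma~\ref{lem:Jtauminus}, applied inside $\J(U)=\Jinv(\overline{\tau}U)$, gives $\J_{\J(U)}(Q[1])=\Jinv_{\Jinv(\overline{\tau}U)}(V^{\mathsf d})$; matching these two identities forces $\overline{\tau}W$ to be the claimed value. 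The delicate points will be the bookkeeping of the projective and injective summands of $\overline{\tau}U$ (the role of $Q_0$ and of the discarded shifted-injective summand $\nu Q_0[-1]$, which is removed by the $\add N$-quotient), the fact that $\overline{\tau}$ and $\overline{\tau}^{-1}$ are genuinely mutually inverse on the relevant indecomposable support $\tau$- and $\tau^{-1}$-rigid objects (Remark~\ref{rem:twokinds2indec}), and verifying that $\overline{\tau}_{\J(U)}$ really realizes the internal $\J\leftrightarrow\Jinv$ correspondence for the algebra $\J(U)$.
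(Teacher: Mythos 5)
Your Case 1 ($V \in \mods\Lambda$) is correct and is essentially the paper's own argument: the same exact sequence $0 \to t_MW \to W \to V \to 0$, the same identification $\FiltGen(M\amalg V) = \Gen(M\amalg W)$, and the same use of Corollary~\ref{cor:genmin}(a) together with $W \notin \Gen M$ to extract $W$ as the summand of $\Pmods(\FiltGen(M\amalg V))$ lying outside $\add M$. Likewise, your overall strategy for Case 2 (pass to the $\tau^{-1}$-side via $\overline{\tau}$ and match perpendicular categories using Lemma~\ref{lem:Jtauminus} and Theorem~\ref{thm:J_E}) is the same as the paper's.

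The gap is in the step you yourself call the main obstacle. You claim that $W = \Fbm_U(Q[1])$ is \emph{the unique} complement of $U$ with $\J(W\amalg U) = \J_{\J(U)}(Q[1])$, and that the value of the dual bijection at $V^{\mathsf{d}}$ is characterized by the analogous identity; ``matching these two identities'' is then supposed to force $\overline{\tau}W$ to equal the claimed module. This uniqueness is false. The uniqueness actually available (\cite[Prop.~6.15(2)]{bh}, as invoked in the proof of Theorem~\ref{thm:E_map}) concerns the $\tau_{\J(U)}$-rigid \emph{module} realizing a given perpendicular category; the indecomposable support $\tau_{\J(U)}$-rigid \emph{objects} realizing it come in pairs, here $Q$ and $Q[1]$, since $\J_{\J(U)}(Q) = \J_{\J(U)}(Q[1])$. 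Because $\Ebm_U$ preserves perpendicular categories (Theorem~\ref{thm:J_E}), the two \emph{distinct} complements $\Fbm_U(Q)$ and $\Fbm_U(Q[1])$ of $U$ satisfy the same identity $\J(- \amalg U) = \J_{\J(U)}(Q[1])$; dually, the two distinct complements of $\overline{\tau}U$ sent by the dual bijection to $V^{\mathsf{d}}$ and to $V^{\mathsf{d}}[-1]$ have the same $\Jinv$. So your matching argument only places $\overline{\tau}W$ in a two-element set, and deciding which element it is, is precisely the substance of the theorem: it is what distinguishes $\Fbm_U(Q[1])$ from $\Fbm_U(Q)$. The paper closes exactly this gap with a separate argument: writing $Z = \Imods(\FiltCogen(N\amalg V^{\mathsf{d}}))/\add N$ for the candidate module (so the claimed value is $\overline{\tau}^{-1}Z$), it checks that $U \amalg \overline{\tau}^{-1}Z$ is support $\tau$-rigid, deduces $\Ebm_U(\overline{\tau}^{-1}Z) \in \{V, V[-1]\}$ from the $\J$-matching, and then rules out $\Ebm_U(\overline{\tau}^{-1}Z) = V[-1] \in \mods\Lambda$ as follows: the dual of Lemma~\ref{lem:fUbijection} shows $Z \in \I(M^\perp)$, so either $Z$ is injective, in which case $\overline{\tau}^{-1}Z = (\nu^{-1}Z)[1] \notin \mods\Lambda$, or else $\overline{\tau}^{-1}Z = \tau^{-1}Z \in \Gen M$ by Proposition~\ref{prop:tauprojectiveinjective} applied to $(\Gen M, M^\perp)$; in both cases $\Ebm_U(\overline{\tau}^{-1}Z) \in \P(\J(U))[1]$ by Theorem~\ref{thm:bmcorr}(a)(ii), hence it equals $V$ and not $V[-1]$. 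Your proposal needs an argument of this type; without one, the claimed intertwining of $\overline{\tau}$ with the two reduction bijections does not follow.
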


\begin{proof}
    Suppose first that $V \in \mods\Lambda$ and denote $N:= \Fbm_U(V)$. We recall from Theorem~\ref{thm:bmcorr} that $U \amalg N$ is a support $\tau$-rigid object which satisfies $V = f_M(N)$. Hence there is an exact sequence 
    $$0 \to X \to N \to V \to 0$$
    with $X$ in $\Gen M$. It follows that $\FiltGen(M\amalg V) =  \Gen(M\amalg N)$.
    Moreover, by Theorem~\ref{thm:Bongartz} we have that $M \amalg N \in \P(\Gen(M\amalg N))$, and Corollary~\ref{cor:genmin} says that $\Pmods(\Gen(M\amalg N)) = \Pmods(\FiltGen(M\amalg V))$ is a direct summand of $M \amalg N$. Thus we need only show that $N$ is a direct summand of $\Pmods(\FiltGen(M\amalg V))$. This follows from the fact that $N \notin \Gen M$ by construction (since otherwise we would have $V=f_M(N)=0$).

    Now suppose that $V \notin \mods\Lambda$. For readability, denote $Y := \overline{\tau}_{\J(U)}V\in \J(U)$. (Note also that $Y = \nu_{\J(U)}(V[-1])$.) By Remark~\ref{rem:twokinds2indec}, the object $\overline{\tau} U$ is support $\tau^{-1}$-rigid.
    We decompose this as $\overline{\tau}U = N \amalg I[-1]$ with $N \in \mods\Lambda$ and $I \in \I(\Lambda)$. (Explicitly, $N = \tau M \amalg \nu P$ and $I = \nu Q$ for $Q$ the maximal projective direct summand of $M$.) We denote the torsion functor associated to the torsion pair $({}^\perp N,\Cogen N)$ by $t_{N}$.
    
    By the dual of Theorem~\ref{thm:E_map} and the dual of Theorem~\ref{thm:bmcorr},
    there is an indecomposable module $Z$ such that $\overline{\tau}U \amalg Z$ is support $\tau^{-1}$-rigid and $Y 
    =t_{N}  Z$. Hence there is an exact sequence
    $$0 \rightarrow Y \rightarrow Z \rightarrow X \rightarrow 0$$
    with $X \in \Cogen N$. It follows that $\FiltCogen(N \amalg Y) = \Cogen(N \amalg Z)$. Moreover, by Theorem~\ref{thm:dualBongartz} we have that $N \amalg Y \in \I(\Cogen(N \amalg Y))$, and Corollary~\ref{cor:genmin} says that $\Imods(\Cogen(N \amalg Y)) = \Imods(\FiltCogen(N \amalg Z))$ is a direct summand of $N \amalg Z$. Now note that $Z\notin \Cogen N$ by construction (since otherwise we would have $Y = t_{N}(Z) = 0$),
    and so $Z$ is a direct summand of $\Imods(\FiltCogen(N \amalg Y))$. By Lemma~\ref{lem:twokinds2}, we have that $\overline{\tau}^{-1}Z$ is an indecomposable object with $U \amalg \overline{\tau}^{-1}Z$ support $\tau$-rigid. That is, $\overline{\tau}^{-1}Z$ is in the image of the map $\Fbm_U$.
    
    We then compute
    \begin{align*}
        \J_{\J(U)}(\Ebm_U(\overline{\tau}^{-1}Z)) &= \J(U \amalg \overline{\tau}^{-1} Z) & \text{by Theorem~\ref{thm:J_E}}\\
        &= \Jinv(Z \amalg \overline{\tau} U) & \text{by Lemma~\ref{lem:Jtauminus}}\\
        &= \Jinv_{\Jinv(\overline{\tau}U)}(Y) & \text{by the dual of Theorem~\ref{thm:J_E}}\\
        &= \J_{\J(U)}(V) & \text{by Lemma~\ref{lem:Jtauminus}.}
    \end{align*}
    It thus follows from~\cite[Prop.~6.15(2)]{bh} that either $\Ebm_U(\overline{\tau}^{-1}Z) = V$ or $\Ebm_U(\overline{\tau}^{-1}Z) = V[-1]$. Thus to prove the result it suffices to show that $\Ebm_U(\overline{\tau}^{-1}Z) \notin \mods\Lambda$. To see this, first note that $\nu^{-1}I \amalg \tau^{-1}N = \nu^{-1}\nu Q \amalg \tau^{-1}\tau M = M$. By the dual of Lemma~\ref{lem:fUbijection} and the fact that $Y = t_{N}Z \in \I(\J(U)) = \I(\Jinv(\overline{\tau}U))$, we then have that $Z \in \I({}^\perp I \cap (\tau^{-1}N)^\perp) = \I(M^\perp)$. Now if $Z \in \I(\Lambda)$, then $\overline{\tau}^{-1}Z \notin \mods\Lambda$. Otherwise $\overline{\tau}^{-1}Z = \tau^{-1}Z \in \Gen M$ by Proposition~\ref{prop:tauprojectiveinjective} applied to the torsion pair $(\Gen M,M^{\perp})$. Thus in either case, $\Ebm_U(\overline{\tau}^{-1}Z) \notin \mods\Lambda$ by the definition of $\Ebm$ (see Theorem~\ref{thm:bmcorr}).
\end{proof}
	
\subsection{Sincere $\tau$-perpendicular subcategories}

Recall that a subcategory $\SC$ of $\mods \Lambda$ is said to be {\em sincere} if $\Hom(P,-)|_{\SC} \neq 0$ for any non-zero projective module $P$. The sincere left/right finite $\tau$-perpendicular categories have particularly nice descriptions, which are crucial for our main results.

	\begin{lemma}
	\label{lem:nosummand}
Let $M$ be a $\tau$-rigid module and suppose that $\J(M)$ is sincere.
 \begin{itemize}
		\item[(a)] Suppose $\J(M)$ is right finite. Then $\Pmods({}^\perp \J(M))$ has no projective direct summands and $\Imodns(\FiltCogen(\J(M)))$ has no injective direct summands. In particular, $\tau \Pmods({}^\perp \J(M)) = \Imodns(\FiltCogen(\J(M)))$ and $\Pmods({}^\perp \J(M)) = \tau^{-1}\Imodns(\FiltCogen(\J(M)))$.
		\item[(b)] Suppose $\J(M)$ is left finite. Then $\Pmodns(\FiltGen(\J(M)))$ has no projective direct summands and $I_{s}(\J(M)^\perp)$ has no injective direct summands. In particular, $\tau \Pmodns(\FiltGen(\J(M))) = \Imods(\J(M)^\perp)$ and $\Pmodns(\FiltGen(\J(M))) = \tau^{-1} \Imods(\J(M)^\perp)$.
	\end{itemize}
\end{lemma}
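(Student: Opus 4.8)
The plan is to realize each of the four assertions in (a) (and, dually, in (b)) as a statement about a single functorially finite torsion pair, and then to read off the two displayed identities directly from Corollary~\ref{cor:proj_inj}, using sincerity only to control projective and injective summands. Throughout I will use the standard Auslander--Reiten fact that $\tau$ restricts to a bijection from the indecomposable non-projective modules to the indecomposable non-injective modules, with inverse $\tau^{-1}$; in particular $\tau X$ is a non-injective indecomposable whenever $X$ is non-projective indecomposable, and $\tau^{-1}\tau Y = Y$ for any module $Y$ with no projective direct summands.

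For (a), I would set $(\T,\F) := ({}^\perp\J(M),\FiltCogen(\J(M)))$. This is a torsion pair by Definition-Proposition~\ref{defprop:torsionpairs} (note ${}^\perp\FiltCogen(\J(M)) = {}^\perp\J(M)$), and it is functorially finite because $\J(M)$ is right finite, so $\F$ is functorially finite, and then so is the pair by Proposition~\ref{prop:asff}. First I would show $\Pmods(\T)$ has no projective summands: any indecomposable projective summand $Q$ lies in $\T = {}^\perp\J(M)$, i.e.\ satisfies $\Hom(Q,-)|_{\J(M)} = 0$, contradicting sincerity. Corollary~\ref{cor:proj_inj}(b) then gives $\Imodns(\F) = \tau\Pmods(\T)$, which is the first ``in particular'' identity. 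Since $\Pmods(\T)$ has no projective summands, each of its indecomposable summands is carried by $\tau$ to a non-injective indecomposable, so $\Imodns(\F)$ has no injective summands; and applying $\tau^{-1}$ to $\Imodns(\F) = \tau\Pmods(\T)$ yields $\Pmods(\T) = \tau^{-1}\Imodns(\F)$, completing (a).

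For (b), I would set $(\T',\F') := (\FiltGen(\J(M)),\J(M)^\perp)$, again a functorially finite torsion pair, this time because $\J(M)$ is left finite. Let $P'$ be the maximal projective with $\Hom(P',\T') = 0$. Since $\J(M) \subseteq \FiltGen(\J(M)) = \T'$, sincerity forces $P' = 0$ (any indecomposable summand $Q$ of $P'$ would satisfy $\Hom(Q,-)|_{\J(M)}=0$). Corollary~\ref{cor:proj_inj}(a) then collapses to $\Imods(\F') = \tau\Pmodns(\T')$, giving the third identity. It remains to check that $\Pmodns(\T')$ has no projective summands, which holds for \emph{any} torsion class: a projective summand $Q$ of $\Pmod(\T')$ cannot lie in $\Gen(\Pmod(\T')/Q)$ (an epimorphism onto a projective splits and $\Pmod(\T')$ is basic), so $Q$ is split Ext-projective by Theorem~\ref{thm:Bongartz}(b) and hence a summand of $\Pmods(\T')$ rather than of $\Pmodns(\T')$. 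As in part (a), this yields that $\Imods(\F') = \tau\Pmodns(\T')$ has no injective summands and that $\Pmodns(\T') = \tau^{-1}\Imods(\F')$.

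The torsion-pair bookkeeping and the AR-translate bijection are routine; the only genuine content lies in the two sincerity arguments, and the step I expect to require the most care is the vanishing $P'=0$ in part (b). This is exactly where sincerity is needed to discard the correction term $\nu P'$ appearing in Corollary~\ref{cor:proj_inj}(a), and it is essential to observe that $\J(M)$ sits inside the torsion \emph{class} $\FiltGen(\J(M))$ in part (b) (whereas it sits inside the torsion-free class in part (a)), so that sincerity is applied to the correct member of the torsion pair. I would note that part (b) can alternatively be deduced from part (a) by passing to $\Lambda^{\mathrm{op}}$, since left/right finiteness, projectives/injectives, and $\tau/\tau^{-1}$ are all interchanged under this duality; but I would carry out the direct argument above to keep the role of the correction term $\nu P'$ transparent.
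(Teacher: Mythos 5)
Your proof is correct and follows essentially the same route as the paper's: both arguments come down to applying Corollary~\ref{cor:proj_inj} to the functorially finite torsion pairs $({}^\perp\J(M),\FiltCogen(\J(M)))$ and $(\FiltGen(\J(M)),\J(M)^\perp)$, with sincerity of $\J(M)$ ruling out projective summands (and, in part (b), killing the correction term $\nu P'$). The only cosmetic differences are that you deduce the absence of injective summands from the $\tau$-identities together with the AR-translate bijection, whereas the paper observes directly that an injective (resp.\ projective) module lying in a torsion-free class (resp.\ torsion class) is automatically split Ext-injective (resp.\ split Ext-projective), so it cannot be a summand of the non-split part.
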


\begin{proof}
  
(a)
Since $\J(M)$ is right finite, $\FiltCogen(\J(M))$ is functorially finite, and hence so is ${}^\perp\J(M)$ \sloppy by Definition-Proposition~\ref{defprop:torsionpairs}(a) and~Proposition~\ref{prop:asff}.
In particular, by applying 
Lemma~\ref{lem:perp_injectives_2}(a) to the torsion class ${}^{\perp}\J(M)=\Gen \Pmods({}^{\perp}(\J(M))$ 
we obtain
$$
\FiltCogen(\J(M))=({}^{\perp}\J(M))^{\perp}=(\Gen P_s({}^{\perp} \J(M)))^{\perp}=P_s({}^{\perp}\J(M))^{\perp}.
$$

Since $\J(M)$ is sincere, $\FiltCogen(\J(M))$ is also sincere. Hence ${}^\perp\J(M) = {}^\perp\FiltCogen(\J(M))$ contains no projectives, and so in particular $\Pmods({}^\perp\J(M))$ has no projective direct summands. Moreover, since $\Imodns(
   \FiltCogen(\J(M)))$ does not contain any split injectives in $\FiltCogen(\J(M))$ as a direct summand, it cannot contain any modules injective in $\mods \Lambda$ as a direct summand. The equalities then follow from Corollary~\ref{cor:proj_inj}(b).

   (b)
Since $\J(M)$ is left finite, we have that  $\FiltGen(\J(M))$ and $\J(M)^\perp$ are both functorially finite. In particular, $\FiltGen(\J(M)) = {}^\perp \Imods(\J(M)^{\perp})$
 by Lemma~\ref{lem:perp_injectives_2}(f) applied to the torsion pair $(\FiltGen(\J(M)),\J(M)^\perp)$.
 Since $\J(M)$ is sincere, $\FiltGen(\J(M))$ is also sincere. Hence $\J(M)^\perp = \FiltGen(\J(M)))^\perp$ contains no injectives, and so in particular $\Imods(\J(M)^\perp)$ has no injective direct summands. Moreover, since $\Pmodns(
   \FiltGen(\J(M)))$ does not contain any split projectives in $\FiltGen(\J(M))$ as a direct summand, it cannot contain any modules projective in $\mods \Lambda$ as a direct summand. The equalities then follow from Corollary~\ref{cor:proj_inj}(a).\end{proof}

\begin{lemma}
	\label{lem:Jsame}
	Let $M$ be a $\tau$-rigid module and suppose that $\J(M)$ is sincere. 
	\begin{itemize}
		\item[(a)] Assume $\J(M)$ is right finite. Then $\J(\Pmods({}^\perp\J(M)))=\Jinv(\Imodns (\FiltCogen(\J(M)))=\J(M)$;
		\item[(b)] Assume $\J(M)$ is left finite. Then $\J(\Pmodns(\FiltGen(\J(M))))=\Jinv(\Imods(\J(M)^\perp)) =\J(M)$.
	\end{itemize}
\end{lemma}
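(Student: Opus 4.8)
The plan is to obtain both identities from the bijective correspondences between torsion(-free) classes and wide subcategories recorded in Theorem~\ref{thm:ms} and Theorem~\ref{thm:WLtorsionclass}, reserving the sincerity hypothesis for the single purpose of forcing the ``support'' summands in those formulas to vanish. I will carry out part (a) and then indicate the dual modifications for part (b).

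For (a), put $\F := \FiltCogen(\J(M))$ and $\T := {}^\perp\J(M)$, so that $(\T,\F)$ is the torsion pair of Definition-Proposition~\ref{defprop:torsionpairs} with $\F = \T^\perp$. Since $\J(M)$ is right finite, $\F$ is functorially finite, hence so is the whole torsion pair $(\T,\F)$ by Proposition~\ref{prop:asff}. Theorem~\ref{thm:WLtorsionclass}(b) then applies and gives
\[\WR(\F) = \J(\Pmods(\T)) = \Jinv\bigl(\Imodns(\F)\amalg I[-1]\bigr),\]
where $I$ is the maximal injective module with $\Hom(\Imod(\F),I)=0$. On the other hand $\J(M)$ is wide, being $\tau$-perpendicular, so the identity $\WR(\FiltCogen(\W))=\W$ of Theorem~\ref{thm:ms}(c) yields $\WR(\F)=\J(M)$. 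Reading off the left-hand equality gives at once $\J(\Pmods({}^\perp\J(M)))=\J(\Pmods(\T))=\J(M)$, using right-finiteness but not sincerity.

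The remaining and only substantive point is that the injective support term vanishes, $I=0$; this is where sincerity is used. As $\J(M)\subseteq\FiltCogen(\J(M))=\F$, the torsion-free class $\F$ inherits sincerity. Since $\F=\Cogen\Imod(\F)$ and $I$ is injective, any map $X\to I$ with $X\in\F$ extends along an embedding $X\hookrightarrow\Imod(\F)^k$ and must therefore vanish, so $\Hom(\Imod(\F),I)=0$ is equivalent to $\Hom(\F,I)=0$. If some indecomposable injective $I_j$ (the envelope of a simple $S_j$) were a summand of $I$, sincerity would furnish $X\in\F$ with $S_j$ a composition factor, and a suitable submodule $B\subseteq X$ (still in $\F$, which is closed under submodules) would admit a surjection onto $S_j$, producing a nonzero composite $B\twoheadrightarrow S_j\hookrightarrow I_j$ and contradicting $\Hom(\F,I)=0$. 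Hence $I=0$, so $\Imodns(\F)\amalg I[-1]=\Imodns(\F)$ and the displayed chain collapses to $\Jinv(\Imodns(\FiltCogen(\J(M))))=\WR(\F)=\J(M)$, finishing (a). Part (b) is the exact dual: take $\T:=\FiltGen(\J(M))$ and $\F:=\J(M)^\perp$, invoke Theorem~\ref{thm:WLtorsionclass}(a) together with $\WL(\FiltGen(\W))=\W$ from Theorem~\ref{thm:ms}(c) to get $\WL(\T)=\J(\Pmodns(\T)\amalg P[1])=\Jinv(\Imods(\F))=\J(M)$; here the projective support vanishes immediately, since $\J(M)\subseteq\T$ makes $\T$ sincere and so no nonzero projective $P$ can satisfy $\Hom(P,\T)=0$. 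The main obstacle is thus the passage $I=0$ (equivalently $P=0$) from sincerity; everything else is a direct reading of the two cited correspondences.
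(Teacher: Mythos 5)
Your proof is correct and follows essentially the same route as the paper: both derive the statement from the correspondences in Theorem~\ref{thm:ms} and Theorem~\ref{thm:WLtorsionclass}, with sincerity used only to force the shifted injective (resp.\ projective) support summand to vanish. The only differences are cosmetic: the paper reaches the first equality via Proposition~\ref{prop:gen_minimal} (itself a repackaging of those two theorems) rather than invoking $\WR(\FiltCogen(\W))=\W$ directly, and it leaves implicit the argument that sincerity forces $I=0$ (resp.\ $P=0$), which you spell out correctly.
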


\begin{proof}
    (a) Since $\J(M)$ is right finite, we have that ${}^\perp \J(M)$ is functorially finite by Definition-Proposition~\ref{defprop:torsionpairs} and Proposition~\ref{prop:asff}. Thus ${}^\perp \J(M) = {}^\perp \J(\Pmods({}^\perp \J(M)))$ by Proposition~\ref{prop:gen_minimal}. This implies that 
    $$ \FiltCogen(\J(M)) = ({}^\perp \J(M))^\perp  = ({}^\perp \J(\Pmods({}^\perp \J(M))))^{\perp} = \FiltCogen(\J(\Pmods({}^\perp \J(M)))).$$
    Thus $\J(M) = \J(\Pmods({}^\perp \J(M)))$ by Theorem~\ref{thm:ms}. Furthermore, using the assumption that $\J(M)$ is sincere, it follows from Theorem~\ref{thm:WLtorsionclass} that $\J(\Pmods({}^\perp \J(M))) = \Jinv(\Imodns(\FiltCogen(\J(M))))$.

    (b)
    Since $\J(M)$ is left finite, we have that $\J(M)^\perp$ is functorially finite by Definition-Proposition~\ref{defprop:torsionpairs} and Proposition~\ref{prop:asff}. Thus $\J(M)^\perp = \Jinv(\Imods(\J(M)^\perp))^\perp$ by Proposition~\ref{prop:gen_minimal}.
    Hence, $${}^\perp(\J(M)^\perp) = \FiltGen(\J(M)) = \FiltGen(\Jinv(\Imods(\J(M)^\perp))).$$ Thus $\J(M) = \Jinv(\Imods(\J(M)^\perp)) = J(\Pmodns(\FiltGen(\J(M))))$ by Theorem~\ref{thm:ms}, Theorem~\ref{thm:WLtorsionclass} and the assumption that $\J(M)$ is sincere.
\end{proof}
	
\section{\texorpdfstring{$\tau$}{Tau}-exceptional pairs and sequences}\label{sec:sequences}

\subsection{Definitions and background}

We recalled the definition of $\tau$-exceptional pairs and sequences in the introduction.
A more general concept was introduced in~\cite{bm}, motivated by a similar definition in the hereditary case by~\cite{igto}. Recall the notation $\C(\W)$ and the discussion of support $\tau_{\W}$-rigid objects from Definition~\ref{def:support_tau_rigid_reduction} and Remark~\ref{rem:CW}.

\begin{definition}\label{def:signed_sequences}
A sequence $\mathcal{U} = (U_s,U_{s+1},\ldots,U_n)$  of indecomposable objects in 
$\C(\Lambda)$ is called a \emph{signed $\tau$-exceptional sequence}
if $U_n$ is support $\tau$-rigid, and 
$(U_s,U_{s+1},\ldots,U_{n-1})$ is a signed $\tau_{\J(U_n)}$-exceptional sequence.
We say that $\mathcal{U}$ is \emph{complete} if $s = 1$ and that $\mathcal{U}$ is a \emph{signed $\tau$-exceptional pair} if $s = n-1$.
\end{definition}

Using the bijections described in Section \ref{sec:wide}, it was proved in~\cite{bm} that signed
$\tau$-exceptional sequences are in bijection with ordered support $\tau$-rigid objects. Restricting this bijection to (unsigned) $\tau$-exceptional sequences requires the following definition.

\begin{definition}\cite[Def.~3.1]{mt}\label{def:tfo}
    Let $\mathcal{N} = (N_s,N_{s+1},\ldots,N_n)$ be a sequence of indecomposable modules. We say that $\mathcal{N}$ is a \emph{TF-ordered $\tau$-rigid module} if (a) $\amalg_{i = s}^n N_i$ is $\tau$-rigid, and (b) $N_i \notin \Gen(\amalg_{i < j} N_j)$ for all $s \leq i < n$. 
\end{definition}

\begin{theorem}\label{thm:tfo}\cite[Thm.~5.1]{mt} Let $\mathcal{N} = (N_s, N_{s+1}, \dots, N_n)$ be a TF-ordered $\tau$-rigid module. Let $M_n = N_n$, and let $M_i = f_{N_{i+1} \amalg \dots \amalg N_n} N_i$ for $i=s, \dots, n-1$.
Then $\mathcal{M} := (M_s, M_{s+1}, \dots, M_{n})$ is a $\tau$-exceptional sequence. Moreover, the association $\omega: \mathcal{N} \mapsto \mathcal{M}$ is a bijection between the set of TF-ordered $\tau$-rigid modules (of length $n-s+1$) and the set of $\tau$-exceptional sequences (of length $n - s + 1$). 
\end{theorem}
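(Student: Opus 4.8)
The plan is to realize $\omega$ as the restriction of the bijection established in~\cite{bm} between \emph{ordered support $\tau$-rigid objects} $(V_s,\dots,V_n)$ (sequences of indecomposables of $\C(\Lambda)$ whose direct sum is support $\tau$-rigid) and signed $\tau$-exceptional sequences. I will write $\Phi$ for this bijection, given by $\Phi(V_s,\dots,V_n)=(U_s,\dots,U_n)$ with $U_n=V_n$ and $U_i=\Ebm_{V_{i+1}\amalg\cdots\amalg V_n}(V_i)$ for $i<n$; by Theorem~\ref{thm:J_E} the categories $\J(V_{i+1}\amalg\cdots\amalg V_n)$ and $\J(U_{i+1},\dots,U_n)$ agree, so each $U_i$ lies in $\C(\J(U_{i+1},\dots,U_n))$ and $(U_s,\dots,U_n)$ is a signed $\tau$-exceptional sequence. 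I will also record the elementary observation, obtained by unwinding the recursive definitions, that a signed $\tau$-exceptional sequence all of whose terms are unshifted modules is precisely a $\tau$-exceptional sequence, and that an ordered support $\tau$-rigid object all of whose terms are unshifted modules is precisely an ordered tuple of indecomposable modules whose direct sum is $\tau$-rigid.

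First I would check that $\Phi$ restricts to $\omega$ on the set of TF-ordered $\tau$-rigid modules. Given a TF-ordered module $(N_s,\dots,N_n)$, fix $i<n$ and put $V=N_{i+1}\amalg\cdots\amalg N_n$; this is an unshifted support $\tau$-rigid object whose module part is all of $V$, the object $N_i\amalg V$ is $\tau$-rigid, and $N_i\notin\Gen V$ by the defining condition of TF-ordering. Theorem~\ref{thm:bmcorr}(b) then yields $U_i=\Ebm_V(N_i)=f_V(N_i)=f_{N_{i+1}\amalg\cdots\amalg N_n}(N_i)=M_i$, and $U_n=N_n=M_n$, so $\Phi(N_s,\dots,N_n)=\omega(N_s,\dots,N_n)$. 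As each $M_i$ is an unshifted module, this output is a $\tau$-exceptional sequence by the observation above; this simultaneously proves the first assertion of the theorem and shows that $\Phi$ sends TF-ordered modules to $\tau$-exceptional sequences.

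It then remains to verify the reverse inclusion, namely that $\Phi^{-1}$ sends $\tau$-exceptional sequences to TF-ordered modules. Suppose $\Phi(V_s,\dots,V_n)=(U_s,\dots,U_n)$ is a $\tau$-exceptional sequence, and argue by downward induction on $i$. For $i=n$, $V_n=U_n$ is an unshifted module. Assuming $V_{i+1},\dots,V_n$ are unshifted modules, the module part of $V_{i+1}\amalg\cdots\amalg V_n$ equals the whole object, so Theorem~\ref{thm:bmcorr}(a)(ii) applies and tells us that $U_i=\Ebm_{V_{i+1}\amalg\cdots\amalg V_n}(V_i)$ is a shifted projective exactly when $V_i\in\Gen(V_{i+1}\amalg\cdots\amalg V_n)$ or $V_i$ is itself a shifted projective. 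Since $U_i$ is an unshifted module, I conclude that $V_i$ is an unshifted module and $V_i\notin\Gen(V_{i+1}\amalg\cdots\amalg V_n)$. Carrying the induction down to $i=s$ shows that every $V_i$ is a module satisfying the Gen condition, and since $\amalg_i V_i$ is support $\tau$-rigid and unshifted it is $\tau$-rigid; hence $(V_s,\dots,V_n)$ is TF-ordered. Combining the two inclusions, $\Phi$ restricts to a bijection between TF-ordered $\tau$-rigid modules and $\tau$-exceptional sequences, and this restriction is $\omega$.

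I expect the main obstacle to be pinning down the precise form of the~\cite{bm} bijection, i.e.\ that its $i$-th component is the single reduction $\Ebm_{V_{i+1}\amalg\cdots\amalg V_n}(V_i)$ rather than a nested iteration of indecomposable-by-indecomposable reductions. Reconciling these uses the composition property of $\Ebm$ (the same property invoked in the proof of Theorem~\ref{thm:bmcorr}(b)) together with the matching of $\tau$-perpendicular categories supplied by Theorem~\ref{thm:J_E}. Once this identification is secured, the two genuinely nontrivial inputs are Theorem~\ref{thm:bmcorr}(b), which converts the reduction maps into the torsion-free functors $f_{(-)}$ appearing in the statement, and the shift-detection criterion of Theorem~\ref{thm:bmcorr}(a)(ii); the remaining downward induction is bookkeeping.
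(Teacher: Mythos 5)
The paper contains no proof of Theorem~\ref{thm:tfo} for you to be compared against: the result is imported verbatim from \cite[Thm.~5.1]{mt}, and the paper only records the statement. So your proposal can only be judged on its own merits, and on those merits it is essentially correct. The route --- restrict the bijection of \cite{bm} between ordered support $\tau$-rigid objects and signed $\tau$-exceptional sequences --- is sound, and both inclusions are handled accurately. In the forward direction, the hypotheses of Theorem~\ref{thm:bmcorr}(b) (namely $N_i \in \ind \mods\Lambda$, $N_i \notin \Gen(N_{i+1}\amalg\cdots\amalg N_n)$, and $N_i \amalg (N_{i+1}\amalg\cdots\amalg N_n)$ support $\tau$-rigid with zero shifted part) are exactly the TF-ordering conditions, so $\Ebm$ collapses to the torsion-free functor $f_{(-)}$ and $\Phi$ agrees with $\omega$ there. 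In the backward direction, your downward induction correctly exploits that once $V_{i+1},\ldots,V_n$ are unshifted, the reducing object has module part equal to the whole sum, so Theorem~\ref{thm:bmcorr}(a)(ii) detects shifts exactly as you claim; your ``elementary observation'' identifying unsigned sequences with all-module signed sequences is likewise a routine induction on length.

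The one point that genuinely requires an argument is the one you flag yourself: the bijection of \cite{bm} is built from iterated one-step reductions, each $\Ebm$ taken with respect to a single indecomposable inside successively smaller $\tau$-perpendicular categories, whereas you define $\Phi$ by the single reduction $\Ebm_{V_{i+1}\amalg\cdots\amalg V_n}(V_i)$. Identifying the two forms requires the composition identity $\Ebm_{U\amalg U'} = \Ebm^{\J(U')}_{\Ebm_{U'}(U)}\circ \Ebm_{U'}$ of \cite[Thm.~6.12]{bh} --- precisely the identity this paper invokes in its proof of Theorem~\ref{thm:bmcorr}(b) --- together with the matching of perpendicular categories from Theorem~\ref{thm:J_E}, applied in a finite induction on the number of terms. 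Since this is exactly how you propose to close the gap, and since none of Theorem~\ref{thm:bmcorr}, Theorem~\ref{thm:J_E}, or the underlying results of \cite{bm,bh} depend on the statement being proved (so there is no circularity), the proposal stands as a correct proof sketch; written out in full, it would give an alternative derivation of \cite[Thm.~5.1]{mt} entirely within the framework assembled in this paper.
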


\begin{remark}
\label{rem:orderedpair}
Note that $f_{N_{i+1} \amalg \dots \amalg N_n} N_i = \Ebm_{N_{i+1} \amalg \dots \amalg N_n} N_i$ in the setting of Theorem~\ref{thm:tfo}.
Let $(M,N)$ be a $\tau$-exceptional pair. Then $\Fbm_N(M)\amalg N$ is a $\tau$-rigid module by Theorem~\ref{thm:bmcorr}. 
In particular, Theorem~\ref{thm:tfo} implies that 
the pair $(\Fbm_N(M),N)$ is a TF-ordered $\tau$-rigid module.
\end{remark}

The following results will also be useful.

\begin{theorem}\label{thm:unique}\cite[Thm.~8]{ht}
    Let $(M_1,\ldots,M_n)$ and $(N_1,\ldots,N_n)$ be complete $\tau$-exceptional sequences. If there exists $i \in \{1,\ldots,n\}$ such that $M_j = N_j$ for all $i \neq j$, then also $M_i = N_j$.
\end{theorem}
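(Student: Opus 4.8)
The plan is to induct on $n=\rk\Lambda$, transporting all the machinery of Sections~\ref{sec:torsion}--\ref{sec:wide} to an arbitrary $\tau$-perpendicular subcategory via the equivalence recalled in Remark~\ref{rem:CW}. The statement is vacuous for $n=1$ (a rank-one algebra is local and hosts a unique complete $\tau$-exceptional sequence), so assume $n\geq 2$. If $i<n$, then $M_n=N_n$, and by the recursive definition both $(M_1,\dots,M_{n-1})$ and $(N_1,\dots,N_{n-1})$ are complete $\tau_{\J(M_n)}$-exceptional sequences; since $\rk\J(M_n)=n-1$, the inductive hypothesis applied inside $\J(M_n)$ yields $M_i=N_i$. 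This reduces the whole problem to the case $i=n$: two complete $\tau$-exceptional sequences $(M_1,\dots,M_{n-1},C_1)$ and $(M_1,\dots,M_{n-1},C_2)$ sharing their first $n-1$ terms, for which I must show $C_1=C_2$.

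For this case I would first reduce to the purely wide-subcategory assertion that $\J(C_1)=\J(C_2)$. Indeed, $C_1$ and $C_2$ are indecomposable $\tau$-rigid modules, hence automatically gen-minimal, so Theorem~\ref{thm:gen_minimal}(a) gives $\Gen C_j={}^\perp\J(C_j)$ for $j=1,2$. Thus if $\J(C_1)=\J(C_2)$ then $\Gen C_1=\Gen C_2$, and since $M\mapsto\Gen M$ is injective on gen-minimal $\tau$-rigid modules by Theorem~\ref{thm:Bongartz}(b), we conclude $C_1=C_2$. By the recursive definition the common tail $(M_1,\dots,M_{n-1})$ is \emph{simultaneously} a complete $\tau_{\J(C_1)}$-exceptional sequence and a complete $\tau_{\J(C_2)}$-exceptional sequence, with $\rk\J(C_1)=\rk\J(C_2)=n-1$.

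The remaining point, which I expect to be the main obstacle, is therefore to show that a $\tau$-perpendicular subcategory is determined by \emph{any} complete $\tau$-exceptional sequence it hosts, i.e.\ that $\J(C_1)=\J(C_2)$. I would attack this through the bijection between complete $\tau$-exceptional sequences and maximal chains in the poset of $\tau$-perpendicular categories from \cite{bh}: each full sequence $(M_1,\dots,M_{n-1},C_j)$ produces a chain $\J(C_j)=\Y^{(j)}_{n-1}\supsetneq\cdots\supsetneq\Y^{(j)}_0=0$ with $\Y^{(j)}_r=\J(M_{r+1},\dots,M_{n-1},C_j)$, and one would try to prove $\Y^{(1)}_r=\Y^{(2)}_r$ rank by rank, building the chains upward from the common bottom $\Y^{(j)}_0=0$ and using the $\J$--$\Ebm$/$\Fbm$ compatibilities of Theorems~\ref{thm:J_E} and~\ref{thm:bmcorr} to control each covering. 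The subtlety---and the reason this is genuinely hard rather than formal---is that one cannot recover $\J(C_j)$ by the naive recipe $\WL(\FiltGen(M_1\amalg\cdots\amalg M_{n-1}))$ or $\WR(\FiltCogen(M_1\amalg\cdots\amalg M_{n-1}))$: because each term $M_k$ is a torsion-free reduction (an application of some $f$, equivalently of $\Ebm$ by Theorem~\ref{thm:tfo} and Remark~\ref{rem:orderedpair}), the terms generate a strictly smaller torsion, and strictly smaller torsion-free, class than $\J(C_j)$ does, so these one-step reconstructions fail and the entire chain must be tracked. This final comparison is precisely the uniqueness proved in \cite[Thm.~8]{ht}, which we invoke here.
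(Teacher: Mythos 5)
Your preliminary reductions are correct, but the proof as a whole is circular. The statement you are asked to prove \emph{is} \cite[Thm.~8]{ht} (the paper offers no proof of its own; it simply imports the result), and your argument ends by invoking \cite[Thm.~8]{ht} to settle the ``remaining point.'' Moreover, that remaining point is not an auxiliary fact lying outside the theorem: the claim that $\J(C_1)=\J(C_2)$ whenever both corank-one $\tau$-perpendicular categories host the common tail $(M_1,\dots,M_{n-1})$ is \emph{equivalent} to the $i=n$ case of the theorem, by exactly the two easy implications in play --- your gen-minimality argument (Theorem~\ref{thm:gen_minimal}(a) plus the injectivity in Theorem~\ref{thm:Bongartz}(b)) in one direction, and the recursive definition of $\tau$-exceptional sequence in the other. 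So the induction on $n$, the reduction to the last position, and the passage from $\J(C_1)=\J(C_2)$ to $C_1=C_2$ carry no logical weight toward the hard part: they reformulate it, and the reformulation is then discharged by citing the very theorem being proved. A further inaccuracy: the statement you actually invoke (``a $\tau$-perpendicular category is determined by any complete $\tau$-exceptional sequence it hosts'') is not Theorem~8 of \cite{ht} but its Corollary~10 --- the result the paper uses in Example~\ref{ex:immutable} --- and in \cite{ht} that corollary is derived from Theorem~8, so citing it instead would be circular as well.

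The sketched attack on the core claim (matching the two maximal chains of $\tau$-perpendicular categories rank by rank from the common bottom, using Theorems~\ref{thm:J_E} and~\ref{thm:bmcorr}) is never carried out, and you yourself identify why it is genuinely hard: the one-step reconstructions $\WL(\FiltGen(M_1\amalg\cdots\amalg M_{n-1}))$ and $\WR(\FiltCogen(M_1\amalg\cdots\amalg M_{n-1}))$ do not recover $\J(C_j)$. That unproved comparison is the entire content of the theorem. A faithful blind answer here would either cite \cite[Thm.~8]{ht} outright (which is exactly what the paper does) or actually supply the argument of \cite{ht}; your proposal does neither, and in its current form it proves nothing beyond what it assumes.
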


Recall that complete $\tau$-exceptional sequences exist for any finite-dimensional algebra (as observed in the introduction to~\cite{bmpreprint}). This can be seen, for example, using a straightforward induction argument, noting that the projective modules are always $\tau$-rigid.

\begin{corollary}\label{cor:unique}
    Let $(M,N)$ and $(M',N')$ be $\tau$-exceptional pairs with $\J(M,N) = \J(M',N')$. Then $M = M'$ if and only if $N = N'$.
\end{corollary}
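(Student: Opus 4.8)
The plan is to reduce the statement to the uniqueness result Theorem~\ref{thm:unique} by extending both pairs to complete $\tau$-exceptional sequences sharing a common initial segment. Write $n = \rk\Lambda$ and set $\W := \J(M,N) = \J(M',N')$. Since $(M,N)$ and $(M',N')$ are $\tau$-exceptional pairs, Definition-Proposition~\ref{prop:J_def} gives $\rk(\W) = n-2$, and $\W$ is equivalent to the module category of a finite-dimensional algebra.

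First I would fix a single complete $\tau$-exceptional sequence $(L_1,\ldots,L_{n-2})$ in $\W$; such a sequence exists by the existence of complete $\tau$-exceptional sequences recalled immediately before the statement, applied inside $\W$. Unwinding the recursive definition, and using $\J(M,N) = \J_{\J(N)}(M)$ together with the analogous identity for $(M',N')$, one checks that both
$$(L_1,\ldots,L_{n-2},M,N) \qquad\text{and}\qquad (L_1,\ldots,L_{n-2},M',N')$$
are complete $\tau$-exceptional sequences of length $n$ over $\Lambda$: in each case the final term is $\tau$-rigid, the penultimate term is $\tau_{\J(-)}$-rigid in the relevant $\tau$-perpendicular category, and the initial segment $(L_1,\ldots,L_{n-2})$ is a $\tau_\W$-exceptional sequence by construction. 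The crucial point is that the common value $\W$ of the two $\tau$-perpendicular categories allows the \emph{same} sequence $(L_1,\ldots,L_{n-2})$ to complete both pairs.

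With both completions in hand, the corollary follows directly from Theorem~\ref{thm:unique}. If $M = M'$, then the two complete sequences agree in positions $1,\ldots,n-1$, hence in every position except possibly the last; Theorem~\ref{thm:unique} (taking $i = n$) then forces $N = N'$. Dually, if $N = N'$, the two sequences agree in positions $1,\ldots,n-2$ and in position $n$, hence everywhere except possibly position $n-1$; Theorem~\ref{thm:unique} (taking $i = n-1$) forces $M = M'$.

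The only real work is the verification in the second paragraph that both extended sequences genuinely satisfy the recursive definition of a $\tau$-exceptional sequence. I expect this to be routine once the recursion defining $\J(L_1,\ldots,L_{n-2},M,N)$ is traced through, but it is the step that must be stated carefully, as it is precisely where the hypothesis $\J(M,N) = \J(M',N')$ is invoked.
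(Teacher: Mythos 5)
Your proposal is correct and follows essentially the same route as the paper: the paper's proof also fixes a single complete $\tau_{\J(M,N)}$-exceptional sequence $(M_1,\ldots,M_{n-2})$ in the common perpendicular category and applies Theorem~\ref{thm:unique} to the two completions $(M_1,\ldots,M_{n-2},M,N)$ and $(M_1,\ldots,M_{n-2},M',N')$. The verification you flag in your final paragraph is exactly the unwinding of the recursive definition of $\tau$-exceptional sequence via $\J(M,N) = \J_{\J(N)}(M)$, which the paper treats as immediate.
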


\begin{proof}
    Let $(M_1,\ldots,M_{n-2})$ be a complete $\tau_{\J(M,N)}$-exceptional sequence.
    The result then follows from applying Theorem~\ref{thm:unique} to the complete $\tau$-exceptional sequences $(M_1,\ldots,M_{n-2},M,N)$ and $(M_1,\ldots,M_{n-2},M',N')$.
\end{proof}

We will also use the following result from
\cite{bmpreprint},
which was crucial
for defining a mutation operation on signed $\tau$-exceptional pairs and sequences. The operation considered there does not, however, preserve (unsigned) 
$\tau$-exceptional pairs.

\begin{theorem}\label{thm:signed_mutation}\cite[Thm.~3.4 and Prop.~3.6]{bmpreprint}
    Let $(M,N)$ be a signed $\tau$-exceptional pair. Then $(\Ebm_{\Fbm_N(M)}(N),\Fbm_N(M))$ is also a signed $\tau$-exceptional pair and $\J(M,N) = \J(\Ebm_{\Fbm_N(M)}(N),\Fbm_N(M)).$
\end{theorem}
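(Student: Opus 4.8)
The plan is to reduce the entire statement to the machinery of Theorem~\ref{thm:bmcorr} and Theorem~\ref{thm:J_E}; conceptually, the result is just the symmetry exchanging the two mutually complementary objects $N$ and $\Fbm_N(M)$. First I would set $M' := \Fbm_N(M)$. Since $M$ is an indecomposable support $\tau_{\J(N)}$-rigid object and $\Fbm_N$ is the inverse of the bijection $\Ebm_N$ of Theorem~\ref{thm:bmcorr}, the object $M'$ is an indecomposable object of $\C(\Lambda)$ for which $M' \amalg N$ is support $\tau$-rigid, and it satisfies $\Ebm_N(M') = M$.

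Next I would check that $\Ebm_{M'}(N)$ is well defined and that the resulting pair is signed $\tau$-exceptional. Because $M' \amalg N$ is support $\tau$-rigid and $N$ is indecomposable, $N$ is an admissible complement of $M'$, so Theorem~\ref{thm:bmcorr} makes $\Ebm_{M'}(N)$ an indecomposable support $\tau_{\J(M')}$-rigid object in $\C(\J(M'))$. As $M'$ is itself an indecomposable support $\tau$-rigid object (being a direct summand of the support $\tau$-rigid object $M' \amalg N$), Definition~\ref{def:signed_sequences} immediately gives that $(\Ebm_{M'}(N), M')$ is a signed $\tau$-exceptional pair. This disposes of the first assertion with no further computation.

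For the equality of $\tau$-perpendicular categories I would apply Theorem~\ref{thm:J_E}(a) twice, using that $N$ and $M'$ are complements of one another. With the support $\tau$-rigid object taken to be $N$ and the indecomposable complement $M'$, I obtain $\J(M' \amalg N) = \J(\Ebm_N(M'), N) = \J(M, N)$, the final equality using $\Ebm_N(M') = M$. With the roles reversed, taking the support $\tau$-rigid object to be $M'$ and the complement $N$, I obtain $\J(N \amalg M') = \J(\Ebm_{M'}(N), M')$. Since $M' \amalg N$ and $N \amalg M'$ denote the same object, chaining these identities yields $\J(M,N) = \J(\Ebm_{M'}(N), M')$, as desired.

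The computation is short, and the only genuine care required is bookkeeping: tracking in which category (namely $\C(\Lambda)$, $\C(\J(N))$, or $\C(\J(M'))$) each object lives, and verifying the indecomposability and support $\tau$-rigidity hypotheses every time $\Ebm$, $\Fbm$, or Theorem~\ref{thm:J_E} is invoked. The conceptual heart is that $\Fbm_N$ upgrades the reduced datum $M$ to a genuine complement $M'$ of $N$, after which reducing in the opposite order via $\Ebm_{M'}$ produces the mutated pair; the symmetric shape of Theorem~\ref{thm:J_E} then forces both reduction orders to yield the same $\tau$-perpendicular category. Were Theorem~\ref{thm:J_E} unavailable, the main obstacle would instead become establishing this $\J$-invariance by hand, unwinding the definitions of $\Ebm$ and $\Fbm$ through the torsion-theoretic bijections of Section~\ref{sec:wide}.
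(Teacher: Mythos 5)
Your argument is correct, but there is no internal proof in the paper to compare it against: Theorem~\ref{thm:signed_mutation} is stated as a quotation of \cite[Thm.~3.4 and Prop.~3.6]{bmpreprint} and is used in the paper as a black box. What you have written is a self-contained derivation of that cited result from the other black boxes the paper records. Your steps are each legitimate: setting $M' := \Fbm_N(M)$, Theorem~\ref{thm:bmcorr}(a)(i) gives that $M'$ is an indecomposable object with $M' \amalg N$ support $\tau$-rigid and $\Ebm_N(M') = M$; a direct summand of a support $\tau$-rigid object is again support $\tau$-rigid, so $M'$ qualifies as the last term of a signed $\tau$-exceptional pair, and $\Ebm_{M'}(N)$ is an indecomposable support $\tau_{\J(M')}$-rigid object, so $(\Ebm_{M'}(N), M')$ is a signed $\tau$-exceptional pair by Definition~\ref{def:signed_sequences}; and the two applications of Theorem~\ref{thm:J_E}(a) (once with $U = M'$, $V = N$, once with $U = N$, $V = M'$, both $U$'s indecomposable as required) chain to give $\J(M,N) = \J(M' \amalg N) = \J(\Ebm_{M'}(N), M')$. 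The one caveat worth recording is logical rather than mathematical: your proof is non-circular only because Theorem~\ref{thm:J_E} is imported from \cite{bh}, a source independent of \cite{bmpreprint}; the original proof of \cite[Prop.~3.6]{bmpreprint} could not simply invoke such a ready-made symmetry statement and had to establish the invariance of the $\tau$-perpendicular category by other means. Within the logical framework of the present paper, your derivation is sound and arguably the cleanest way to see the result.
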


\subsection{Sincere $\tau$-perpendicular categories}

We will need criteria for when $\tau$-perpendicular categories are 
sincere. In the corank one case, we have the following straightforward criterion.

\begin{lemma}
		\label{lem:sincerecorank1}
		Let $M\in\mods \Lambda$ be an indecomposable $\tau$-rigid module. Then $\J(M)$ is sincere in $\mods \Lambda$ if and only if $M$ is non-projective.
	\end{lemma}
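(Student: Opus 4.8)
The plan is to prove each implication separately, treating the ``only if'' direction (projective $\Rightarrow$ non-sincere) as a short computation and devoting the real work to the converse.

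For the easy direction I would observe that if $M$ is an indecomposable projective, say $M = P_i$, then $\tau M = 0$, so ${}^\perp \tau M = \mods\Lambda$ and hence $\J(M) = M^\perp = P_i^\perp$. Every object of $\J(M)$ is then annihilated by $\Hom(P_i,-)$, so the nonzero projective $P_i$ witnesses that $\J(M)$ is not sincere. This gives the contrapositive of one direction.

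For the converse I would argue by contradiction: assume $M$ is non-projective but $\J(M)$ is not sincere. Non-sincerity produces a nonzero projective $P$ with $\Hom(P,\J(M)) = 0$, and after passing to an indecomposable summand I may take $P = P_i$ indecomposable. This says exactly that $\J(M) \subseteq P_i^\perp$, and since $\tau P_i = 0$ we have $P_i^\perp = \J(P_i)$, so the containment is an inclusion of two $\tau$-perpendicular categories $\J(M) \subseteq \J(P_i)$.

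The key step --- and the place where the argument could easily stall --- is converting this inclusion into a usable statement, since comparing the torsion-free classes $M^\perp$ and $P_i^\perp$ directly does not work (the inclusion only constrains $\J(M) = M^\perp \cap {}^\perp\tau M$, not $M^\perp$ itself). The trick is to apply ${}^\perp(-)$, which reverses inclusions, and to use that both $M$ and $P_i$ are indecomposable, hence gen-minimal, $\tau$-rigid modules. Theorem~\ref{thm:gen_minimal}(a) then identifies ${}^\perp\J(M) = \Gen M$ and ${}^\perp\J(P_i) = \Gen P_i$, turning the inclusion into $\Gen P_i \subseteq \Gen M$. In particular $P_i \in \Gen M$, so there is an epimorphism $M^r \twoheadrightarrow P_i$; as $P_i$ is projective this splits, exhibiting $P_i$ as a direct summand of $M^r$, whence $P_i \cong M$ by indecomposability of $M$, contradicting that $M$ is non-projective. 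Thus $\J(M)$ must be sincere. (One could instead note that $P_i \in \Gen M$ and $M \in \Gen P_i = \Gen M$ and invoke Lemma~\ref{lem:twogens}, but the splitting argument is the most direct.)
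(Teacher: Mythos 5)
Your proof is correct, but its decisive step runs along a genuinely different track from the paper's. Both arguments open identically (an indecomposable projective $M=P_i$ gives $\tau M=0$ and $\J(M)=M^{\perp}$, which is visibly non-sincere; conversely, non-sincerity gives $\J(M)\subseteq P^{\perp}=\J(P)$ for some indecomposable projective $P$), and both exploit that an indecomposable $\tau$-rigid module is automatically gen-minimal. From there you apply the inclusion-reversing operator ${}^{\perp}(-)$ and use Theorem~\ref{thm:gen_minimal}(a) to convert $\J(M)\subseteq\J(P)$ into $\Gen P\subseteq \Gen M$, then finish with the elementary observation that an epimorphism $M^{r}\twoheadrightarrow P$ splits because $P$ is projective, so $P\cong M$ by Krull--Schmidt, contradicting non-projectivity of $M$. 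The paper instead shows $\FiltCogen(\J(M))=M^{\perp}$ (via $\WR$, Theorem~\ref{thm:ms}, Theorem~\ref{thm:WLtorsionclass}(b) and Corollary~\ref{cor:genmin}), deduces that $\J(M)$ is a right finite wide subcategory of $P^{\perp}$, and then invokes two external results from~\cite{bh}: one to write $\J(M)=\J(N,P)$ for some $\tau_{\J(P)}$-rigid $N$, and a rank count (Definition-Proposition~\ref{prop:J_def}) plus~\cite[Prop.~6.15]{bh} to force $N=0$ and $M=P$. Your route is more self-contained and elementary: it uses only machinery proved inside the paper (Theorem~\ref{thm:gen_minimal} lives in Section~\ref{sec:wide}, so there is no circularity) together with Krull--Schmidt, avoiding the wide-subcategory rank count and the citations to~\cite{bh}; the paper's route, in exchange, establishes the stronger intermediate fact $\J(M)=\J(P)$ along the way.

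One caveat: your parenthetical alternative ending is not justified as written. From $\Gen P_i\subseteq\Gen M$ you do get $P_i\in\Gen M$, but nothing at that point gives $M\in\Gen P_i$ (equivalently $\Gen P_i=\Gen M$), so Lemma~\ref{lem:twogens} cannot yet be invoked. Since you offer this only as an aside and your main splitting argument is complete, this does not affect the validity of the proof.
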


 \begin{proof}
    It is clear that if $M$ is projective then $\J(M)=M^{\perp}$ is not sincere. Thus suppose that $\J(M)$ is not sincere; i.e., that $\J(M)\subseteq P^{\perp}$ for some indecomposable projective module $P$. Since $M$ and $P$ are indecomposable, $\J(M)$ and $\J(P)=P^{\perp}$ are both rank $n-1$ module categories by Definition-Proposition~\ref{prop:J_def}. 
    Moreover, by Theorem~\ref{thm:ms}, we have $\WR(\FiltCogen (\J(M)))=\J(M)$. Since there is a torsion pair $(\Gen M, M^{\perp})$, we have, by Theorem~\ref{thm:WLtorsionclass}(b) and Corollary~\ref{cor:genmin}, that $\WR(M^{\perp})=\J(\Pmods(\Gen M))=\J(M)$, since $M$ is gen-minimal. Hence, by Theorem~\ref{thm:ms}, noting that $\Gen M$ is functorially finite, we have $\FiltCogen(\J(M))=M^{\perp}$.
   In particular, $\FiltCogen(\J(M)) \subseteq P^\perp$ is functorially finite, and so $\J(M)$ is a right finite wide subcategory of $P^\perp$. Thus, by~\cite[Lem.\ 4.3]{bh}, there exists $N \in P^\perp$ which is $\tau_{\J(P)}$-rigid and satisfies $\J(M) = \J(N,P)$. Then $N = 0$, or equivalently $J(M)= P^\perp = \J(P)$, by Definition-Proposition~\ref{prop:J_def}. We conclude that $M=P$ by~\cite[Prop.\ 6.15]{bh}.
\end{proof}

We consider now the iterated $\tau$-perpendicular category $\J(M,N)$.
	
	\begin{proposition}
		\label{prop:JJCBsincere}
		Let $(M,N)$ be a $\tau$-exceptional pair in $\mods \Lambda$ satisfying $N\not\in \P(\Lambda)$ and $M\not\in \P(\J(N))$. Then $\J(M,N)$ is sincere in $\mods \Lambda$.
	\end{proposition}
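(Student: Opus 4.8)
The plan is to reduce the proposition to two applications of the corank-one criterion (Lemma~\ref{lem:sincerecorank1}) glued together by a transitivity principle for sincerity. The single observation I would lean on throughout is that, for $P_i$ the projective cover of a simple module $S_i$, one has $\Hom(P_i,X)\neq 0$ if and only if $S_i$ occurs as a composition factor of $X$. Consequently, a subcategory $\SC$ is sincere exactly when every simple module arises as a composition factor of some object of $\SC$. This reformulation is what makes sincerity well-behaved under passage to $\tau$-perpendicular subcategories, where the notion of ``projective'' changes.

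First I would record that $\J(N)$ is sincere in $\mods\Lambda$: since $N$ is indecomposable, $\tau$-rigid, and non-projective by hypothesis, this is immediate from Lemma~\ref{lem:sincerecorank1}. Next, writing $\W := \J(N)$, I would apply the same criterion inside $\W$. Recall that $\W$ is equivalent to $\mods\Lambda_\W$ for a finite-dimensional algebra $\Lambda_\W$ (Definition-Proposition~\ref{prop:J_def}), that $M$ is an indecomposable $\tau_\W$-rigid object of $\W$ (as $(M,N)$ is a $\tau$-exceptional pair), and that $\J(M,N) = \J_\W(M)$ by definition. The hypothesis $M\notin\P(\J(N)) = \P(\W)$ says exactly that the image of $M$ is non-projective in $\mods\Lambda_\W$, so applying Lemma~\ref{lem:sincerecorank1} in $\mods\Lambda_\W$ yields that $\J(M,N) = \J_\W(M)$ is sincere in $\W$.

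It remains to combine these two facts, and this transitivity is where the real content lies. Concretely, I would establish: if $\W$ is a wide (hence length-abelian) subcategory that is sincere in $\mods\Lambda$, and $\V\subseteq\W$ is sincere in $\W$, then $\V$ is sincere in $\mods\Lambda$. Using the composition-factor reformulation, fix a simple $\Lambda$-module $S_i$. By sincerity of $\W$, the module $S_i$ is a $\Lambda$-composition factor of some $W\in\W$. Since $\W$ is an exact abelian subcategory, a $\W$-composition series of $W$ is an honest filtration of $W$ by $\Lambda$-submodules whose subquotients are the $\W$-simple objects occurring; hence $S_i$ is a $\Lambda$-composition factor of one such $\W$-simple $T$. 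Now sincerity of $\V$ in $\W$ provides some $V\in\V$ having $T$ as a $\W$-composition factor, so $T$---and therefore $S_i$---is a $\Lambda$-composition factor of $V$. Thus every simple $\Lambda$-module occurs as a composition factor of an object of $\V$, i.e.\ $\V$ is sincere in $\mods\Lambda$. Taking $\W = \J(N)$ and $\V = \J(M,N)$ gives the proposition.

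The main obstacle is precisely this transitivity step: the definition of sincerity is phrased in terms of projective modules, and projectives do not transfer directly between $\mods\Lambda$ and $\W$, since the projectives of $\W$ are its Ext-projectives rather than restrictions of projective $\Lambda$-modules. Passing to the composition-factor description, and exploiting that $\W$-composition series are genuine $\Lambda$-filtrations because $\W$ is exact abelian, is exactly what circumvents this mismatch.
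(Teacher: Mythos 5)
Your proof is correct and takes essentially the same route as the paper's: both apply Lemma~\ref{lem:sincerecorank1} twice (once in $\mods\Lambda$ using $N\notin\P(\Lambda)$, once inside $\J(N)$ using $M\notin\P(\J(N))$ and Jasso's equivalence), and then combine the two sincerity statements via the composition-factor characterization of sincerity, using that $\J(N)$-composition series are genuine $\Lambda$-filtrations because $\J(N)$ is wide. The paper simply packages your transitivity lemma concretely, by tracking composition factors of the direct sum of the simple objects of $\J(M,N)$ and of the simple objects of $\J(N)$, rather than stating it as a separate principle.
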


 \begin{proof}
    Let $\mathcal{S}_N$ be a complete set of isomorphism-class representatives of the objects of $\mods \Lambda$ which are simple in $\J(N)$.
    Note that this is a finite set, since 
    $\J(N)$ is equivalent to the module category of a finite-dimensional algebra by Definition-Proposition~\ref{prop:J_def}. 
    Since $\J(M,N)$ is sincere in $\J(N)$ (by Lemma~\ref{lem:sincerecorank1}), every object of $\mathcal{S}_N$ appears as a $\J(N)$-composition factor of the direct sum, $T$, of the simple objects in $\J(M,N)$. Similarly, since $\J(N)$ is sincere in $\mods\Lambda$, every simple object of $\mods\Lambda$ appears as a composition factor of $\bigoplus_{S \in \mathcal{S}_N} S$. It follows that every simple object of $\mods\Lambda$ is a composition factor of $T$, and therefore that $\J(M,N)$ is sincere in $\mods\Lambda$. 
\end{proof}

We now interpret and summarise 
the key results in the following proposition, in terms of $\tau$-exceptional pairs. 

\begin{proposition}\label{prop:sincere_summary_2}
    Let $(M,N)$ be a $\tau$-exceptional pair, let $L = \Fbm_N(M )\amalg N$, and suppose that $\J(L)$($=\J(M,N)$) is sincere.
    
    If $\J(L)$ is right finite, then:
        \begin{enumerate}
            \item[(a)] $\Pmods({}^\perp\J(L)) = \tau^{-1} \Imodns(\FiltCogen(\J(L)))$.
            \item[(b)] ${}^\perp\J(L) = \Gen \Pmods({}^\perp \J(L))$.
            \item[(c)] $\FiltCogen(\J(L)) = \Pmods({}^\perp\J(L))^\perp$.
            \item[(d)] $\J(\Pmods({}^\perp \J(L))) = \J(L)$.
        \end{enumerate}
Dually, if $\J(L)$ is left finite, then:
        \begin{enumerate}
            \item[(a')] $\tau \Pmodns(\FiltGen(\J(L))) = \Imods(\J(L)^\perp)$.
            \item[(b')] $\J(L)^\perp = \Cogen \Imods(\J(L)^\perp)$.
            \item[(c')] $\FiltGen(\J(L)) = {}^\perp \Imods(\J(L)^\perp)$.
            \item[(d')] $\J(\Pmodns(\FiltGen(\J(L)))) = \J(L)$.
    \end{enumerate}
\end{proposition}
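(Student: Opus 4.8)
The plan is to deduce all eight statements from the lemmas on sincere $\tau$-perpendicular categories proved earlier in this section, once we observe that $\J(L)$ is of the form $\J(M')$ for a $\tau$-rigid module $M'$. Indeed, by Remark~\ref{rem:orderedpair} the object $L = \Fbm_N(M) \amalg N$ is itself a $\tau$-rigid module, so $\J(L)$ already has this form (alternatively, one may invoke Remark~\ref{rem:tau_perp}). Since $\J(L)$ is assumed sincere and is right (resp. left) finite by hypothesis, Lemmas~\ref{lem:nosummand} and~\ref{lem:Jsame} apply to it verbatim.

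With this reduction in place, items (a) and (d) are immediate: part (a), namely $\Pmods({}^\perp \J(L)) = \tau^{-1} \Imodns(\FiltCogen(\J(L)))$, is precisely the second displayed equality in the conclusion of Lemma~\ref{lem:nosummand}(a), and part (d), namely $\J(\Pmods({}^\perp \J(L))) = \J(L)$, is contained in the chain of equalities of Lemma~\ref{lem:Jsame}(a).

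For (b) and (c) I would first note that, since $\J(L)$ is right finite, $\FiltCogen(\J(L))$ is functorially finite, so by Definition-Proposition~\ref{defprop:torsionpairs} and Proposition~\ref{prop:asff} the pair $({}^\perp \J(L), \FiltCogen(\J(L)))$ is a functorially finite torsion pair. Applying Lemma~\ref{lem:perp_injectives_2}(a) to this pair with $\T = {}^\perp \J(L)$ gives ${}^\perp \J(L) = \Gen \Pmods({}^\perp \J(L))$, which is (b). Part (c) then follows by combining (b) with the elementary identity $(\Gen S)^\perp = S^\perp$ (valid since each object of $\Gen S$ is a quotient of a power of $S$): by Definition-Proposition~\ref{defprop:torsionpairs}(c) we have $\FiltCogen(\J(L)) = ({}^\perp \J(L))^\perp = (\Gen \Pmods({}^\perp \J(L)))^\perp = \Pmods({}^\perp \J(L))^\perp$.

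The primed statements are obtained by the exactly dual argument: (a') and (d') are read off from Lemmas~\ref{lem:nosummand}(b) and~\ref{lem:Jsame}(b); (b') is Theorem~\ref{thm:dualBongartz}(b) applied to the functorially finite torsion-free class $\J(L)^\perp$; and (c') follows either directly from Lemma~\ref{lem:perp_injectives_2}(f), or from (b') together with the dual identity ${}^\perp(\Cogen S) = {}^\perp S$ and Definition-Proposition~\ref{defprop:torsionpairs}(b). I do not anticipate a genuine obstacle here; the only point requiring care is the initial reduction to a category of the form $\J(M')$, so that the module-level lemmas of the previous subsection apply, together with the observation that sincerity is imposed as a standing hypothesis (so that the finer criterion of Proposition~\ref{prop:JJCBsincere} plays no role in the present argument).
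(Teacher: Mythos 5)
Your proof is correct and follows essentially the same route as the paper's: reduce via Remark~\ref{rem:orderedpair} to the fact that $L$ is a $\tau$-rigid module so that Lemmas~\ref{lem:nosummand} and~\ref{lem:Jsame} yield (a), (d), (a'), (d'), and then obtain (b), (c), (b'), (c') from Lemma~\ref{lem:perp_injectives_2} together with applying the perpendicular operators. Your alternative citations for (b') (Theorem~\ref{thm:dualBongartz}(b)) and for (c') (Lemma~\ref{lem:perp_injectives_2}(f)) are interchangeable with the paper's choices, since Lemma~\ref{lem:perp_injectives_2} is itself deduced from Theorem~\ref{thm:dualBongartz}(b), so this is not a genuinely different argument.
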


\begin{proof}
Recall that $L$ is a $\tau$-rigid module (see Remark~\ref{rem:orderedpair}).
Note that $\J(L) = \J(\Fbm_N(M)\amalg N) = \J(M,N)$ by Theorem~\ref{thm:J_E}.

    Suppose first that $\J(L)$ is right finite and consider the torsion pair $(\T,\F) := ({}^\perp \J(L), \FiltCogen(\J(L)))$ (see Definition-Proposition~\ref{defprop:torsionpairs}). Note that $\T$ and $\F$ are both functorially finite by the assumption that $\J(L)$ is right finite and Proposition~\ref{prop:asff}). Moreover, $\F$ is sincere because $\J(L)$ is sincere.
    Items (a)-(d) are then specializations of previous results as follows.
    \begin{itemize}
        \item Item (a) is contained in Lemma~\ref{lem:nosummand}(a).
        \item Item (b) follows from Lemma~\ref{lem:perp_injectives_2}. 
        \item Item (c) follows from applying the operator $(-)^\perp$ to item (b).
        \item Item (d) follows from Lemma~\ref{lem:Jsame}(a).
    \end{itemize}
    Now suppose that $\J(L)$ is left finite and consider the torsion pair $(\T',\F') := (\FiltGen(\J(L)), \J(L)^\perp)$. Note that $\T'$ and $\F'$ are both functorially finite by the assumption that $\J(L)$ is left finite. Moreover, $\T$ is sincere because $\J(L)$ is.
    Items (a')-(d') are then specializations of previous results as follows.
    \begin{itemize}
        \item Item (a') is contained in Lemma~\ref{lem:nosummand}(b)
    \item Item (b') follows from the dual of Lemma~\ref{lem:perp_injectives_2}.
    \item Item (c') follows from applying the operator ${}^\perp(-)$ to item (b').
        
        \item Item (d') follows from Lemma~\ref{lem:Jsame}(b).\qedhere
    \end{itemize}
\end{proof}


\subsection{Irregular $\tau$-exceptional pairs}	

As we will see in Section~\ref{sec:pair_mutation}, the notions of mutation for $\tau$-exceptional pairs that we introduce do not admit clear case-free definitions. More precisely, we split the definitions of left and right mutation into ``regular'' cases and ``irregular'' cases, with the irregular cases occurring only under very specific assumptions. In this section, we prepare the necessary definitions and background related to the irregular cases.

\begin{definition} \label{def:regular}
\begin{enumerate}
    \item[(a)] A $\tau$-exceptional pair $(B,C)$ with $C \not \in \P({}^\perp{\tau \Fbm_C(B)})$ or $C \in  \P(\Lambda)$ is called {\em left regular}.
Otherwise, it is called \emph{left irregular}.
\item[(b)] A $\tau$-exceptional pair $(X,Y)$ with $\Fbm_Y(X) \in \P({}^\perp{\tau Y})$ or $Y \not \in \Gen \Fbm_Y(X)$ is called {\em right regular}. Otherwise it is called {\em right irregular}.
\end{enumerate}
 \end{definition}

Note that a $\tau$-exceptional pair $(B,C)$ is left irregular if and only if $C$ is a non-projective summand in the Bongartz complement of $\Fbm_C(B)$, while a $\tau$-exceptional pair $(X,Y)$ is right irregular if and only if $Y$ is a summand in the the co-Bongartz complement of $\Fbm_Y(X)$ and $\Fbm_Y(X)$ is not a summand in the Bongartz complement of $Y$.

We have the following.
		
\begin{lemma}\label{lem:regular}
Any $\tau$-exceptional pair $(B,C)$  with 
$C \in \Gen \Fbm_C(B)$ is left regular.
\end{lemma}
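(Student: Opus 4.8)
The plan is to argue directly from Definition~\ref{def:regular}(a) that the hypothesis $C \in \Gen \Fbm_C(B)$ forces $C \notin \P({}^\perp \tau \Fbm_C(B))$, which already suffices for left regularity (the first disjunct in the definition). Throughout I would write $M := \Fbm_C(B)$. By Remark~\ref{rem:orderedpair}, the module $M \amalg C$ is $\tau$-rigid, so $M$ is a genuine $\tau$-rigid module (not a shifted projective) and, since we work with basic modules, $M \not\cong C$. Both $M$ and $C$ are indecomposable: $C$ is the last term of a $\tau$-exceptional pair, and $M$ is the image of an indecomposable object under the bijection $\Fbm_C$.

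First I would recall, via Theorem~\ref{thm:Bongartz}(c), that $\P({}^\perp \tau M) = \add(M \amalg B_M)$, where $B_M$ denotes the Bongartz complement of $M$. Since $C$ is indecomposable with $C \not\cong M$, the membership $C \in \P({}^\perp \tau M)$ is equivalent to $C$ being a direct summand of $B_M$, so it suffices to exclude the latter. The key step is then to play two facts about $B_M$ and $\Gen M$ against each other. On the one hand, Lemma~\ref{lem:Bongartzsplit}(a) gives that $B_M$ is a direct summand of $\Pmods({}^\perp \tau M)$, so every indecomposable summand of $B_M$ is split Ext-projective in ${}^\perp \tau M$. On the other hand, the hypothesis gives $C \in \Gen M \subseteq {}^\perp \tau M$, and the argument already used in the proof of Lemma~\ref{lem:Bongartzsplit}(c) shows that no object of $\Gen M$ can be split Ext-projective in ${}^\perp \tau M$. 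These two statements are incompatible, so $C$ is not a summand of $B_M$; hence $C \notin \P({}^\perp \tau M)$, and by Definition~\ref{def:regular}(a) the pair $(B,C)$ is left regular.

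The argument is short, and I do not expect any genuine obstacle: the entire content is the clean clash between ``summands of the Bongartz complement are split Ext-projective'' and ``objects of $\Gen M$ are never split Ext-projective,'' both already available from Lemma~\ref{lem:Bongartzsplit}. The only point requiring care is the bookkeeping that justifies the reduction to summands of $B_M$, namely confirming that $M = \Fbm_C(B)$ is an indecomposable $\tau$-rigid module with $M \not\cong C$, so that $\P({}^\perp\tau M) = \add(M \amalg B_M)$ can be split as ``$M$ or a summand of $B_M$.''
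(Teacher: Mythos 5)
Your proof is correct and takes essentially the same route as the paper: both arguments reduce to showing $C \notin \P({}^\perp\tau \Fbm_C(B))$ by playing split Ext-projectivity of the relevant Ext-projectives against the hypothesis $C \in \Gen \Fbm_C(B)$ (noting $C \neq \Fbm_C(B)$). The only difference is one of bookkeeping: you route the argument through Theorem~\ref{thm:Bongartz}(c) and Lemma~\ref{lem:Bongartzsplit}(a), whereas the paper cites Lemma~\ref{lem:Bongartzsplit}(b) and then needs a separate remark for the case where $\Fbm_C(B)$ is projective, so your version handles both cases uniformly.
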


\begin{proof}
Note that $C\in {}^{\perp} \tau \Fbm_C(B)$ (see Remark~\ref{rem:orderedpair}).
If $\Fbm_C(B)$ is not projective, then we have that
\begin{equation}
    \label{eq:Psperp}
    \P(^{\perp}\tau \Fbm_C(B)) \setminus \{\add \Fbm_C(B)\}  = \Ps(^{\perp}\tau \Fbm_C(B))
\end{equation}
by Lemma \ref{lem:Bongartzsplit}(b). It is easy to see that this also holds if $\Fbm_C(B)$ is projective.

If $C\in \Gen \Fbm_C(B)$, then $C\not\in \Ps(^{\perp}\tau \Fbm_C(B)$, and hence $C\not\in \P(^{\perp}\tau \Fbm_C(B))$ by~\eqref{eq:Psperp}, so the pair $(B,C)$ is left regular.
\end{proof}

We now state two technical results which will be used in Section~\ref{sec:pair_mutation} to define mutations of (left or right) irregular pairs.

\begin{proposition}
		\label{prop:JUsinceresetup1}
		Assume $(B,C)$ is a left irregular $\tau$-exceptional pair and denote $L =\Fbm_C(B) \amalg C$.
  Then the following hold:

  \begin{itemize}
      \item[(a)] $L$ is $\tau$-rigid and has no projective direct summands.
      \item[(b)] $\J(L)$($= \J(B,C)$) is sincere.
      \item[(c)] $L$ is gen-minimal.
      \item[(d)] ${}^\perp \J(L)$ is functorially finite, and thus $\J(L)$ is right finite.
	  \item[(e)] $L= \Pmods({}^\perp \J(L)) = \tau^{-1}  \Imodns(\FiltCogen(\J(L)))$.
  \end{itemize}

  Assume in addition that $\J(L)$ is left finite. Then:
   \begin{itemize}
    \item[(f)]  $\J(\Pmodns(\FiltGen(\J(L)))) = \J(L)$.
       \item[(g)]  $\Pmodns(\FiltGen(\J(L)))$ is not gen-minimal and decomposes into $\Pmods(\Gen \Pmodns(\FiltGen(\J(L)))) \amalg Y$, with both direct summands indecomposable. 
        \end{itemize}
	\end{proposition}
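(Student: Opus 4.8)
Throughout, write $D:=\Fbm_C(B)$, so that $L=D\amalg C$ and $W:=\J(L)$. Recall from Remark~\ref{rem:orderedpair} that $L$ is $\tau$-rigid and that $(D,C)$ is TF-ordered; in particular $D$ and $C$ are indecomposable and $D\notin\Gen C$ (so $D\neq C$). For (a), left irregularity means precisely $C\in\P({}^\perp\tau D)$ and $C\notin\P(\Lambda)$; were $D$ projective we would have ${}^\perp\tau D=\mods\Lambda$ and hence $\P({}^\perp\tau D)=\P(\Lambda)$, forcing $C$ projective, so $D$ is non-projective and $L$ has no projective summand. For (b), $\J(L)=\J(B,C)$ by Theorem~\ref{thm:J_E}, and I would invoke Proposition~\ref{prop:JJCBsincere}; its hypotheses are $C\notin\P(\Lambda)$, which is clear, and $B\notin\P(\J(C))$. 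For the latter, Lemma~\ref{lem:eachinperp} applied to $(D,C)$ (using $C\in\P({}^\perp\tau D)$) shows $D\notin\P({}^\perp\tau C)$, since otherwise $C=D$; the bijection of Lemma~\ref{lem:fUbijection}, which identifies the indecomposable summands of the Bongartz complement of $C$ with $\P(\J(C))$ via $\Ebm_C=f_C$, then gives that $B=\Ebm_C(D)\in\P(\J(C))$ would force $D\in\P({}^\perp\tau C)$, a contradiction. Part (c) reduces, for the rank-two module $L$, to $D\notin\Gen C$ (TF-ordering) together with $C\notin\Gen D$, the latter holding because $C\in\Gen D$ would make $(B,C)$ left regular by Lemma~\ref{lem:regular}. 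Part (d) is Corollary~\ref{cor:gen_minimal}(a) applied to the gen-minimal $\tau$-rigid module $L$; part (e) combines Theorem~\ref{thm:gen_minimal}(a) and Corollary~\ref{cor:genmin}(a) (giving $L=\Pmods(\Gen L)=\Pmods({}^\perp W)$) with Proposition~\ref{prop:sincere_summary_2}(a); and part (f) is Proposition~\ref{prop:sincere_summary_2}(d$'$).

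The substance is part (g). Writing $T:=\Pmodns(\FiltGen(W))$, I would first record that $\rk T=2$: by (f) we have $\J(T)=W$, and since $T$ is a $\tau$-rigid module the rank formula of Definition-Proposition~\ref{prop:J_def} gives $n-\rk T=\rk W=n-2$. As $\Pmods(\Gen T)$ is always a nonzero direct summand of $T$ by Corollary~\ref{cor:genmin}(a), the asserted decomposition $T=\Pmods(\Gen T)\amalg Y$ with both summands indecomposable follows immediately once $T$ is shown not to be gen-minimal. Thus the whole of (g) comes down to proving that $T$ is not gen-minimal.

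I would prove this by contradiction. If $T$ were gen-minimal, then $\Gen T={}^\perp\J(T)={}^\perp W$ by Theorem~\ref{thm:gen_minimal}(a), while $\Gen L={}^\perp W$ by the gen-minimality of $L$; hence $\Gen T=\Gen L$, and the injectivity of $M\mapsto\Gen M$ on gen-minimal $\tau$-rigid modules (Theorem~\ref{thm:Bongartz}(b)) forces $T=L=D\amalg C$. To rule this out, the key point is that left irregularity makes $C$ a summand of $B_D=\Pmods({}^\perp\tau D)$ (Lemma~\ref{lem:Bongartzsplit}(b), as $D$ is indecomposable non-projective), so $C$ is Ext-projective in ${}^\perp\tau D$; Proposition~\ref{prop:ASExt}(a) then upgrades this to the inclusion ${}^\perp\tau D\subseteq{}^\perp\tau C$, whence ${}^\perp\tau L={}^\perp\tau D\cap{}^\perp\tau C={}^\perp\tau D$. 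On the other hand, Proposition~\ref{prop:sincere_summary_2}(a$'$),(c$'$) identify $\FiltGen(W)={}^\perp\tau T$. Under the assumption $T=L$ these combine to give $\FiltGen(W)={}^\perp\tau L={}^\perp\tau D$, so that $T=\Pmodns(\FiltGen(W))=\Pmodns({}^\perp\tau D)$. But $\Pmod({}^\perp\tau D)=D\amalg B_D$ with $B_D=\Pmods({}^\perp\tau D)$, so $\Pmodns({}^\perp\tau D)=D$ has rank one, contradicting $\rk T=2$ (equivalently, $T=L=D\amalg C=D$ would force $C=0$). Hence $T$ is not gen-minimal, which completes (g).

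The main obstacle is concentrated in the last paragraph: the argument works only because left irregularity can be converted, via Ext-projectivity and Proposition~\ref{prop:ASExt}(a), into the torsion-class identity ${}^\perp\tau L={}^\perp\tau D$, and because the sincere and left-finite hypotheses make the identification $\FiltGen(W)={}^\perp\tau T$ available from Proposition~\ref{prop:sincere_summary_2}. The conceptual hinge is the reduction ``$T$ not gen-minimal'' $\iff$ ``$T\neq L$'' supplied by Theorem~\ref{thm:gen_minimal}, which turns the problem into the clean rank contradiction above.
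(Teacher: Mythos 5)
Your proof is correct and follows essentially the same route as the paper's: parts (a)--(f) rest on the same lemmas, and for (g) you use the identical strategy of assuming $\Pmodns(\FiltGen(\J(L)))$ is gen-minimal, forcing it to equal $L$ via Theorem~\ref{thm:gen_minimal} and Theorem~\ref{thm:Bongartz}(b), and then contradicting this using the identity $\FiltGen(\J(L)) = {}^\perp\tau L = {}^\perp\tau \Fbm_C(B)$ coming from left irregularity. The only cosmetic differences are that you derive ${}^\perp\tau L = {}^\perp\tau \Fbm_C(B)$ via Proposition~\ref{prop:ASExt} instead of the paper's Bongartz-complement sandwich, and you phrase the final contradiction as a rank count ($\Pmodns({}^\perp\tau \Fbm_C(B)) = \Fbm_C(B)$ has rank one) rather than as $C$ being simultaneously split and non-split Ext-projective in $\FiltGen(\J(L))$.
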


\begin{proof}
(a) The module $L$ is $\tau$-rigid by Theorem \ref{thm:bmcorr}, and $C$ is non-projective by the irregularity assumption. By Lemma \ref{lem:eachinperp}, since we have that $C \in \P({}^\perp{\tau \Fbm_C(B)})$, we have that 
\begin{equation}\label{eq:notinBc}
\Fbm_C(B) \not \in \P({}^\perp{\tau C})    
\end{equation}
(noting that $C\not=\Fbm_C(B)$)
and hence $\Fbm_C(B)$ is not projective.

(b) By \eqref{eq:notinBc} it also follows that $B \not \in \P(\J(C))$, using Lemma~\ref{lem:fUbijection}. 
Then it follows from  Theorem~\ref{thm:J_E} that $\J(L)= \J(B,C)$, and from Proposition~\ref{prop:JJCBsincere} that this is a sincere subcategory.

(c) Since $(\Fbm_C(B),C)$ is $TF$-ordered, we have $\Fbm_C(B)\not\in \Gen C$.  Moreover, $C \notin \Gen\Fbm_C(B)$ by Lemma~\ref{lem:regular}. Hence $L$ is gen-minimal.

(d) This follows from item (c) and Corollary~\ref{cor:gen_minimal}.

(e) Note that, by item (c) and Theorem~\ref{thm:gen_minimal}, we have
    ${}^{\perp}\J(L)=\Gen L$. Since $L$ is gen-minimal, $L=\Pmods(\Gen L)=\Pmods({}^\perp \J(L))$, using Corollary~\ref{cor:genmin}(a).
The final equality follows from Proposition~\ref{prop:sincere_summary_2}(a).

(f) follows from (b) and Proposition \ref{prop:sincere_summary_2}(d').

(g) By items (c) and (e), we have that $\Pmods({}^\perp \J(L)) = L$ is gen-minimal.
    Suppose, for a contradiction, that $\Pmodns(\FiltGen(\J(L)))$ is also gen-minimal.
    We first show that then $\Pmods({}^\perp \J(L))= \Pmodns(\FiltGen(\J(L)))$. 
We have \begin{align*}
\Gen \Pmods({}^\perp \J(L)) & ={}^{\perp}\J(\Pmods({}^\perp\J(L))) & \text{by Theorem~\ref{thm:gen_minimal}} \\
&= {}^{\perp}\J(L) & \text{by item (e)} \\
&={}^{\perp}\J(\Pmodns(\FiltGen(\J(L)))) & \text{by item (f)} \\
&=\Gen \Pmodns(\FiltGen(\J(L))) & \text{by Theorem~\ref{thm:gen_minimal}}.
\end{align*}

    Hence, since both $\Pmods({}^\perp \J(L))$ and $\Pmodns(\FiltGen(\J(L)))$ are $\tau$-rigid and gen-minimal, they are equal by Theorem~\ref{thm:Bongartz}(b). In particular, this means that by item (e) we have: 
    \begin{equation}\label{eq:Uisomorphism}
    L = \Pmodns(\FiltGen(\J(L)))    
    \end{equation}

By the definition of left irregularity, we have $C\in \P({}^{\perp}\tau \Fbm_C(B))\setminus \{\Fbm_C(B)\}$ (noting that $C\not=\Fbm_C(B)$ by Theorem~\ref{thm:bmcorr}), so $C$ is a direct summand of the Bongartz complement $B_{\Fbm_C(B)}$ of $\Fbm_C(B)$. Hence, using this and the characterisation of the Bongartz complement (see Theorem~\ref{thm:Bongartz}):
\begin{align*}
    {}^{\perp} \tau (C\amalg \Fbm_C(B)) &\subseteq {}^{\perp} \tau \Fbm_C(B) \\
    &={}^{\perp}\tau (\Fbm_C(B)\amalg B_{\Fbm_C(B)}) \\
    &\subseteq {}^{\perp}\tau (C\amalg \Fbm_C(B)).
\end{align*}
It follows that
\begin{equation}\label{eq:equality}
{}^{\perp} \tau (C\amalg \Fbm_C(B)) = {}^{\perp} \tau \Fbm_C(B)
\end{equation}
Since $C$ is a direct summand of $B_{\Fbm_C(B)}$, it is split projective in ${}^{\perp}\tau \Fbm_C(B)$ by Lemma~\ref{lem:Bongartzsplit}, i.e. it is a summand of $P_s({}^{\perp}\tau \Fbm_C(B))$. We then have (since $\J(L)$ is assumed to be left finite):
\begin{align*}
\FiltGen(\J(L))
&= {}^{\perp}\tau \Pmodns(\FiltGen(\J(L))) & \text{by item (b) and Lemma~\ref{lem:perp_injectives_2}} \\
&= {}^{\perp} \tau L &  \text{by Equation \eqref{eq:Uisomorphism}} \\
&= {}^{\perp} \tau (\Fbm_C(B)\amalg C) & \text{by the definition of $L$} \\
&= {}^{\perp} \tau \Fbm_C(B) & \text{by Equation \eqref{eq:equality}}
\end{align*}
Hence, $C$ is a direct summand of $P_s(\FiltGen(\J(L)))$.
However, using~\eqref{eq:Uisomorphism}, we have that $C$ is also a direct summand of $L= \Pmodns(\FiltGen(\J(L)))$,
giving a contradiction. Hence,  $\Pmodns(\FiltGen(\J(L)))$ is not gen-minimal, as required.

Now recall that $\J(L) = \J( \Pmodns(\FiltGen(\J(L))))$ by Proposition~\ref{prop:sincere_summary_2}(d'). Thus $ \Pmodns(\FiltGen(\J(L)))$ has exactly two indecomposable direct summands by Definition-Proposition~\ref{prop:J_def}.
Moreover, by Corollary~\ref{cor:genmin}, $\Pmods(\Gen \Pmodns(\FiltGen(\J(L))))$ is a nonzero direct summand of  $\Pmodns(\FiltGen(\J(L)))$. Since $ \Pmodns(\FiltGen(\J(L)))$ is not gen-minimal, we thus obtain the desired direct sum decomposition.
\end{proof}

  \begin{proposition}
		\label{prop:JUsinceresetup2}
		Assume $(X,Y)$ is a right irregular $\tau$-exceptional pair, with $\widetilde{L} =\Fbm_Y(X) \amalg Y$.
  Then the following hold:

  \begin{itemize}
      \item[(a)] $\widetilde{L}$ is $\tau$-rigid and has no projective direct summands. 
      \item[(b)] $\J(\widetilde{L})$($= \J(X,Y)$) is sincere.
      \item[(c)] $\tau \widetilde{L}$ is $\tau^{-1}$-rigid and cogen-minimal.
      \item[(d)] $\J(\widetilde{L})^\perp$ is functorially finite, and thus $\J(\widetilde{L})$ is left finite.
	  \item[(e)] $\tau \widetilde{L} = \tau \Pmodns(\FiltGen(\J(\widetilde{L}))) = \Imods(\J(\widetilde{L})^\perp)$.
  \end{itemize}
  Assume in addition that $\J(\widetilde{L})$ is right finite. Then:
   \begin{itemize}
    \item[(f)]  $\Jinv(\Imodns(\FiltCogen(\J(\widetilde{L})))) = \J(P_{s}({}^\perp \J(\widetilde{L}))) = \J(\widetilde{L})$. 
       \item[(g)] 
$\Imodns(\FiltCogen(\J(\widetilde{L})))$ is not cogen-minimal and can be written
       $$\Imodns(\FiltCogen(\J(\widetilde{L}))) = \Imods(\Cogen \Imodns(\FiltCogen(\J(\widetilde{L})))) \amalg \tau C,$$
       where both direct summands are indecomposable and non-injective.
       Applying $\tau^{-1}$ to this equality gives
       $$P_{s}({}^\perp \J(\widetilde{L})) = \tau^{-1} \Imods(\Cogen \Imodns(\FiltCogen(\J(\widetilde{L})))) \amalg C,$$ with both direct summands indecomposable and non-projective.
        \end{itemize}
	\end{proposition}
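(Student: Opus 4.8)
The plan is to establish each item as the dual of the corresponding item in Proposition~\ref{prop:JUsinceresetup1}, working with $\tau\widetilde{L}$ (which is $\tau^{-1}$-rigid by Lemma~\ref{lem:twokinds2}, once we know $\widetilde{L}$ has no projective summands) in place of $L$, and using throughout the dual structural results: Theorem~\ref{thm:dualBongartz} for Theorem~\ref{thm:Bongartz}, Corollary~\ref{cor:genmin}(b) for (a), Theorem~\ref{thm:gen_minimal}(b) for (a), Corollary~\ref{cor:gen_minimal}(b) for (a), the primed items of Proposition~\ref{prop:sincere_summary_2}, and Lemma~\ref{lem:Jsame}(a). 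The one genuinely non-formal point, which I expect to be the main obstacle, is that right irregularity is \emph{not} the literal dual of left irregularity: its definition contains the condition $Y\in\Gen\Fbm_Y(X)$, whereas naive dualization of left irregularity would instead produce a ``$\notin\Cogen$'' condition. I therefore plan to feed the two defining conditions of right irregularity, namely $\Fbm_Y(X)\notin\P({}^\perp\tau Y)$ and $Y\in\Gen\Fbm_Y(X)$, into two \emph{different} steps of the proof of cogen-minimality.

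For (a), $\widetilde{L}$ is $\tau$-rigid by Theorem~\ref{thm:bmcorr} and Remark~\ref{rem:orderedpair}. To see it has no projective summands I would argue: $Y$ cannot be projective, since $Y\in\Gen\Fbm_Y(X)$ with $Y\neq\Fbm_Y(X)$ would force $Y\in\add\Fbm_Y(X)$; and $\Fbm_Y(X)$ cannot be projective, since $\tau$-rigidity gives $\Fbm_Y(X)\in{}^\perp\tau Y$, so a projective $\Fbm_Y(X)$ would be Ext-projective there, contradicting $\Fbm_Y(X)\notin\P({}^\perp\tau Y)$. For (b), the same condition $\Fbm_Y(X)\notin\P({}^\perp\tau Y)$ yields $X\notin\P(\J(Y))$ through the bijection of Lemma~\ref{lem:fUbijection}, whereupon $\J(\widetilde{L})=\J(X,Y)$ (Theorem~\ref{thm:J_E}) is sincere by Proposition~\ref{prop:JJCBsincere}, using also $Y\notin\P(\Lambda)$ from (a).

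The crux is (c): that $\tau\widetilde{L}=\tau\Fbm_Y(X)\amalg\tau Y$ is cogen-minimal, equivalently that neither indecomposable summand lies in the $\Cogen$ of the other. For $\tau\Fbm_Y(X)\notin\Cogen(\tau Y)$ I would note that $\tau Y$ lies in the torsion-free part $({}^\perp\tau Y)^\perp$ of the torsion pair $({}^\perp\tau Y,({}^\perp\tau Y)^\perp)$ essentially by definition, so $\Cogen(\tau Y)\subseteq({}^\perp\tau Y)^\perp$; meanwhile Proposition~\ref{prop:tauprojectiveinjective} converts $\Fbm_Y(X)\notin\P({}^\perp\tau Y)$ into $\tau\Fbm_Y(X)\notin({}^\perp\tau Y)^\perp$, which gives the separation. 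For $\tau Y\notin\Cogen(\tau\Fbm_Y(X))$ I would instead use $Y\in\Gen\Fbm_Y(X)$: this makes $Y$ a summand of the co-Bongartz complement of $\Fbm_Y(X)$, so by Corollary~\ref{cor:proj_inj} $\tau Y$ is a summand of $\Imods(\Fbm_Y(X)^\perp)$ while $\tau\Fbm_Y(X)=\Imodns(\Fbm_Y(X)^\perp)$; if $\tau Y\in\Cogen(\tau\Fbm_Y(X))$, then $\Imodns(\Fbm_Y(X)^\perp)\amalg(\Imods(\Fbm_Y(X)^\perp)/\tau Y)$ would be a $\tau^{-1}$-rigid module cogenerating $\Fbm_Y(X)^\perp$ but not having $\tau Y$ as a summand, contradicting Corollary~\ref{cor:genmin}(b). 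Items (d) and (e) then follow formally: (d) from (c) and Corollary~\ref{cor:gen_minimal}(b); and (e) by combining cogen-minimality (which via Theorem~\ref{thm:gen_minimal}(b) and Corollary~\ref{cor:genmin}(b) gives $\tau\widetilde{L}=\Imods(\J(\widetilde{L})^\perp)$) with Proposition~\ref{prop:sincere_summary_2}(a$'$).

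For the last two items I assume $\J(\widetilde{L})$ right finite. Item (f) is Lemma~\ref{lem:Jsame}(a) (equivalently Proposition~\ref{prop:sincere_summary_2}(d) together with Theorem~\ref{thm:WLtorsionclass}), using sincerity from (b). Item (g) is the most technical, and I would prove it by dualizing the contradiction argument of Proposition~\ref{prop:JUsinceresetup1}(g) step by step: assuming $\Imodns(\FiltCogen(\J(\widetilde{L})))$ cogen-minimal, a chain through Theorem~\ref{thm:gen_minimal}(b), item (e) and item (f) forces $\tau\widetilde{L}=\Imods(\J(\widetilde{L})^\perp)=\Imodns(\FiltCogen(\J(\widetilde{L})))$; but $\tau Y$ is a summand of the dual Bongartz complement of $\tau\Fbm_Y(X)$ (Lemma~\ref{lem:dualBongartzSplit}), hence split Ext-injective in $\Fbm_Y(X)^\perp=\FiltCogen(\J(\widetilde{L}))$, the latter equality coming from $Y\in\Gen\Fbm_Y(X)$ and the dual of Lemma~\ref{lem:perp_injectives_2} exactly as in Proposition~\ref{prop:JUsinceresetup1}(g), so $\tau Y$ would be a summand of both $\Imods(\FiltCogen(\J(\widetilde{L})))$ and $\Imodns(\FiltCogen(\J(\widetilde{L})))$, a contradiction. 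The asserted decomposition then follows from item (f) (which forces exactly two indecomposable summands via Definition-Proposition~\ref{prop:J_def}) together with Corollary~\ref{cor:genmin}(b), with non-injectivity of the summands coming from sincerity as in Lemma~\ref{lem:nosummand}, and the $\tau^{-1}$-version by applying $\tau^{-1}$. Besides (c), where the non-dual nature of right irregularity requires the two separate arguments above, the chief hazard I anticipate is carrying out (g) without sign errors in the interplay of $\tau$, $\Gen/\Cogen$, and split versus non-split Ext-injectivity.
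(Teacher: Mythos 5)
Your proposal is correct, and for items (a), (b), (d), (e), (f) and (g) it tracks the paper's own proof almost step for step: (a) and (b) are argued exactly as in the paper, (d) and (e) are the same formal consequences (the paper, like you, routes (e) through Theorem~\ref{thm:gen_minimal}(b), the cogen-minimal case of the dual Bongartz bijection, and Proposition~\ref{prop:sincere_summary_2}(a')), in (f) you cite Lemma~\ref{lem:Jsame}(a), which is verbatim the claim, where the paper re-derives it from Theorem~\ref{thm:ms} and Theorem~\ref{thm:WLtorsionclass}, and your sketch of (g) -- forcing $\tau\widetilde{L}=\Imodns(\FiltCogen(\J(\widetilde{L})))$ under the contradiction hypothesis, identifying $\FiltCogen(\J(\widetilde{L}))=\Fbm_Y(X)^\perp$ via the dual of Lemma~\ref{lem:perp_injectives_2}, and then clashing split against non-split Ext-injectives at $\tau Y$ -- is precisely the paper's argument. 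The one genuine divergence is the second half of (c), i.e.\ ruling out $\tau Y\in\Cogen(\tau\Fbm_Y(X))$. The paper stays on the torsion-class side: it uses Lemma~\ref{lem:Bongartzsplit}(b) (for the indecomposable non-projective $\Fbm_Y(X)$, every Ext-projective of ${}^\perp\tau\Fbm_Y(X)$ other than $\Fbm_Y(X)$ itself is split projective) to deduce from $Y\in\Gen\Fbm_Y(X)$ that $Y\notin\P({}^\perp\tau\Fbm_Y(X))$ while $Y\in\P({}^\perp\tau\widetilde{L})$, hence ${}^\perp\tau\widetilde{L}\neq{}^\perp\tau\Fbm_Y(X)$, and then converts to Cogen classes via $({}^\perp\tau Z)^\perp=\Cogen\tau Z$. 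You instead pass to the torsion-free class $\Fbm_Y(X)^\perp$: Corollary~\ref{cor:proj_inj} (with $Y\in\Pns(\Gen\Fbm_Y(X))$, which holds by Proposition~\ref{prop:ASExt}) places $\tau Y$ inside $\Imods(\Fbm_Y(X)^\perp)$ and identifies $\Imodns(\Fbm_Y(X)^\perp)=\tau\Fbm_Y(X)$, and a containment $\tau Y\in\Cogen(\tau\Fbm_Y(X))$ would make $N'=\tau\Fbm_Y(X)\amalg\bigl(\Imods(\Fbm_Y(X)^\perp)/\tau Y\bigr)$ a $\tau^{-1}$-rigid module (a summand of $\Imod(\Fbm_Y(X)^\perp)$) with $\Cogen N'=\Fbm_Y(X)^\perp$ but without $\tau Y$ as a summand, contradicting Corollary~\ref{cor:genmin}(b). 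I checked this and it is valid; note it can even be shortened, since split Ext-injectivity of $\tau Y$ in $\Fbm_Y(X)^\perp$ already forces any monomorphism $\tau Y\hookrightarrow(\tau\Fbm_Y(X))^k$ to split, giving $\tau Y\cong\tau\Fbm_Y(X)$, which is absurd. (Your first half of (c), via Proposition~\ref{prop:tauprojectiveinjective}, is essentially the paper's argument in different words.) As for what each route buys: the paper's version needs only the Bongartz-side lemmas and never invokes Corollary~\ref{cor:proj_inj} in (c), while your version front-loads the passage to the torsion-free side, which is exactly the machinery (g) requires anyway, so your proof of (c) and (g) uses one uniform toolkit; the cost is the slightly more elaborate contradiction object $N'$ where a one-line splitting argument suffices.
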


\begin{proof}
    (a) 
    The module $\widetilde{L}$ is $\tau$-rigid by Theorem \ref{thm:bmcorr}.
    Since $(X,Y)$ is right irregular, we have $\Fbm_Y(X)\not\in \P({}^{\perp}(\tau Y)$ and $Y\in \Gen \Fbm_Y(X)$. Hence, neither $\Fbm_Y(X)$ nor $Y$ is projective.

    (b) We have $Y \notin \P(\Lambda)$ by (a).
    Since $\Fbm_Y(X)\not\in \P({}^{\perp}(\tau Y))$, we have $X \notin \P(\J(Y))$ by Lemma~\ref{lem:fUbijection}. Then it follows from Proposition~\ref{prop:JJCBsincere} and Theorem~\ref{thm:J_E} that $\J(\widetilde{L}) = \J(X,Y)$ is sincere.

(c) The module $\tau\widetilde{L}$ is 
$\tau^{-1}$-rigid by Lemma \ref{lem:twokinds2}.
By the irregularity assumption, we have $\Fbm_Y(X) \notin \P({}^\perp \tau Y)$. By Theorem \ref{thm:Bongartz}(c), we have that 
    $\Fbm_Y(X) \in \P({}^\perp \tau \widetilde{L})$, since $\Fbm_Y(X)$ is a direct summand in $\widetilde{L}$.
    It follows that 
    \begin{equation}\label{eq:diffPerp1}
    {}^\perp \tau \widetilde{L} \neq {}^\perp \tau Y.
    \end{equation}

Next note that, since $\Fbm_Y(X)$ is indecomposable non-projective (by item (a)) and $\tau$-rigid, we have by Theorem~\ref{thm:Bongartz}(c) and Lemma~\ref{lem:Bongartzsplit}(b) that 
\begin{equation}\label{eq:split}P({}^\perp \tau \Fbm_Y(X)) = \Pmods({}^\perp \tau \Fbm_Y(X)) \amalg \Fbm_Y(X).\end{equation} 
Now, using again the irregularity assumption, we have $Y \in \Gen \Fbm_Y(X)$. Recalling from Theorem~\ref{thm:Bongartz}(c) that $\Fbm_Y(X) \in \P({}^\perp \tau \Fbm_Y(X))$, this implies that
$Y \notin \P_s({}^\perp \tau \Fbm_Y(X))$. Since $Y$ is indecomposable and $Y \neq \Fbm_Y(X)$, it then follows from~\eqref{eq:split} that $Y \notin \P({}^\perp \tau \Fbm_Y(X))$. 

Since $Y$ is a direct summand of $\widetilde{L}$, we have $Y \in \P({}^\perp \tau \widetilde{L})$ using Theorem~\ref{thm:Bongartz}(c),
and it follows that 
\begin{equation}\label{eq:diffPerp2}
{}^\perp \tau \widetilde{L} \neq {}^\perp \tau \Fbm_Y(X).
    \end{equation}
Now, let $Z \in \{\widetilde{L}, Y, \Fbm_Y(X)\}$.
By Theorem~\ref{thm:Bongartz}(a) we have that ${}^\perp \tau Z$ is a torsion class. 
We claim that the corresponding torsion-free class $(^\perp \tau Z)^{\perp}$ is equal to $\Cogen(\tau Z)$.
By Definition-Proposition~\ref{defprop:torsionpairs} we have $(^\perp \tau Z)^{\perp}= \Filt(\Cogen \tau Z)$. 
Since $Z$ is $\tau$-rigid,  
$\tau Z$ is $\tau^{-1}$-rigid by Lemma \ref{lem:twokinds2}. Hence, $\Cogen \tau Z$ is closed under extensions, and it follows that $(^\perp \tau Z)^{\perp} = \Filt(\Cogen \tau Z) = \Cogen \tau Z$.

Combining this now with \eqref{eq:diffPerp1} and \eqref{eq:diffPerp2}, we obtain that $\Cogen \tau \widetilde{L}$ must properly contain both $\Cogen \tau Y$ and $\Cogen \tau \Fbm_Y(X)$. Since $\widetilde{L} = \Fbm_Y(X) \amalg Y$, it follows that 
$\tau\widetilde{L}$ is cogen-minimal.

    (d) Note that $\J(\widetilde{L}) = \Jinv(\tau \widetilde{L})$ by (a) and Lemma~\ref{lem:Jtauminus}. The result thus follows from item (c) and Corollary~\ref{cor:gen_minimal}.

    (e) By item (c) and Theorem~\ref{thm:gen_minimal}, we have $\Jinv(\tau \widetilde{L})^\perp =  \Cogen \tau \widetilde{L}$. Since $\tau \widetilde{L}$ is cogen-minimal, we have by Theorem~\ref{thm:dualBongartz}(b) that $\tau\widetilde{L} = \Imods
    (\Cogen \tau \widetilde{L}) =\Imods(\Jinv(\tau\widetilde{L})^\perp)$. Now using that $\J(\widetilde{L})=\Jinv(\tau \widetilde{L})$ is sincere and left finite, the final isomorphism follows from Proposition~\ref{prop:sincere_summary_2}(a').

    (f) By Theorem~\ref{thm:ms}, we have that $\J(\widetilde{L}) = \WR(\FiltCogen(\J(\widetilde{L})))$. As we have assumed $\FiltCogen(\J(\widetilde{L}))$ is functorially finite, the result follows from Theorem~\ref{thm:WLtorsionclass} and item (b), noting the torsion pair $({}^{\perp}J(\widetilde{L}),\FiltCogen(\J(\widetilde{L})))$ (see Definition-Proposition~\ref{defprop:torsionpairs}).
    
    (g) Note that $\J(\widetilde{L}) = \Jinv(\tau \widetilde{L})$ by (a) and Lemma~\ref{lem:Jtauminus}.
    We thus write $\Jinv(\tau \widetilde{L})$ in place of $\J(\widetilde{L})$ throughout the proof. By items (c) and (e), we have that $\Imods(\Jinv(\tau\widetilde{L})^\perp) = \tau \widetilde{L}$ is cogen-minimal. Suppose, for a contradiction, that $\Imodns(\FiltCogen(\Jinv(\tau\widetilde{L})))$ is also cogen-minimal. We first show that then $
    \Imods(\Jinv(\tau\widetilde{L})^{\perp})= \Imodns(\FiltCogen(\Jinv(\tau\widetilde{L})))$. 

    We have
    \begin{align*}
        \Cogen \Imods(\Jinv(\tau\widetilde{L})^\perp) &= \Jinv(\Imods(\Jinv(\tau\widetilde{L})^\perp))^\perp & \text{by Theorem~\ref{thm:gen_minimal}}\\
        &= \Jinv(\tau\widetilde{L})^\perp & \text{by Lemma~\ref{lem:Jsame}}\\
        &= \Jinv(\Imodns(\FiltCogen(\Jinv(\tau \widetilde{L}))))^\perp & \text{by item (f)}\\
        &= \Cogen \Imodns(\FiltCogen(\Jinv(\tau\widetilde{L}))) & \text{by Theorem~\ref{thm:gen_minimal}}
    \end{align*}
    Hence, since both 
    $\Imods(\Jinv(\tau \widetilde{L})^{\perp})$ and $\Imodns(\FiltCogen(\Jinv(\tau\widetilde{L})))$ are $\tau^{-1}$-rigid and cogen-minimal, they are equal by Theorem~\ref{thm:dualBongartz}(b). In particular, this means by item (e) we have
    \begin{equation}\label{eqn:tau_U_tilde}
        \tau \widetilde{L} = \Imodns(\FiltCogen(\Jinv(\tau\widetilde{L}))).
    \end{equation}
    By the definition of right irregularity, we have $Y \in \Gen \Fbm_Y(X)$. It follows that 
    $\Gen \widetilde{L} =  \Gen \Fbm_Y(X)$ and that $\widetilde{L}^{\perp} = \Fbm_Y(X)^{\perp} $.

    Using that $\widetilde{L}$ is $\tau$-rigid and~\cite[Lemma 4.4]{bm}, we have that 
$Y \in \P(\Gen\Fbm_Y(X)) = \P(\Gen \widetilde{L})$. Since $Y$ is in $\Gen\Fbm_Y(X)$ we must have 
$Y \in \Pns(\Gen \widetilde{L})$. Using that $(\Gen \widetilde{L},\widetilde{L}^{\perp})$ is a functorially 
finite torsion pair, we obtain by Corollary~\ref{cor:proj_inj} that $\tau Y \in \Is(\widetilde{L}^{\perp})$.
    
    Moreover, since $\tau \widetilde{L}$ is cogen-minimal (by (c)), we have
    \begin{align*}
        \FiltCogen(\Jinv(\tau\widetilde{L})) &= \tau^{-1}(\Imodns(\FiltCogen(\Jinv(\tau\widetilde{L}))))^\perp & \text{by (b) and the dual of Lemma~\ref{lem:perp_injectives_2}}\\
        &= \tilde{L}^\perp & \text{by Equation~(\ref{eqn:tau_U_tilde}) and item (a).}
    \end{align*} 
    Hence, $\tau Y$ is a direct summand of $\Imods(\FiltCogen(\Jinv(\tau \widetilde{L})))$. However, using (\ref{eqn:tau_U_tilde}), we have that $\tau Y$ is also a direct summand of $\tau\widetilde{L} = \Imodns(\FiltCogen(\Jinv(\tau \widetilde{L})))$, giving a contradiction. Hence, $\Imodns(\FiltCogen(\Jinv(\tau\widetilde{L})))$ is not cogen-minimal, as required.

Recall that $\Jinv(\tau\widetilde{L}) =\J(\widetilde{L})$.
By item (f), $\Imodns(\FiltCogen(\J(\widetilde{L})))$ is a direct sum of two indecomposable modules. Since $\Imodns(\FiltCogen(\J(\widetilde{L})))$ is non-zero, $I_s(\Cogen \Imodns(\FiltCogen(\J(\widetilde{L})))$ is also non-zero.
By Corollary~\ref{cor:genmin} we can write 
\begin{equation}\label{eq:decompose}
    \Imodns(\FiltCogen(\J(\widetilde{L}))) = \Imods(\Cogen \Imodns (\FiltCogen(\J(\widetilde{L})))) \amalg C'.
\end{equation}

Since $\Imodns(\FiltCogen(\J(\widetilde{L})))$ is not cogen-minimal (established above), $C'$ is non-zero, so both summands on the right hand side are indecomposable.
Since an injective module cannot be non-split injective in a torsion-free class, we see that
$\Imodns(\FiltCogen(\J(\widetilde{L})))$ has no injective direct summands, enabling us to write $C'=\tau C$ where $C$ is indecomposable, giving the first statement.

From Corollary~\ref{cor:proj_inj}, we have 
\begin{equation}\label{eq:projtoinj}
\Imodns(\FiltCogen(\J(\widetilde{L}))) =  \tau( \Pmods(({}^\perp \J(\widetilde{L})))).
\end{equation}
So,
applying $\tau^{-1}$ to \eqref{eq:decompose} and using \eqref{eq:projtoinj}, we obtain the decomposition
\begin{equation}
\label{eq:decompose2}
    P_{s}({}^\perp \J(\widetilde{L})) = \tau^{-1} \Imods(\Cogen \Imodns(\FiltCogen(\J(\widetilde{L})))) \amalg C.
    \end{equation}
Since both terms on the right hand side of~\eqref{eq:decompose} are indecomposable and non-injective, we see that both terms on the right hand side of~\eqref{eq:decompose2} are indecomposable and non-projective, as desired.
\end{proof}

We conclude this section with two results which show that, when they exist, left and right irregular pairs uniquely determine their (iterated) $\tau$-perpendicular categories. We expand further upon these uniqueness results in Corollary~\ref{cor:irregular_unique}.

\begin{proposition}\label{prop:left_irregular_unique}
    Let $(B,C)$ and $(\overline{B},\overline{C})$ be left irregular $\tau$-exceptional pairs such that $\J(B,C) = \J(\overline{B},\overline{C})$. Then $(B,C) = (\overline{B},\overline{C})$.
\end{proposition}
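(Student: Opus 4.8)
The plan is to show that, for a left irregular pair, \emph{both} entries are recoverable from the single subcategory $\J(B,C)$, so that equality of the subcategories forces equality of the pairs. First I would set $L := \Fbm_C(B)\amalg C$ and $\overline{L} := \Fbm_{\overline{C}}(\overline{B})\amalg\overline{C}$. By Theorem~\ref{thm:J_E}(b) we have $\J(L) = \J(B,C)$ and $\J(\overline{L}) = \J(\overline{B},\overline{C})$, so the hypothesis gives $\J(L) = \J(\overline{L})$. Since both pairs are left irregular, Proposition~\ref{prop:JUsinceresetup1}(d),(e) apply: the subcategory ${}^\perp\J(L)$ is a functorially finite torsion class and $L = \Pmods({}^\perp\J(L))$, and likewise $\overline{L} = \Pmods({}^\perp\J(\overline{L}))$. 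As $\Pmods({}^\perp\J(L))$ is determined by the subcategory $\J(L)$ alone, I conclude $\add L = \add\overline{L}$; that is, the two pairs have exactly the same (two distinct, indecomposable, non-projective) summands, using Proposition~\ref{prop:JUsinceresetup1}(a) and the fact that $\Fbm_C(B)\neq C$ (the former lies outside $\Gen C$ since $(\Fbm_C(B),C)$ is TF-ordered).

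The crux is then to recover which summand of $L$ is $C$, intrinsically from $L$. Here I would exploit the asymmetry built into left irregularity. By definition of left irregularity, $C \in \P({}^\perp\tau\Fbm_C(B))$. On the other hand, applying Lemma~\ref{lem:eachinperp} to the two distinct indecomposable $\tau$-rigid summands $\Fbm_C(B)$ and $C$ of $L$ (which are not both projective), the relation $C \in \P({}^\perp\tau\Fbm_C(B))$ \emph{rules out} $\Fbm_C(B) \in \P({}^\perp\tau C)$. Consequently, $C$ is characterised purely in terms of $L$ as the unique indecomposable summand $X$ of $L$ that is Ext-projective in ${}^\perp\tau Y$, where $Y$ denotes the complementary summand. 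Since $\add L = \add\overline{L}$, the identical characterisation applied to $\overline{L}$ selects the same module, giving $C = \overline{C}$.

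Finally the complementary summands must agree, i.e. $\Fbm_C(B) = \Fbm_{\overline{C}}(\overline{B})$; as $C = \overline{C}$ the maps $\Fbm_C$ and $\Fbm_{\overline{C}}$ coincide, and $\Fbm_C$ is injective (being a bijection by Theorem~\ref{thm:bmcorr}), so $B = \overline{B}$, whence $(B,C) = (\overline{B},\overline{C})$. I expect the only delicate step to be the intrinsic identification of $C$ among the two summands of $L$ in the second paragraph; this is exactly where left irregularity is used essentially, via the one-sided Ext-projectivity that Lemma~\ref{lem:eachinperp} guarantees cannot hold symmetrically. Everything else is a direct bookkeeping application of the structural results already established for left irregular pairs.
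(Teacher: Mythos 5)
Your proof is correct and follows essentially the same route as the paper: you recover the underlying module $L = \Fbm_C(B)\amalg C$ as $\Pmods({}^\perp\J(B,C))$ via Proposition~\ref{prop:JUsinceresetup1}(e), and then use left irregularity together with Lemma~\ref{lem:eachinperp} to exclude the swapped assignment of summands. The paper packages this as a proof by contradiction (assuming the pairs differ forces $\overline{C}=\Fbm_C(B)$ and $\Fbm_{\overline{C}}(\overline{B})=C$, whence Lemma~\ref{lem:eachinperp} gives $C=\Fbm_C(B)$), whereas you phrase it as an intrinsic characterisation of $C$ inside $L$; these are the same argument.
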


\begin{proof}
    Suppose for a contradiction that $(B,C) \neq (\overline{B},\overline{C})$. Since $\J(B,C) = \J(\overline{B},\overline{C})$, Proposition~\ref{prop:JUsinceresetup1}(e) says that
    $$\Fbm_C(B) \amalg C = \Pmods({}^\perp \J(B,C)) = \Pmods({}^\perp \J(\overline{B},\overline{C}))=\Fbm_{\overline{C}}(\overline{B}) \amalg \overline{C}.$$
    Theorem~\ref{thm:tfo} and the assumption that $(B,C) \neq (\overline{B},\overline{C})$ thus imply that $\overline{C} = \Fbm_C(B)$ and $\Fbm_{\overline{C}}(\overline{B}) = C$. Since $(B,C)$ is left irregular, we have that 
    $C \in \P({}^\perp \tau \Fbm_C(B)) \setminus \P(\Lambda)$.
    Likewise, since $(\overline{B},\overline{C}) = (\Ebm_{{\Fbm_C}(B)}(C),\Fbm_C(B))$ is left irregular, we have that $\Fbm_C(B) \in \P({}^\perp \tau C) \setminus \P(\Lambda)$. Since $C\not\in \P(\Lambda)$, we have $C = \Fbm_C(B)$ by Lemma~\ref{lem:eachinperp}, a contradiction to the definition of $\Ebm$ (see Theorem~\ref{thm:bmcorr}).
\end{proof}

\begin{proposition}\label{prop:right_irregular_unique}
    Let $(X,Y)$ and $(\overline{X},\overline{Y})$ be right irregular $\tau$-exceptional pairs such that $\J(X,Y) = \J(\overline{X},\overline{Y})$. Then $(X,Y) = (\overline{X},\overline{Y})$.
\end{proposition}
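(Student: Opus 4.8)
The plan is to dualize the proof of Proposition~\ref{prop:left_irregular_unique}, arguing by contradiction. Suppose $(X,Y) \neq (\overline{X},\overline{Y})$, and write $\widetilde{L} = \Fbm_Y(X) \amalg Y$ and $\widetilde{L}' = \Fbm_{\overline{Y}}(\overline{X}) \amalg \overline{Y}$. First I would use the hypothesis $\J(X,Y) = \J(\overline{X},\overline{Y})$ together with Proposition~\ref{prop:JUsinceresetup2}(b) and (e), which for a right irregular pair identify $\J(\widetilde{L}) = \J(X,Y)$ and $\tau\widetilde{L} = \Imods(\J(\widetilde{L})^\perp)$. Applying this to both pairs gives $\tau\widetilde{L} = \Imods(\J(X,Y)^\perp) = \Imods(\J(\overline{X},\overline{Y})^\perp) = \tau\widetilde{L}'$. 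Since both $\widetilde{L}$ and $\widetilde{L}'$ have no projective direct summands by Proposition~\ref{prop:JUsinceresetup2}(a), and $\tau$ is injective on modules without projective summands, I conclude $\widetilde{L} = \widetilde{L}'$, i.e.\ $\Fbm_Y(X) \amalg Y = \Fbm_{\overline{Y}}(\overline{X}) \amalg \overline{Y}$. (Note that these parts of Proposition~\ref{prop:JUsinceresetup2} hold unconditionally for right irregular pairs: the left finiteness needed in the proof of (e) is itself supplied by (d).)

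Next I would transfer this equality of modules into a statement about orderings, exactly as in the left irregular case. By Remark~\ref{rem:orderedpair}, the pairs $(\Fbm_Y(X),Y)$ and $(\Fbm_{\overline{Y}}(\overline{X}),\overline{Y})$ are the TF-ordered $\tau$-rigid modules corresponding under $\omega$ to $(X,Y)$ and $(\overline{X},\overline{Y})$. Since $\omega$ is a bijection (Theorem~\ref{thm:tfo}) and $(X,Y) \neq (\overline{X},\overline{Y})$, these two TF-ordered pairs are distinct; as they share the underlying module $\Fbm_Y(X) \amalg Y$, which is a direct sum of the two distinct indecomposables $\Fbm_Y(X)$ and $Y$ (distinct by Theorem~\ref{thm:bmcorr}), they must be reorderings of one another. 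This forces $\overline{Y} = \Fbm_Y(X)$ and $\Fbm_{\overline{Y}}(\overline{X}) = Y$.

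The contradiction then comes from the right irregularity of the two pairs, and this is where the argument diverges from the left irregular case: here the defining condition (Definition~\ref{def:regular}(b)) supplies a $\Gen$-containment rather than a split-projectivity condition, so the final step uses Lemma~\ref{lem:twogens} in place of Lemma~\ref{lem:eachinperp}. Right irregularity of $(X,Y)$ gives $Y \in \Gen \Fbm_Y(X) = \Gen \overline{Y}$, while right irregularity of $(\overline{X},\overline{Y})$ gives $\overline{Y} \in \Gen \Fbm_{\overline{Y}}(\overline{X}) = \Gen Y$. Since $Y$ and $\overline{Y}$ are indecomposable, Lemma~\ref{lem:twogens} yields $Y = \overline{Y} = \Fbm_Y(X)$, contradicting $Y \neq \Fbm_Y(X)$ (Theorem~\ref{thm:bmcorr}). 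I expect the main subtlety to be the bookkeeping of the second paragraph---correctly invoking the bijectivity of $\omega$ to pass from the equality of underlying $\tau$-rigid modules to the swap of entries---rather than any single hard estimate, since the remaining steps are direct applications of the preparatory results in Proposition~\ref{prop:JUsinceresetup2}.
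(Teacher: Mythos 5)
Your proposal is correct and follows essentially the same route as the paper's own proof: the paper likewise uses Proposition~\ref{prop:JUsinceresetup2}(e) (together with (a), implicitly, to apply $\tau^{-1}$) to get $\Fbm_Y(X)\amalg Y = \tau^{-1}\Imods(\J(X,Y)^\perp) = \Fbm_{\overline{Y}}(\overline{X})\amalg\overline{Y}$, then Theorem~\ref{thm:tfo} to force the swap $\overline{Y} = \Fbm_Y(X)$, $\Fbm_{\overline{Y}}(\overline{X}) = Y$, and finally the two $\Gen$-containments from right irregularity plus Lemma~\ref{lem:twogens} for the contradiction. Your extra bookkeeping (the injectivity of $\tau$ on modules without projective summands, and the observation that items (a)--(e) of Proposition~\ref{prop:JUsinceresetup2} hold unconditionally) is just an expanded version of what the paper leaves implicit.
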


\begin{proof}
    Suppose for a contradiction that $(X,Y) \neq (\overline{X},\overline{Y})$. Since $\J(X,Y) = \J(\overline{X},\overline{Y})$, Proposition~\ref{prop:JUsinceresetup2}(e) says that 
    $$\Fbm_Y(X) \amalg Y = \tau^{-1} \Imods(\J(X,Y)^\perp) = \Fbm_{\overline{Y}}(\overline{X})\amalg \overline{Y}.$$
    Theorem~\ref{thm:tfo} and the assumption that $(X,Y) \neq (\overline{X},\overline{Y})$ thus imply that $\overline{X} = \Fbm_Y(X)$ and $\Fbm_{\overline{Y}}(\overline{X}) = Y$. Now since $(X,Y)$ is right irregular, we have that $Y \in \Gen \Fbm_Y(X)$. Likewise, since $(\overline{X},\overline{Y}) = (\Ebm_{\Fbm_Y(X)}(Y),\Fbm_Y(X))$ is right irregular, we have that $\Fbm_Y(X) \in \Gen Y$. Lemma~\ref{lem:twogens} then implies that $Y = \Fbm_Y(X)$, a contradiction.
\end{proof}

\section{Mutation of \texorpdfstring{$\tau$}{tau}-exceptional pairs}\label{sec:pair_mutation}

In this section we define left and right mutation of $\tau$-exceptional pairs. We show that these are inverse operations, proving Theorem \ref{thm:mutation_pairs_intro}. For algebras admitting torsion classes which are {\em not} functorially finite, we will see that, 
under certain circumstances, there can be pairs which are only left (or only right) mutable. Such pairs are always left (or right) irregular, and appear exactly when the corresponding $\tau$-perpendicular categories (which are always functorially finite themselves) are not left finite (resp. right finite), that is, when the torsion class (resp. torsion-free class) that they generate is {\em not} functorially finite. We emphasize that, as proved in Proposition~\ref{prop:one_mutation}, immutability is always ``one-sided'', that is, there do not exist $\tau$-exceptional pairs which are neither left nor right mutable.

\begin{definition}

A $\tau$-exceptional pair $(B,C)$, is called {\em left immutable} 
if it is left irregular and the $\tau$-perpendicular category $\J(B,C)$ is not left finite. A $\tau$-exceptional pair $(B,C)$ is called {\em left mutable} if it is not left immutable.
\end{definition}

\begin{definition}

A $\tau$-exceptional pair $(X,Y)$, is called {\em right immutable}
if it is right irregular and the $\tau$-perpendicular category $\J(X,Y)$ is not right finite. A $\tau$-exceptional pair $(X,Y)$ is called {\em right mutable}, if it is not right immutable.
\end{definition}

Note that, by the above definitions, any left regular $\tau$-exceptional pair $(B,C)$ is left mutable, and any right 
regular $\tau$-exceptional pair $(X,Y)$ is right mutable.

To state the mutation maps, we establish some notation. For $U$ indecomposable in $\mods \Lambda \amalg \mods \Lambda[1]$, we denote
$$|U| = \begin{cases} U, & \text{if $U \in \mods\Lambda$}; \\ U[-1], & \text{if $U \in \mods\Lambda[1]$.}\end{cases}$$
and extend this additively to all objects in $\mods\Lambda \amalg \mods\Lambda[1]$.

For $(M,N)$ a $\tau$-exceptional pair, we denote
\begin{align*}
    N_+ &:= \begin{cases} N[1], & \text{if $N$ is projective;}\\ N, & \text{otherwise;} \end{cases}\\
    M^+ &:= \begin{cases} M[1], & \text{if $M$ is projective in $\J(N)$;}\\ M, & \text{otherwise;} \end{cases}\\
    M_{N\uparrow} &:= \Fbm_{N_+}(M).\\
    M^+_{N\uparrow} &:= \Fbm_{N}(M^+).
\end{align*}
Note that $|N_+| = N$ and $|M^+| = M$.

\begin{definition-proposition}\label{defprop:forward}
For a left regular $\tau$-exceptional pair $(B,C)$ let
$$\varphi(B,C) = (\lvert \Ebm_{B_{C\uparrow}}(C_+) \rvert, B_{C\uparrow}).$$
Then $\varphi(B,C)$ is a right regular $\tau$-exceptional pair and $\J(\varphi(B,C)) = \J(B,C)$.
\end{definition-proposition}

\begin{proof}

Since $\J(Q) = \J(Q[1])$ for any projective module $Q$, we have that $(B,C_+)$ is a signed $\tau$-exceptional pair. Thus $(\Ebm_{B_{C\uparrow}}(C_+),B_{C\uparrow})$ is also a signed $\tau$-exceptional pair by Theorem~\ref{thm:signed_mutation}. Moreover, we have $B_{C\uparrow} \in \mods\Lambda$ by Theorem~\ref{thm:F_map_1} and Theorem~\ref{thm:bmcorr}. It follows that $(\lvert \Ebm_{B_{C\uparrow}}(C_+) \rvert, B_{C\uparrow})$ is a $\tau$-exceptional pair.  We then compute
    \begin{align*}
        \J(\lvert \Ebm_{B_{C\uparrow}}(C_+) \rvert, B_{C\uparrow}) &= \J(\Ebm_{B_{C\uparrow}}(C_+), B_{C\uparrow}) &\text{by the definition of $\J$}\\
        &=\J\left(\Ebm_{\Fbm_{B_{C\uparrow}}\circ\Ebm_{B_{C\uparrow}}(C_+)}(B_{C\uparrow}),\Fbm_{B_{C\uparrow}}\circ\Ebm_{B_{C\uparrow}}(C_+)\right) & \text{by Theorem~\ref{thm:signed_mutation}}\\
            &= \J(\Ebm_{C_+}(\Fbm_{C_+}(B)),C_+) & \text{by the definition of $B_{C\uparrow}$}\\
            &= \J(B,C_+) &\\
            &= \J(B,C) &\text{by the definition of $\J$.}
    \end{align*} 
    It remains to show that $(\lvert \Ebm_{B_{C\uparrow}}(C_+) \rvert, B_{C\uparrow})$ is right regular. Suppose first that $\Ebm_{B_{C\uparrow}}(C_+) = X[1] \in \P(\J(B_{C\uparrow}))[1]$, so that $\varphi(B,C) = (X, B_{C\uparrow})$. Then it follows from 
    Lemma~\ref{lem:fUbijection}(a), that 
    $\Fbm_{B_{C\uparrow}}(X) \in \P(^{\perp}\tau B_{C\uparrow})$,
    and hence $(X, B_{C\uparrow})$ is right regular by definition. Now suppose that $\Ebm_{B_{C\uparrow}}(C_+) \notin \P(\J(B_{C\uparrow}))[1]$. By the definition of $\Ebm$ (see Theorem~\ref{thm:bmcorr}), this means that $C_+ \notin (\Gen B_{C\uparrow}) \amalg \P(\Lambda)[1]$,
    and so $C_+ = C$ and $\lvert \Ebm_{B_{C\uparrow}}(C_+)\rvert = \Ebm_{B_{C\uparrow}}(C)$.
    We then compute
    $$
            \Fbm_{B_{C\uparrow}}(\lvert \Ebm_{B_{C\uparrow}}(C_+) \rvert) = \Fbm_{B_{C\uparrow}}(\Ebm_{B_{C\uparrow}}(C)) = C.
    $$
    Note that $\Fbm_C(B) \notin \Gen C$, because 
    $B = \Ebm_C(\Fbm_C(B))\in \mods \Lambda.$
It follows that 
    $$B_{C\uparrow} = \Fbm_C(B) \notin \Gen C = \Gen \Fbm_{B_{C\uparrow}}(\lvert \Ebm_{B_{C\uparrow}}(C_+) \rvert).$$
    We conclude that $(\lvert \Ebm_{B_{C\uparrow}}(C_+) \rvert, B_{C\uparrow})$ is right regular.
\end{proof}

\begin{definition-proposition}\label{defprop:forward2}
Let $(B, C)$ be a left mutable left irregular $\tau$-exceptional pair and let $L = \Fbm_C(B) \amalg C$. 
Let $X' = \Pmods(\Gen \Pmodns(\FiltGen(\J(L))))$, and let $Y = \Pmodns(\FiltGen(\J(L)))/X'$. Then $\varphi(B,C) := (\Ebm_Y(X'),Y)$ is a right irregular right mutable $\tau$-exceptional pair which satisfies $\J(\varphi(B,C)) = \J(B,C)=\J(L)$.
\end{definition-proposition}	

\begin{proof}
Note that $\J(B,C) = \J(L)$ by Theorem~\ref{thm:J_E}. Since $(B,C)$ is left mutable and left irregular, we have that $\J(B,C)$ is left finite. It then follows from Proposition~\ref{prop:JUsinceresetup1}(g) that 
\begin{equation}
    X'\amalg Y=\Pmodns(\FiltGen(\J(L)))
\label{eq:XYsum}
\end{equation}
which is a $\tau$-rigid module by Theorem~\ref{thm:Bongartz}. Note that $(X',Y)$ is TF-ordered since $X'$ is split projective in $\Gen \Pmodns(\FiltGen(\J(L)))$.
Thus $(\Ebm_Y(X'),Y)$ is a $\tau$-exceptional pair by Theorem~\ref{thm:tfo}. We then compute
    \begin{align*}
        \J(\Ebm_Y(X'),Y) &= \J(X' \amalg Y) & \text{by Theorem~\ref{thm:J_E}}\\
            &=  \J(\Pmodns(\FiltGen(\J(B,C)))) & \text{by the definition of $X' \amalg Y$}\\
            &= \J(B,C) \ (=\J(L))& \text{by Proposition~\ref{prop:JUsinceresetup1}(f).}
    \end{align*}
    It remains to show that $(\Ebm_Y(X'),Y)$ is right irregular and right mutable. We first show that it is right irregular. We have $Y \in \Gen(X') = \Gen(\Fbm_Y\circ \Ebm_Y(X'))$ by Proposition~\ref{prop:JUsinceresetup1}(g). Now suppose for a contradiction
    that $X' \in \P({}^\perp \tau Y)$; i.e., that $X'$ 
    is a direct summand of the Bongartz complement $B_Y$ of $Y$. So $X'\in \Ps({}^{\perp}\tau Y)$, by Lemma~\ref{lem:Bongartzsplit}.
Hence, using Theorem~\ref{thm:Bongartz},
\begin{align*}
   {}^{\perp} \tau (X'\amalg Y) &\subseteq {}^{\perp} \tau Y \\
    &={}^{\perp} \tau (Y\amalg B_Y) \\
    &\subseteq {}^{\perp} \tau (X'\amalg Y).
\end{align*}
We must have equalities all the way through, so ${}^\perp \tau (X'\amalg Y)={}^{\perp} \tau Y$. Moreover, we have that $\J(L)$ (hence also $\FiltGen(\J(L))$) is sincere by Proposition~\ref{prop:JUsinceresetup1}(b), and that $\FiltGen(\J(L))$ is functorially finite by the assumption that $(B,C)$ is left mutable. We then use Lemma~\ref{lem:perp_injectives_2} to compute
\begin{equation}\label{eq:FGJU}\FiltGen(\J(L)) = {}^\perp \tau \Pmodns(\FiltGen(\J(L))) = {}^\perp \tau(X'\amalg Y) = {}^\perp \tau Y.\end{equation}
By the above, $X' \in \Ps({}^{\perp} \tau Y)$, so $X'$ is a direct summand of $\Pmods(\FiltGen(\J(L)))$ by~\eqref{eq:FGJU}. 
But, by~\eqref{eq:XYsum}, the module $X'$ is a direct summand of 
$\Pmodns(\FiltGen(\J(L)))$, so this is a contradiction. We conclude that $X' \notin \P({}^\perp \tau Y)$, and thus that $(\Ebm_Y(X'), Y)$ is right irregular.

Note that the torsion class ${}^\perp \J(L)$ is functorially finite by Proposition~\ref{prop:JUsinceresetup1}(d). Equivalently, the torsion-free class $\FiltCogen(\J(L))$ is functorially finite (by Proposition~\ref{prop:asff});  that is, the wide subcategory $\J(L) = \J(\Ebm_Y(X'),Y)$ is right finite. Thus $(\Ebm_Y(X'),Y)$ is right mutable.
\end{proof}

\begin{definition-proposition}\label{defprop:backward}
For a right regular $\tau$-exceptional pair $(X,Y)$ let
$$\psi(X,Y) = (\Ebm_{X^+_{Y\uparrow}}(Y), \lvert X^+_{Y\uparrow}\rvert).$$ Then $\psi(X,Y)$ is a left regular $\tau$-exceptional pair and $\J(\psi(X,Y)) = \J(X,Y)$.
\end{definition-proposition}

\begin{proof}
    Since $\J_{\J(Y)}(Q) = \J_{\J(Y)}(Q[1])$ for any $Q \in \P(\J(Y))$, we have that $(X^+, Y)$ is a signed $\tau$-exceptional pair with $\J(X^+,Y) = \J(X,Y)$. Thus
    $$\left(\Ebm_{X^+_{Y\uparrow}}(Y), X^+_{Y\uparrow}\right) = \left(\Ebm_{\Fbm_Y(X^+)}(Y), \Fbm_Y(X^+)\right)$$
    is also a signed $\tau$-exceptional pair with $\J\left(\Ebm_{X^+_{Y\uparrow}}(Y), X^+_{Y\uparrow}\right) = \J(X,Y)$ by Theorem~\ref{thm:signed_mutation}. Since $\J(Q) = \J(Q[1])$ for all $Q \in \P(\Lambda)$, it follows that $\psi(X,Y)$ is a signed $\tau$-exceptional pair with $\J(\psi(X,Y)) = \J(X,Y)$. To see that this is an (unsigned) $\tau$-exceptional pair, we need to show that $\Ebm_{X^+_{Y\uparrow}}(Y)  \in \mods\Lambda$. We must consider two cases.

    Suppose first that $X^+ \in \mods\Lambda$. Equivalently, this means $X^+ = X \notin \P(\J(Y))$. Then $X_{Y\uparrow}^+ = \Fbm_Y(X) \notin \P({}^\perp \tau Y)$ by Lemma~\ref{lem:fUbijection}. Combining this with the fact that $(X,Y)$ is right regular, we have by definition that $Y \notin \Gen \Fbm_Y(X)$, and thus that $$\Ebm_{X^+_{Y\uparrow}}(Y) = \Ebm_{\Fbm_Y(X)}(Y) = f_{\Fbm_Y(X)}(Y) \in \mods\Lambda$$ by Theorem~\ref{thm:bmcorr}.

    Now suppose that $X^+ \notin \mods\Lambda$. Then, by the definition of the map 
    $\Ebm$ (see Theorem~\ref{thm:bmcorr}), we have that either 
$X_{Y\uparrow}^+ = \Fbm_Y(X^+) \in \P(\Lambda)[1]$ or $X_{Y\uparrow}^+ = \Fbm_Y(X^+) \in \Gen Y$.
    Now if $X_{Y\uparrow}^+ \in \P(\Lambda$)[1], then $\Ebm_{X^+_{Y\uparrow}}(Y) \in \mods\Lambda$ by the definition of the map $\Ebm$ (see Theorem~\ref{thm:bmcorr}).
    In the case $X_{Y\uparrow}^+ \in \Gen Y$, Lemma~\ref{lem:twogens} implies that $Y \notin \Gen X_{Y\uparrow}^+$, and thus $\Ebm_{X_{Y\uparrow}^+}(Y) = f_{X_{Y\uparrow}^+}(Y) \in \mods\Lambda$ by the definition of $\Ebm$ (see Theorem~\ref{thm:bmcorr}).
    
    We conclude that $\Ebm_{X_Y^+}(Y) \in \mods\Lambda$, and thus that $\psi(X,Y)$ is a $\tau$-exceptional pair which satisfies $\J(\psi(X,Y)) = \J(X,Y)$. 
    
    It remains to show that $\psi(X,Y) = (\Ebm_{X^+_{Y\uparrow}}(Y), \lvert X^+_{Y\uparrow}\rvert)$ is left regular. If $\lvert X^+_{Y\uparrow}\rvert \in \P(\Lambda)$, then there is nothing to show.
    Thus suppose that $\lvert X^+_{Y\uparrow}\rvert \not \in \P(\Lambda)$. Then in particular $X^+_{Y\uparrow} \in \mods\Lambda$, and so $X^+ \notin \P(\J(Y))[1]$ 
by the definition of the map $\Ebm$ (see Theorem~\ref{thm:bmcorr}).
    By the definition of $X^+$, this means $X^+ = X \notin \P(\J(Y))$. We conclude that $X_{Y\uparrow}^+ = \Fbm_Y(X) \notin \P({}^\perp \tau Y)$, by Lemma~\ref{lem:fUbijection},
    and thus that $\psi(X,Y)$ is left regular. 
\end{proof}

\begin{definition-proposition}\label{defprop:backward2}
Let $(X,Y)$ be a right mutable right irregular $\tau$-exceptional pair and let $\widetilde{L} = \Fbm_Y(X)\amalg Y$. Let $B' = \tau^{-1}\Imods(\Cogen \tau \Pmods({}^\perp \J(\widetilde{L})))$, and let $C = \Pmods({}^\perp \J(\widetilde{L}))/B'$. Then $\psi(X,Y) := (\Ebm_C(B'),C)$ is a left irregular left mutable $\tau$-exceptional pair which satisfies $\J(\psi(X,Y)) = \J(X,Y)$($=\J(\widetilde{L})$).
\end{definition-proposition}

\begin{proof}
Note that $\J(X,Y) = \J(\widetilde{L})$ by Theorem~\ref{thm:J_E}. Since $(X,Y)$ is right mutable and right irregular, we have that $\J(\widetilde{L})$ is right finite. We thus have a functorially finite torsion pair
    $({}^\perp \J(\widetilde{L}),\FiltCogen(\J(\widetilde{L})))
    $.
Now recall from Proposition~\ref{prop:JUsinceresetup2}(g) and~\eqref{eq:projtoinj} that $$\Pmods({}^\perp \J(\widetilde{L})) = B' \amalg C = \tau^{-1}\Imods(\Cogen \Imodns(\FiltCogen(\J(\widetilde{L}))))\amalg C,$$
that $\Imodns(\FiltCogen(\J(\widetilde{L})))$ is not cogen-minimal and satisfies
\begin{equation}
    \label{eqn:irregular_right3b}\Imodns(\FiltCogen(\J(\widetilde{L}))) = \tau \Pmods(({}^\perp \J(\widetilde{L}))
    = \tau B' \amalg \tau C
    = \Imods(\Cogen \Imodns(\FiltCogen(\J(\widetilde{L}))))\amalg \tau C,
\end{equation}
and that all of $B', \tau B', C$, and $\tau C$ are indecomposable. 

Thus $(B',C)$ is a TF-ordered $\tau$-rigid module since 
    $B' \amalg C = \Pmods({}^\perp \J(\widetilde{L}))$ is split projective in ${}^{\perp} \J(\widetilde{L})$. By Theorem~\ref{thm:tfo}, we conclude that $(\Ebm_C(B'),C)$ is a $\tau$-exceptional sequence. We then compute
    \begin{align*}
        \J(\Ebm_C(B'),C) &= \J(B' \amalg C) & \text{by Theorem~\ref{thm:J_E}}\\
            &= \J(\Pmods({}^\perp \J(\widetilde{L}))) & \text{by the definition of }B'\amalg C\\
            &= \J(X,Y) & \text{by Proposition~\ref{prop:JUsinceresetup2}(f).}
    \end{align*}
    It remains to show that $(\Ebm_C(B'),C)$ is left irregular and left mutable. We first show that it is left irregular. We have that $C \notin \P(\Lambda)$ since $\J(B'\amalg C) = \J(\widetilde{L})$ is sincere
    (by Proposition~\ref{prop:JUsinceresetup2}(b)), so it suffices to show that $C \in \P({}^\perp \tau B')$.
    
    Since $\tau B'=\Imods(\Cogen\Imodns(\FiltCogen(\J(\widetilde{L}))))$ (by~\eqref{eqn:irregular_right3b}), we have by Theorem~\ref{thm:dualBongartz}(b) that
    \begin{equation}
    \label{eq:tauB'}
    \Cogen \Imodns(\FiltCogen(\J(\widetilde{L})))=\Cogen \tau B'.
    \end{equation}
    Since $\Imodns(\FiltCogen(\J(\widetilde{L})))= \tau B' \amalg \tau C$ is $\tau^{-1}$-rigid by Theorem~\ref{thm:dualBongartz}, we have $\Ext^1(-,\tau C)|_{\Cogen \tau B'} = 0$ by Proposition~\ref{prop:ASExt}. Thus $\tau C\in \I(\Cogen \tau B')=\I(\Cogen \Imodns(\FiltCogen(\J(\widetilde{L})))$, \sloppy and hence
    $\tau C\in \I_{\text{ns}}(\Cogen\Imodns(\FiltCogen(\J(\widetilde{L}))))$ since $\tau C\not\in \I_{\text{s}}(\Cogen \Imodns(\FiltCogen(\J(\widetilde{L}))))$.
    Hence, by~\eqref{eq:tauB'}, we have $\tau C\in \I_{\text{ns}}(\Cogen \tau B')$. Considering the torsion pair $({}^\perp \tau B', \Cogen \tau B')$, it
    then follows from Corollary~\ref{cor:proj_inj} that $\tau C \in \tau\P({}^\perp \tau B')$. So, since $C$ is not projective, $C\in \P({}^\perp \tau B')$.
    We conclude that $(\Ebm_C(B'),C)$ is left irregular.

    Finally, we show that $(\Ebm_C(B'),C)$ is left mutable. Note that the torsion-free class $\J(\widetilde{L})^\perp$ is functorially finite by Proposition~\ref{prop:JUsinceresetup2}(d). Equivalently, the torsion class $\FiltGen(\J(\widetilde{L}))$ is functorially finite; that is, the wide subcategory $\J(\widetilde{L}) = \J(\Ebm_C(B'),C)$ is left finite. Thus $(\Ebm_C(B'),C)$ is left mutable.
\end{proof}

We are now prepared to prove our first main theorem.

\begin{theorem}[Theorem~\ref{thm:mutation_pairs_intro}]\label{thm:mutation_pairs}

The maps $\varphi$ and $\psi$ give inverse bijections
    \[\{\text{left mutable $\tau$-exceptional pairs}\} \qquad \stackrel[\psi]{\varphi}{\rightleftarrows} \qquad \{\text{right mutable $\tau$-exceptional pairs}\}\] 
\end{theorem}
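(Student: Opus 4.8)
The plan is to exploit that both $\varphi$ and $\psi$ preserve the category $\J(-)$ together with the regular/irregular dichotomy, so that the verification splits into a regular case and an irregular case. First I would record, from Definition-Propositions~\ref{defprop:forward}--\ref{defprop:backward2}, that $\varphi$ restricts to maps $\{\text{left regular}\}\to\{\text{right regular}\}$ and $\{\text{left mutable left irregular}\}\to\{\text{right mutable right irregular}\}$, that $\psi$ restricts to the opposite maps, and that all four restrictions preserve $\J$. Since every pair is exactly one of left regular or left irregular (Definition~\ref{def:regular}) and every left regular pair is left mutable, the left mutable pairs are the disjoint union of the left regular and the left mutable left irregular pairs, and dually on the right. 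It therefore suffices to prove $\psi\circ\varphi=\mathrm{id}$ and $\varphi\circ\psi=\mathrm{id}$ on each piece separately.

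The irregular case is immediate from the uniqueness results. For a left mutable left irregular pair $(B,C)$, the pair $\psi(\varphi(B,C))$ is again left irregular and satisfies $\J(\psi(\varphi(B,C)))=\J(\varphi(B,C))=\J(B,C)$, so Proposition~\ref{prop:left_irregular_unique} forces $\psi(\varphi(B,C))=(B,C)$. The identity $\varphi\circ\psi=\mathrm{id}$ on right mutable right irregular pairs follows dually from Proposition~\ref{prop:right_irregular_unique}.

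For the regular case, the conceptual core is that the signed mutation $\sigma\colon(M,N)\mapsto(\Ebm_{\Fbm_N(M)}(N),\Fbm_N(M))$ of Theorem~\ref{thm:signed_mutation} is an \emph{involution}: applying it twice, the inner $\Fbm$ inverts the inner $\Ebm$ so the second coordinate returns to $N$, and then the outer $\Ebm$ inverts the outer $\Fbm$ so the first coordinate returns to $M$, using that $\Ebm_U$ and $\Fbm_U$ are mutually inverse (Theorem~\ref{thm:bmcorr}). Writing $(X,Y):=\varphi(B,C)$, one has $Y=\Fbm_{C_+}(B)\in\mods\Lambda$, so that $\varphi$ is exactly $\sigma$ applied to $(B,C_+)$ followed by taking $\lvert-\rvert$ of the first coordinate; dually $\psi$ is $\sigma$ applied to $(X^+,Y)$ followed by taking $\lvert-\rvert$ of the second coordinate. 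The one point requiring care is that these sign conventions are consistent, namely that $X^+=\Ebm_Y(C_+)$. I would check this using Theorem~\ref{thm:bmcorr}(a)(ii): if $\Ebm_Y(C_+)\in\P(\J(Y))[1]$---which happens exactly when $C$ is projective or $C\in\Gen Y$---then $X\in\P(\J(Y))$ and $X^+=X[1]=\Ebm_Y(C_+)$; otherwise $C_+=C$ and $\Ebm_Y(C)\in\mods\Lambda$, and left regularity of $(B,C)$ gives $C\notin\P({}^\perp\tau Y)$, so $X\notin\P(\J(Y))$ by Lemma~\ref{lem:fUbijection} and $X^+=X=\Ebm_Y(C_+)$. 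Granting this, $X^+_{Y\uparrow}=\Fbm_Y(X^+)=\Fbm_Y(\Ebm_Y(C_+))=C_+$, whence the second coordinate of $\psi(\varphi(B,C))$ equals $\lvert C_+\rvert=C$; since $\J$ is preserved, Corollary~\ref{cor:unique} then gives $\psi(\varphi(B,C))=(B,C)$. The identity $\varphi\circ\psi=\mathrm{id}$ on right regular pairs is entirely symmetric, with right regularity playing the role that left regularity plays here.

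I expect the regular case to be the main obstacle. Because the sign decorations $(\cdot)_+$, $(\cdot)^+$, and $\lvert-\rvert$ prevent $\varphi$ and $\psi$ from being literal inverses, the genuine content is the bookkeeping identity $X^+=\Ebm_Y(C_+)$, which matches the shift one inserts against the shift one recovers. Once the involution property of $\sigma$ is isolated and Corollary~\ref{cor:unique} reduces the problem to a single coordinate, what remains is a short finite case check.
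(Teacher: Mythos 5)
Your proposal is correct and is essentially the paper's own proof: the same splitting of mutable pairs into regular and (mutable) irregular ones, the irregular case handled exactly as in the paper via Propositions~\ref{prop:left_irregular_unique} and~\ref{prop:right_irregular_unique}, and the regular case reduced via Corollary~\ref{cor:unique} to the sign-consistency identity $X^+ = \Ebm_{B_{C\uparrow}}(C_+)$, which you verify with precisely the tools the paper uses (Theorem~\ref{thm:bmcorr}(a)(ii), Lemma~\ref{lem:fUbijection}, and left regularity); your ``$\sigma$ is an involution'' packaging is just a cleaner way of stating the paper's computation. One harmless inaccuracy: in the $\varphi\circ\psi$ direction the analogous identity $C_+ = X^+_{Y\uparrow}$ does not come from right regularity playing a symmetric role --- it follows from the fact that $X^+ \notin \P(\J(Y))$ holds by definition of $(-)^+$, combined with Lemma~\ref{lem:fUbijection} --- so that direction is actually easier, with right regularity needed only for $\psi$ to be well defined.
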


\begin{proof}

We first show that $\psi\circ\varphi$ is the identity. Let $(B,C)$ be a left mutable $\tau$-exceptional pair.

We first consider the case where $(B,C)$ is left irregular. Denote $(\overline{B},\overline{C}) := \psi(\varphi(B,C))$. By Definition-Propositions~\ref{defprop:forward2} and~\ref{defprop:backward2}, $(\overline{B},\overline{C})$ is left irregular and satisfies $\J(\overline{B},\overline{C}) = \J(B,C)$. Proposition~\ref{prop:left_irregular_unique} thus implies that $(B,C) = (\overline{B},\overline{C})$.

It remains to consider the case where $(B,C)$ is left regular. By Definition-Propositions~\ref{defprop:forward} and~\ref{defprop:backward}, we compute
\begin{align*}
    \psi(\varphi(B,C)) &= \psi\left(\left\lvert \Ebm_{B_{C\uparrow}}(C_+) \right\rvert, B_{C\uparrow}\right)\\
      &= \left(\Ebm_{\left\lvert \Ebm_{B_{C\uparrow}}(C_+) \right\rvert_{(B_{C\uparrow})\uparrow}^+}(B_{C\uparrow}),\left\lvert\left\lvert \Ebm_{B_{C\uparrow}}(C_+) \right\rvert_{(B_{C\uparrow})\uparrow}^+\right\rvert\right).
\end{align*}

Definition-Propositions~\ref{defprop:forward} and~\ref{defprop:backward} also imply that $\J(B,C) = \J(\psi(\varphi(B,C)))$. Using Corollary~\ref{cor:unique}, it therefore suffices to show that
\begin{equation}\label{eqn:identity_regular}\left\lvert\left\lvert \Ebm_{B_{C\uparrow}}(C_+) \right\rvert_{(B_{C\uparrow})\uparrow}^+\right\rvert = C.\end{equation}
Since $(B,C)$ is left regular by assumption, we have $C \not \in \P(^{\perp}\tau\Fbm_C)$ or $C \in \P(\Lambda)$. Assume first that $C\not \in \P(\Lambda)$. Then we have
$$C_+ = C \notin \P(^{\perp}\tau\Fbm_C(B)) = \P({}^\perp \tau \Fbm_{C_+}(B))= \P({}^\perp \tau B_{C\uparrow}).$$
Lemma~\ref{lem:fUbijection} then implies that $$f_{B_{C\uparrow}}(C_+) =\Ebm_{B_{C\uparrow}}(C_+) \notin \P(\J(B_{C\uparrow})),$$ and so 
\begin{equation}\label{eq:not-proj}
\left\lvert \Ebm_{B_{C\uparrow}}(C_+) \right\rvert^+ = \Ebm_{B_{C\uparrow}}(C_+).
\end{equation}
If $C\in \P(\Lambda)$ then $C_+ \in \P(\Lambda)[1]$, and hence 
$\Ebm_{B_{C\uparrow}}(C_+) \in \P(\J(B_{C\uparrow}))[1]$ by the definition of 
$\Ebm$ (see Theorem~\ref{thm:bmcorr}).
This implies that Equation \eqref{eq:not-proj} also holds in this case.

We now verify Equation~\eqref{eqn:identity_regular}:

\begin{align*}
    \left\lvert\left\lvert \Ebm_{B_{C\uparrow}}(C_+) \right\rvert^+_{(B_{C\uparrow})\uparrow}\right\rvert &= \left\lvert \Fbm_{B_{C\uparrow}}\left(\left\lvert\Ebm_{B_{C\uparrow}}(C_+)\right\rvert^+\right)\right\rvert & \text{by the definition of $(-)^+_{(?)\uparrow}$}\\
        &= \left\lvert\Fbm_{B_{C\uparrow}}\circ \Ebm_{B_{C\uparrow}}(C_+))\right\rvert & \text{by Equation~\eqref{eq:not-proj}}\\
        &= \lvert C_+\rvert\\
        &= C.
\end{align*}
We conclude that $\psi\circ\varphi(B,C) = (B,C)$ if $(B,C)$ is left regular.

We next show that $\varphi\circ \psi$ is the identity. Let $(X,Y)$ be a right mutable $\tau$-exceptional pair.

We first consider the case where $(X,Y)$ is right irregular. Denote $(\overline{X},\overline{Y}) := \varphi(\psi(X,Y))$. By Definition-Propositions~\ref{defprop:forward2} and~\ref{defprop:backward2}, $(\overline{X},\overline{Y})$ is left irregular and satisfies $\J(\overline{X},\overline{Y}) = \J(X,Y)$. Proposition~\ref{prop:right_irregular_unique} thus implies that $(X,Y) = (\overline{X},\overline{Y})$.

It remains to consider the case where $(X,Y)$ is right regular. By Definition-Propositions~\ref{defprop:forward} and~\ref{defprop:backward}, we compute
\begin{align*}
    \varphi(\psi(X,Y)) &= \varphi\left(\Ebm_{X^+_{Y\uparrow}}(Y),|X_{Y\uparrow}^+|\right)\\
    &= \left(\left|\Ebm_{\left(\Ebm_{X^+_{Y\uparrow}}(Y)\right)_{|X_{Y\uparrow}^+|\uparrow}}\left(|X_{Y\uparrow}^+|_+\right)\right|,\left(\Ebm_{X^+_{Y\uparrow}}(Y)\right)_{|X_{Y\uparrow}^+|\uparrow}\right).
\end{align*}
Definition-Propositions~\ref{defprop:forward} and~\ref{defprop:backward} also imply that $\J(X,Y) = \J(\varphi(\psi(X,Y)))$. Using Corollary~\ref{cor:unique}, it therefore suffices to show that
\begin{equation}\label{eqn:right_comp}
    \left(\Ebm_{X^+_{Y\uparrow}}(Y)\right)_{|X_{Y\uparrow}^+|\uparrow} = Y.
\end{equation}

 Note that, by definition, $X^+ \notin \P(\J(Y))$. By Lemma~\ref{lem:fUbijection}, this implies that $X_{Y\uparrow}^+ = \Fbm_Y(X^+) \notin \P({}^\perp \tau Y)$.
Thus $X_{Y\uparrow}^+ \notin \P(\Lambda)$, and so $|X_{Y\uparrow}^+|_+ = X_{Y\uparrow}^+$. We then verify Equation~\eqref{eqn:right_comp}:

\begin{align*}
    \left(\Ebm_{X^+_{Y\uparrow}}(Y)\right)_{|X_{Y\uparrow}^+|\uparrow} &= \Fbm_{|X_{Y\uparrow}^+|_+}\left(\Ebm_{X^+_{Y\uparrow}}(Y)\right) & \text{by the definition of $(-)_{(?)\uparrow}$}\\
    &= \Fbm_{X_{Y\uparrow}^+}\left(\Ebm_{X^+_{Y\uparrow}}(Y)\right) & \text{since $|X_{Y\uparrow}^+|_+ = X_{Y\uparrow}^+$} \\
    &= Y.
\end{align*}
We conclude that $\varphi\circ\psi(X,Y) = (X,Y)$ also in the case when $(X,Y)$ is right regular.
\end{proof}

Using Theorem~\ref{thm:mutation_pairs}, we can expand upon the uniqueness results of Propositions~\ref{prop:left_irregular_unique} and~\ref{prop:right_irregular_unique} as follows.

\begin{corollary}\label{cor:irregular_unique}
    Let $\W$ be a $\tau$-perpendicular category of corank two. Then the following are equivalent.
    \begin{enumerate}
        \item[(a)] There exists a left irregular left mutable $\tau$-exceptional pair $(B,C)$ with $\J(B,C) = \W$.
        \item[(b)] There exists a right irregular right mutable $\tau$-exceptional pair $(X,Y)$ with $\J(X,Y) = \W$.
        \item[(c)] There exists a unique left irregular left mutable $\tau$-exceptional pair $(B,C)$ with $\J(B,C) = \W$ and a unique right mutable $\tau$-exceptional pair $(X,Y)$ with $\J(X,Y) = \W$. These satisfy $(X,Y) = \varphi(B,C)$ and $(B,C) = \psi(X,Y)$.
    \end{enumerate}
\end{corollary}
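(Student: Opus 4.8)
The plan is to establish the equivalences by running through a short cycle of implications, with all the genuine content already packaged in Section~\ref{sec:pair_mutation}. I would first dispatch the trivial direction (c) $\Rightarrow$ (a): condition (c) explicitly asserts the existence of a left irregular left mutable $\tau$-exceptional pair $(B,C)$ with $\J(B,C) = \W$, so (a) holds verbatim.

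Next I would prove (a) $\Leftrightarrow$ (b) directly from the mutation maps. For (a) $\Rightarrow$ (b), suppose $(B,C)$ is a left irregular left mutable pair with $\J(B,C) = \W$. Then Definition-Proposition~\ref{defprop:forward2} says exactly that $\varphi(B,C)$ is a right irregular right mutable $\tau$-exceptional pair and that $\J(\varphi(B,C)) = \J(B,C) = \W$; setting $(X,Y) := \varphi(B,C)$ establishes (b). The reverse implication (b) $\Rightarrow$ (a) is the mirror image: given a right irregular right mutable pair $(X,Y)$ with $\J(X,Y) = \W$, Definition-Proposition~\ref{defprop:backward2} shows that $\psi(X,Y)$ is a left irregular left mutable pair with $\J(\psi(X,Y)) = \J(X,Y) = \W$, which is (a).

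Finally, for (a) $\Rightarrow$ (c) I would assume (a) and use (b), which has just been proved. Existence of both required pairs is then in hand: $(B,C)$ comes from (a), and $(X,Y) := \varphi(B,C)$ is the right irregular right mutable pair produced in the proof of (a) $\Rightarrow$ (b). Uniqueness of the left irregular left mutable pair realizing $\W$ is precisely Proposition~\ref{prop:left_irregular_unique}, and uniqueness of the right irregular right mutable pair realizing $\W$ is precisely Proposition~\ref{prop:right_irregular_unique}. The compatibility relations are then formal: $(X,Y) = \varphi(B,C)$ holds by construction, and since $\varphi$ and $\psi$ are mutually inverse bijections by Theorem~\ref{thm:mutation_pairs}, applying $\psi$ gives $(B,C) = \psi(X,Y)$.

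There is no computational obstacle here; the argument is pure bookkeeping over results already established. The one point I would be careful about, and which I expect to be the only place a reader might stumble, is tracking the correct regularity class for each uniqueness clause. The map $\varphi$ sends left regular pairs to right regular pairs (Definition-Proposition~\ref{defprop:forward}) and left irregular left mutable pairs to right irregular right mutable pairs (Definition-Proposition~\ref{defprop:forward2}); consequently a fixed corank-two $\W$ may be realized by \emph{several} mutable pairs, and the uniqueness statements are to be understood within the irregular class. This is exactly why Propositions~\ref{prop:left_irregular_unique} and~\ref{prop:right_irregular_unique} are the correct tools, and why the bijections $\varphi,\psi$ must be applied to the irregular witnesses rather than to arbitrary pairs with $\tau$-perpendicular category $\W$.
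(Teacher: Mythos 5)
Your proposal is correct and follows essentially the same route as the paper: the trivial implication out of (c), the equivalence of (a) and (b) via Definition-Propositions~\ref{defprop:forward2} and~\ref{defprop:backward2}, and then uniqueness from Propositions~\ref{prop:left_irregular_unique} and~\ref{prop:right_irregular_unique} together with the fact that $\varphi$ and $\psi$ interchange the two irregular witnesses. Your closing remark that the uniqueness clauses must be read within the irregular class (the paper's phrasing ``right mutable'' in (c) is to be understood as the right irregular right mutable pair, exactly as its proof treats it) is also the right reading.
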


\begin{proof}
    The implications $(c\implies a)$ and $(c \implies b)$ are trivial. Moreover, the equivalence between (a) and (b) follows immediately from Definition-Propositions~\ref{defprop:forward2} and~\ref{defprop:backward2}. Thus it suffices to show that (a) and (b) together imply (c).

    Suppose (a) and (b) hold. Then the $\tau$-exceptional pairs $(B,C)$ and $(X,Y)$ realizing conditions (a) and (b) are unique by Propositions~\ref{prop:left_irregular_unique} and~\ref{prop:right_irregular_unique}. The fact that these are interchanged by $\varphi$ and $\psi$ then follows from Definition-Propositions~\ref{defprop:forward2} and~\ref{defprop:backward2}.
\end{proof}

We conclude this section by proving our second and third main theorems.

\begin{theorem}[Theorem~\ref{thm:always_mutable_intro}]\label{prop:one_mutation}
    Every $\tau$-exceptional pair is left mutable or right mutable (or both).
\end{theorem}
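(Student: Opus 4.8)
The plan is to proceed by contraposition: I will show that any $\tau$-exceptional pair which fails to be left mutable is necessarily right mutable. The whole point is that \emph{left} irregularity and \emph{right} irregularity are mutually exclusive, so immutability can occur on at most one side, and the left/right finiteness conditions never need to be examined.

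First I would unwind the definitions of mutability. By definition, a pair $(M,N)$ is left immutable precisely when it is both left irregular and $\J(M,N)$ is not left finite; in particular, if $(M,N)$ is not left mutable, then it must be left irregular. By Definition~\ref{def:regular}(a), this means $(M,N)$ is \emph{not} left regular.

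The second, and central, step is to invoke Lemma~\ref{lem:regular}, which states that any pair $(B,C)$ with $C \in \Gen \Fbm_C(B)$ is left regular. Taking the contrapositive, since $(M,N)$ is not left regular, we must have $N \notin \Gen \Fbm_N(M)$. Now I apply Definition~\ref{def:regular}(b): a pair $(X,Y)$ satisfying $Y \notin \Gen \Fbm_Y(X)$ is right regular (the second disjunct in the definition of right regularity is met). Hence $(M,N)$ is right regular, and by the remark immediately following the definitions of left/right mutability, every right regular pair is right mutable. Thus $(M,N)$ is right mutable, completing the argument.

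I do not expect a genuine obstacle here: all the content is already packaged in Lemma~\ref{lem:regular}, which rules out the ``doubly irregular'' case. The only point requiring care is conceptual rather than technical—recognizing that one should not try to analyze the failure of both left and right finiteness simultaneously, but instead exploit the asymmetry of the irregularity conditions. Once one observes that $N \in \Gen \Fbm_N(M)$ is forced by right irregularity (via Definition~\ref{def:regular}(b)) yet precluded by left irregularity (via Lemma~\ref{lem:regular}), the result follows immediately.
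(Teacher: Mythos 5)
Your proof is correct, and it takes a genuinely different (and more economical) route than the paper's. The paper argues: not left mutable $\Rightarrow$ left irregular $\Rightarrow$ (by Proposition~\ref{prop:JUsinceresetup1}(d), which rests on showing $L = \Fbm_C(B)\amalg C$ is gen-minimal) the torsion class ${}^\perp\J(B,C)$ is functorially finite $\Rightarrow$ $\J(B,C)$ is right finite $\Rightarrow$ the pair is right mutable, \emph{regardless} of whether it is right regular or right irregular. You instead short-circuit this: left irregularity means not left regular, so the contrapositive of Lemma~\ref{lem:regular} gives $N \notin \Gen \Fbm_N(M)$, which is precisely the second disjunct of right regularity in Definition~\ref{def:regular}(b); right regularity then gives right mutability for free. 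In effect you observe that left irregularity and right irregularity are mutually exclusive, so the finiteness clauses in the definitions of mutability never come into play. Interestingly, the key fact you use ($C \notin \Gen \Fbm_C(B)$ for left irregular pairs, via Lemma~\ref{lem:regular}) appears inside the paper's own chain as a sub-step of Proposition~\ref{prop:JUsinceresetup1}(c), but the paper continues on to the finiteness conclusion rather than stopping where you do. What each approach buys: yours is shorter, avoids the gen-minimality/functorial-finiteness machinery, and yields the slightly stronger statement that every left irregular pair is right \emph{regular} (immutability and even irregularity are one-sided); the paper's heavier route establishes right finiteness of $\J(B,C)$ itself, which is a statement about the perpendicular category rather than the particular pair, and this extra information is what powers Remark~\ref{rem:immutable} (no right immutable pair $(X,Y)$ can share $\J(X,Y) = \J(B,C)$ with a left immutable pair) and the analysis in Corollary~\ref{cor:irregular_unique}.
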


\begin{proof}
    Let $(B,C)$ be a $\tau$-exceptional pair, and suppose that $(B,C)$ is not left mutable. By definition, this means $(B,C)$ is left irregular, and so the torsion class ${}^\perp \J(B,C)$ is functorially finite by Proposition~\ref{prop:JUsinceresetup1}(d). Equivalently (by Proposition~\ref{prop:asff} and Lemma~\ref{defprop:torsionpairs}), we have that $\J(B,C)$ is right finite. By definition, this means that $(B,C)$ is right mutable.
\end{proof}

\begin{remark}\label{rem:immutable}
    More generally, the argument used to prove Theorem~\ref{prop:one_mutation} shows that if $(B,C)$ is left immutable (resp. right immutable), then there cannot exist a right immutable (resp. left immutable) $\tau$-exceptional pair $(X,Y)$ with $\J(B,C) = \J(X,Y)$.

    In particular, note that the formula for $\varphi$ used in the left regular case (Definition-Proposition~\ref{defprop:forward}) could also be applied to a left irregular $\tau$-exceptional pair $(B,C)$. The only use of left regularity in the proof was to show that the resulting $\tau$-exceptional pair is right regular, so the output would be a $\tau$-exceptional pair $(X,Y)$ with $\J(X,Y) = \J(B,C)$. The above paragraph then says that $(X,Y)$ is right mutable, so we could not hope to establish an extension of Theorem~\ref{thm:mutation_pairs} using this extension of the map $\varphi$. Applying the formula for $\psi$ used in the right regular case to a right irregular pair leads to a similar consequence. 
\end{remark}

\begin{theorem}
[Theorem~\ref{thm:mutation_complete_intro}]\label{prop:mutation_complete}
    Suppose that $\Lambda$ is hereditary, $\tau$-tilting finite, or has rank two. Then every $\tau$-exceptional pair is both left and right mutable.
\end{theorem}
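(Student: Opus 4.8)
The plan is to reduce the statement to a single finiteness property of the corank-two perpendicular categories and then verify it case by case. By the definitions of mutability, a $\tau$-exceptional pair $(B,C)$ can fail to be left mutable only when it is left irregular \emph{and} $\J(B,C)$ is not left finite, and dually on the right. So I would reduce the whole theorem to the claim that for every $\tau$-exceptional pair $(B,C)$ the category $\J(B,C)$ is both left finite and right finite. Throughout I will use that $\J(B,C) = \J(L)$ with $L = \Fbm_C(B) \amalg C$ a $\tau$-rigid module of rank two (Remark~\ref{rem:orderedpair} and Theorem~\ref{thm:J_E}), so that $\J(B,C)$ is a $\tau$-perpendicular category of corank two.

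First I would dispatch the two easy cases. If $\rk\Lambda = 2$, then $\rk\J(B,C) = \rk\Lambda - 2 = 0$ by Definition-Proposition~\ref{prop:J_def}, so $\J(B,C) = 0$; the zero subcategory generates the zero torsion class and the zero torsion-free class, both functorially finite, so $\J(B,C)$ is vacuously left and right finite. If $\Lambda$ is $\tau$-tilting finite, then every torsion class and every torsion-free class in $\mods\Lambda$ is functorially finite, so in particular $\FiltGen(\J(B,C))$ and $\FiltCogen(\J(B,C))$ are functorially finite and $\J(B,C)$ is left and right finite.

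The hereditary case is the substantive one, and here the plan is as follows. Write $\W := \J(B,C) = \J(L)$ and let $\Pmod(\W)$ be the additive generator of the Ext-projectives $\P(\W)$. Since $\W$ is a wide subcategory equivalent to a module category, $\Pmod(\W)$ is its projective generator, hence $\tau_\W$-rigid and therefore rigid in $\mods\Lambda$; as $\Lambda$ is hereditary, rigidity coincides with $\tau$-rigidity, so $\Pmod(\W)$ is $\tau$-rigid in $\mods\Lambda$ and $\Gen\Pmod(\W)$ is a functorially finite torsion class by Theorem~\ref{thm:Bongartz}(a). I would then prove the identity $\FiltGen(\W) = \Gen\Pmod(\W)$. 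For $\subseteq$: via a projective presentation in $\W$, every object of $\W$ is a quotient, in $\mods\Lambda$, of a sum of copies of $\Pmod(\W)$ (cokernels being computed identically in the exact abelian category $\W$ and in $\mods\Lambda$), so $\W \subseteq \Gen\Pmod(\W)$, and since the right-hand side is a torsion class this gives $\FiltGen(\W) \subseteq \Gen\Pmod(\W)$. For $\supseteq$: $\Pmod(\W) \in \W \subseteq \FiltGen(\W)$ and $\FiltGen(\W)$ is closed under quotients, so $\Gen\Pmod(\W) \subseteq \FiltGen(\W)$. This makes $\FiltGen(\W)$ functorially finite, so $\W$ is left finite. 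The dual argument, using the Ext-injective generator $\Imod(\W)$, which is $\tau^{-1}_\W$-rigid, hence rigid, hence $\tau^{-1}$-rigid over the hereditary algebra $\Lambda$, together with Theorem~\ref{thm:dualBongartz}(a), gives $\FiltCogen(\W) = \Cogen\Imod(\W)$ functorially finite, so $\W$ is right finite.

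Assembling the three cases, $\J(B,C)$ is always both left and right finite, so no $\tau$-exceptional pair is left or right immutable and the theorem follows. The hard part will be the hereditary case, and more precisely the identity $\FiltGen(\J(L)) = \Gen\Pmod(\J(L))$: this is exactly where the hereditary hypothesis is used, since it is the passage from $\tau_\W$-rigidity to $\tau$-rigidity that guarantees $\Gen\Pmod(\W)$ is extension-closed and hence equal to the torsion class it generates. Once that identity (and its dual) is in hand, the reduction step and the rank-two and $\tau$-tilting-finite cases are routine.
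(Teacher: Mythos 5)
Your proposal is correct, and it follows the paper's reduction exactly (every failure of mutability requires $\J(B,C)$ to fail left or right finiteness, so it suffices to prove finiteness of the relevant perpendicular categories; the $\tau$-tilting finite case is handled identically via functorial finiteness of all torsion classes). Where you genuinely diverge is in the other two cases. For rank two, the paper proves the stronger statement that \emph{all} $\tau$-perpendicular categories are left and right finite, which forces it to treat the corank-one ones via \cite[Cor.~4.8]{bh}; you instead observe that only $\J(B,C)$ itself matters, and by Definition-Proposition~\ref{prop:J_def} this has rank $2-2=0$, i.e.\ is the zero category, which trivializes the case and removes a citation. For the hereditary case, the paper simply quotes Ingalls--Thomas \cite[Cor.~2.17]{IngTho} (that functorially finite wide, $\tau$-perpendicular, left finite and right finite subcategories all coincide), whereas you give a self-contained argument: the Ext-projective generator $\Pmod(\W)$ of a $\tau$-perpendicular $\W$ is rigid in $\mods\Lambda$ (wideness identifies $\Ext^1_\W$ with $\Ext^1_\Lambda$), hence $\tau$-rigid by heredity, so $\Gen\Pmod(\W)$ is a functorially finite torsion class by Theorem~\ref{thm:Bongartz}(a); combined with the two inclusions $\W\subseteq\Gen\Pmod(\W)$ (exactness of the embedding) and $\Gen\Pmod(\W)\subseteq\FiltGen(\W)$ (quotient-closure), this yields $\FiltGen(\W)=\Gen\Pmod(\W)$, and dually with $\Imod(\W)$ and Theorem~\ref{thm:dualBongartz}(a). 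Both routes are valid; the paper's buys brevity and the stronger all-corank statement, while yours isolates precisely where heredity enters (rigid $\Rightarrow$ $\tau$-rigid, making $\Gen$ of the projective generator extension-closed) and needs no external input beyond results already stated in the paper.
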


\begin{proof}
    It suffices to show that every $\tau$-perpendicular category is both left and right finite. We consider each of the three cases separately.
    
    Suppose that $\Lambda$ is hereditary. It follows from~\cite[Cor.~2.17]{IngTho} that the functorially finite wide subcategories, $\tau$-perpendicular categories, left finite wide subcategories, and right finite wide subcategories all coincide (see also~\cite[Rk.~4.10]{bh}). In particular, every $\tau$-perpendicular subcategory is both left and right finite.

    Now suppose $\Lambda$ is $\tau$-tilting finite. Then every torsion class (and so also every torsion-free class) is functorially finite by~\cite[Thm.~1.2]{dij}. Theorem~\ref{thm:ms} and its dual thus imply that every wide subcategory is both left and right finite. Since $\tau$-perpendicular categories are wide (Definition-Proposition~\ref{prop:J_def}), this implies the result. (In fact, Theorem~\ref{thm:WLtorsionclass} implies that the wide subcategories, left finite wide subcategories, right finite wide subcategories, and $\tau$-perpendicular categories all coincide in $\tau$-tilting finite case.)
    
    Finally, suppose that $\Lambda$ has rank two.
    Since
    $0$ and $\mods\Lambda$ are Serre subcategories, they are both left
    and right finite as wide subcategories. Since $\rk(\Lambda) = 2$, every other $\tau$-perpendicular category has corank one by Definition-Proposition~\ref{prop:J_def}, and is thus both left and right finite by~\cite[Cor.~4.8]{bh}.
\end{proof}

\section{Mutation of \texorpdfstring{$\tau$}{tau}-exceptional sequences}\label{sec:sequence_mutation}

We now extend the mutation operators on $\tau$-exceptional pairs to a family of mutation operators on $\tau$-exceptional sequences. For a $\tau$-perpendicular category $\W$, we will denote by $\varphi^\W$ and $\psi^\W$ the operators $\varphi$ and $\psi$ for the category $\W$. When $\W = \mods\Lambda$, we will still sometimes write $\varphi$ and $\psi$ in place of $\varphi^{\mods\Lambda}$ and $\psi^{\mods\Lambda}$.

We begin by expanding the definitions of mutability and regularity.

\begin{definition}\label{def:i_mutable}
Let $\mathcal{M} = (M_s,M_{s+1},\ldots,M_n)$ be a $\tau$-exceptional sequence and $i \in \{s,\ldots,n-1\}$.
    \begin{enumerate}
        \item[(a)] We say that $\mathcal{M}$ is \emph{left $i$-mutable} (resp. \emph{right $i$-mutable}) if  $(M_i,M_{i+1})$ is a left mutable (resp. right mutable) $\tau_{\J(M_{i+2},\ldots,M_n)}$-exceptional pair.
        \item[(b)] We say that $\mathcal{M}$ is \emph{left $i$-regular} (resp. \emph{right $i$-regular}) if $(M_i,M_{i+1})$ is a left regular (resp. right regular) $\tau_{\J(M_{i+2},\ldots,M_n)}$-exceptional pair.
        \item[(c)] We say that $\mathcal{M}$ is \emph{left $i$-irregular} (resp. \emph{right $i$-irregular}) if $(M_i,M_{i+1})$ is left irregular (resp. right irregular) $\tau_{\J(M_{i+2},\ldots,M_n)}$-exceptional pair.
    \end{enumerate}
\end{definition}

\begin{definition}\label{def:i_mutation}
Let $\mathcal{M} = (M_s,M_{s+1},\ldots,M_n)$ be a $\tau$-exceptional sequence and $i \in \{s,\ldots,n-1\}$.
\begin{enumerate}
    \item[(a)] If $\mathcal{M}$ is left $i$-mutable, then the \emph{$i$-th left mutation} of $\mathcal{M}$ is the $\tau$-exceptional sequence
    $$\varphi_i(\mathcal{M}) = (M_s,\ldots,M_{i-1},M'_i,M'_{i+1},M_{i+2},\ldots,M_n)$$
    where $(M'_i,M'_{i+1}) = \varphi^{\J(M_{i+2},\ldots,M_n)}(M_i,M_{i+1})$.
    \item[(b)] If $\mathcal{M}$ is right $i$-mutable, then the \emph{$i$-th right mutation} of $\mathcal{M}$ is the $\tau$-exceptional sequence
    $$\psi_i(\mathcal{M}) = (M_s,\ldots,M_{i-1},M''_i,M''_{i+1},M_{i+2},\ldots,M_n)$$
    where $(M''_i,M''_{i+1}) = \psi^{\J(M_{i+2},\ldots,M_n)}(M_i,M_{i+1})$.
\end{enumerate}
\end{definition}

Note that outputs of $\varphi_i$ and $\psi_i$ are indeed $\tau$-exceptional sequences by the results of Section~\ref{sec:pair_mutation}. Moreover, Theorem~\ref{thm:mutation_pairs} and the Definition-Propositions in Section~\ref{sec:pair_mutation} immediately imply the following.

\begin{corollary}
    For all $s \leq i < n$, the maps $\varphi_i$ and $\psi_i$ give bijections
    \[\left\{\begin{array}{c} \text{left $i$-mutable $\tau$-exceptional}\\\text{ sequences of length $s$}\end{array}\right\} \qquad \stackrel[\psi_i]{\varphi_i}{\rightleftarrows} \qquad \left\{\begin{array}{c} \text{right $i$-mutable $\tau$-exceptional}\\\text{ sequences of length $s$}\end{array}\right\}.\]
    Moreover, these bijections satisfy the following.
    \begin{enumerate}
        \item[(a)] Let $\mathcal{M}$ be left $i$-mutable. Then $\J(\mathcal{M}) = \J(\varphi_i(\mathcal{M}))$.
        \item[(b)] Let $\mathcal{M}$ be left $i$-mutable. Then $\mathcal{M}$ is left $i$-irregular if and only if $\varphi_i(\mathcal{M})$ is right $i$-irregular.
        \item[(c)] Let $\mathcal{M} = (M_s,M_{s+1},\ldots,M_n)$ be a $\tau$-exceptional sequence and let $s \leq i < n$. Then $\mathcal{M}$ is left $i$-mutable or right $i$-mutable (or both).
    \end{enumerate}
\end{corollary}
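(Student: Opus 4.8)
The plan is to reduce every assertion to the corresponding pair-level statement from Section~\ref{sec:pair_mutation}, applied inside the $\tau$-perpendicular category $\W := \J(M_{i+2},\ldots,M_n)$. The crucial observation is that both $\varphi_i$ and $\psi_i$ leave the entries $M_{i+2},\ldots,M_n$ unchanged by Definition~\ref{def:i_mutation}, so they leave $\W$ unchanged as well. Consequently the only part of the data that moves is the top pair $(M_i,M_{i+1})$, which by the recursive definition of $\tau$-exceptional sequences (peel off $M_n,\ldots,M_{i+2}$) is a $\tau_\W$-exceptional pair: the peeling shows $(M_s,\ldots,M_{i+1})$ is a $\tau_\W$-exceptional sequence, whose top two terms form the pair $(M_i,M_{i+1})$. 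Since $\W$ is equivalent to the module category of a finite-dimensional algebra (Definition-Proposition~\ref{prop:J_def}), all results of Section~\ref{sec:pair_mutation} apply with $\W$ in place of $\mods\Lambda$; see Remark~\ref{rem:CW}.

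First I would verify that the outputs are genuinely $\tau$-exceptional sequences. Writing $(M_i',M_{i+1}') = \varphi^\W(M_i,M_{i+1})$, Definition-Propositions~\ref{defprop:forward} and~\ref{defprop:forward2} give that $(M_i',M_{i+1}')$ is a $\tau_\W$-exceptional pair satisfying $\J_\W(M_i',M_{i+1}') = \J_\W(M_i,M_{i+1})$. The peeling above also shows that $(M_s,\ldots,M_{i-1})$ is a $\tau_{\J_\W(M_i,M_{i+1})}$-exceptional sequence; replacing $\J_\W(M_i,M_{i+1})$ by the equal category $\J_\W(M_i',M_{i+1}')$ then shows $(M_s,\ldots,M_{i-1},M_i',M_{i+1}')$ is again a $\tau_\W$-exceptional sequence, so $\varphi_i(\mathcal{M})$ is a $\tau$-exceptional sequence. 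The analogous argument, using Definition-Propositions~\ref{defprop:backward} and~\ref{defprop:backward2}, handles $\psi_i$.

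For the bijection, Theorem~\ref{thm:mutation_pairs} applied in $\W$ states that $\varphi^\W$ and $\psi^\W$ are mutually inverse bijections between left mutable and right mutable $\tau_\W$-exceptional pairs. Since $\W$ is fixed by the operations, $\varphi_i$ carries left $i$-mutable sequences to right $i$-mutable ones and $\psi_i$ does the reverse, and the identities $\psi_i\circ\varphi_i = \mathrm{id}$ and $\varphi_i\circ\psi_i = \mathrm{id}$ follow entrywise from the pair-level identities (the untouched entries agree trivially). Part (a) holds because $\J(\mathcal{M})$ is obtained by iterating $\J$ downward starting from $\W$, and the sole step that differs between $\mathcal{M}$ and $\varphi_i(\mathcal{M})$ is $\J_\W(M_i,M_{i+1}) = \J_\W(M_i',M_{i+1}')$, after which the identical operators $\J_{-}(M_{i-1}),\ldots,\J_{-}(M_s)$ are applied to equal categories. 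Part (b) is immediate from the case split in the Definition-Propositions: by~\ref{defprop:forward} a left regular pair maps to a right regular pair, while by~\ref{defprop:forward2} a left irregular pair maps to a right irregular pair; since a left mutable pair is exactly one of these two types, left $i$-irregularity corresponds precisely to right $i$-irregularity of $\varphi_i(\mathcal{M})$. Finally, part (c) is Theorem~\ref{prop:one_mutation} applied to the $\tau_\W$-exceptional pair $(M_i,M_{i+1})$.

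The work here is almost entirely bookkeeping, and the main point requiring care — the chief potential obstacle — is confirming that the ambient category $\W$ really is fixed by each mutation and that the recursive peeling correctly exhibits $(M_i,M_{i+1})$ as a $\tau_\W$-exceptional pair whose iterated perpendicular category is preserved. Once these structural facts are pinned down, every claim is a direct transcription of the pair-level results through Remark~\ref{rem:CW}.
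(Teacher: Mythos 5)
Your proposal is correct and follows essentially the same route as the paper, which simply observes that the corollary is immediate from Theorem~\ref{thm:mutation_pairs}, the Definition-Propositions of Section~\ref{sec:pair_mutation}, and Theorem~\ref{prop:one_mutation}, all applied in the fixed ambient category $\W = \J(M_{i+2},\ldots,M_n)$. Your write-up just makes explicit the bookkeeping (the peeling argument, the invariance of $\W$, and the equality $\J_\W(M_i,M_{i+1}) = \J_\W(M_i',M_{i+1}')$ propagating down the recursion) that the paper leaves implicit.
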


Recall from the introduction that the algebra $\Lambda$ is \emph{mutation complete} if all of its $\tau$-exceptional sequences (of length $s$) are both left and right $i$-mutable (for all $s \leq i < n$). As a consequence of Theorem~\ref{prop:mutation_complete}, we obtain the following.

\begin{corollary}[Corollary~\ref{cor:mutation_complete_intro}]\label{cor:mutation_complete}
    If $\Lambda$ is hereditary, $\tau$-tilting finite, or has rank two, then it is mutation complete.
\end{corollary}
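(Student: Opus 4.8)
The plan is to reduce the statement to Theorem~\ref{prop:mutation_complete}, applied not to $\Lambda$ itself but to the finite-dimensional algebras governing the iterated $\tau$-perpendicular categories that arise. Fix a $\tau$-exceptional sequence $\mathcal{M} = (M_s,\ldots,M_n)$ and an index $s \le i < n$, and set $\W := \J(M_{i+2},\ldots,M_n)$. By Definition~\ref{def:i_mutable}, the sequence $\mathcal{M}$ is left (resp. right) $i$-mutable exactly when the pair $(M_i,M_{i+1})$ is left (resp. right) mutable as a $\tau_\W$-exceptional pair. By Definition-Proposition~\ref{prop:J_def} and Remark~\ref{rem:CW}, there is an exact equivalence $F_\Lambda\colon \W \to \mods\Lambda_\W$ onto the module category of a finite-dimensional algebra $\Lambda_\W$, and this equivalence transports the whole support-$\tau$-rigid and $\tau$-perpendicular apparatus of Sections~\ref{sec:torsion}--\ref{sec:pair_mutation} from $\W$ to $\mods\Lambda_\W$; in particular it identifies $\tau_\W$-exceptional pairs with $\tau$-exceptional pairs of $\mods\Lambda_\W$ and preserves the notions of left/right regularity and mutability. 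Hence it suffices to prove that $\Lambda_\W$ again belongs to the relevant class, for then Theorem~\ref{prop:mutation_complete} applied to $\Lambda_\W$ shows that $(M_i,M_{i+1})$ is both left and right mutable.

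The task thus becomes checking that each of the three properties is inherited by every $\tau$-perpendicular subcategory $\W$ in which a pair can occur. First I would dispose of the rank two case, which is the easiest: by the rank formula in Definition-Proposition~\ref{prop:J_def} we have $\rk(\W) = \rk(\Lambda) - \rk(M_{i+2}\amalg\cdots\amalg M_n) \le 2$, while a $\tau_\W$-exceptional pair can exist only when $\rk(\W)\ge 2$. Therefore $\rk(\W)=2$, the complementary support $\tau$-rigid object is zero, and $\W=\mods\Lambda$; the desired conclusion is then simply Theorem~\ref{prop:mutation_complete} for $\Lambda$, with no reduction required. For the hereditary case, one invokes the classical identification of $\tau$-perpendicular (equivalently, wide) subcategories of a hereditary module category with the perpendicular categories of Geigle--Lenzing~\cite{gl} and Schofield~\cite{sch}, which are again hereditary module categories; alternatively one checks directly that, since the inclusion $\W \hookrightarrow \mods\Lambda$ is exact and $\W$ is closed under kernels, cokernels and extensions, $\W$ is hereditary. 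Thus $\Lambda_\W$ is hereditary.

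For the $\tau$-tilting finite case I would use the brick characterisation from~\cite{dij}: an algebra is $\tau$-tilting finite if and only if it has only finitely many isomorphism classes of brick modules. Since $\W$ is a full subcategory, any $S\in\W$ satisfies $\End_\W(S) = \End_\Lambda(S)$, so each brick of $\W \cong \mods\Lambda_\W$ is a brick of $\mods\Lambda$; as $\Lambda$ is $\tau$-tilting finite it has finitely many bricks, whence so does $\Lambda_\W$, which is therefore $\tau$-tilting finite.

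I expect the main obstacle to be the $\tau$-tilting finite case, specifically confirming that $\tau$-tilting finiteness descends to \emph{arbitrary} $\tau$-perpendicular subcategories and not merely to left- or right-finite ones (recall that, by Asai's observation quoted before Corollary~\ref{cor:proj_inj}, $\tau$-perpendicular subcategories need not be left or right finite). The brick-counting argument above is the cleanest way around this, provided one is careful that the inclusion $\W\hookrightarrow\mods\Lambda$ really induces an injection on brick modules; alternatively one can appeal to the known compatibility of Jasso's $\tau$-tilting reduction with $\tau$-tilting finiteness. Once stability of all three classes is in hand, the corollary follows at once from the reduction in the first paragraph, since every pair $(M_i,M_{i+1})$ occurring in the definition of mutation completeness lives in some such $\W$.
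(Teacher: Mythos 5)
Your proposal is correct and takes essentially the same route as the paper: both arguments reduce the corollary to the claim that each of the three classes of algebras is stable under passing to $\tau$-perpendicular subcategories (transported along Jasso's equivalence $\W\simeq\mods\Lambda_\W$, as in Remark~\ref{rem:CW}), and then invoke Theorem~\ref{prop:mutation_complete} for the reduced algebra; your rank-two computation via the rank formula of Definition-Proposition~\ref{prop:J_def}, forcing $\W=\mods\Lambda$, is exactly the paper's observation that in rank two the $\tau$-exceptional pairs are precisely the complete $\tau$-exceptional sequences. The one genuine difference is the $\tau$-tilting finite case: the paper deduces stability directly from Jasso's reduction theorem \cite[Thm.~1.1]{jasso}, whereas you count bricks using the brick-finiteness characterisation of $\tau$-tilting finiteness from \cite{dij}. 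Your argument is sound, since $\W$ is a full subcategory, so $\End_\W(S)=\End_\Lambda(S)$ and non-isomorphic bricks of $\W$ stay non-isomorphic in $\mods\Lambda$; it buys a slightly more self-contained justification that does not rely on the correspondence between torsion classes of $\Lambda_\W$ and intervals of torsion classes of $\Lambda$, which is precisely the concern you raise about left/right finiteness. One caution: your alternative ``direct check'' that wide subcategories of hereditary categories are hereditary is glossed too quickly — exactness of the inclusion alone does not give $\Ext^2_\W=0$; one must splice a $2$-fold extension in $\W$ at its middle term (which lies in $\W$ by wideness) and use the long exact sequence to see each Yoneda product vanishes — but this is moot since your primary citation \cite[Prop.~1.1]{gl} is the same one the paper uses.
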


\begin{proof}
    If $\Lambda$ has rank two, then the $\tau$-exceptional pairs are precisely the complete $\tau$-exceptional sequences. Moreover, it is well-known that if $\Lambda$ is hereditary (resp. $\tau$-tilting finite) then every $\tau$-perpendicular category of $\mods\Lambda$ is also hereditary (resp. $\tau$-tilting finite).
    This was proved in~\cite[Prop.~1.1]{gl} in the hereditary case. In the $\tau$-tilting finite case it is a direct consequence of~\cite[Thm.~1.1]{jasso}.
      Theorem~\ref{prop:mutation_complete} then implies the result in all three cases.
\end{proof}


\section{The hereditary case}\label{sec:hereditary}

Suppose for this section that $\Lambda$ is hereditary. Recall that, in this case, the exceptional modules, indecomposable rigid modules, and indecomposable ($\tau$-)rigid modules all coincide. As discussed in the introduction, Crawley-Boevey~\cite{cb} and Ringel~\cite{rin} have shown that the braid group on $n$ strands acts transitively on the set of complete exceptional sequences. More precisely, for $\mathcal{M} = (M_1,\ldots,M_n)$ a complete exceptional sequence and $1 \leq i \leq n-1$, they prove that there is a unique exceptional module $M'_{i}$ such that $\mathcal{M}' = (M_1,\ldots,M_{i-1},M'_i,M_i,M_{i+2},\ldots,M_n)$ is a complete exceptional sequence. (See~\cite[Sect.\ 1]{brt} for an explicit description of the module $M'_i$.) The generator $\sigma_i$ of the braid group then sends $\mathcal{M}$ to $\mathcal{M}'$.

Note that $\tau$-exceptional pairs will in the hereditary case be referred to as {\em exceptional pairs}.
Recalling that $\tau$-exceptional sequences and (classical) exceptional sequences coincide for hereditary algebras, the purpose of this section is to prove the following. 

\begin{theorem}[Theorem~\ref{thm:hereditary_case_intro}]\label{thm:hereditary_case}
    Let $\Lambda$ be a hereditary algebra and let $\mathcal{M}$ be a complete ($\tau$-)exceptional sequence. Then $\varphi_i(\mathcal{M}) = \sigma_i (\mathcal{M})$ for all $1 \leq i \leq n-1$. That is, the mutation of ($\tau$-)exceptional sequences that we define here coincides with the classical braid group action~\cite{cb,rin} on exceptional sequences.
\end{theorem}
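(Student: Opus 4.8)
The plan is to reduce the theorem to a statement about individual exceptional pairs and then invoke the uniqueness result Theorem~\ref{thm:unique}. Recall from~\cite{cb,rin} that $\sigma_i(\mathcal M)$ is characterized as the unique complete exceptional sequence that agrees with $\mathcal M$ in every position except the $i$-th and has $M_i$ in position $i+1$. Since $\varphi_i(\mathcal M)$ is automatically a complete ($\tau$-)exceptional sequence (by the results of Section~\ref{sec:pair_mutation}) which agrees with $\mathcal M$ outside positions $i$ and $i+1$, it suffices to prove the following claim: for every exceptional pair $(B,C)$ inside a $\tau$-perpendicular subcategory $\W$ of $\mods\Lambda$, the second component of $\varphi^\W(B,C)$ equals $B$. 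Indeed, granting this, $\varphi_i(\mathcal M)$ and $\sigma_i(\mathcal M)$ agree in position $i+1$ (both equal $M_i$) and in all positions $j\neq i,i+1$, hence agree everywhere by Theorem~\ref{thm:unique}. Because $\Lambda$ is hereditary, every $\tau$-perpendicular subcategory is again hereditary and is both left and right finite (as in the proof of Theorem~\ref{prop:mutation_complete}), so I may work inside $\W$ and split into the left regular and left irregular cases.

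First I would treat the left regular case. If $C$ is projective in $\W$, then $C_+ = C[1]$ and Theorem~\ref{thm:bmcorr}(a)(iii) gives $\Fbm_{C_+}(B)=B$ directly, so the second component $B_{C\uparrow}$ of $\varphi(B,C)$ equals $B$. If $C$ is non-projective, the key point is the homological observation that a non-projective $C$ with $\Ext^1(B,C)\neq 0$ forces $(B,C)$ to be left irregular: writing $V=\Fbm_C(B)$ and using the universal extension $0\to C^{d}\to V\to B\to 0$ (with $d=\dim_{\End C}\Ext^1(B,C)$), the long exact sequence obtained by applying $\Hom(-,X)$ shows, using $\Ext^2=0$ in the hereditary case, that $\Ext^1(C,X)=0$ whenever $\Ext^1(V,X)=0$, i.e.\ whenever $X\in{}^\perp\tau V$; hence $C\in\P({}^\perp\tau V)$. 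Taking the contrapositive, a left regular pair with $C$ non-projective has $\Ext^1(B,C)=0$, so $B\amalg C$ is rigid and $\Fbm_{C_+}(B)=\Fbm_C(B)=B$, as required.

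The left irregular case is the main obstacle. Here I would not compute the second component directly, but instead exploit the uniqueness of irregular pairs. By Corollary~\ref{cor:irregular_unique}, $\varphi(B,C)$ is the unique right irregular pair whose iterated $\tau$-perpendicular category is $\W':=\J(B,C)$. Thus it suffices to show that the classical mutation $\sigma(B,C)=(M',B)$ is itself right irregular with $\J(M',B)=\W'$; the equality $\varphi(B,C)=\sigma(B,C)$ then follows, and in particular the second component is $B$. The module $M'=L_BC$ is given by the classical mutation formula as the universal co-extension $0\to C\to M'\to B^{d}\to 0$, and one checks from the corresponding long exact sequences that $M'\in\J(B)$ and $\{M',B\}^\perp=\{B,C\}^\perp=\W'$ (using that in the hereditary case $\J(B,C)$ is exactly the common right perpendicular category of $B$ and $C$). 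It then remains to verify the two conditions defining right irregularity for $(M',B)$, namely $B\in\Gen\Fbm_B(M')$ and $M'\notin\P(\J(B))$ (equivalently $\Fbm_B(M')\notin\P({}^\perp\tau B)$, via Lemma~\ref{lem:fUbijection}).

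The hardest part is this last verification, especially when $d\geq 2$, where $M'$ and $V=\Fbm_C(B)$ are genuinely different modules and $\Fbm_B(M')$ must be analysed through the torsion pair $(\Gen B,B^\perp)$. My approach would be to identify $\Fbm_B(M')$ by applying the torsion-free functor $f_B$ to $M'$, using the co-extension presentation of $M'$ together with $B\in\Gen V$, and to pin down $\Pmodns(\FiltGen(\W'))$ using the characterizations in Proposition~\ref{prop:JUsinceresetup1} and Proposition~\ref{prop:sincere_summary_2}. A clean intermediate fact that I expect to be useful is that $B$ is Ext-projective in $\FiltGen(\W')$, which follows because $\tau B\in(\W')^\perp$ (as $\W'\subseteq{}^\perp\tau B$, since $\W'\subseteq B^{\perp}$ for $\Ext^1$ as well as $\Hom$). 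Throughout, the hereditary hypothesis is used repeatedly via $\Ext^2=0$ and via the Auslander--Reiten duality $\Hom(X,\tau Y)\cong D\Ext^1(Y,X)$, which convert the regularity and irregularity conditions into the vanishing of $\Ext^1$-groups.
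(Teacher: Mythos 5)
Your reduction to pairs is exactly the paper's (Proposition~\ref{prop:hereditary_move_left}), and your left regular case is correct in outline, but both halves of your argument rest on an unproven identification that is in fact the technical heart of the paper's proof: you assert that $V=\Fbm_C(B)$ \emph{is} the universal extension $0\to C^d\to V\to B\to 0$. This is Proposition~\ref{prop:hereditary_case_IV}(a), and proving it is nontrivial: the general theory (proof of Theorem~\ref{thm:F_map_1}) only gives an exact sequence $0\to X\to\Fbm_C(B)\to B\to 0$ with $X\in\Gen C$, and one must separately show that the universal extension $E'$ is indecomposable and rigid with $\Hom(E',C)=0=\Ext^1(E',C)=\Ext^1(C,E')$ before one can conclude $\Ebm_C(E')=B$, i.e.\ $E'=\Fbm_C(B)$; the paper spends about a page on this. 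Granting it, your observation that $\Ext^1(B,C)\neq 0$ with $C$ non-projective forces left irregularity (via $\Ext^2=0$ and Proposition~\ref{prop:ASExt}) is a correct and pleasant shortcut past the paper's Lemma~\ref{lem:brick_gen} and Proposition~\ref{prop:hereditary_case_III}; but as written the regular case has no foundation.

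The left irregular case is where the genuine gap lies. Your strategy --- show the classical mutation $(M',B)$ is right irregular with $\J(M',B)=\J(B,C)$ and conclude by Proposition~\ref{prop:right_irregular_unique} / Corollary~\ref{cor:irregular_unique} --- is a legitimate alternative to the paper's direct computation, but you never carry out its crux. Verifying right irregularity requires (i) identifying $\Fbm_B(M')$, which needs $\Hom(B,M')=0$, rigidity of $B\amalg M'$, and $M'\notin\Gen B$ (the paper's Proposition~\ref{prop:hereditary_case_IV}(d); note your phrase ``apply $f_B$ to $M'$'' is backwards, since $M'\in B^\perp$ gives $f_B(M')=M'$ vacuously --- what is needed is that $\Ebm_B(M')=M'$), and (ii) showing $\Fbm_B(M')\notin\P({}^\perp\tau B)$, for which you offer only the plan ``pin down $\Pmodns(\FiltGen(\W'))$.'' But computing $\Pmodns(\FiltGen(\W'))$ is exactly the paper's Proposition~\ref{prop:hereditary_case_IV}(f), the hardest step of its proof --- and once you have it, Definition-Proposition~\ref{defprop:forward2} yields $\varphi(B,C)=(\Ebm_B(M'),B)$ immediately, making the uniqueness detour superfluous. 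So your irregular case either collapses back onto the paper's argument or is missing its decisive verification; either way, the core computation (Proposition~\ref{prop:hereditary_case_IV}) is absent from the proposal.
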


We deduce Theorem~\ref{thm:hereditary_case} from the following proposition. Note that we assume throughout this section that we are working over a fixed hereditary algebra $\Lambda$, and so by Corollary~\ref{cor:mutation_complete} left and right mutation is defined for any ($\tau$-)exceptional pair or sequence.

\begin{proposition}\label{prop:hereditary_move_left}
    Let $(B,C)$ be an exceptional pair and write $\varphi(B,C) = (C',B')$. Then $B' = B$.
\end{proposition}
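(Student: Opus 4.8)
The plan is to unwind the definition of $\varphi$ in the left regular case and show that the second component simplifies to $B$. Recall from Definition-Proposition~\ref{defprop:forward} that
$$\varphi(B,C) = (\lvert \Ebm_{B_{C\uparrow}}(C_+)\rvert, B_{C\uparrow}),$$
where $B_{C\uparrow} = \Fbm_{C_+}(B)$. So with the notation $\varphi(B,C) = (C',B')$ we have $B' = B_{C\uparrow} = \Fbm_{C_+}(B)$, and the entire statement reduces to proving that $\Fbm_{C_+}(B) = B$. First I would dispose of the signing: since $\Lambda$ is hereditary, the projective modules are precisely the modules $Q$ with $\J(Q) = Q^\perp$ having no contribution from $\tau$, and I would check that $C_+ = C$ whenever $C$ is non-projective, while if $C$ is projective then $C_+ = C[1]$ but $\J(C) = \J(C[1]) = C^\perp$, so that $\Fbm_{C_+}$ and $\Fbm_{C}$ agree as maps on $\ind\C(\J(C))$. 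In either case $\Fbm_{C_+}(B) = \Fbm_C(B)$, so it suffices to show $\Fbm_C(B) = B$.

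The key mechanism is the interplay between $\Fbm$ and the hereditary hypothesis. By Theorem~\ref{thm:bmcorr}, the map $\Fbm_C$ is inverse to $\Ebm_C$, and by part (b) of that theorem together with Theorem~\ref{thm:jasso}, on the relevant domain $\Ebm_C$ acts as $f_C$, the torsion-free functor for the torsion pair $(\Gen C, C^\perp)$. So $\Fbm_C(B)$ is characterized as the unique indecomposable $N \notin \Gen C$ with $N \amalg C$ support $\tau$-rigid and $f_C(N) = B$. Thus the heart of the matter is to show that in the hereditary setting $B$ itself already lies in $\J(C)$, i.e. that $f_C(B) = B$, equivalently $B \in C^\perp$ and $B \in {}^\perp \tau C$; then by uniqueness $\Fbm_C(B) = B$. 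Since $(B,C)$ is an exceptional pair, $B$ is by definition an object of $\J(C) = C^\perp \cap {}^\perp\tau C$, so $f_C(B) = B$ is immediate from $\Gen C \cap C^\perp = 0$ and the fact that $B \in C^\perp$.

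The main obstacle, and the step I would treat most carefully, is confirming that $\Fbm_C(B)$ is genuinely an \emph{unsigned} indecomposable module equal to $B$ rather than a shift, and that no subtlety of the signing or the irregular/regular dichotomy intervenes. Concretely, I would argue that because $B \in \J(C)$ is already a module lying in $C^\perp \subseteq \mods\Lambda$, the short exact sequence $0 \to X \to \Fbm_C(B) \to B \to 0$ from the proof of Theorem~\ref{thm:F_map_1} (with $X \in \Gen C$) must have $X = 0$: applying $\Hom(-,C^\perp)$ and using $\Ext^1(C,-) $ vanishing in the hereditary case forces the extension to split against the $\tau$-rigidity of $C \amalg \Fbm_C(B)$, and indecomposability of $\Fbm_C(B)$ then yields $\Fbm_C(B) = B$. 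I expect the whole argument to be short once the hereditary collapse $f_C|_{\J(C)} = \mathrm{id}$ is established, since everything then follows from the defining property of $\Fbm_C = \Ebm_C^{-1}$ and the uniqueness in Theorem~\ref{thm:bmcorr}.
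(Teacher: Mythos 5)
Your proposal has two genuine gaps, and the second one undermines even the case you do treat.

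First, the formula $\varphi(B,C) = (\lvert \Ebm_{B_{C\uparrow}}(C_+)\rvert, B_{C\uparrow})$ from Definition-Proposition~\ref{defprop:forward} is the definition of $\varphi$ only on \emph{left regular} pairs; on left irregular pairs $\varphi$ is given by the entirely different formula of Definition-Proposition~\ref{defprop:forward2}, whose second entry is a quotient of $\Pmodns(\FiltGen(\J(L)))$, not $\Fbm_{C_+}(B)$. Left irregular pairs genuinely occur over hereditary algebras: for $\Lambda = K(1\to 2\to 3)$, the pair $(S_1,S_2)$ is exceptional, $\Fbm_{S_2}(S_1) = P_1/S_3$ (the universal extension of $S_1$ by $S_2$), and one checks that $S_2$ is a non-projective Ext-projective of ${}^\perp \tau(P_1/S_3)$, so $(S_1,S_2)$ is left irregular in the sense of Definition~\ref{def:regular}. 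For such pairs, proving $B'=B$ is exactly the second half of the paper's proof, which runs through the universal-extension analysis of Proposition~\ref{prop:hereditary_case_IV} and Proposition~\ref{prop:JUsinceresetup1}(g); your argument never touches this case.

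Second, in the regular case your central step --- ``$f_C(B)=B$ plus uniqueness gives $\Fbm_C(B)=B$'' --- is fallacious. The bijection $\Ebm_C$ has domain the indecomposables $N$ with $N\amalg C$ $\tau$-rigid and $N\notin\Gen C$, and $B$ need not lie in this domain: for an exceptional pair one only knows $\Hom(C,B)=0=\Ext^1(C,B)$, while $\Ext^1(B,C)$ may be nonzero, so $B\amalg C$ need not be rigid. In the $A_3$ example above, $f_{S_2}(S_1)=S_1$ and yet $\Fbm_{S_2}(S_1)=P_1/S_3\neq S_1$; moreover the exact sequence $0\to S_2\to P_1/S_3\to S_1\to 0$ does not split, which refutes your final paragraph (there is no ``vanishing of $\Ext^1(C,-)$'' over a hereditary algebra). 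The real content of the paper's regular case, which you have assumed away, is that left regularity forces one of: $C\in\Gen\Fbm_C(B)$ (handled by the endomorphism argument of Lemma~\ref{lem:brick_gen}), $C\in\P(\Lambda)$, or $\Ext^1(B,C)=0$ (so that Proposition~\ref{prop:hereditary_trivial} applies); the third alternative is itself extracted from Proposition~\ref{prop:hereditary_case_IV} by contradiction. Finally, your reduction ``$\Fbm_{C_+}=\Fbm_C$ when $C$ is projective'' is also false: $\Fbm_{C[1]}(B)=B$ by Theorem~\ref{thm:bmcorr}(a)(iii), whereas over $K(1\to 2)$ one has $\Fbm_{S_2}(S_1)=P_1\neq S_1$; the projective case is immediate, but only if you use $\Fbm_{C[1]}$ directly rather than conflating it with $\Fbm_C$.
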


\begin{proof}[Proof that Proposition~\ref{prop:hereditary_move_left} implies Theorem~\ref{thm:hereditary_case}]
    Let $\mathcal{M} = (M_1,\ldots,M_n)$ be a complete exceptional sequence and let $i \in \{1,\ldots,n-1\}$. Since $\J(M_{i+2},\ldots,M_n)$ is also hereditary, Proposition~\ref{prop:hereditary_move_left} implies that both $\sigma_i(\mathcal{M})$ and $\varphi_i(\mathcal{M})$ are of the form 
    $(M_1,\ldots,M_{i-1},?,M_i, M_{i+2},\ldots,M_n)$.
    Now note that there is a unique such sequence by the results of Crawley-Boevey \cite[Lem.~8]{cb} and Ringel \cite[Prop.~3]{rin}. 
\end{proof}

In the remainder of this section, we prove Proposition~\ref{prop:hereditary_move_left} in several steps.

\begin{lemma}\label{lem:brick_gen}
    Suppose $(B,C)$ is an exceptional pair with $C \in \Gen(\Fbm_C(B))$. Then $\Fbm_C(B) = B$.
\end{lemma}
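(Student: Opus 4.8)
The plan is to unwind the definitions, reduce the claim to a single $\Hom$-vanishing statement, and then prove that statement by an endomorphism-ring argument. Write $F := \Fbm_C(B)$. By Theorem~\ref{thm:bmcorr}, $F$ is indecomposable and $F \amalg C$ is $\tau$-rigid; since $\Lambda$ is hereditary, $F$ is then exceptional, so $\End(F)$ is a division ring. By Remark~\ref{rem:orderedpair} the pair $(F,C)$ is TF-ordered, so $F \notin \Gen C$ and in particular $F \ne C$; moreover $B = \Ebm_C(F) = f_C(F)$ by Theorem~\ref{thm:bmcorr}(b), where $f_C$ is the torsion-free functor of the torsion pair $(\Gen C, C^\perp)$. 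Thus there is a short exact sequence $0 \to X \to F \to B \to 0$ with $X = t_C(F) \in \Gen C$ the torsion part and $B \in C^\perp$. The goal $\Fbm_C(B) = B$ is exactly the assertion $X = 0$.

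First I would reduce to showing $\Hom(C,F) = 0$. Applying $\Hom(C,-)$ to the sequence above gives an inclusion $\Hom(C,X) \hookrightarrow \Hom(C,F)$, so $\Hom(C,F) = 0$ forces $\Hom(C,X) = 0$. Since $X \in \Gen C$, any epimorphism $C^m \twoheadrightarrow X$ has all of its components in $\Hom(C,X)$, so if $\Hom(C,X)=0$ then this epimorphism is zero and $X = 0$, whence $F = B$ as desired. This step is routine.

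The key step is the vanishing $\Hom(C,F) = 0$, and here I would use only that $F$ is exceptional (so $\End(F)$ is a division ring), that $C$ is indecomposable with $C \ne F$, and the hypothesis $C \in \Gen F$. Fix an epimorphism $\pi = (\pi_1,\dots,\pi_n)\colon F^n \twoheadrightarrow C$ with components $\pi_i \colon F \to C$, which exists because $C \in \Gen F$; the $\pi_i$ are jointly surjective, i.e. $\sum_i \im(\pi_i) = C$. Let $a \colon C \to F$ be arbitrary. For each $i$ the composite $a\pi_i$ lies in $\End(F)$; if $a\pi_i \ne 0$ then it is invertible, so $a$ is a split epimorphism, making the indecomposable $C$ have $F$ as a direct summand and forcing $C \cong F$, contradicting $C \ne F$. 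Hence $a\pi_i = 0$ for all $i$, so $a$ vanishes on $\sum_i \im(\pi_i) = C$, giving $a = 0$. Therefore $\Hom(C,F)=0$.

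The main subtlety I expect is bookkeeping rather than a genuine obstacle: making sure that the objects coming out of $\Fbm_C$ are honest modules (not shifts) and that the identifications $B = f_C(F)$ and $F \notin \Gen C$ are correctly extracted from Theorem~\ref{thm:bmcorr} and Remark~\ref{rem:orderedpair}. It is worth emphasizing that the hereditary hypothesis enters exactly once, to guarantee that the indecomposable $\tau$-rigid module $F$ is exceptional with division endomorphism ring; the rigidity of the pair $\{C,F\}$ and any finer $\Ext$-vanishing are not needed for the argument.
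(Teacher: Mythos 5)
Your proof is correct and follows essentially the same route as the paper's: both arguments reduce the claim to showing $\Hom(C,\Fbm_C(B))=0$, and both prove this vanishing by composing an arbitrary map $C \to \Fbm_C(B)$ with the components of a surjection $(\Fbm_C(B))^n \twoheadrightarrow C$ and using that $\End(\Fbm_C(B))$ is a division algebra (indecomposable rigid over a hereditary algebra). The only cosmetic difference is how the invertible-composite case is excluded: the paper notes the map $C \to \Fbm_C(B)$ cannot be surjective by Lemma~\ref{lem:twogens}, while you observe it would be a split epimorphism onto the indecomposable $C$, forcing $C \cong \Fbm_C(B)$ — the same contradiction.
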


\begin{proof}
    It suffices to show that $\Hom(C, \Fbm_C(B)) = 0$, by the definition of $\Ebm$ (see Theorem~\ref{thm:bmcorr}). Let $f: C \rightarrow \Fbm_C(B)$. By assumption, there is a surjection $q: (\Fbm_C(B))^k \twoheadrightarrow C$. Let $N$ be a submodule of $(\Fbm_C(B))^k$ which is isomorphic to $\Fbm_C(B)$. Since $\Fbm_C(B)$ is  indecomposable and 
    ($\tau$-)rigid, and
    $\Lambda$ is hereditary, we have that $\End(N)$ is a division algebra. It follows that $f \circ q|_N$ is either 0 or an isomorphism. Since $f$ is not surjective (otherwise we would have $C \in \Gen \Fbm_C(B)$ and $\Fbm_C(B) \in \Gen C$, a contradiction to Lemma~\ref{lem:twogens}), this composition is zero. It follows that $f = 0$, as desired.
\end{proof}

The following is well-known. See e.g.\ \cite[Proposition 2.15]{hi_2} (items 1 and 2) and~\cite[Lemma~1.1]{brt} (item 3) for proofs.

\begin{lemma}\label{lem:hereditary_inj_surj}
    Let $(B,C)$ be an exceptional pair.
    \begin{enumerate}
        \item[(a)] Let $g: B \rightarrow C^k$ be a minimal left $\add(C)$-approximation. Then $g$ is either a monomorphism or an epimorphism.
        \item[(b)] Let $g: B^{k} \rightarrow C$ be a minimal right $\add(B)$-approximation. Then $g$ is either a monomorphism or an epimorphism.
        \item[(c)] Either $\Hom(B,C)=0$ or $\Ext^1(B,C)=0$ (or both). 
   \end{enumerate} 
\end{lemma}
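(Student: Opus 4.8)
The plan is to first make explicit the homological constraints packaged in the definition of an exceptional pair, then to prove (a) and (b) by analysing minimal approximations, and finally to deduce (c) from (a) and (b) together with a dimension count. Since $(B,C)$ is a ($\tau$-)exceptional pair, $B$ lies in $\J(C)$; and because $\Lambda$ is hereditary, $\J(C)=C^\perp\cap{}^\perp\tau C$ coincides with $\{X:\Hom(C,X)=0=\Ext^1(C,X)\}$ (via $\Hom(X,\tau C)\cong D\Ext^1(C,X)$). So the standing facts are $\Hom(C,B)=0$, $\Ext^1(C,B)=0$, together with $\Ext^1(B,B)=0=\Ext^1(C,C)$, the division‑ring property of $\End(B)$ and $\End(C)$, and $\Ext^{\geq 2}=0$. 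Statement (b) is dual to (a): applying $D=\Hom_K(-,K)\colon\mods\Lambda\to\mods\Lambda^{\mathrm{op}}$ turns $(B,C)$ and a minimal left $\add(C)$-approximation into an exceptional pair over $\Lambda^{\mathrm{op}}$ and a minimal right approximation, interchanging monomorphisms and epimorphisms. Hence it suffices to prove (a).

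For (a), I would write $g\colon B\to C^k$ for the minimal left $\add(C)$-approximation, so that $k=\dim_{\End(C)}\Hom(B,C)$ and $\Hom(C^k,C)\xrightarrow{-\circ g}\Hom(B,C)$ is an isomorphism (surjective by the approximation property, injective by left minimality), and set $L=\ker g$, $I=\im g$, $Q=\coker g$, giving $0\to L\to B\to I\to 0$ and $0\to I\to C^k\to Q\to 0$. Applying $\Hom(-,C)$ and using $\Ext^1(C^k,C)=0$, $\Ext^{\geq2}=0$, and the isomorphism above yields $\Ext^1(I,C)=0$, $\Hom(I,C)\cong\Hom(B,C)$, $\Hom(L,C)=0$, and $\Hom(Q,C)=0=\Ext^1(Q,C)$; dually, applying $\Hom(C,-)$ and using $\Hom(C,B)=\Ext^1(C,B)=0$ gives $\Hom(C,L)=0$ and $\Ext^1(C,I)=0$. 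The goal is then the dichotomy $L=0$ or $Q=0$ (equivalently, $g$ mono or epi). Assuming both are nonzero, neither sequence splits — the first because $B$ is indecomposable with $L\neq0\neq I$, the second because a splitting would place $\im g$ in a proper direct summand of $C^k$, contradicting left minimality — and one must derive a contradiction from these two non-split sequences using the brick properties of $B$ and $C$ and the perpendicularity relations above.

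This dichotomy is the main obstacle. The formal long‑exact‑sequence bookkeeping does \emph{not} by itself exclude $L,Q\neq0$: for instance $Q$ can a priori lie in $\Gen C\cap{}^\perp C$, which is nonzero in general, so the argument must genuinely exploit the minimality of $g$ and the indecomposability of $B$. This is exactly the content of \cite[Proposition 2.15]{hi_2}, whose proof I would follow for the kernel/cokernel analysis.

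Finally, (c) follows cleanly from (a) and (b). If $\Hom(B,C)=0$ there is nothing to prove, so assume $\Hom(B,C)\neq0$, whence $k\geq1$ in (a) and $k'\geq1$ in (b). If the left approximation $g\colon B\to C^k$ is a monomorphism, then $\Hom(-,C)$ applied to $0\to B\to C^k\to Q\to 0$ gives $\Ext^1(B,C)=0$, since $\Ext^1(C^k,C)=0=\Ext^2(Q,C)$. Otherwise $g$ is an epimorphism; as $\Hom(C,B)=0$ forces $B\not\cong C$, the kernel $L$ is nonzero, so $\dim_K B=\dim_K C^k+\dim_K L>\dim_K C$. Then the minimal right $\add(B)$-approximation $B^{k'}\to C$ of (b) cannot be a monomorphism (as $\dim_K B^{k'}\geq\dim_K B>\dim_K C$), hence it is an epimorphism, and $\Hom(B,-)$ applied to $0\to\ker\to B^{k'}\to C\to 0$ gives $\Ext^1(B,C)=0$ from $\Ext^1(B,B^{k'})=0=\Ext^2(B,\ker)$. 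In every case one of $\Hom(B,C)$, $\Ext^1(B,C)$ vanishes, proving (c).
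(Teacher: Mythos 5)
Your proposal should be measured against the fact that the paper does not actually prove this lemma: it declares it well-known and cites \cite[Prop.~2.15]{hi_2} for items (a) and (b) and \cite[Lem.~1.1]{brt} for item (c). Relative to that, your treatment is correct and in places does strictly more work. The reduction of (b) to (a) via the duality $D\colon\mods\Lambda\to\mods\Lambda^{\mathrm{op}}$ is valid (it carries exceptional pairs over $\Lambda$ to exceptional pairs over $\Lambda^{\mathrm{op}}$, swaps minimal left/right approximations, and interchanges monomorphisms and epimorphisms), and your deduction of (c) from (a) and (b) is sound: when the minimal left approximation $g\colon B\to C^k$ is mono, the sequence $0\to B\to C^k\to Q\to 0$ together with $\Ext^1(C^k,C)=0$ and $\Ext^2(Q,C)=0$ gives $\Ext^1(B,C)=0$; when $g$ is epi but not mono, indecomposability and $\Hom(C,B)=0$ force $\ker g\neq 0$, hence $\dim_K B>\dim_K C$, so the minimal right $\add(B)$-approximation $B^{k'}\to C$ cannot be mono, is therefore epi by (b), and the same long-exact-sequence argument applied to $\Hom(B,-)$ yields $\Ext^1(B,C)=0$. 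This replaces the paper's citation of \cite{brt} by a self-contained derivation, which is a genuine (if modest) improvement. The one place your proposal does not stand on its own is item (a) itself: as you candidly note, the kernel/cokernel bookkeeping and the non-splitness of the two short exact sequences do not by themselves exclude $\ker g\neq 0\neq\coker g$, and you defer the actual dichotomy to \cite[Prop.~2.15]{hi_2} --- which is exactly the reference the paper relies on. So there is no gap relative to the paper's own treatment, but a fully self-contained proof would still require reproducing that argument (or a Happel--Ringel-type argument adapted to approximations) for the core of (a).
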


\begin{proposition}\label{prop:hereditary_trivial}
    Let $(B,C)$ be an exceptional pair. If $\Ext^1(B,C) = 0$, then $\Fbm_C(B) = B$.
\end{proposition}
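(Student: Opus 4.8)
The plan is to understand the interplay between the condition $\Ext^1(B,C) = 0$ and the map $\Fbm_C(B)$, ultimately reducing to Lemma~\ref{lem:brick_gen}. Recall from Theorem~\ref{thm:bmcorr} and Remark~\ref{rem:orderedpair} that $\Fbm_C(B) \amalg C$ is a $\tau$-rigid module (which in the hereditary case is just a rigid module), and $B = \Ebm_C(\Fbm_C(B)) = f_C(\Fbm_C(B))$ whenever $\Fbm_C(B) \notin \Gen C$. There is a canonical short exact sequence
\begin{equation*}
0 \to tC(\Fbm_C(B)) \to \Fbm_C(B) \to B \to 0
\end{equation*}
arising from the torsion pair $(\Gen C, C^\perp)$, where $tC(-)$ denotes the torsion functor; here $\Fbm_C(B) \notin \Gen C$ since $B = f_C(\Fbm_C(B)) \neq 0$, and $tC(\Fbm_C(B)) \in \Gen C$. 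The goal $\Fbm_C(B) = B$ is equivalent to showing this torsion submodule $tC(\Fbm_C(B))$ vanishes, equivalently that $\Fbm_C(B) \in C^\perp$, equivalently that $\Hom(C, \Fbm_C(B)) = 0$ (using that $\Gen C$-membership of the torsion part is detected by $\Hom(C,-)$).

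First I would set $B' := \Fbm_C(B)$ and aim to show $\Hom(C, B') = 0$. Since $\Lambda$ is hereditary and $B' \amalg C$ is rigid, the pair $(B', C)$ is an exceptional pair (both are indecomposable rigid, hence exceptional, and $B' \in C^\perp$ would be what we want). The key structural input is Lemma~\ref{lem:hereditary_inj_surj}(c): for the exceptional pair $(B', C)$ we have either $\Hom(B', C) = 0$ or $\Ext^1(B', C) = 0$. However, the hypothesis I am handed concerns $\Ext^1(B,C)$, not $\Ext^1(B', C)$, so the main work is to transfer the vanishing condition from $B$ to $B'$. Applying $\Hom(-, C)$ to the short exact sequence above and using that $\Lambda$ is hereditary (so $\Ext^{\geq 2} = 0$), I would obtain a long exact sequence relating $\Ext^1(B,C)$, $\Ext^1(B', C)$, and $\Ext^1(tC(B'), C)$, together with the relevant $\Hom$ terms. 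Since $tC(B') \in \Gen C$ and $C$ is rigid with $B' \amalg C$ rigid, I expect $\Ext^1(tC(B'),C) = 0$ (as $tC(B')$ is generated by $C$ and $\Ext^1(C,C)=0$, using that quotients and extensions behave well here), which combined with $\Ext^1(B,C) = 0$ forces $\Ext^1(B', C) = 0$.

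Once $\Ext^1(B', C) = 0$ is established, I would examine the two alternatives. If additionally $\Hom(B', C) = 0$ then $(B',C)$ is already orthogonal in the strong sense; but I actually want to rule out $C \in \Gen B'$ being nontrivial. The cleaner route is: having $\Ext^1(B',C)=0$, I claim $C \notin \Gen B'$ leads directly to $tC(B') = 0$, whereas $C \in \Gen B'$ would let me invoke Lemma~\ref{lem:brick_gen} to conclude $B' = \Fbm_C(B) = B$ immediately. So the argument splits: in the case $C \in \Gen B' = \Gen \Fbm_C(B)$, Lemma~\ref{lem:brick_gen} gives the result at once. In the case $C \notin \Gen B'$, I would show the torsion submodule $tC(B')$ must vanish—using $\Ext^1(B',C)=0$ together with the characterization of the torsion pair and the fact that any nonzero $\Hom(C, B')$ would produce a nonzero map whose image lies in $\Gen C \cap$ (a submodule of $B'$), contradicting minimality or the rigidity bookkeeping—so that $B' = B$.

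The hard part will be the transfer step, namely proving $\Ext^1(B', C) = 0$ from $\Ext^1(B, C) = 0$ by controlling the middle/torsion terms in the long exact sequence, and then cleanly dispatching the $C \notin \Gen B'$ case to force the torsion part to be zero. I anticipate that the rigidity of $B' \amalg C$ and the hereditary hypothesis (making all the relevant $\Ext^2$ terms vanish and all indecomposable rigids exceptional) are exactly what make both the long-exact-sequence computation and the final vanishing go through, with Lemma~\ref{lem:brick_gen} handling the remaining generated case.
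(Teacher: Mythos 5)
There are genuine gaps here, and the most serious one is that your argument never validly uses the hypothesis $\Ext^1(B,C)=0$. Write $B'=\Fbm_C(B)$ and $t_C(B')$ for the torsion part, as in your proposal. First, your transfer step rests on a false principle: from $t_C(B')\in\Gen C$ you infer $\Ext^1(t_C(B'),C)=0$, but membership in $\Gen C$ controls vanishing of $\Ext^1(C,-)$, not of $\Ext^1(-,C)$ (this is exactly the variance in Proposition~\ref{prop:ASExt}); for instance, over the Kronecker algebra one has $S_1\in\Gen P_1$ but $\Ext^1(S_1,P_1)\neq 0$. Second, invoking Lemma~\ref{lem:hereditary_inj_surj}(c) for ``the exceptional pair $(B',C)$'' is circular, since $(B',C)$ being an exceptional pair means precisely that $\Hom(C,B')=0$, which is your goal. (The conclusion of the transfer step, $\Ext^1(B',C)=0$, is in fact automatic and needs no hypothesis: $B'\amalg C$ is $\tau$-rigid, hence rigid --- a signal that this step cannot be where the hypothesis does its work.) Third, and decisively, in the case $C\notin\Gen B'$ you give no mechanism by which $\Ext^1(B,C)=0$ forces $t_C(B')=0$; ``contradicting minimality or the rigidity bookkeeping'' is not an argument, and the hypothesis must enter somewhere, because the statement fails without it: if $\Ext^1(B,C)\neq 0$ and $C\notin\P(\Lambda)$, then $\Fbm_C(B)=E'\neq B$ by Proposition~\ref{prop:hereditary_case_IV}.

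Your strategy can be repaired, and then no case split is needed. Since $t_C(B')\in\Gen C$, choose an epimorphism $C^n\twoheadrightarrow t_C(B')$ and apply $\Hom(B,-)$: because $\Lambda$ is hereditary, $\Ext^1(B,t_C(B'))$ is a quotient of $\Ext^1(B,C)^n=0$. Hence the canonical sequence $0\to t_C(B')\to B'\to B\to 0$ splits, and indecomposability of $B'$ together with $B\neq 0$ forces $t_C(B')=0$, i.e.\ $\Fbm_C(B)=B$. Even so, this is more roundabout than the paper's own proof, which runs forwards rather than backwards: the exceptional-pair conditions give $\Hom(C,B)=0=\Ext^1(C,B)$, so the hypothesis makes $B\amalg C$ rigid, hence $\tau$-rigid; then $B\in C^\perp$ and $B\notin\Gen C$ give $\Ebm_C(B)=f_C(B)=B$ by Theorem~\ref{thm:bmcorr}(b), and $\Fbm_C(B)=B$ follows because $\Fbm_C$ is inverse to $\Ebm_C$.
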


\begin{proof}
    By assumption, $B\amalg C$ is ($\tau$-)rigid. 
Since $\Hom(C,B) = 0$, we then have $ B = f_C(B) = \Ebm_C(B)$, and the claim follows.
\end{proof}

\begin{proposition}\label{prop:hereditary_case_III}
    Let $(B,C)$ be an exceptional pair. Suppose that $\Hom(B,C) \neq 0$, that $C \notin \P(\Lambda)$, and that $C \notin \Gen B$. Then $B \in \P(\J(C))$ and $\Ebm_{\Fbm_C(B)}(C) = \Ebm_B(C) \notin \P(\J(B))$. 
\end{proposition}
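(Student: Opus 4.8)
The plan is to treat the three assertions in turn: reduce the middle equality to the identity $\Fbm_C(B)=B$, then prove $B\in\P(\J(C))$ via an approximation sequence, and finally deduce $\Ebm_B(C)\notin\P(\J(B))$ from the symmetric statement Lemma~\ref{lem:eachinperp}. First I would observe that, since $\Hom(B,C)\neq 0$, Lemma~\ref{lem:hereditary_inj_surj}(c) forces $\Ext^1(B,C)=0$; hence $B\amalg C$ is ($\tau$-)rigid and Proposition~\ref{prop:hereditary_trivial} gives $\Fbm_C(B)=B$. This is precisely the middle equality $\Ebm_{\Fbm_C(B)}(C)=\Ebm_B(C)$, and it reduces the proposition to the two claims $B\in\P(\J(C))$ and $\Ebm_B(C)\notin\P(\J(B))$. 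I would also record for later use that $B\neq C$ (as $B\in\J(C)\subseteq C^\perp$ while $C\notin C^\perp$) and that $\Hom(C,B)=0$, so that $B\in C^\perp$ and $f_C(B)=B$.

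For $B\in\P(\J(C))$, the computational heart is to extract a short exact sequence from the hypothesis $C\notin\Gen B$. A minimal right $\add(B)$-approximation $g\colon B^k\to C$ is nonzero because $\Hom(B,C)\neq 0$, and by Lemma~\ref{lem:hereditary_inj_surj}(b) it is a monomorphism or an epimorphism; it cannot be epic since $C\notin\Gen B$, so it is monic, and restricting to one summand gives a short exact sequence $0\to B\to C\to D\to 0$. Applying $\Hom(-,Y)$ for an arbitrary $Y\in{}^\perp\tau C$, I would use that $\Ext^1(C,Y)=0$ (which follows from $Y\in{}^\perp\tau C$ by Proposition~\ref{prop:ASExt}(a)) together with $\Ext^2(D,Y)=0$ (as $\Lambda$ is hereditary) to conclude $\Ext^1(B,Y)=0$. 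Thus $B\in\P({}^\perp\tau C)$, and since $f_C(B)=B$, Lemma~\ref{lem:fUbijection}(a) yields $B=f_C(B)\in\P(\J(C))$.

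Finally, for $\Ebm_B(C)\notin\P(\J(B))$, the hard part is to rule out $C\in\P({}^\perp\tau B)$, and here Lemma~\ref{lem:eachinperp} does the essential work: I have just shown $B\in\P({}^\perp\tau C)$, so if in addition $C\in\P({}^\perp\tau B)$, then, as $B$ and $C$ are indecomposable $\tau$-rigid and $C$ is non-projective (so they are not both projective), Lemma~\ref{lem:eachinperp} would force $B=C$, a contradiction. Hence $C\notin\P({}^\perp\tau B)$. Since $C\notin\Gen B$ and $C\in\mods\Lambda$, Theorem~\ref{thm:bmcorr} gives $\Ebm_B(C)=f_B(C)\in\mods\Lambda$; and because Lemma~\ref{lem:fUbijection} identifies $\P(\J(B))$ with $f_B$ applied to $\ind\P({}^\perp\tau B)\setminus\ind\add B$ (whose members all lie outside $\Gen B$), the injectivity of $f_B$ from Theorem~\ref{thm:jasso} combined with $C\notin\P({}^\perp\tau B)$ gives $\Ebm_B(C)=f_B(C)\notin\P(\J(B))$, as required. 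The genuinely delicate step is the approximation argument producing $B\in\P({}^\perp\tau C)$; once that is established, Lemma~\ref{lem:eachinperp} converts it almost immediately into the desired non-projectivity, so the bulk of the care will go into the short exact sequence and the Ext-vanishing transfer from $C$ to $B$.
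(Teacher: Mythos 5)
Your proof is correct and follows essentially the same route as the paper's: the same combination of Lemma~\ref{lem:hereditary_inj_surj} and Proposition~\ref{prop:hereditary_trivial} yields $\Fbm_C(B)=B$, an approximation argument plus hereditariness yields the Ext-projectivity of $B$, and Lemma~\ref{lem:eachinperp} together with Lemma~\ref{lem:fUbijection} rules out $\Ebm_B(C)\in\P(\J(B))$. The only immaterial differences are that the paper embeds $B$ into $C^k$ via a minimal left $\add(C)$-approximation and verifies Ext-projectivity directly in $\J(C)$ before transferring to ${}^\perp\tau C$, whereas you embed $B$ into $C$ via a minimal right $\add(B)$-approximation and verify it in ${}^\perp\tau C$ first; the two orderings are interchanged by the same Lemma~\ref{lem:fUbijection}.
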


\begin{proof} 
    Since $\Hom(B,C) \neq 0$ by assumption, Lemma~\ref{lem:hereditary_inj_surj}(c) and Proposition~\ref{prop:hereditary_trivial} imply that $B = \Fbm_C(B)$. Moreover, since $C \notin \Gen B$ and $\Hom(B,C) \neq 0$ by assumption, we have $B \in \Cogen C$ by 
    Lemma~\ref{lem:hereditary_inj_surj}(a). Then for $M \in \J(C)$, the fact that $\Ext^1(C,M) = 0$ implies that $\Ext^1(B,M) = 0$ since $\Lambda$ is hereditary. We conclude that $B \in \P(\J(C))$, and hence (using Lemma~\ref{lem:fUbijection}) that $B = \Fbm_C(B) \in \P({}^\perp{\tau C})$. Since $C$ is not projective, it follows from Lemma~\ref{lem:eachinperp} that $C \notin \P({}^\perp{\tau B})$, and hence, again by Lemma~\ref{lem:fUbijection}, that 
    $\Ebm_B(C) \notin \P(\J(B))$.
\end{proof}

\begin{proposition}\label{prop:hereditary_case_IV}
    Let $(B,C)$ be an exceptional pair, and suppose that $\Ext^1(B,C) \neq 0$ and $C \notin \P(\Lambda)$. Let $\eta = (C \hookrightarrow E \twoheadrightarrow B^k)$ and $\eta' = (C^\ell \hookrightarrow E' \twoheadrightarrow B)$ be short exact sequences such that $\eta: B^k \rightarrow C[1]$ is a minimal right $\add(B)$-approximation and $\eta': B \rightarrow C^\ell[1]$ is a minimal left $\add(C[1])$-approximation.  Then the following hold.
    \begin{enumerate}
        \item[(a)] $\Fbm_C(B) = E'$.
        \item[(b)] $\Hom(E',C) = 0$ and $L := C \amalg E'$ is a gen-minimal rigid module.
        \item[(c)] $\widetilde{L}:= B \amalg E$ satisfies $\widetilde{L}\in \Gen L$ and $\tau\widetilde{L} \in \J(L)^\perp$.
        \item[(d)] $\Hom(B,E) = 0$ and $\tau \widetilde{L}$ is a cogen-minimal rigid module.
        \item[(e)] $\Ebm_{E'}(C) = C \in \P(\J(E'))$.
        \item[(f)] $\widetilde{L} = \Pmodns(\FiltGen(\J(L)))$.
    \end{enumerate}
We observe that $k$ and $l$ are positive and the sequences $\eta$ and $\eta'$ are not split, since $\Ext^1(B,C)\not=0$.
\end{proposition}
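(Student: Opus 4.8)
The plan is to read the two short exact sequences $\eta,\eta'$ as universal extensions and to extract every vanishing we need from the fact that they come from \emph{minimal} approximations, using throughout that $\Lambda$ hereditary gives $\Ext^2=0$ and that $B,C$ are bricks (being exceptional). I would first record that $\Hom(B,C)=0$ by Lemma~\ref{lem:hereditary_inj_surj}(c) (as $\Ext^1(B,C)\neq0$), so that the connecting maps below are onto and, as the final sentence asserts, $k,\ell\geq1$ with $\eta,\eta'$ non-split.

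For (a) I would verify the characterisation of $\Fbm_C$ from Theorem~\ref{thm:bmcorr}. Applying $\Hom(-,W)$ and $\Hom(W,-)$ to $\eta'$ gives $\Ext^1(E',C)=0$ (surjectivity of $\Hom(C^\ell,C)\to\Ext^1(B,C)$ is exactly the left $\add(C[1])$-approximation property), and then $\Ext^1(C,E')=0$ and $\Ext^1(E',E')=0$, where I use $\Ext^1(C,B)=0$ (this holds since $B\in{}^\perp\tau C$ and Proposition~\ref{prop:ASExt}). Hence $E'\amalg C$ is rigid, so $\tau$-rigid. Since $\eta'$ is the canonical sequence of $(\Gen C,C^\perp)$, we get $f_C(E')=B$ and $E'\notin\Gen C$; minimality of $\eta'$ forces $E'$ to have no summand in $\Gen C$, and Theorem~\ref{thm:jasso} then makes $E'$ indecomposable, so $\Ebm_C(E')=f_C(E')=B$, i.e. $\Fbm_C(B)=E'$. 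Part (b) is then quick: $\Hom(E',C)=\ker(\Hom(C^\ell,C)\to\Ext^1(B,C))$, and the brick dimension count $\ell\cdot\dim_K\End(C)=\dim_K\Ext^1(B,C)$ coming from minimality makes this map bijective; rigidity of $L=C\amalg E'$ is (a), and gen-minimality follows from $E'\notin\Gen C$ together with $C\notin\Gen E'$ (if $C\in\Gen E'=\Gen\Fbm_C(B)$ then Lemma~\ref{lem:brick_gen} forces $E'=B$, impossible as $\ell\geq1$).

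The pivotal step is (e), which I would prove before (c), (d), (f), because it shows $(B,C)$ is left irregular and thereby unlocks Proposition~\ref{prop:JUsinceresetup1}. First $\Ebm_{E'}(C)=f_{E'}(C)=C$, since $\Hom(E',C)=0$ gives $C\in(E')^\perp$ while $C\notin\Gen E'$. To see $C\in\P(\J(E'))$, equivalently (Lemma~\ref{lem:fUbijection}) $C\in\P({}^\perp\tau E')$, I would show $\tau C\in\Cogen\tau E'=({}^\perp\tau E')^\perp$: for $W\in{}^\perp\tau E'$ the sequence $\eta'$ exhibits $\Ext^1(C,W)^\ell=\Ext^1(C^\ell,W)$ as a quotient of $\Ext^1(E',W)=0$, so $\Ext^1(C,-)$ vanishes on the torsion class ${}^\perp\tau E'$ and hence $\Hom(W,\tau C)=0$ by Proposition~\ref{prop:ASExt}. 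Combined with $C\in{}^\perp\tau E'$ and Proposition~\ref{prop:tauprojectiveinjective}, this gives $C\in\P({}^\perp\tau E')$, i.e. $(B,C)$ is left irregular. Lemma~\ref{lem:eachinperp} then yields $E'=\Fbm_C(B)\notin\P({}^\perp\tau C)$, so $B\notin\P(\J(C))$, and Proposition~\ref{prop:JJCBsincere} gives sincerity of $\J(L)=\J(B,C)$. Since $\Lambda$ is hereditary, $\J(L)$ is both left and right finite, so I may invoke Proposition~\ref{prop:JUsinceresetup1}, in particular the two-summand decomposition of part (g).

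For (c), $\widetilde L\in\Gen L$ holds because $B\in\Gen E'$ and $E$ is an extension of $B^k$ by $C$ inside the torsion class $\Gen L$; for $\tau\widetilde L\in\J(L)^\perp$ I would show $\Ext^1(\widetilde L,W)=0$ for $W\in\J(L)$ (from $\eta,\eta'$, using $W\in C^\perp\cap{}^\perp\tau C$), bootstrap to all of $\FiltGen(\J(L))$ via $\Ext^2=0$, and convert through Proposition~\ref{prop:ASExt}. For (d), the computations $\Ext^1(E,E)\cong\Ext^1(C,E)=0$ and $\Ext^1(E,B)=0$ (from $\eta$ and $\Ext^1(C,B)=0$) show $\widetilde L=B\amalg E$ is rigid, hence $\tau$-rigid, so $\tau\widetilde L$ is $\tau^{-1}$-rigid (Lemma~\ref{lem:twokinds2}); the dimension count on $\eta$ gives $\Hom(B,E)=0$; and $E$ is non-projective because over a hereditary algebra a submodule of a projective is projective while $C=\ker(E\to B^k)$ is not. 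I would also record $\J(\widetilde L)=\J(L)$ via $\J(L)\subseteq\J(\widetilde L)$ (part (c) plus $\Hom(B,W)=\Hom(E,W)=0$ for $W\in\J(L)$) and equal corank. The remaining, and genuinely hardest, points are the cogen-minimality of $\tau\widetilde L$ in (d) and the identification $\widetilde L=\Pmodns(\FiltGen(\J(L)))$ in (f): the obstacle is matching the \emph{explicit} module $B\amalg E$ with the \emph{intrinsic} module $\Pmodns(\FiltGen(\J(L)))$, which is delicate precisely because $\widetilde L$ lies in $\FiltGen(\J(L))$ even though its submodule $C$ does not. I would reduce both to the single key lemma $\J(L)^\perp=\Cogen\tau\widetilde L$; granting it, cogen-minimality of $\tau\widetilde L$ is immediate from Theorem~\ref{thm:gen_minimal}(b) (as $\Jinv(\tau\widetilde L)=\J(\widetilde L)=\J(L)$ by Lemma~\ref{lem:Jtauminus}), and $\FiltGen(\J(L))={}^\perp\tau\widetilde L$ then contains the $\tau$-rigid $\widetilde L$, making $\widetilde L$ non-split Ext-projective there (Proposition~\ref{prop:tauprojectiveinjective}) with $\Pmods(\Gen\widetilde L)=E$ since $B\in\Gen E$; comparing with the two indecomposable summands from Proposition~\ref{prop:JUsinceresetup1}(g) yields (f). Thus I expect the whole weight of the proof to rest on establishing $\J(L)^\perp=\Cogen\tau\widetilde L$.
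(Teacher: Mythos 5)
Most of your proposal is sound, and in places genuinely different from the paper: your indecomposability argument for $E'$ via Theorem~\ref{thm:jasso} (in place of the paper's algebra isomorphism $\End(E')\cong\End(B)$), your proof of (e) via $\tau C\in({}^\perp\tau E')^\perp$, and your use of (e) together with Lemma~\ref{lem:eachinperp} and Proposition~\ref{prop:JJCBsincere} to obtain sincerity of $\J(L)$ (the paper instead invokes a lemma of Happel) all work. The genuine gap sits exactly where you place ``the whole weight of the proof'': you never prove the key lemma $\J(L)^\perp=\Cogen\tau\widetilde{L}$, and the reduction to it is circular rather than a reduction. Indeed, since $\Jinv(\tau\widetilde{L})=\J(\widetilde{L})=\J(L)$ (Lemma~\ref{lem:Jtauminus}), Theorem~\ref{thm:gen_minimal}(b) says that the equality $\Cogen\tau\widetilde{L}=\J(L)^\perp$ \emph{is} the statement that $\tau\widetilde{L}$ is cogen-minimal: the two assertions are equivalent. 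So ``granting the key lemma, cogen-minimality is immediate'' has no content, and the hard halves of (d) and (f) remain unproven in your write-up, with no strategy offered for the lemma itself.

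What is missing is the paper's direct, elementary argument for cogen-minimality, which is where the hereditary hypothesis is really used: since $\Hom(B,E)=0$ and $B,E\notin\P(\Lambda)$, one has $\Hom(\tau B,\tau E)=0$, so $\tau B\notin\Cogen\tau E$; and the minimal left $\add(\tau B)$-approximation of $\tau E$ is $\tau$ applied to the minimal left $\add(B)$-approximation of $E$, namely the epimorphism $E\twoheadrightarrow B^k$ from $\eta$, whose kernel is $C\neq 0$; since $C\notin\P(\Lambda)$, the inclusion $C\hookrightarrow E$ induces a nonzero map $\tau C\to\tau E$ annihilated by this approximation, so the approximation is not injective and $\tau E\notin\Cogen\tau B$. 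With cogen-minimality established, your key lemma follows from Theorem~\ref{thm:gen_minimal}(b), then $\tau\widetilde{L}=\Imods(\J(L)^\perp)$ by Corollary~\ref{cor:genmin}(b), and (f) follows either by your Bongartz/two-summand comparison with Proposition~\ref{prop:JUsinceresetup1}(g) or, as in the paper, from Proposition~\ref{prop:sincere_summary_2}(a'). A secondary repair: you deduce $\J(\widetilde{L})=\J(L)$ from one inclusion ``and equal corank'', but containment of wide subcategories of equal rank does not force equality in general (the wide closure of two orthogonal regular bricks over the Kronecker algebra sits properly inside the whole module category); here the reverse inclusion should simply be checked directly by applying $\Hom(-,W)$ to $\eta$ and $\eta'$, which is immediate.
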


\begin{proof}
(a)
It suffices to prove that $E'$ is 
indecomposable and that $C\amalg E'$ is a ($\tau$-)rigid module. Indeed, we automatically have that $C \neq E'$,
so these conditions and the fact that $\Hom(C,B) = 0$ imply that $B = f_C(E') = \Ebm_C(E')$, and thus that $E' = \Fbm_C(B)$ as desired.

We use an argument similar to those appearing in \cite[Lem.~2.1 and Prop.~2.5]{hi_2} and the references therein. Note that $\eta'$ can be defined explicitly as a universal extension; that is, we have $\dim\Ext^1(B,C) = \ell$. Now note from Lemma~\ref{lem:hereditary_inj_surj} that $\Hom(B,C) = 0$ since $\Ext^1(B,C) \neq 0$. Applying $\Hom(-,C)$ to $\eta'$ thus yields an exact sequence
$$0 = \Hom(B,C) \rightarrow \Hom(E',C) \rightarrow \Hom(C^\ell,C) \xrightarrow{\Hom(\eta',C)} \Ext^1(B,C) \rightarrow$$$$\rightarrow \Ext^1(E',C) \rightarrow \Ext^1(C^\ell,C) = 0.$$
The morphism $\Hom(\eta',C)$ must be surjective since $\eta'$ is a minimal left $\add(C[1])$-approximation, and so it is bijective since its domain and codomain have the same dimension. Thus $\Ext^1(E',C) = 0$ and $\Hom(E',C) = 0$. We also have that $\Ext^1(C,E') = 0$ because $\Ext^1(C,C^\ell \amalg B) =~0$.

It remains to show that $E'$ is indecomposable and $(\tau$-)rigid. 
Let $q: E' \twoheadrightarrow B$ be the surjection in the short exact sequence $\eta'$. Applying $\Hom(E',-)$ to $\eta'$ yields an exact sequence
$$0 = \Hom(E',C^\ell) \rightarrow \Hom(E',E') \xrightarrow{\Hom(E',q)} \Hom(E',B) \rightarrow \Ext^1(E',C^\ell) = 0 \rightarrow$$$$\rightarrow \Ext^1(E',E') \rightarrow \Ext^1(E',B) \rightarrow 0.$$
Applying $\Hom(-,B)$ to $\eta'$ yields an exact sequence
$$0 \rightarrow \Hom(B,B) \xrightarrow{\Hom(q,B)} \Hom(E',B) \rightarrow \Hom(C^\ell,B) = 0 \rightarrow \Ext^1(B,B) = 0 \rightarrow$$$$\rightarrow \Ext^1(E',B) \rightarrow \Ext^1(C^\ell,B) = 0.$$
 We see directly from combining these long exact sequences that $\Ext^1(E',E') \simeq \Ext^1(E',B) = 0$, so $E'$ is rigid. To see that $E'$ is indecomposable, we again combine these long exact sequences to obtain vector space isomorphisms
$$\Hom(E',E') \xrightarrow{\Hom(E',q)} \Hom(E',B) \xleftarrow{\Hom(q,B)} \Hom(B,B),$$
where $q:E'\rightarrow B$ is the map in $\eta'$. 
The composition $\Hom(q,B)^{-1}\circ \Hom(E',q)$
yields a vector space isomorphism $\Phi: \Hom(E',E') \rightarrow \Hom(B,B)$ characterised by the property that, for $f \in \Hom(E',E')$, the morphism $\Phi(f)$ is the unique element of $\Hom(B,B)$ which satisfies $q \circ f = \Phi(f) \circ q$. For $f, f' \in \Hom(E',E')$, we then compute
$$\Phi(f \circ f')\circ q = q \circ f \circ f' = \Phi(f) \circ q \circ f' = \Phi(f) \circ \Phi(f') \circ q.$$
The characterising property of $\Phi$ then implies that $\Phi(f) \circ \Phi(f') = \Phi(f\circ f')$, so $\Phi: \End(E') \rightarrow \End(B)$ is a $K$-algebra isomorphism. Since $B$ is indecomposable by assumption, we conclude that $\End(E') \simeq \End(B)$ is a local algebra, and thus that $E'$ is indecomposable. This concludes the proof of (a).

    (b) We showed that $E'$ is indecomposable, that $\Hom(E',C) = 0$, and that $C\amalg E'$ is ($\tau$-)rigid (and basic) in the proof of (a). Thus to prove gen-minimality of $L$, we need only show that $E' \notin \Gen C$. 
    For $C^p \in \add C$, applying $\Hom(C^p,-)$ to $\eta'$ yields an exact sequence
    $$0 \rightarrow \Hom(C^p,C^\ell) \rightarrow \Hom(C^p,E') \rightarrow \Hom(C^p,B) = 0.$$
    Thus any morphism $C^p \rightarrow E'$ factors through the injection $C^\ell \hookrightarrow E'$ comprising $\eta'$; that is, this injection is a right $\add(C)$-approximation. In particular, this means $E' \notin \Gen C$, completing the proof of (b).
    Hence, $L = C \amalg E'$ is
gen-minimal.

(c) Since $B\in \Gen E'$, we have $B\in \Gen L$. Since $L$ is rigid by (b), the subcategory $\Gen L$ is closed under extensions. Hence, since $B,C\in \Gen L$, also $E\in \Gen 
    L$, giving $\widetilde{L}=B\amalg E\in \Gen L$.

    Let $M\in \J(L)$. Then $\Hom(L,M)=0$ and $\Ext^1(L,M)=0$, so $\Hom(C,M)=0$, $\Hom(E',M)=0$, $\Ext^1(C,M)=0$ and $\Ext^1(E',M)=0$. Applying $\Hom(-,M)$ to $\eta'$ yields the exact sequence:
    $$\Hom(C^l,M)\rightarrow \Ext^1(B,M)\rightarrow \Ext^1(E',M)$$
    in which the first and last terms are zero, so $\Ext^1(B,M)=0$. Applying $\Hom(-,M)$ to $\eta$ yields the exact sequence:
    $$\Ext^1(B^k,M)\rightarrow \Ext^1(E,M)\rightarrow \Ext^1(C,M)$$
    in which the first and last terms are zero, so $\Ext^1(E,M)=0$. Hence, using the fact that $\Lambda$ is hereditary, we have
    $\Ext^1(\widetilde{L},M)= \Hom(M,\tau \widetilde{L})=0$ as required.

    (d) We first note that $E$ is indecomposable, that $\widetilde{L}$ is a basic rigid module, that $\Hom(B,E) = 0$, and that $E \notin \Cogen B$ by an argument dual to that used to prove (a) and (b).
    Note that $\Ext^1(B,C)\not=0$ by assumption, so $B\not\in\P(\Lambda)$. 
Furthermore, $C$ is a non-projective submodule of $E$, and so $E$ is not projective by the hereditary property. 
    It follows that $\tau \widetilde{L}$ is a basic rigid module with two indecomposable direct summands. It remains to show that $\tau\widetilde{L}$ is cogen-minimal. To see this, first note that $0 = \Hom(B,E) = \Hom(\tau B, \tau E)$ since $\Lambda$ is hereditary and $B,E \notin \P(\Lambda)$. Now let $f: \tau E \rightarrow \tau B^m$ be a minimal left $\add(\tau B)$-approximation, and note that $\tau E \in \Cogen(\tau B)$ if and only if $f$ is a monomorphism. Now since $E \notin \P(\Lambda)$, the map $f$ corresponds functorially and bijectively to a morphism $f': E \rightarrow B^m$. Moreover, $f'$ is a minimal left $\add(B)$-approximation by functoriality, and so $m = k$ and $\ker f' = C$. Since $C\notin \P(\Lambda)$, the inclusion $C \hookrightarrow E$ corresponds functorially and bijectively to a nonzero morphism $h: \tau C \rightarrow \tau E$ which satisfies $f \circ h = 0$. We conclude that $f$ is not a monomorphism, and thus that
    $\tau E \notin \Cogen \tau B$.

    (e) Let $M \in \J(E')$. Then $\Ext^1(E',M) = 0$ by construction. Since $C \in \Cogen E'$, the fact that $\Hom(E',C)=0$ from (b) implies that $\Ebm_{E'}(C) = C \in \P(\J(E'))$.

    (f) 
    It is clear from the definitions of $E$ and $E'$ that $\J(L) = \J(\widetilde{L}) = \Jinv(\tau\widetilde{L})$. By (d) and Proposition~\ref{prop:gen_minimal} (taking $\F = \Cogen \tau \widetilde{L}$), it follows that \begin{equation}\label{eqn:hereditary_6} \tau \widetilde{L} = \Imods(\Jinv(\tau\widetilde{L})^\perp) = \Imods(\J(L)^\perp).\end{equation}
    Recall from the proof of (a) that $\Hom(E',C) = 0$. Moreover, since $l>0$ and $C \notin \P(\Lambda)$, the fact that $\Lambda$ is hereditary implies that $E' \notin \P(\Lambda)$. This, together with \cite[Lemma~6.1]{happel} imply that $E' \notin \Pmods({}^\perp \tau C)$. Equivalently, $B = \Ebm_C(E') \notin \P(\J(C))$ by Lemma~\ref{lem:fUbijection}. Since $C \notin \P(\Lambda)$, Proposition~\ref{prop:JJCBsincere} and Theorem~\ref{thm:J_E} then say that $\J(B,C) = \J(L)$ is sincere. Applying $\tau^{-1}$ to \eqref{eqn:hereditary_6} and using Proposition~\ref{prop:sincere_summary_2}(a'), we conclude that $\Pns(\FiltGen(\J(L))) = \tau^{-1}\Imods(\J(L)^\perp) = \widetilde{L}$. (Note that $\widetilde{L} = B \amalg E$ has no projective direct summands by the proof of (d) and that $\Imods(\J(L)^{\perp})$ has no injective direct summands by~\eqref{eqn:hereditary_6}.)
\end{proof}

We are now prepared to prove Proposition~\ref{prop:hereditary_move_left}.

\begin{proof}[Proof of Proposition~\ref{prop:hereditary_move_left}]

Suppose first that $(B,C)$ is left regular.
Then $C \not\in \P({}^\perp{\tau \Fbm_C(B)})$ or $C \in \P(\Lambda)$. 
Recall from Definition-Proposition~\ref{defprop:forward} that $B'=B_{C\uparrow}=\Fbm_{C_+}(B)$.
We consider three cases:
\medskip

\noindent{\bf Case I:} Suppose $C \in \Gen \Fbm_C(B)$. Then $B' = \Fbm_C(B) = B$ by Lemma~\ref{lem:brick_gen}.

\medskip

\noindent{\bf Case II:} Suppose $C \in \P(\Lambda)$.
Then $B'=\Fbm_{C[1]}(B)=B$ (see Theorem~\ref{thm:bmcorr}).

\medskip

\noindent{\bf Case III:} Suppose that $C \notin \P(\Lambda)$, and that $C \notin \Gen(\Fbm_C(B))$. By assumption, $C \notin \P({}^\perp \tau \Fbm_C(B))$. Thus, by Lemma~\ref{lem:fUbijection}, we have $\Ebm_{\Fbm_C(B)}(C) \notin \P(\J(\Fbm_C(B)))$. 
Suppose for a contradiction that $\Ext^1(B,C) \neq 0$. Then Proposition~\ref{prop:hereditary_case_IV}(a,e) imply $\Ebm_{\Fbm_C(B)}(C) = C \in 
\P(\J(\Fbm_C(B))$. This is a contradiction, and hence $\Ext^1(B,C) =0$. 
It follows that $B' = \Fbm_C(B) = B$ by Proposition~\ref{prop:hereditary_trivial}.
\medskip 

The claim of the proposition is thus proved for $(B,C)$ being left regular.
Suppose  that $(B,C)$ is left irregular. Then $C \in \P({}^\perp{\tau \Fbm_C(B)})$ and $C \notin \P(\Lambda)$.
Suppose for a contradiction that $\Ext^1(B,C) = 0$. Then $\Fbm_C(B) = B$ by Proposition~\ref{prop:hereditary_trivial}. Hence, we have $C \in \P({}^\perp \tau B)$, so $C$ is a direct summand of the Bongartz complement of $B$.
The assumption that $\Hom(C,B) = 0$ and \cite[Lem.~6.1]{happel} then imply that $C \in \P(\Lambda)$, a contradiction. We conclude that $\Ext^1(B,C) \neq 0$.

Now denote $L = C \amalg \Fbm_C(B)$ and $\widetilde{L} = \Pmodns(\FiltGen(\J(L)))$. By Proposition~\ref{prop:hereditary_case_III}(a,f), these are precisely the modules $L$ and $\widetilde{L}$ defined in Proposition~\ref{prop:hereditary_case_III}. In particular, Proposition~\ref{prop:hereditary_case_III}(c) yields a decomposition $\widetilde{L} = B \amalg E$ with $B \in \Gen E$. Proposition~\ref{prop:JUsinceresetup1}(g) and the fact that $B \in \Gen E$ then imply that $E = \Pmods(\Gen \Pmodns(\FiltGen(\J(L))))$. The definition of $B'$ (Definition-Proposition~\ref{defprop:forward2}) then yields $B' = \widetilde{L}/E = B$.
\end{proof}


\section{Mutation in rank two}\label{sec:rank_2_exchange}

\subsection{Connection with $\tau$-tilting mutation}
We recall background information about the mutation of support $\tau$-tilting modules from \cite[Sect.\ 2.3-2.4]{air}. Let $M,N \in \mods\Lambda$ be support $\tau$-tilting modules. Then $M$ and $N$ are \emph{related by a mutation} if either (i) there exists an indecomposable module $X$ such that $M = N\amalg X$ or $N = M\amalg X$, or (ii) there exist indecomposable modules $X, Y$ and a
$\tau$-rigid module $L$ such that $M = L \amalg X$ and $N = L \amalg Y$. One further says that $M$ is a \emph{left mutation} of $N$ (and thus also that $N$ is a \emph{right mutation} of $M$) if also 
$M \in \Gen N$. The \emph{oriented exchange graph} of $\Lambda$ can then be realized as the quiver $\Hasse(\stt\Lambda)$ with vertices the support $\tau$-tilting modules and an arrow $N \rightarrow M$ whenever $M$ is a left mutation of $N$.

Throughout this section, we assume that $\Lambda$ has rank two. We denote by $\upsilon$ the natural map from the set of TF-ordered $\tau$-tilting modules to the set of $\tau$-tilting modules. We also recall from Theorem~\ref{thm:tfo} the bijective map $\tfo$ from the set of complete $\tau$-exceptional sequences to the set of TF-ordered $\tau$-tilting modules.

We identify indices mod $2$ throughout this section. 
Let $P_i$ be the indecomposable projective, with corresponding simple $S_i$, for $i \in \{1,2\}$. 
Furthermore, let $S_{i+1}^e$ be the indecomposable generator-cogenerator of the Serre subcategory $P_i^\perp = \Filt(S_{i+1})$. Note that $S_{i+1}^e$ is support $\tau$-tilting. We denote by $R_i$ the co-Bongartz complement of $P_i$. Note that $R_i = 0$ if and only if $P_i = S_i^e$. For example, when $\Lambda = K(1\rightarrow 2)$ we have $R_1 = 0$ and $R_2 = S_1^e = S_1$. 

\begin{remark}\label{rem:Hasse_shape}
    The shape of $\Hasse(\stt\Lambda)$ is as follows. Every vertex has valence $2$. The unique source is the vertex labeled $P(\Lambda)$, and its two outgoing arrows are $P(\Lambda) \rightarrow P_1 \amalg R_1$ and $P(\Lambda) \rightarrow P_2\amalg R_2$. The unique sink is the vertex labeled 0, and its two incoming arrows are $S_1^e \rightarrow 0$ and $S_2^e \rightarrow 0$. (The modules $0, S_1^e$, and $S_2^e$ are the only support $\tau$-tilting modules which are not $\tau$-tilting.) Every other vertex is the source of one arrow and the target of one arrow. Thus if $M$ is a support $\tau$-tilting module different from $0$ and $P(\Lambda)$, then $M$ has a single left mutation and a single right mutation.
\end{remark}

Let $\Hasse(\tex \Lambda)$ denote the quiver whose vertices are the complete $\tau$-exceptional pairs with arrows given by:
\begin{equation}\label{eq:ctauex}
(B,C)\rightarrow (B',C') \iff (B,C)=\varphi(B',C').\end{equation}
In this section, we explain how $\Hasse(\stt\Lambda)$ and $\Hasse(\tex\Lambda)$ are related (for algebras of rank two). The main result is that $\Hasse(\tex\Lambda)$ has the same number of connected components as $\Hasse(\stt\Lambda)$. As a consequence, in rank two, mutation of $\tau$-exceptional sequences is transitive if and only if mutation of support $\tau$-tilting modules is transitive (see Corollary~\ref{cor:connected}). In particular, by \cite[Cor.~2.34]{air}, mutation of $\tau$-exceptional sequences is transitive for all $\tau$-tilting finite algebras of rank two. Using \cite[Prop.~3.5]{bmrrt}, Corollary~\ref{cor:connected} also gives a new proof that mutation of ($\tau$-)exceptional sequences is transitive for all hereditary algebras of rank two. (Recall that mutation is transitive for hereditary algebras of arbitrary rank \cite[Thm.]{cb},~\cite[Sect.\ 7 Cor.]{rin}.)

\begin{lemma}\label{lem:rank2_one_order}
    Let $M$ be a $\tau$-tilting module. If $M$ is not projective, then it admits exactly one TF-ordering.
\end{lemma}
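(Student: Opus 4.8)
The plan is to reduce the statement to a question about gen-minimality. Since $\Lambda$ has rank two and $M$ is $\tau$-tilting, I can write $M = X \amalg Y$ with $X,Y$ indecomposable and non-isomorphic, so the only candidate orderings of $M$ are $(X,Y)$ and $(Y,X)$. By Definition~\ref{def:tfo}, the ordering $(X,Y)$ is a TF-ordering precisely when $X \notin \Gen Y$, and $(Y,X)$ is one precisely when $Y \notin \Gen X$. Thus the goal becomes to show that exactly one of the conditions $X \notin \Gen Y$ and $Y \notin \Gen X$ holds.

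First I would dispose of existence: if both $X \in \Gen Y$ and $Y \in \Gen X$ held, then $X = Y$ by Lemma~\ref{lem:twogens}, contradicting that $X$ and $Y$ are non-isomorphic, so at least one of the two orderings is a TF-ordering. It then remains to rule out that \emph{both} are TF-orderings. Here I would record the key reformulation: since $M$ has exactly two indecomposable summands, gen-minimality (Definition~\ref{def:gen_min}) amounts to $\Gen X \subsetneq \Gen M$ and $\Gen Y \subsetneq \Gen M$; and because $\Gen X \subseteq \Gen M$ always, with $\Gen X = \Gen M \iff Y \in \Gen X$ and symmetrically, this says exactly that $X \notin \Gen Y$ and $Y \notin \Gen X$. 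In other words, $M$ admits two TF-orderings if and only if $M$ is gen-minimal.

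The crux is therefore to show that a rank-two $\tau$-tilting module is gen-minimal only if it is projective. For this I would invoke Theorem~\ref{thm:gen_minimal}(a), which states that the $\tau$-rigid module $M$ is gen-minimal if and only if $\Gen M = {}^\perp\J(M)$. Since $\rk(M) = \rk(\Lambda) = 2$, Definition-Proposition~\ref{prop:J_def} gives $\rk(\J(M)) = \rk(\Lambda) - \rk(M) = 0$, so $\J(M) = 0$ and hence ${}^\perp\J(M) = \mods\Lambda$. Thus $M$ is gen-minimal if and only if $\Gen M = \mods\Lambda$, which by Theorem~\ref{thm:Bongartz}(b) forces $M = \Pmod(\mods\Lambda) = \Lambda$, i.e. $M$ is projective. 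As $M$ is assumed non-projective, it is not gen-minimal, so the two orderings cannot both be TF-orderings; combined with the existence step, $M$ has exactly one TF-ordering.

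I expect the only real care needed to be bookkeeping in the second paragraph—verifying cleanly that $\Gen X = \Gen M$ is equivalent to $Y \in \Gen X$, so that the two-summand gen-minimality condition matches the two TF-ordering conditions—together with justifying that a $\tau$-perpendicular category of rank zero is the zero subcategory, which is what makes ${}^\perp\J(M) = \mods\Lambda$. Everything beyond these observations is a direct application of the cited results, so I do not anticipate a substantive obstacle.
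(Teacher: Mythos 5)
Your proof is correct and follows essentially the same route as the paper's: existence of a TF-ordering via Lemma~\ref{lem:twogens}, and uniqueness by observing that two TF-orderings would force $M$ to be gen-minimal, whence $\Gen M = {}^\perp\J(M) = {}^\perp 0 = \mods\Lambda$ by Theorem~\ref{thm:gen_minimal}, making $M$ projective. You simply spell out details the paper leaves implicit (the equivalence of double TF-orderability with gen-minimality, and the rank count giving $\J(M)=0$), which is fine.
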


\begin{proof}
    It follows from Lemma ~\ref{lem:twogens} that $M$ 
    has at least one TF-ordering. (Note that this was proved more 
    generally in~\cite{mt}.)
    Suppose that $M$ admits more than one TF-ordering. Then $M$ must be gen-minimal. Moreover, since $M$ is $\tau$-tilting, we have $\J(M) = 0$. It follows from Corollary~\ref{thm:gen_minimal} that $\Gen M = {}^\perp{0} = \mods\Lambda$; i.e., that $M$ is projective.
\end{proof}

\begin{lemma}\label{lem:rank2_no_case_iii}
    Let $(B,C)$ be a complete $\tau$-exceptional sequence.
    Then $(B,C)$ is left regular and either
    $C\in \Gen \Fbm_C(B)$ or $C\in \P(\Lambda)$.
\end{lemma}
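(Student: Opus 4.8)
The plan is to first observe that left regularity is a formal consequence of the stated dichotomy, so that the real content lies in establishing that either $C \in \P(\Lambda)$ or $C \in \Gen \Fbm_C(B)$. Indeed, if $C \in \P(\Lambda)$ then $(B,C)$ is left regular directly from Definition~\ref{def:regular}(a), while if $C \in \Gen \Fbm_C(B)$ then $(B,C)$ is left regular by Lemma~\ref{lem:regular}. I would therefore assume $C \notin \P(\Lambda)$ and work to show $C \in \Gen \Fbm_C(B)$.

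Writing $A := \Fbm_C(B)$, the first step is to record, via Remark~\ref{rem:orderedpair}, that $A \amalg C$ is a $\tau$-rigid module and that $(A,C)$ is a TF-ordered $\tau$-rigid module; in particular $A$ and $C$ are indecomposable modules with $A \notin \Gen C$, so $A \not\cong C$. Since $\rk(\Lambda) = 2$, the module $A \amalg C$ then has two non-isomorphic indecomposable summands and is $\tau$-rigid, hence is $\tau$-tilting (taking the projective part to be $0$). The assumption $C \notin \P(\Lambda)$ guarantees that $A \amalg C$ is non-projective, since a direct summand of a projective module would itself be projective.

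The decisive step is to exploit the rank-two uniqueness of TF-orderings. By Lemma~\ref{lem:rank2_one_order}, a non-projective $\tau$-tilting module admits exactly one TF-ordering. Since $(A,C)$ is already a TF-ordering, the reversed sequence $(C,A)$ cannot be one. As $A \amalg C$ is $\tau$-rigid, condition (a) of Definition~\ref{def:tfo} holds for $(C,A)$, so the failure must lie in condition (b), which for the length-two sequence $(C,A)$ reads $C \notin \Gen A$. The failure of this condition yields exactly $C \in \Gen A = \Gen \Fbm_C(B)$, as required. I expect the only subtle point to be the careful bookkeeping that $A \amalg C$ is genuinely $\tau$-tilting and non-projective, as this is precisely what licenses the appeal to Lemma~\ref{lem:rank2_one_order}; once that is in place, the remainder is a clean uniqueness-of-orderings argument.
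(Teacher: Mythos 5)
Your proof is correct and takes essentially the same route as the paper: when $C \notin \P(\Lambda)$, Lemma~\ref{lem:rank2_one_order} forces $(\Fbm_C(B),C)$ to be the unique TF-ordering of the non-projective rank-two $\tau$-tilting module $\Fbm_C(B)\amalg C$, so the reversed sequence fails condition (b) of Definition~\ref{def:tfo} and $C \in \Gen \Fbm_C(B)$, with left regularity then following from Definition~\ref{def:regular}(a) and Lemma~\ref{lem:regular}. Your extra bookkeeping (that $\Fbm_C(B)\amalg C$ is genuinely $\tau$-tilting and non-projective, which licenses the appeal to Lemma~\ref{lem:rank2_one_order}) simply makes explicit what the paper's terse proof leaves implicit.
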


\begin{proof}
    Suppose $C \notin \P(\Lambda)$. Then $(\Fbm_C(B),C)$ is the only TF-ordering of $\Fbm_C(B) \amalg C$ by Lemma~\ref{lem:rank2_one_order}. This means that $C \in \Gen \Fbm_C(B)$.
\end{proof}

Note that it is a consequence of Lemma~\ref{lem:rank2_no_case_iii} that all complete $\tau$-exceptional sequences are also left mutable, so this gives an alternative proof for part of Theorem~\ref{prop:mutation_complete}. 

By definition (see~\eqref{eq:ctauex}), there is an arrow $(B,C) \rightarrow (B',C')$ in $\Hasse(\tex\Lambda)$ if and only if $(B,C) = \varphi(B',C')$. Recall also, by Theorem~\ref{thm:bmcorr}, that when $C'$ is projective, we have
$\Fbm_{C'[1]}(B') = B'$.
So, combining this with Lemma~\ref{lem:rank2_no_case_iii} and Definition-Proposition~\ref{defprop:forward}, we have that 
\begin{equation}\label{eq:lMutRankTwo} 
\varphi(B',C') = 
\begin{cases}
  (\Ebm_{\Fbm_{C'}(B')}(C')[-1], \Fbm_{C'}(B'))  & \text{ if } C' \in \Gen \Fbm_{C'}(B') \\
  (\Ebm_{B'}(C'[1])[-1], B') & \text{ if } C' \in \P(\Lambda)
\end{cases}
\end{equation}

\begin{proposition}\label{prop:arrows_rank_2_a}
    Let $(B,C)$ and $(B',C')$ be complete $\tau$-exceptional sequences, and suppose that neither $C$ nor $C'$ is projective. Then the following are equivalent.
    \begin{enumerate}
        \item[(a)] There is an arrow $(B,C) \rightarrow (B',C')$ in $\Hasse(\tex\Lambda)$.
        \item[(b)] $C = \Fbm_{C'}(B')$.
        \item[(c)] There is an arrow $\upsilon\circ\tfo(B,C) \rightarrow \upsilon\circ\tfo(B',C')$ in $\Hasse(\stt\Lambda)$.
    \end{enumerate}
\end{proposition}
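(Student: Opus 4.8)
The plan is to record the explicit shape of all the objects involved and then prove $(a)\Leftrightarrow(b)$ and $(b)\Leftrightarrow(c)$ in turn. Since $C$ and $C'$ are non-projective, Lemma~\ref{lem:rank2_no_case_iii} applies to both pairs and gives $C\in\Gen\Fbm_C(B)$ and $C'\in\Gen\Fbm_{C'}(B')$; in particular the first branch of~\eqref{eq:lMutRankTwo} yields $\varphi(B',C')=(\lvert\Ebm_{\Fbm_{C'}(B')}(C')\rvert,\Fbm_{C'}(B'))$. I will also use that $\upsilon\circ\tfo(B,C)=\Fbm_C(B)\amalg C$ and $\upsilon\circ\tfo(B',C')=\Fbm_{C'}(B')\amalg C'$ are non-projective $\tau$-tilting modules (each is $\tau$-rigid of full rank two and has the non-projective summand $C$, resp.\ $C'$), so each admits a unique TF-ordering by Lemma~\ref{lem:rank2_one_order}.

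For $(a)\Leftrightarrow(b)$: by~\eqref{eq:ctauex} an arrow $(B,C)\to(B',C')$ means $(B,C)=\varphi(B',C')$, so reading off the second coordinate of the displayed formula for $\varphi(B',C')$ gives $(a)\Rightarrow(b)$ at once. Conversely, if $C=\Fbm_{C'}(B')$, then $\varphi(B',C')$ is a complete $\tau$-exceptional pair whose second coordinate equals $C$, as does that of $(B,C)$. Both pairs are complete, hence $\J(B,C)=\J(\varphi(B',C'))=0$ by the rank formula of Definition-Proposition~\ref{prop:J_def}, so Corollary~\ref{cor:unique} forces their first coordinates to coincide; thus $(B,C)=\varphi(B',C')$, which is $(a)$.

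For $(b)\Rightarrow(c)$: write $M=\Fbm_C(B)\amalg C$ and $M'=\Fbm_{C'}(B')\amalg C'$. Assuming $(b)$ we have $M'=C\amalg C'$, so $M$ and $M'$ share the summand $C$; they are distinct because $\Fbm_C(B)\notin\Gen C$ (TF-orderedness of $(\Fbm_C(B),C)$) while $C'\in\Gen\Fbm_{C'}(B')=\Gen C$, forcing $\Fbm_C(B)\neq C'$. Hence $M$ and $M'$ are related by a single mutation with common $\tau$-rigid part $C$, and $M'=C\amalg C'\in\Gen M$ since $C'\in\Gen C\subseteq\Gen M$. Thus $M'$ is a left mutation of $M$ and there is an arrow $M\to M'$, i.e.\ $(c)$. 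I emphasise that this argument uses only non-projectivity of $C'$, the second entry of the \emph{target} pair, and not of $C$.

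The reverse implication $(c)\Rightarrow(b)$ is the main obstacle, since an arrow in $\Hasse(\stt\Lambda)$ only sees the underlying modules, whereas $(b)$ is a statement about the TF-ordering that $\upsilon$ discards. To bridge this I will exploit the rigidity of the rank-two exchange graph from Remark~\ref{rem:Hasse_shape}: the non-projective $\tau$-tilting module $M'=\upsilon\circ\tfo(B',C')$ is the target of exactly one arrow. Now $\varphi(B',C')$ is a complete $\tau$-exceptional pair whose second coordinate is $\Fbm_{C'}(B')$, so applying the implication $(b)\Rightarrow(c)$ just proved to the pairs $\varphi(B',C')$ and $(B',C')$ (valid since the target's second entry $C'$ is non-projective, even though $\Fbm_{C'}(B')$ may itself be projective) produces an arrow $\upsilon\circ\tfo(\varphi(B',C'))\to M'$. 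Uniqueness of the incoming arrow to $M'$, together with the hypothesis $(c)$ that $M=\upsilon\circ\tfo(B,C)\to M'$ is an arrow, forces $\upsilon\circ\tfo(\varphi(B',C'))=M$. Since $M$ is non-projective, comparing the unique TF-orderings of this common module (Lemma~\ref{lem:rank2_one_order}) gives equality of second coordinates, namely $C=\Fbm_{C'}(B')$, which is $(b)$. This closes the chain of equivalences.
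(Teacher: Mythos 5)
Your proposal is correct, and while $(a)\Leftrightarrow(b)$ matches the paper (uniqueness of the completion of a fixed second term, via Theorem~\ref{thm:unique}/Corollary~\ref{cor:unique} together with~\eqref{eq:lMutRankTwo}), your treatment of $(b)\Leftrightarrow(c)$ is genuinely different. For $(b)\Rightarrow(c)$, the paper identifies $\upsilon\circ\tfo(B,C)$ as the Bongartz completion of $C$ and $\upsilon\circ\tfo(B',C')$ as the co-Bongartz completion, invoking \cite[Thm.~2.18]{air} and Lemma~\ref{lem:Bongartzsplit}(c); you instead verify the definition of left mutation by hand: common summand $C$, distinct complements (since $\Fbm_C(B)\notin\Gen C$ by TF-orderedness while $C'\in\Gen C$), and $M'\in\Gen M$. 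This is more elementary, and your explicit observation that this argument uses only non-projectivity of the \emph{target's} second entry is correct and is what makes your $(c)\Rightarrow(b)$ work. For $(c)\Rightarrow(b)$, the paper argues locally: since $M=\Fbm_C(B)\amalg C$ is the co-Bongartz completion of $\Fbm_C(B)$, a left mutation cannot exchange $C$ (that would enlarge $\Gen$), so $C$ survives as a summand of $M'$, and $C\neq C'$ by Theorem~\ref{thm:unique}. You argue globally instead: the strengthened $(b)\Rightarrow(c)$ applied to $\varphi(B',C')$ and $(B',C')$ produces an incoming arrow at $M'$, and the rank-two valence structure of $\Hasse(\stt\Lambda)$ (Remark~\ref{rem:Hasse_shape}, unique incoming arrow at a vertex other than $0$ and $P(\Lambda)$) forces $\upsilon\circ\tfo(\varphi(B',C'))=M$; comparing the unique TF-orderings (Lemma~\ref{lem:rank2_one_order}, using that $C$ is non-projective so $M\neq P(\Lambda)$) then yields $C=\Fbm_{C'}(B')$. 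The trade-off: the paper's route pinpoints concretely which summand is exchanged and stays at the level of a single mutation, while yours reuses $(b)\Rightarrow(c)$ and the mutability machinery of Definition-Proposition~\ref{defprop:forward}, but leans on the global shape of the exchange graph; both are sound, and your reliance on Remark~\ref{rem:Hasse_shape} is consistent with how the paper itself uses that remark in the proof of Theorem~\ref{thm:rank_2_hasse}.
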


\begin{proof}

($a\implies b$): This follows from Equation \eqref{eq:lMutRankTwo} (noting~\eqref{eq:ctauex}).

($b\implies a$): This follows also from Equation \eqref{eq:lMutRankTwo}, using in addition that $C$ uniquely determines $B$, by Theorem~\ref{thm:unique}.

    ($b \implies c$): Suppose $C = \Fbm_{C'}(B')$. Then $\upsilon\circ\tfo(B,C)$ and $\upsilon\circ\tfo(B',C')$ are either equal or related by a mutation,
    since they have a common direct summand. 
    By Lemma \ref{lem:rank2_one_order}, they cannot be equal. Then Lemma~\ref{lem:rank2_no_case_iii}  and the fact that $C' \notin \P(\Lambda)$ imply that $C' \in \Gen \Fbm_{C'}(B')$; i.e., that $C'$ is the co-Bongartz complement of $\Fbm_{C'}(B') = C$. It follows from~\cite[Thm. 2.18]{air} and Lemma~\ref{lem:Bongartzsplit}(c) that $\Fbm_C(B)$ must be the Bongartz complement of $C$, and so condition (c) holds.

    ($c \implies b$): Suppose that there is an arrow $\upsilon\circ\tfo(B,C) \rightarrow \upsilon\circ\tfo(B',C')$ in $\Hasse(\stt\Lambda)$. Since $C$ is not projective, Lemma~\ref{lem:rank2_one_order} implies that $C \in \Gen \Fbm_C(B)$, and hence that $C$ is the co-Bongartz complement of $\Fbm_C(B)$ and $\Fbm_C(B)$ is the Bongartz complement of $C$. It follows that $C$ is a direct summand of $\Fbm_{C'}(B') \amalg C'$. Now note that $C \neq C'$, since otherwise $B = B'$ by Theorem~\ref{thm:unique}, contradicting the existence of an arrow $\upsilon\circ\tfo(B,C) \rightarrow \upsilon\circ\tfo(B',C')$ in $\Hasse(\stt\Lambda)$. We conclude that $C = \Fbm_{C'}(B')$, as desired. 
\end{proof}

\begin{proposition}\label{prop:arrows_rank_2_b}
    Let $(B,C)$ and $(B',C')$ be complete $\tau$-exceptional sequences, and suppose that $C \in \P(\Lambda)$ and $C' \notin \P(\Lambda)$. Then the following are equivalent.
    \begin{enumerate}
        \item[(a)] There is an arrow $(B,C) \rightarrow (B',C')$ in $\Hasse(\tex\Lambda)$.
        \item[(b)] $C = \Fbm_{C'}(B')$.
        \item[(c)] $\upsilon\circ\tfo(B,C) = P(\Lambda)$, $\Hom(C',B) = 0$, and there is an arrow $P(\Lambda) \rightarrow \upsilon\circ \tfo(B',C')$ in $\Hasse(\stt\Lambda)$.
    \end{enumerate}
\end{proposition}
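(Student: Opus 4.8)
The plan is to establish $(a)\iff(b)$ by the same argument as in Proposition~\ref{prop:arrows_rank_2_a}, and then to prove $(b)\iff(c)$ using the rigid structure forced by $C$ being projective. Since $C'\notin\P(\Lambda)$, Lemma~\ref{lem:rank2_no_case_iii} places us in the first case of~\eqref{eq:lMutRankTwo}, so the second component of $\varphi(B',C')$ is $\Fbm_{C'}(B')$. Thus an arrow $(B,C)\to(B',C')$ yields $C=\Fbm_{C'}(B')$, giving $(a\implies b)$; conversely, if $C=\Fbm_{C'}(B')$ then $\varphi(B',C')$ is a complete $\tau$-exceptional pair whose second entry is $C$, and since the second entry of a length-two complete $\tau$-exceptional sequence determines the first (Theorem~\ref{thm:unique}), we get $\varphi(B',C')=(B,C)$, proving $(b\implies a)$.

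For $(b)\iff(c)$ I would first record the structural consequences of $C=P_i\in\P(\Lambda)$. Here $\J(C)=\J(P_i)=P_i^\perp=\Filt(S_{i+1})$ is a rank-one (local) module category, so its unique indecomposable $\tau_{\J(P_i)}$-rigid object is the projective generator $S_{i+1}^e$; hence $B=S_{i+1}^e$. Consequently $\upsilon\circ\tfo(B,C)=\Fbm_{P_i}(B)\amalg P_i$ is a rank-two $\tau$-tilting module with projective summand $P_i$. By Remark~\ref{rem:Hasse_shape} and the description of the complements of $P_i$, the only such modules are $P(\Lambda)=P_i\amalg P_{i+1}$ and $P_i\amalg R_i$; since $(\Fbm_{P_i}(B),P_i)$ is TF-ordered we have $\Fbm_{P_i}(B)\notin\Gen P_i$, while $R_i\in\Gen P_i$, so $\upsilon\circ\tfo(B,C)=P(\Lambda)$. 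This shows the first clause of~(c) holds automatically whenever $C$ is projective.

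To finish $(b\implies c)$, assume $C=\Fbm_{C'}(B')=P_i$. Lemma~\ref{lem:rank2_no_case_iii} gives $C'\in\Gen P_i$, so $C'$ is the co-Bongartz complement $R_i$ and $\upsilon\circ\tfo(B',C')=P_i\amalg R_i$, which is a target of an arrow out of the source $P(\Lambda)$ by Remark~\ref{rem:Hasse_shape}; this is the third clause of~(c). For the second clause I would prove $\Hom(C',B)=\Hom(R_i,S_{i+1}^e)=0$ by a top/composition-factor argument: every module in $\Gen P_i$ has top a sum of copies of $S_i$, whereas every submodule of $S_{i+1}^e$ has all composition factors equal to $S_{i+1}$, so the image of any map $R_i\to S_{i+1}^e$ vanishes. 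For $(c\implies b)$, the arrow $P(\Lambda)\to\upsilon\circ\tfo(B',C')$ forces $\upsilon\circ\tfo(B',C')=P_k\amalg R_k$ for some $k$ (Remark~\ref{rem:Hasse_shape}); as $C'$ is non-projective with $C'\in\Gen\Fbm_{C'}(B')$, we get $C'=R_k$ and $\Fbm_{C'}(B')=P_k$. Identifying $B=S_{i+1}^e$ as above, the hypothesis $\Hom(C',B)=\Hom(R_k,S_{i+1}^e)=0$ then selects $k=i$, since the same computation shows $\Hom(R_i,S_{i+1}^e)=0$ but $\Hom(R_{i+1},S_{i+1}^e)\neq 0$ (the top of $R_{i+1}\in\Gen P_{i+1}$ contains $S_{i+1}$, which embeds into $S_{i+1}^e$). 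Hence $C=P_i=\Fbm_{C'}(B')$, which is~(b).

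The step I expect to be the main obstacle is the analysis of $\Hom(C',B)$: one must check not only that $\Hom(R_i,S_{i+1}^e)=0$ in the correct configuration, but that it genuinely fails for the other mutation, $\Hom(R_{i+1},S_{i+1}^e)\neq 0$, so that the second clause of~(c) pins down the unique mutation direction out of $P(\Lambda)$ compatible with $B=S_{i+1}^e$. This is precisely the ingredient that distinguishes the two arrows emanating from the source of $\Hasse(\stt\Lambda)$.
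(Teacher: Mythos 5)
Your proof is correct, and while the $(a)\Leftrightarrow(b)$ step and the skeleton of $(b)\Rightarrow(c)$ match the paper's argument, your treatment of $(c)\Rightarrow(b)$ takes a genuinely different route. The paper argues directly: the clause $\upsilon\circ\tfo(B,C)=P(\Lambda)$ forces $\Fbm_C(B)$, and hence its factor module $B$, to have simple top $S$, while projectivity of $C$ places $B$ in $C^\perp=\Filt(S)$, so the socle of $B$ lies in $\add S$; the clause $\Hom(C',B)=0$ then yields $\Hom(C',S)=0$, hence $C'\in\Gen C$; the arrow clause together with Lemma~\ref{lem:rank2_no_case_iii} yields $\Fbm_{C'}(B')\in\P(\Lambda)$ with $C'\in\Gen\Fbm_{C'}(B')$; and uniqueness of projective covers concludes $C=\Fbm_{C'}(B')$. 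You instead identify $B=S_{i+1}^e$ directly from the standing hypothesis $C=P_i$ (in particular showing that the clause $\upsilon\circ\tfo(B,C)=P(\Lambda)$ in (c) is automatic under the hypotheses of the proposition --- a redundancy the paper only records afterwards, in Lemma~\ref{lem:simples_rank_2}), then use the arrow clause and Remark~\ref{rem:Hasse_shape} to force $\{\Fbm_{C'}(B'),C'\}=\{P_k,R_k\}$ with $C'=R_k$, and eliminate $k=i+1$ by the explicit nonvanishing $\Hom(R_{i+1},S_{i+1}^e)\neq 0$ (valid since $R_{i+1}=C'\neq 0$ in that case). Both proofs pivot on the same point --- $\Hom(C',B)=0$ is precisely what distinguishes the two arrows out of the source $P(\Lambda)$ --- but yours is a proof by elimination with the rank-two combinatorics made fully explicit, at the price of the extra verification that the Hom space genuinely fails to vanish in the wrong configuration, whereas the paper's top/socle and projective-cover argument never needs to name $S_{i+1}^e$ or exhibit a nonzero morphism. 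Similarly, your composition-factor verification of $\Hom(C',B)=0$ in $(b)\Rightarrow(c)$ is a harmless substitute for the paper's one-line deduction from $\Hom(C,B)=0$ (which holds since $(B,C)$ is a $\tau$-exceptional pair) combined with $C'\in\Gen C$.
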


\begin{proof}
    ($a\iff b$): This can be seen by arguing as in the proof of $(a\iff b)$ in  Proposition~\ref{prop:arrows_rank_2_a}.
    ($b \implies c$): Suppose $C = \Fbm_{C'}(B')$. Then, arguing as in the proof of $(b\implies c)$ in Proposition~\ref{prop:arrows_rank_2_a}, we have that $\upsilon\circ\tfo(B,C)$ and $\upsilon\circ\tfo(B',C')$ are related by a mutation. As before, Lemma~\ref{lem:rank2_no_case_iii} and the fact that $C' \notin \P(\Lambda)$ imply that $C' \in \Gen \Fbm_{C'}(B')$; i.e., that $C'$ is the co-Bongartz complement of $\Fbm_{C'}(B') = C$. It follows that $\Fbm_C(B)$ must be the Bongartz complement of $C$. As $C$ is projective, the Bongartz completion of $C$ is $P(\Lambda)$, so this implies that there is an arrow $P(\Lambda) \rightarrow \upsilon\circ \tfo(B',C')$ in $\Hasse(\stt\Lambda)$. Now note that $\Hom(C,B) = 0$ since $(B,C)$ is a $\tau$-exceptional sequence. The fact that $C' \in \Gen \Fbm_{C'}(B') = \Gen C$ thus implies that $\Hom(C',B) = 0$ as well.

    ($c \implies b$): Suppose (c) holds. The fact that $\upsilon\circ \tfo(B,C) = P(\Lambda)$ implies that $\Fbm_C(B)$, and thus also its factor module $B =\Ebm_C(\Fbm_C(B)) = f_{C}(\Fbm_C(B))$, must have a simple top $S$. 
    Since $C$ is projective, the module $B$ lies in the Serre subcategory $C^\perp = \Filt(S)$. In particular, this means the socle of $B$ lies in $\add S$, and so $\Hom(C',B) = 0$ implies $\Hom(C',S) = 0$. This implies that the projective cover of $C'$ lies in $\add C$, and hence that $C' \in \Gen C$. 

    Now since $C' \notin \P(\Lambda)$, the existence of an arrow $P(\Lambda) \rightarrow \upsilon\circ \tfo(B',C')$ in $\Hasse(\stt\Lambda)$ implies that $\Fbm_{C'}(B') \in \P(\Lambda)$. Moreover, Lemma~\ref{lem:rank2_no_case_iii} implies that $C' \in \Gen \Fbm_{C'}(B')$. By the uniqueness of projective covers, it follows that $C = \Fbm_{C'}(B')$.
\end{proof}

\begin{proposition}\label{prop:arrows_rank_2_c}
    Let $(B,C)$ and $(B',C')$ be complete $\tau$-exceptional sequences, and suppose that $C' \in \P(\Lambda)$. Then the following are equivalent.
    \begin{enumerate}
        \item[(a)] There is an arrow $(B,C) \rightarrow (B',C')$ in $\Hasse(\tex\Lambda)$.
        \item[(b)] $C = B'$.
        \item[(c)] $\Hom(C',C) = 0$ and there is an arrow $\upsilon \circ \tfo(B,C) \rightarrow C$ in $\Hasse(\stt\Lambda)$.
    \end{enumerate}
\end{proposition}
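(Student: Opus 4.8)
The plan is to follow the same template as Propositions~\ref{prop:arrows_rank_2_a} and~\ref{prop:arrows_rank_2_b}: first establish $(a\iff b)$ directly from the mutation formula~\eqref{eq:lMutRankTwo}, and then prove $(b\iff c)$ using the support $\tau$-tilting reduction of Adachi--Iyama--Reiten~\cite{air} together with the uniqueness statement of Theorem~\ref{thm:unique}. For $(a\iff b)$, since $C'\in\P(\Lambda)$, Lemma~\ref{lem:rank2_no_case_iii} places us in the second branch of~\eqref{eq:lMutRankTwo}, so $\varphi(B',C')=(\Ebm_{B'}(C'[1])[-1],B')$. Hence an arrow $(B,C)\to(B',C')$, i.e.\ $(B,C)=\varphi(B',C')$, forces $C=B'$, giving $(a\implies b)$. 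Conversely, if $C=B'$ then $\varphi(B',C')$ is a complete $\tau$-exceptional pair whose second entry is $B'=C$; as $(B,C)$ is also such a pair, Theorem~\ref{thm:unique} yields $(B,C)=\varphi(B',C')$, hence the arrow. This mirrors the $(a\iff b)$ argument of Proposition~\ref{prop:arrows_rank_2_a} verbatim.

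For $(b\implies c)$, assume $C=B'$. Since $(B',C')$ is a $\tau$-exceptional pair we have $B'\in\J(C')\subseteq (C')^\perp$, so $\Hom(C',C)=\Hom(C',B')=0$. Because $C'$ is projective, $\J(C')=(C')^\perp$, and so $(C,C')=(B',C')$ being a $\tau$-exceptional pair says exactly that $C\amalg C'[1]$ is a support $\tau$-rigid object of rank two, hence support $\tau$-tilting. Now $\upsilon\circ\tfo(B,C)=\Fbm_C(B)\amalg C$ is a $\tau$-tilting module (Remark~\ref{rem:orderedpair}), and the two completions of the almost complete support $\tau$-tilting object with summand $C$ are precisely $\Fbm_C(B)\amalg C$ (a module) and $C\amalg C'[1]$ (with a shifted projective). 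Thus mutation at the summand $\Fbm_C(B)$ sends $\upsilon\circ\tfo(B,C)$ to the support $\tau$-tilting module $C$; since $C\in\Gen(\Fbm_C(B)\amalg C)$, this is a left mutation, producing the desired arrow $\upsilon\circ\tfo(B,C)\to C$ in $\Hasse(\stt\Lambda)$.

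For $(c\implies b)$, the arrow $\upsilon\circ\tfo(B,C)\to C$ shows that $C$ is a vertex of $\Hasse(\stt\Lambda)$, i.e.\ a support $\tau$-tilting module; being indecomposable it is not $\tau$-tilting, so its support $\tau$-tilting object has the form $C\amalg P[1]$ for the unique indecomposable projective $P$ with $\Hom(P,C)=0$. The hypothesis $\Hom(C',C)=0$ with $C'\in\P(\Lambda)$ indecomposable then forces $P=C'$, so $C\amalg C'[1]$ is support $\tau$-tilting. Consequently $(C,C')$ is a complete $\tau$-exceptional pair: $C'$ is projective, $C\in\J(C')=(C')^\perp$ is indecomposable, and $C$ is $\tau_{\J(C')}$-rigid by the support $\tau$-rigid reduction argument used in Lemma~\ref{lem:fUbijection} (via Proposition~\ref{prop:ASExt}). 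Since $(B',C')$ and $(C,C')$ are complete $\tau$-exceptional pairs with the same second entry, Theorem~\ref{thm:unique} gives $B'=C$.

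The main obstacle is the bookkeeping in $(c\implies b)$: correctly identifying the shifted projective attached to the target vertex $C$ as $C'$ (rather than the other indecomposable projective), and verifying that $(C,C')$ is a bona fide $\tau$-exceptional pair so that Theorem~\ref{thm:unique} applies. Both of these hinge on $\Hom(C',C)=0$ together with the rank-two constraint. The complementary claim in $(b\implies c)$ that mutation at $\Fbm_C(B)$ returns precisely $C\amalg C'[1]$ rests on the uniqueness of completions of almost complete support $\tau$-tilting objects~\cite{air}, which I would cite rather than reprove.
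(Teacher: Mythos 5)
Your proof is correct and follows the same template as the paper's: $(a\iff b)$ via the mutation formula~\eqref{eq:lMutRankTwo} together with $\Fbm_{C'[1]}(B')=B'$ and Theorem~\ref{thm:unique}, and $(b\implies c)$ via the Adachi--Iyama--Reiten completion theory (your ``two completions of the almost complete object $C$'' is exactly the paper's observation that the co-Bongartz complement of $C$ is zero, forcing $\Fbm_C(B)$ to be the Bongartz complement). The only genuine divergence is in $(c\implies b)$: the paper never constructs the pair $(C,C')$, but instead notes that $\J(C')$ has rank one, so $B'$ is the \emph{unique} indecomposable $\tau_{\J(C')}$-rigid module, and that $C=\Ebm_{C'[1]}(C)$ is $\tau_{\J(C')}$-rigid by Theorem~\ref{thm:bmcorr}(a)(iii), whence $C=B'$; in particular the paper's argument uses only the hypothesis $\Hom(C',C)=0$ and not the arrow in $\Hasse(\stt\Lambda)$. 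You instead use the arrow to see that $C$ is support $\tau$-tilting, identify its shifted projective as $C'$, verify that $(C,C')$ is a complete $\tau$-exceptional pair (correctly, via the reduction argument of Lemma~\ref{lem:fUbijection}/Proposition~\ref{prop:ASExt}), and then apply Theorem~\ref{thm:unique}. Both routes are valid and of comparable length; the paper's is slightly leaner in its hypotheses, while yours has the small advantage of reusing the same uniqueness theorem already needed for $(b\implies a)$ rather than appealing to the rank-one structure of $\J(C')$.
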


\begin{proof}

 ($a\iff b$): This can be seen by arguing as in the proof of Proposition~\ref{prop:arrows_rank_2_a} and using the fact that $\Fbm_{C'[1]}(B') = B'$ (see Theorem~\ref{thm:bmcorr}).
 
    ($b\implies c$): Suppose $C = B'$. Since $(B',C')$ is a $\tau$-exceptional sequence, this implies that $\Hom(C',C) = 0$. Equivalently, we have that $C \amalg C'[1]$ is a support $\tau$-tilting pair, and thus the co-Bongartz complement of $C$ is zero. It follows that $\Fbm_C(B)$ is the Bongartz complement of $C = B'$, and so there is an arrow $\upsilon \circ \tfo(B,C) \rightarrow C$ in $\Hasse(\stt\Lambda)$.

    ($c \implies b$): Suppose (c) holds. Since $(B',C')$ is a $\tau$-exceptional sequence, we have that $B'$ is the unique indecomposable module which is $\tau_{\J(C')}$-rigid. On the other hand, we have that $\J(C'[1]) = \J(C')$, and so $C = \Ebm_{C'[1]}(C)$ 
    is $\tau_{\J(C')}$-rigid by Theorem~\ref{thm:bmcorr}. 
    We conclude that $C = B'$.
\end{proof}

\begin{lemma}\label{lem:simples_rank_2}
    Let $(B,C)$ be a $\tau$-exceptional sequence and $(B',C') = \psi(B,C)$. Then the following are equivalent.
    \begin{enumerate}
        \item[(a)] $C = S_i^e$ for some $i \in \{1,2\}$.
        \item[(b)] $C' \in \P(\Lambda)$.
        \item[(c)] $\upsilon\circ \tfo (B',C') = P(\Lambda)$.
    \end{enumerate}
    Moreover, if these equivalent conditions hold, then $B' = C$ and $C' = P_{i+1}$ for $i$ realizing $C = S_i^e$.
\end{lemma}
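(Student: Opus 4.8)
The plan is to prove the cycle of implications $(a) \Rightarrow (b) \Rightarrow (c) \Rightarrow (a)$ while extracting the ``moreover'' statement along the way, the central object being the candidate right mutation $(S_i^e, P_{i+1})$. Throughout I would use that, in rank two, $(B,C)$ is automatically a complete $\tau$-exceptional pair, that $\J(B,C) = 0$ by Definition-Proposition~\ref{prop:J_def}, and that every such pair is both left and right mutable by Theorem~\ref{prop:mutation_complete} (so that $\varphi$ and $\psi$ are defined where needed). I would also first record the basic identification $\J(P_{i+1}) = P_{i+1}^\perp = \Filt(S_i)$, a rank-one $\tau$-perpendicular category whose projective generator is exactly $S_i^e$; in particular $S_i^e$ is its unique indecomposable $\tau_{\J(P_{i+1})}$-rigid object, since any such object has vanishing Bongartz complement, hence is $\tau$-tilting in the rank-one category and therefore equals the projective generator.

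For $(a) \Rightarrow \text{``moreover''}$, assume $C = S_i^e$. I would first check that $(S_i^e, P_{i+1})$ is a complete $\tau$-exceptional pair with $\J(S_i^e, P_{i+1}) = \J_{\J(P_{i+1})}(S_i^e) = 0$. Since $P_{i+1} \in \P(\Lambda)$, the second (projective) case of~\eqref{eq:lMutRankTwo} shows that the second component of $\varphi(S_i^e, P_{i+1})$ is again $S_i^e$, while Definition-Proposition~\ref{defprop:forward} (applicable because the pair is left regular by Lemma~\ref{lem:rank2_no_case_iii}) gives $\J(\varphi(S_i^e, P_{i+1})) = 0$. Thus $\varphi(S_i^e, P_{i+1})$ and $(B,C) = (B, S_i^e)$ are both $\tau$-exceptional pairs with equal $\J$ and equal second component, so Corollary~\ref{cor:unique} forces them to coincide. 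Applying $\psi = \varphi^{-1}$ (Theorem~\ref{thm:mutation_pairs}) then yields $(B',C') = \psi(B,C) = (S_i^e, P_{i+1})$, i.e.\ $B' = C$ and $C' = P_{i+1}$; in particular $(b)$ holds.

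For $(b) \Rightarrow (a)$, write $C' = P_j$. The hypothesis $(B',C') = \psi(B,C)$ means there is an arrow $(B,C) \to (B',C')$ in $\Hasse(\tex\Lambda)$ by~\eqref{eq:ctauex}, so Proposition~\ref{prop:arrows_rank_2_c} gives $C = B'$. Since $B' \in \J(C') = \Filt(S_{j+1})$ is an indecomposable $\tau_{\J(P_j)}$-rigid object, the rank-one uniqueness above forces $C = B' = S_{j+1}^e$, which is $(a)$ (with $i = j+1$, so that $C' = P_{i+1}$). For $(b) \Leftrightarrow (c)$, I would compute $\tfo(B',C') = (\Fbm_{C'}(B'), C')$ (Remark~\ref{rem:orderedpair}) in the case $C' = P_{i+1}$, $B' = S_i^e$. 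Here Lemma~\ref{lem:fUbijection} applies with $U = P_{i+1}$: since $\tau P_{i+1} = 0$ we have ${}^\perp\tau P_{i+1} = \mods\Lambda$, so $\P({}^\perp\tau P_{i+1}) = \P(\Lambda)$, and the induced bijection sends $P_i$ to the projective generator $S_i^e$ of $\P(\J(P_{i+1}))$; hence $\Ebm_{P_{i+1}}(P_i) = S_i^e$ and $\Fbm_{P_{i+1}}(S_i^e) = P_i$. Therefore $\tfo(B',C') = (P_i, P_{i+1})$ and $\upsilon\circ\tfo(B',C') = P_i \amalg P_{i+1} = P(\Lambda)$, giving $(c)$. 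Conversely, if $\upsilon\circ\tfo(B',C') = P(\Lambda)$, then $C'$ is the last entry of a TF-ordering of $P(\Lambda)$, hence an indecomposable direct summand of $P(\Lambda)$ and so lies in $\P(\Lambda)$, which is $(b)$.

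The main obstacle I anticipate is the careful handling of the rank-one category $\J(P_{i+1}) = \Filt(S_i)$: identifying $S_i^e$ simultaneously as its projective generator, as its unique indecomposable $\tau$-rigid object, and as the image $\Ebm_{P_{i+1}}(P_i)$, together with the bookkeeping of the index shift $i \leftrightarrow j = i+1$. Everything else is a direct assembly of Corollary~\ref{cor:unique}, Proposition~\ref{prop:arrows_rank_2_c}, Lemma~\ref{lem:fUbijection}, and the mutation formula~\eqref{eq:lMutRankTwo}.
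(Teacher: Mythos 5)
Your proof is correct and follows essentially the same route as the paper's: both hinge on computing $\varphi(S_i^e,P_{i+1})$ via the rank-two formula~\eqref{eq:lMutRankTwo}, on the rank-two/rank-one uniqueness of $\tau$-exceptional sequences with prescribed final term, and on the key computation $\Ebm_{P_{i+1}}(P_i)=f_{P_{i+1}}(P_i)=S_i^e$, i.e.\ $\Fbm_{P_{i+1}}(S_i^e)=P_i$, for $(b)\Leftrightarrow(c)$. The only differences are cosmetic: you route $(b)\Rightarrow(a)$ through Proposition~\ref{prop:arrows_rank_2_c} and justify the uniqueness and the identity $\Ebm_{P_{i+1}}(P_i)=S_i^e$ in more detail (via the Bongartz-complement argument and Lemma~\ref{lem:fUbijection}) where the paper simply asserts them.
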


\begin{proof}
    Since $\Lambda$ has rank two, we have that $(S_i^e,P_{i+1})$ is the unique $\tau$-exceptional sequence of the form $(-,P_{i+1})$. From Equation \eqref{eq:lMutRankTwo}, we have $\varphi(S_i^e,P_{i+1}) = (X_i,S_i^e)$ for $X_i = \Ebm_{S_i^e}(P_{i+1}[1])[-1]$. Again since $\Lambda$ has rank two, this is the unique $\tau$-exceptional sequence of the form $(-,S_i^e)$. The fact that $\varphi$ and $\psi$ are inverses then implies the equivalence between (a) and (b). This also shows the ``moreover'' part of the statement. Finally, the equivalence between (b) and (c) follows from the observation that $\Fbm_{P_{i+1}}(S_i^e) = P_i$, since $\Ebm_{P_{i+1}}(P_i) = f_{P_{i+1}}(P_i) = S_i^e$.
\end{proof}

We now prove the main result of this section,
which, for algebras of rank two, gives a precise relationship between
$\Hasse(\stt\Lambda)$ and $\Hasse(\tex\Lambda)$
.

\begin{theorem}\label{thm:rank_2_hasse}
    Let $Q$ be the quiver obtained from $\Hasse(\stt\Lambda)$ by doing the following.
    \begin{itemize}
        \item Delete the vertices $P(\Lambda)$ and $0$ and all arrows incident to these vertices.
        \item Add an arrow from the vertex $S_1^e$ to the vertex $P_2 \amalg R_2$.
        \item Add an arrow from the vertex $S_2^e$ to the vertex $P_1 \amalg R_1$.
    \end{itemize}
    There there is an isomorphism of quivers $\rho: \Hasse(\tex\Lambda) \rightarrow Q$ given by
    $$\rho(B,C) = \begin{cases}
        C & C \in \{S_1^e,S_2^e\} \\
        \upsilon \circ \tfo \circ \psi(B,C) & \textnormal{otherwise.}\\
    \end{cases}$$
\end{theorem}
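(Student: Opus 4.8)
The plan is to realise $\rho$ as a vertex bijection that intertwines the canonical ``successor'' operations on the two quivers, and then to observe that both $\Hasse(\tex\Lambda)$ and $Q$ are \emph{functional graphs} (every vertex has exactly one incoming and one outgoing arrow), so that a successor-preserving vertex bijection is automatically a quiver isomorphism. For $\Hasse(\tex\Lambda)$ this regularity holds because, by~\eqref{eq:ctauex}, the out-neighbour of $(B,C)$ is $\psi(B,C)$ and its in-neighbour is $\varphi(B,C)$, and $\varphi,\psi$ are mutually inverse bijections on \emph{all} complete pairs by Theorems~\ref{thm:mutation_pairs} and~\ref{prop:mutation_complete}. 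For $Q$ I would read off from Remark~\ref{rem:Hasse_shape} that deleting $P(\Lambda)$ and $0$ leaves the two vertices $P_k\amalg R_k$ with in-degree $0$ and the two vertices $S_j^e$ with out-degree $0$, and that the two added arrows $S_j^e\to P_{j+1}\amalg R_{j+1}$ restore in- and out-degree $1$ at every vertex (the degenerate coincidences $P_k\amalg R_k=P_k=S_k^e$ when $R_k=0$ are consistent with this); I write $\mathrm{succ}_Q$ for the resulting successor map. Thus it suffices to prove (i) $\rho$ is a bijection on vertices and (ii) $\rho\circ\psi=\mathrm{succ}_Q\circ\rho$.

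For (i), abbreviate $\sigma:=\upsilon\circ\tfo$, so that $\sigma(B,C)=\Fbm_C(B)\amalg C$ by Remark~\ref{rem:orderedpair}. Lemma~\ref{lem:simples_rank_2} identifies the complete pairs with $C$ projective as exactly the $(S_i^e,P_{i+1})$, and their $\varphi$-preimages as exactly the pairs $(X_i,S_i^e)$ with $C=S_i^e$; moreover $\sigma(S_i^e,P_{i+1})=P_i\amalg P_{i+1}=P(\Lambda)$. I would first check that the two Case-1 pairs $(X_1,S_1^e),(X_2,S_2^e)$ are sent bijectively to $\{S_1^e,S_2^e\}\subseteq V(Q)$. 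For the remaining (Case-2) pairs, since $\psi$ is a bijection carrying the Case-1 pairs onto the projective-$C$ pairs, it carries the Case-2 pairs onto the non-projective-$C$ pairs; and $\sigma$ restricts to a bijection from the non-projective-$C$ pairs onto the $\tau$-tilting modules different from $P(\Lambda)$. This last point uses that $\tfo$ is a bijection (Theorem~\ref{thm:tfo}), that $P(\Lambda)$ is the unique $\tau$-tilting module with more than one TF-ordering and its two orderings are exactly the two projective-$C$ pairs (Lemma~\ref{lem:rank2_one_order}), and that a non-projective-$C$ pair has $\sigma$-value a genuine two-summand $\tau$-tilting module $\neq P(\Lambda)$. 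Hence $\rho$ is a bijection onto $V(Q)$.

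For (ii) I would verify $\rho(\psi v)=\mathrm{succ}_Q(\rho v)$ by cases on $v=(B,C)$, applying the arrow characterisations of Propositions~\ref{prop:arrows_rank_2_a}--\ref{prop:arrows_rank_2_c} to the arrow $\psi v\to\psi^2 v$. When $v$ is Case 2, $\rho v=\sigma(\psi v)$ is a genuine $\tau$-tilting module, so $\mathrm{succ}_Q(\rho v)$ equals its left $\tau$-tilting mutation: if $\psi v$ is again Case 2, then both second coordinates of $\psi v\to\psi^2 v$ are non-projective and Proposition~\ref{prop:arrows_rank_2_a} gives that this left mutation is $\sigma(\psi^2 v)=\rho(\psi v)$; if instead $\psi v$ is Case 1 (so $C(\psi v)=S_i^e$ and $\psi^2 v=(S_i^e,P_{i+1})$ has projective second coordinate), then Proposition~\ref{prop:arrows_rank_2_c} gives that the left mutation is $S_i^e=\rho(\psi v)$. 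The remaining case is $v=(X_i,S_i^e)$ Case 1, where $\rho v=S_i^e$, $\psi v=(S_i^e,P_{i+1})$, and $\mathrm{succ}_Q(S_i^e)=P_{i+1}\amalg R_{i+1}$ by definition of the surgery; so here I must show $\rho(\psi v)=P_{i+1}\amalg R_{i+1}$.

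This last case is the crux and the main obstacle. Since $\sigma(\psi v)=P(\Lambda)$, Proposition~\ref{prop:arrows_rank_2_b} applied to $\psi v\to\psi^2 v$ shows $\sigma(\psi^2 v)$ is a left mutation of $P(\Lambda)$, hence lies in $\{P_1\amalg R_1,P_2\amalg R_2\}$, and also yields $\Hom(C(\psi^2 v),S_i^e)=0$, where $C(\psi^2v)$ is the non-projective summand $R_k$ of $\sigma(\psi^2 v)=P_k\amalg R_k$. To pin down $k$ I would carry out a short top-of-module computation: a nonzero map $R_k\to S_i^e$ can exist only when $k=i$, because the top of $R_k\in\Gen P_k$ lies in $\add S_k$ while every nonzero quotient of a submodule of $S_i^e$ has top in $\add S_i$. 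Thus when $R_i\neq 0$ the vanishing $\Hom(R_i,S_i^e)=0$ fails, forcing $k=i+1$; when $R_i=0$ the module $P_i\amalg R_i=P_i$ is not $\tau$-tilting, so a summand count again forces $k=i+1$; and when $R_{i+1}=0$ one has $P_{i+1}=S_{i+1}^e$, so $\psi v$ is itself Case 1 and $\rho(\psi v)=S_{i+1}^e=P_{i+1}=P_{i+1}\amalg R_{i+1}$ directly. In every case $\rho(\psi v)=\mathrm{succ}_Q(\rho v)$, which together with the vertex bijection completes the proof. The genuinely delicate input is exactly this index and degeneracy bookkeeping at the top of the exchange graph, since it is where the two added arrows of $Q$ must be matched against the two arrows out of $P(\Lambda)$.
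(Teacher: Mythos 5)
Your proof is correct, and it rests on the same pillars as the paper's own argument: the identical vertex bijection (Case-1 pairs onto $\{S_1^e,S_2^e\}$ via Lemma~\ref{lem:simples_rank_2}, Case-2 pairs onto the $\tau$-tilting modules other than $P(\Lambda)$ via Theorem~\ref{thm:tfo} and Lemma~\ref{lem:rank2_one_order}), and the arrow analysis via Propositions~\ref{prop:arrows_rank_2_a}--\ref{prop:arrows_rank_2_c}. Where you genuinely diverge is in the organization of the arrow correspondence, and your version is leaner: by exploiting that both quivers are functional graphs (Remark~\ref{rem:Hasse_shape} plus the surgery on one side, invertibility of $\varphi$ and $\psi$ on the other) and checking only the intertwining $\rho\circ\psi=\mathrm{succ}_Q\circ\rho$, you need only the implications (a)$\implies$(c) of the three propositions; the paper instead proves two-sided equivalences and therefore must argue both directions in each of its Cases I--III (its Case II converse and Case III(b) subcases are precisely the extra work your framing avoids). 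One small repair is needed in your crux case: you claim that $R_i\neq 0$ makes the vanishing $\Hom(R_i,S_i^e)=0$ fail, but your stated top-of-module argument proves only the opposite implication (a nonzero map $R_k\to S_i^e$ forces $k=i$). What you actually need is the easy supplementary fact that a nonzero $R_i\in\Gen P_i$ admits a surjection onto $S_i$, which in turn embeds into the socle of $S_i^e$, whence $\Hom(R_i,S_i^e)\neq 0$; this is the same one-line observation the paper makes in its Case II, and with it your index bookkeeping (including the $R_i=0$ and $R_{i+1}=0$ degeneracies, which you handle correctly) goes through.
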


Note that $\rho(B,C)$ is a $\tau$-tilting module whenever $C \notin \{S_1^e, S_2^e\}$. 

\begin{proof}
    Note that, by Remark~\ref{rem:Hasse_shape}, the quiver $Q$ has the property that every vertex has precisely one outgoing arrow and precisely one incoming arrow.

    We first show that $\rho$ is well-defined as a map on vertices. Let $(B,C)$ be a complete $\tau$-exceptional sequence. If $C \in \{S_1^e,S_2^e\}$, then it is clear that $\rho(B,C)$ is a vertex of $Q$. Thus suppose $C \notin \{S_1^e,S_2^e\}$. Then $\upsilon\circ\tfo \circ \psi(B,C) \neq P(\Lambda) = P_1 \amalg P_2$ by Lemma~\ref{lem:simples_rank_2}. We conclude that $\rho(B,C)$ is a vertex of $Q$.

    We next show that $\rho$ is injective on vertices. Suppose $\rho(B,C) = \rho(B',C')$. Suppose first that $C = S_i^e$ for $i \in \{1,2\}$. Since $\upsilon\circ \tfo\circ \psi(B',C')$ is $\tau$-tilting, it follows that $C' = S_i^e$, and thus that $(B,C) = (B',C')$ by Theorem~\ref{thm:unique}. Without loss of generality, we can thus assume that $\{C,C'\} \cap \{S_1^e,S_2^e\} = \emptyset$. Then $\rho(B,C) \neq P(\Lambda)$ by Lemma \ref{lem:simples_rank_2}, and hence $\rho(B,C)$ admits exactly one TF-ordering by Lemma~\ref{lem:rank2_one_order}. 
    The same argument holds for $(B',C')$, and thus $(B,C) = (B',C')$ since $\tfo$ and $\psi$ are both injective. We conclude that $\rho$ is injective on vertices.

    We next show that $\rho$ is surjective on vertices. Let $M$ be a vertex of $Q$. If $M$ is not $\tau$-tilting, then either $M = S_1^e$ or $M = S_2^e$, both of which are in the image of $\rho$. Thus suppose that $M$ is $\tau$-tilting. Then there exists a TF-ordering $(M_1,M_2)$ of $M$ with corresponding $\tau$-exceptional sequence $(M_1',M_2) = (\Ebm_{M_2}(M_1), M_2)$. Let $(B,C) = \varphi(M_1',M_2)$. Lemma~\ref{lem:simples_rank_2} and the assumption that $M \neq P(\Lambda)$ then imply that $C \notin \{S_1^e,S_2^e\}$. The fact that $\varphi$ and $\psi$ are inverses thus implies that $M = \rho(B,C)$. We conclude that $\rho$ is surjective on vertices.

    Now let $(B,C)$ and $(B',C')$ be distinct $\tau$-exceptional sequences. In particular, this means $C \neq C'$ and $B \neq B'$ by Theorem~\ref{thm:unique}. It remains to show that there is an arrow $(B,C) \rightarrow (B',C')$ in $\Hasse(\tex\Lambda)$ if and only if there is an arrow $\rho(B,C) \rightarrow \rho(B',C')$ in $Q$. For readability, denote $(B_0,C_0) = \psi(B,C)$ and $(B'_0,C'_0) = \psi(B',C')$. Since $\varphi$ and $\psi$ are inverses, there is an arrow $(B_0,C_0) \rightarrow (B'_0,C'_0)$ in $\Hasse(\tex\Lambda)$ if and only if there is an arrow $(B,C) \rightarrow (B',C')$ in $\Hasse(\tex\Lambda)$. Thus it suffices to show that there is an arrow $(B_0,C_0) \rightarrow (B'_0,C'_0)$ in $\Hasse(\tex\Lambda)$ if and only if there is an arrow $\rho(B,C) \rightarrow \rho(B',C')$ in $Q$. We prove this by breaking the result down into in several cases.

    \smallskip
    \noindent {\bf Case I:} Suppose that neither $C_0$ nor $C'_0$ is projective. By Lemma~\ref{lem:simples_rank_2}, this is equivalent to assuming $\{C,C'\} \cap \{S_1^e,S_2^e\} = \emptyset$. Thus $\rho(B,C) = \upsilon\circ \tfo(B_0,C_0)$ and $ \rho(B',C') = \upsilon\circ \tfo(B'_0,C'_0)$. 
    Proposition~\ref{prop:arrows_rank_2_a} then implies that there is an arrow $(B_0,C_0) \rightarrow (B'_0,C'_0)$ in $\Hasse(\tex\Lambda)$ if and only if there is an arrow $\rho(B,C) \rightarrow \rho(B',C')$ in $Q$, as desired.

   \smallskip

\noindent{\bf Case II:}
Suppose that $C_0 \in \P(\Lambda)$ and $C'_0 \notin \P(\Lambda)$. 
Using that $C_0 \in \P(\Lambda)$, it follows from 
Lemma~\ref{lem:simples_rank_2} that 
$C= S_i^e = B_0$ for some $i\in \{1,2\}$, that $\upsilon\circ \tfo(B_0,C_0) = P(\Lambda)$,
and that
$C_0 = P_{i+1}$.
Furthermore, using that $C'_0 \notin \P(\Lambda)$, it also follows from 
Lemma~\ref{lem:simples_rank_2} that $C' \notin \{S_1^e, S_2^e\}$. 

Assume first there is an arrow 
$\rho(B,C) \to \rho(B',C')$ in $Q$.
Since $C= S_i^e$ we have $\rho(B,C) =S_i^e$, and since $C' \notin \{S_1^e, S_2^e\}$, it follows from the definition of $Q$ that
$\rho(B',C') = \upsilon\circ \tfo(B_0',C_0') = P_{i+1} \amalg R_{i+1}$. So there is an arrow $P(\Lambda) \to \upsilon\circ \tfo(B_0',C_0')$ in $\Hasse(\stt\Lambda)$ by Remark~\ref{rem:Hasse_shape}.

Note that $C_0'$ is a direct summand of  $\upsilon\circ \tfo(B_0',C_0') = P_{i+1} \amalg R_{i+1}$. Since $C_0'\notin \P(\Lambda)$, this means $C_0' = R_{i+1}.$
Since $R_{i+1}$ is the co-Bongartz complement of $P_{i+1}$, we have that $R_{i+1} \in \Gen P_{i+1}$.
Thus $\Hom(R_{i+1}, B_0) = 0$, since $\Hom(P_{i+1}, B_0) =0$.
Summarising, we have (i) $\upsilon\circ \tfo(B_0,C_0) = P(\Lambda)$,
(ii) $\Hom(C_0',B_0)= \Hom(R_{i+1}, B_0) = 0$ and (iii) 
there is an arrow $P(\Lambda) \to \upsilon\circ \tfo(B_0',C_0')$ in $\Hasse(\stt\Lambda)$. It then follows from Proposition~\ref{prop:arrows_rank_2_b} that there is an arrow
$(B_0,C_0) \to (B_0',C_0')$ in $\Hasse(\tex\Lambda)$.

For the converse, assume there is an arrow 
$(B_0,C_0) \to (B_0',C_0')$ in $\Hasse(\tex\Lambda)$.
Proposition~\ref{prop:arrows_rank_2_b} then implies that 
(i) $\upsilon\circ \tfo(B_0,C_0) = P(\Lambda)$, that (ii)
$\Hom(C_0',B_0)=0$ and that (iii) there is an arrow 
$P(\Lambda) \to \upsilon\circ \tfo(B'_0,C'_0)$ in $\Hasse(\stt\Lambda)$.
Then (iii) implies that $\upsilon\circ \tfo(B'_0,C'_0) = P_j \amalg R_j$ for some $j \in \{1,2\}$. 
We must have $C_0' = R_j$, since $C_0'\notin \P(\Lambda)$. Since $\rho$ is bijective on vertices and $C' \notin \{S_1^e,S_2^e\}$, we also have $$P_j \amalg R_j = \upsilon\circ \tfo(B'_0,C'_0) = \upsilon\circ \tfo\circ \psi(B',C') = \rho(B',C')\notin \{S_1^e,S_2^e\},$$ and so $R_j \neq 0$ (see the comment preceding Remark~\ref{rem:Hasse_shape}). Thus $\Hom(R_j,S_j^e) \neq 0$ since $S_j$ is the simple top of $P_j$ and $0 \neq R_j \in \Gen P_j$. Hence, recalling that $B_0 = S_i^e$, we have that $j = i+1$ by (ii).
By construction, there is an arrow 
$S_i^e \to P_{i+1} \amalg R_{i+1}$ in $Q$. 
This is precisely an arrow 
$S_i^e = \rho(B,C) \to \rho(B',C') = \upsilon\circ \tfo(B'_0,C'_0) =
P_{i+1} \amalg R_{i+1}$. This concludes the proof for case II.
    
   \smallskip
   \noindent {\bf Case III:} Suppose that $C_0' \in \P(\Lambda)$. By Lemma~\ref{lem:simples_rank_2}, this is equivalent to assuming that $B'_0 = C' = S_i^e$ and $C'_0 = P_{i+1}$ for some $i \in \{1,2\}$. Thus $\rho(B',C') = S_i^e$. We now have two subcases to consider.

    \smallskip
    
   \noindent {\bf Case III(a):} Suppose first that $S_i^e(=C')$ is projective; i.e., that $P_i = S_i^e$. Then $B' = S_{i+1}^e$ and $\rho(B',C') = S_i^e =
   P_i \amalg R_i$. Thus, by the definition of $Q$, there is an arrow $\rho(B,C) \rightarrow \rho(B',C')$ in $Q$ 
   if and only if $\rho(B,C) = S^e_{i+1}$. On the other hand, there is by definition an arrow $(B,C) \rightarrow (B',C')$ in $\Hasse(\tex)$ if and only if $\varphi(B', C') = (B,C)$.
   By \eqref{eq:lMutRankTwo} we have that $\varphi(B', C') = (\ast, B') = (\ast , S^e_{i+1})$, since $C'$ is projective. 
   
\smallskip

   \noindent {\bf Case III(b):} 
   Assume now $S_i^e(=C')$ is not projective.
    We first discuss the case where $C = S_{i+1}^e$. (Note that $C \neq S_i^e$ since $C\neq C'$ by assumption.) Then $C_0 = P_i$ by Lemma~\ref{lem:simples_rank_2}, and so $(B_0,C_0) \neq (B',C')$. Thus there is no arrow $(B_0,C_0) \rightarrow (B'_0,C'_0)$ in $\Hasse(\tex\Lambda)$. Furthermore, we also have $\rho(B,C) = S_{i+1}^e$ and $\rho(B',C') = S_i^e \neq P_{i+1}\amalg R_{i+1}$, and so there is no arrow $\rho(B,C) \rightarrow \rho(B',C')$ in~$Q$.

   We next discuss the case with $C \notin \{S_1^e,S_2^e\}$. Then $\rho(B,C) = \upsilon\circ \tfo(B_0,C_0)$, by the definition of the map $Q$.
   Now let $U_i$ denote the Bongartz complement of $S_i^e$, and note that there is an arrow $\rho(B,C) = \upsilon \circ \tfo(B_0,C_0) \rightarrow \rho(B',C') = S_i^e$ in $Q$ if and only if $\rho(B,C) = U_i \amalg S_i^e$. Now since $S_i^e$ is not projective, we have that $(U_i,S_i^e)$ is the unique TF-ordering of $U_i\amalg S_i^e$ by Lemma~\ref{lem:rank2_one_order}. Thus $\upsilon\circ\tfo(B_0,C_0) = U_i \amalg S_i^e$ if and only if $C_0 = S_i^e (= C')$, or equivalently (by Theorem~\ref{thm:unique}) if and only if $(B_0,C_0) = (B',C')$. Now note that, by construction, there is an arrow $(B_0,C_0) \rightarrow (B'_0,C'_0)$ in $\Hasse(\tex\Lambda)$ if and only if $(B_0,C_0) = \varphi(B'_0,C'_0)$; i.e., if and only if $(B_0,C_0) = (B',C')$. The result follows also in this case, and this concludes the proof of the theorem.
\end{proof}

We have the following direct consequence.
\begin{corollary}[Theorem~\ref{cor:connected_intro}]\label{cor:connected}
    $\Hasse(\stt\Lambda)$ and $\Hasse(\tex\Lambda)$ have the same number of connected components.
\end{corollary}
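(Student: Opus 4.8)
The plan is to deduce the statement directly from the quiver isomorphism $\rho\colon\Hasse(\tex\Lambda)\to Q$ of Theorem~\ref{thm:rank_2_hasse}. Since isomorphic quivers have equally many connected components, it suffices to compare $Q$ with $\Hasse(\stt\Lambda)$. First I would record that, by Remark~\ref{rem:Hasse_shape}, every vertex of $\Hasse(\stt\Lambda)$ has valence two, so its underlying undirected graph is a disjoint union of cycles and its number of connected components equals the number of these cycles. A degree count gives the same for $Q$: deleting $P(\Lambda)$ and $0$ lowers the valence of each of $P_1\amalg R_1$, $P_2\amalg R_2$, $S_1^e$, $S_2^e$ by one, and the two added arrows restore them, so $Q$ is again a disjoint union of cycles.

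Next I would localize the surgery. Because $P(\Lambda)$ is the unique source and $0$ the unique sink, they lie in a common connected component $C_0$ (the unique component that is not a directed cycle), and the four vertices $P_1\amalg R_1,\,P_2\amalg R_2,\,S_1^e,\,S_2^e$, being neighbours of $P(\Lambda)$ or $0$, also lie in $C_0$. Hence every other component is a directed cycle untouched by the surgery, contributing equally to both quivers, and it remains to show that the surgery turns the single cycle $C_0$ into a single cycle in $Q$. Viewing $C_0$ as an undirected cycle, removing $P(\Lambda)$ and $0$ cuts it into two arcs, with endpoints $\{P_1\amalg R_1,\,S_a^e\}$ and $\{P_2\amalg R_2,\,S_b^e\}$ for $\{a,b\}=\{1,2\}$, where $a,b$ record which penultimate vertex terminates which branch. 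Adding the arrows $S_1^e\to P_2\amalg R_2$ and $S_2^e\to P_1\amalg R_1$ reglues the two arcs into a single cycle exactly when $a=1$ and $b=2$, and splits them into two cycles when $a=2,\,b=1$.

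Thus the main obstacle is the matching: I must show that the branch of $C_0$ beginning $P(\Lambda)\to P_1\amalg R_1$ terminates at $S_1^e$ (and, symmetrically, that the branch through $P_2\amalg R_2$ terminates at $S_2^e$). I would settle this by locating the two rank-one support $\tau$-tilting modules $S_1^e\amalg P_2[1]$ and $S_2^e\amalg P_1[1]$ relative to the sink $0=P_1[1]\amalg P_2[1]$: each shares exactly one shifted-projective summand with $0$, and in the $g$-vector fan the branch leaving $P(\Lambda)$ along the ray $g(P_1)$ (i.e.\ through $P_1\amalg R_1$) sweeps to the cone of $0$ sharing the ray $g(P_2[1])$, which is precisely the cone of $S_1^e\amalg P_2[1]$. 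Equivalently, and staying inside the torsion-theoretic framework of the paper, one checks that $\Gen S_1^e=P_2^\perp$ is contained in the coatom $\Gen(P_1\amalg R_1)$ but not in $\Gen(P_2\amalg R_2)$, so that the maximal descending chain of functorially finite torsion classes through $\Gen S_1^e$ passes through $\Gen(P_1\amalg R_1)$; since in rank two the torsion classes on each branch form a chain, this forces $a=1$. With $a=1,\,b=2$ established, $C_0$ becomes a single cycle, so $Q$ and $\Hasse(\stt\Lambda)$ have the same number of cycles, and the proof is complete.
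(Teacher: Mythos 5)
Your opening reduction (via the isomorphism $\rho$ of Theorem~\ref{thm:rank_2_hasse}) and your identification of the real issue are both good: the corollary is not a purely formal consequence of the quiver isomorphism, because the surgery defining $Q$ only preserves the number of components if the descending branch of $\Hasse(\stt\Lambda)$ through $P_i\amalg R_i$, whenever it reaches $0$, does so through $S_i^e$ (same index), while the two added arrows cross indices. Your torsion-theoretic proof of this matching is sound: arrows in $\Hasse(\stt\Lambda)$ are inclusions of torsion classes, $\Gen(P_i\amalg R_i)=\Gen P_i$ by Theorem~\ref{thm:Bongartz}, and $\Gen S_a^e=\Filt(S_a)\subseteq \Gen P_i$ forces $a=i$, since $S_i\in\Gen P_i$ while $\Hom(P_i,S_{i+1})=0$ gives $S_{i+1}\notin\Gen P_i$. (The $g$-vector fan sketch, by contrast, invokes machinery the paper never develops and is not needed; I would delete it.)

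However, there is a genuine gap: your structural claims about $\Hasse(\stt\Lambda)$ fail exactly in the case where the corollary has content, namely when $\Lambda$ is $\tau$-tilting infinite. A connected graph in which every vertex has valence two is either a finite cycle or a doubly infinite path, so $\Hasse(\stt\Lambda)$ need not be a disjoint union of cycles; and the claim that $P(\Lambda)$ and $0$ ``lie in a common connected component'' is false in general --- your implicit source/sink count only shows that a \emph{finite} component containing $P(\Lambda)$ must contain $0$. For the Kronecker algebra, $P(\Lambda)$ and $0$ do share a component, but it is a doubly infinite line, so your ``two arcs of a cycle'' analysis does not apply as written. Worse, for the algebra of Example~\ref{ex:kronecker} (Terland's example \cite{terland}, the paper's motivating instance of non-transitivity), $\Hasse(\stt\Lambda)$ has two components: one contains $P(\Lambda)$ together with both infinite descending branches, and the other contains $0$, $S_1^e$, $S_2^e$ and both infinite ascending branches. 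Thus the component $C_0$ on which your entire argument operates does not exist there. The repair is a three-case analysis: (i) $P(\Lambda)$ and $0$ in a common finite component (your case, where the matching is needed); (ii) $P(\Lambda)$ and $0$ on a common doubly infinite line, where deletion produces two rays and one finite segment, and the same matching argument shows the added arrows reassemble a single line; (iii) $P(\Lambda)$ and $0$ in distinct components, necessarily two doubly infinite lines, where deletion produces four rays and the two added arrows glue them into two lines, so the count is preserved regardless of matching. As written, your proof covers only case (i), i.e., essentially only the $\tau$-tilting finite case, where both quivers are connected anyway.
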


\subsection{Connection with brick labeling}\label{sec:brick_label}

We momentarily return to the case where $\Lambda$ has arbitrary rank. Let $M \in \mods\Lambda$ be an indecomposable module, and let $\mathrm{rad}(M,M)$ denote the set of non-invertible endomorphisms of $M$. We then denote
$$\beta(M) = M/\sum_{f \in \mathrm{rad}(M,M)} \mathrm{im}(f) = M/\mathrm{rad}_{\mathrm{End}(M)}M.$$
Now recall that a module $X$ is called a \emph{brick} if every nonzero endomorphism of $X$ is invertible, and is called an \emph{f-brick} if an addition the torsion class $\FiltGen(X)$ is functorially finite. The \emph{brick-$\tau$-rigid correspondence} \cite[Thm. 1.5]{dij} then says that the association $M \mapsto \beta(M)$ induces a bijection from the set of indecomposable $\tau$-rigid modules to the set of f-bricks. We further note that, for $M$ an arbitrary module and $\W$ an arbitrary wide subcategory, if $M \in \W$, then $\beta(M) \in \W$ as well. Thus for $\W$ a $\tau$-perpendicular subcategory, the same map $\beta$ induces a bijection from the set of indecomposable objects which are $\tau_\W$-rigid and the set of bricks which are f-bricks in $\W$. (Note that an f-brick in $\W$ is also a brick in $\mods\Lambda$, but it is not necessarily an f-brick in $\mods\Lambda$.) Note, however, that while the definition of $\beta$ does not depend on the wide subcategory in which we are working, the inverse of the brick-$\tau$-rigid correspondence general does. See~\cite[Rk.~3.19]{bh2} for an example.

Let $(M_s,\ldots,M_n)$ be a $\tau$-exceptional sequence. Following~\cite{ht}, we say that $(\beta(M_s),\ldots,\beta(M_n))$ is a \emph{brick-$\tau$-exceptional sequence}. It is clear that $\tau$-exceptional sequences (of length $n-s+1$) are in bijection with brick-$\tau$-exceptional sequences (of length $n-s+1$).

\begin{definition}\cite[Def.~2.14 (specialized to rank two)]{asai}
    Let $M$ and $N$ be support $\tau$-tilting modules. Suppose that $N$ is a left mutation of $M$ (so in particular $\Gen(N) \subsetneq \Gen(M)$). Then this mutation corresponds to an arrow in $\Hasse(\stt\Lambda)$ labeled by $\beta(f_N M)$.
\end{definition}

Let us now restrict again to the case where $\Lambda$ has rank two. As mentioned previously, every support $\tau$-tilting module $M$ with $M \notin \{0,\Lambda\}$ has exactly one left mutation $\mu^+(M)$ and exactly one right mutation $\mu^-(M)$. We denote the bricks labeling these mutations by $\beta^+(M)$ and $\beta^-(M)$, respectively. Note that if $\Gen M = \Gen P_i$, then $\beta^+(M) = S_{i+1}$. We now recall the following.

\begin{proposition}\cite[Prop. 6.7]{ap}\cite[Prop. 8.4]{bh2}\label{prop:brick_tau_seq}
    Let $M \in \mods\Lambda$ be a support $\tau$-tilting module with $M \notin \{0,P(\Lambda)\}$. Then $\beta^+(M)$ is the unique brick in $\J(P_{s}(\Gen M))$.
\end{proposition}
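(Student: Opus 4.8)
The plan is to identify $\beta^+(M)$ with the unique simple object of the rank-one $\tau$-perpendicular category $\J(\Pmods(\Gen M))$, by routing through the wide subcategory $\WR$ attached to the torsion-free class $(\Gen M)^\perp$ and Asai's brick-labeling of $\Hasse(\stt\Lambda)$.

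First I would record the reduction to rank one. Since $M \notin \{0,P(\Lambda)\}$, the torsion class $\Gen M$ is functorially finite, nonzero and proper; hence by Theorem~\ref{thm:Bongartz}(b) the module $X := \Pmods(\Gen M)$ is a nonzero gen-minimal $\tau$-rigid module with $\Gen X = \Gen M$. The rank-two classification (Definition-Proposition~\ref{prop:J_def}, using $\Gen M \neq 0, \mods\Lambda$) forces $\rk(X) = 1$, so $X$ is indecomposable and $\W := \J(\Pmods(\Gen M)) = \J(X)$ is a rank-one $\tau$-perpendicular subcategory. As $\W$ is equivalent to the module category of a (necessarily local) rank-one algebra, it has a unique simple object $S_\W$, and $S_\W$ is its unique brick: any brick in a rank-one module category surjects onto and embeds into the simple, so it cannot have length greater than one. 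This makes ``the unique brick in $\J(\Pmods(\Gen M))$'' well-defined, and it remains to prove $\beta^+(M) = S_\W$.

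Next I would pass to $\WR$. Setting $\T = \Gen M$ and $\F = \T^\perp = M^\perp$, Theorem~\ref{thm:WLtorsionclass}(b) gives $\WR(\F) = \J(\Pmods(\T)) = \W$. The decisive input is then Asai's brick-labeling framework (the definition preceding this proposition, together with \cite{asai} and the brick-$\tau$-rigid correspondence \cite{dij}): the simple objects of $\WR(\F)$ are exactly the bricks labeling the arrows of $\Hasse(\stt\Lambda)$ whose target is $M$, i.e.\ the covers lying immediately above $\T$ in the lattice of torsion classes. Concretely, one unwinds the definition of the label $\beta^+(M) = \beta(f_{\Gen M}(\mu^+(M)))$ of the (unique) incoming arrow $\mu^+(M) \to M$ and checks, via the brick-$\tau$-rigid correspondence applied inside $\W$, that it is a simple object of $\WR(\F)$.

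Finally I would invoke the shape of the exchange graph. By Remark~\ref{rem:Hasse_shape}, for $M \neq 0, P(\Lambda)$ the vertex $M$ is the target of exactly one arrow of $\Hasse(\stt\Lambda)$, labeled $\beta^+(M)$; and $\WR(\F) = \W$ being rank one has the single simple $S_\W$. Matching the unique incoming label with the unique simple of $\WR(\F)$ yields $\beta^+(M) = S_\W$, as claimed. The main obstacle is the middle step: establishing, with the orientation conventions matched correctly, that the simples of $\WR(\F)$ are precisely the incoming brick labels of $\Gen M$. This is exactly the content supplied by Asai's semibrick/brick-labeling theory; an alternative for the crux would be to compute $f_{\Gen M}(\mu^+(M))$ directly and show its brick quotient is $S_\W$, but this needs an explicit description of the parent $\mu^+(M)$ and is more delicate than the wide-subcategory route. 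The two outer reductions are routine given Theorem~\ref{thm:WLtorsionclass} and the rank-two classification of $\tau$-perpendicular categories.
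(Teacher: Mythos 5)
First, note that the paper contains no proof of this proposition at all: it is recalled from the literature, with \cite[Prop.~6.7]{ap} and \cite[Prop.~8.4]{bh2} standing in for a proof. So the only meaningful test of your proposal is whether it is self-contained, and it is not. Your outer reductions are correct: $X := \Pmods(\Gen M)$ is a nonzero gen-minimal $\tau$-rigid module with $\Gen X = \Gen M$ (Theorem~\ref{thm:Bongartz}(b)); it cannot have rank two, since then $\J(X) = 0$ by Definition-Proposition~\ref{prop:J_def}, and gen-minimality together with Theorem~\ref{thm:gen_minimal} would give $\Gen M = \Gen X = {}^\perp 0 = \mods\Lambda$, i.e.\ $M = P(\Lambda)$; hence $\J(X)$ has rank one, its unique simple is its unique brick, and $\J(X) = \WR(M^\perp)$ by Theorem~\ref{thm:WLtorsionclass}(b). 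But the step you yourself call decisive --- that the simples of $\WR(\F)$ are exactly the bricks labeling the Hasse arrows with target $\Gen M$ --- is never argued: you appeal to ``Asai's brick-labeling framework,'' and that appeal is precisely the content of the two results the paper cites for this proposition. In effect you have reduced the proposition to the general-rank form of itself. As a derivation from the literature this coincides with the paper's treatment (a citation); as a blind proof, the single step carrying all of the mathematical content is missing.

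Second, the orientation question you defer (``with the orientation conventions matched correctly'') is not cosmetic, and you resolve it only implicitly, with the wrong name attached. Read literally, the paper defines $\mu^+(M)$ as the left mutation of $M$, so $\mu^+(M) \in \Gen M$ and the associated arrow is $M \to \mu^+(M)$; under that reading $\beta^+(M)$ would label the arrow \emph{out of} $M$, and such a label is a nonzero module lying in $\Gen M$, whereas the unique brick of $\J(\Pmods(\Gen M)) \subseteq \Pmods(\Gen M)^\perp = (\Gen M)^\perp$ never lies in $\Gen M$. So with the outgoing-arrow reading the proposition is false; it is true only for the label of the unique arrow \emph{into} $M$, which is also the reading forced by the note ``$\Gen M = \Gen P_i \Rightarrow \beta^+(M) = S_{i+1}$'' preceding the statement and by the proof of Corollary~\ref{cor:brick_label}. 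That incoming-arrow reading is the one you adopt, which is why your plan can succeed --- but the source of that arrow is then $\mu^-(M)$ in the paper's notation, not $\mu^+(M)$ as in your formula $\beta(f_{\Gen M}(\mu^+(M)))$. Pinning this down rigorously, rather than postponing it, is exactly what separates a proof from a plan here.
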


Combining this with Theorem~\ref{thm:rank_2_hasse}, we establish the following relationship between the bijection $\rho$ and the brick labeling.

\begin{corollary}\label{cor:brick_label}
    Let $M$ be a vertex of $Q$ (so $M$ is a support $\tau$-tilting module and $M \notin \{0,P(\Lambda)\}$). Then
    $\beta\circ\rho^{-1}(M) = (\beta^+(M),\beta^-(M))$.
\end{corollary}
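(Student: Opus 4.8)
The plan is to set $(B,C) := \rho^{-1}(M)$ and prove the two coordinates separately, namely $\beta(B) = \beta^+(M)$ and $\beta(C) = \beta^-(M)$, both of which follow once $C$ is identified with $\Pmods(\Gen M)$. So the first and central step is to show $C = \Pmods(\Gen M)$ in every case. If $C \in \{S_1^e, S_2^e\}$, then $M = \rho(B,C) = C$ by the definition of $\rho$, and since $\Gen C$ is the Serre subcategory $\Filt(S_i)$ generated by the indecomposable $C = S_i^e$, we get $\Pmods(\Gen M) = C$. If $C \notin \{S_1^e, S_2^e\}$, write $(B',C') = \psi(B,C)$, so $M = \upsilon\circ\tfo(B',C')$. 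By Lemma~\ref{lem:simples_rank_2} we have $C' \notin \P(\Lambda)$, so Equation~\eqref{eq:lMutRankTwo} gives $C = \Fbm_{C'}(B')$ and hence $M = \Fbm_{C'}(B') \amalg C' = C \amalg C'$. By Lemma~\ref{lem:rank2_no_case_iii}, $C' \in \Gen \Fbm_{C'}(B') = \Gen C$, so $\Gen M = \Gen C$; as $C$ is indecomposable it is gen-minimal, whence $C = \Pmods(\Gen M)$.

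Given this, the first coordinate is immediate. The subcategory $\J(C) = \J(\Pmods(\Gen M))$ has rank one, so by Proposition~\ref{prop:brick_tau_seq} its unique brick is $\beta^+(M)$. Now $B$ is an indecomposable $\tau_{\J(C)}$-rigid module; since a rank-one module category has a single indecomposable $\tau$-rigid module (its projective), $B$ is this module, and the brick--$\tau$-rigid correspondence inside $\J(C)$ sends it to the unique (f-)brick of $\J(C)$. As $\beta$ does not depend on the ambient wide subcategory and $B \in \J(C)$, this yields $\beta(B) = \beta^+(M)$.

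For the second coordinate I would argue that $\beta^-(M)$ is the brick labeling the downward (left-mutation) arrow out of $M$, and that this mutation is exactly the mutation of $M$ at the summand $C = \Pmods(\Gen M)$. Indeed, in the decomposition $M = C \amalg C'$ the right mutation $\mu^-(M)$ keeps $C$ and replaces $C'$ (it is the module $\Fbm_C(B) \amalg C$, of strictly larger $\Gen$), so the left mutation is the one changing $C$; in the boundary case $M = S_i^e$ the left mutation to $0$ removes $C = S_i^e$ outright. By the compatibility of Asai's brick labeling with the brick--$\tau$-rigid correspondence~\cite{dij,asai} — the arrow corresponding to a left mutation at an indecomposable summand $X$ is labeled $\beta(X)$ — the label of this arrow is $\beta(C)$, i.e.\ $\beta^-(M) = \beta(C)$. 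Combining the two coordinates gives $\beta\circ\rho^{-1}(M) = (\beta(B),\beta(C)) = (\beta^+(M), \beta^-(M))$.

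The main obstacle is this second coordinate. One must first pin down the $\pm$ conventions so that $\beta^+(M)$ is the brick of Proposition~\ref{prop:brick_tau_seq} (the label of the arrow \emph{into} $M$) and $\beta^-(M)$ is the label of the arrow \emph{out of} $M$, and then justify that this outgoing label is $\beta(C)$ rather than $\beta$ of the other summand. This is where I would either cite the brick labeling results of~\cite{dij,asai} directly or, for a self-contained argument, verify that $\beta(C)$ is the unique brick lying in $\Gen M \cap (\Gen \mu^+(M))^\perp$. The uniform treatment of the boundary cases $C \in \{S_1^e, S_2^e\}$ — where $M$ is support $\tau$-tilting but not $\tau$-tilting — also requires a little care, though the identification $C = \Pmods(\Gen M)$ above handles them alongside the generic case.
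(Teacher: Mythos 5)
Your strategy coincides with the paper's own proof: identify the second entry of $\rho^{-1}(M)$ with $\Pmods(\Gen M)$ (the paper does the same computation in the opposite direction, using the unique TF-ordering $(M_1,M_2)$ of $M$ from Lemma~\ref{lem:rank2_one_order} to get $\rho^{-1}(M)=(\Ebm_{M_1}(M_2)[-1],M_1)$), read off the first coordinate from Proposition~\ref{prop:brick_tau_seq} together with uniqueness of the brick in the rank-one category $\J(\Pmods(\Gen M))$, and obtain the second coordinate from the label of the mutation arrow out of $\Gen M$. Your identification $C=\Pmods(\Gen M)$, including the boundary cases $C\in\{S_1^e,S_2^e\}$, and your first-coordinate argument are correct, and your resolution of the $\pm$ conventions (so that $\beta^+(M)$ is the brick of Proposition~\ref{prop:brick_tau_seq}, i.e.\ the label of the incoming arrow) is exactly the reading the paper's proof uses.

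The gap is in the second coordinate. The ``known fact'' you invoke --- that the arrow given by a left mutation at an indecomposable summand $X$ is labeled $\beta(X)$ --- is false in general, and it is not what \cite{dij} or \cite{asai} prove. Asai's label is $\beta(f_N M)$ where $N$ is the smaller module, and in general $\beta(f_N X)\neq\beta(X)$. Concretely, let $\Lambda$ be the path algebra of $A_2$, with indecomposable projectives $P$ (of length two, with simple top $S$) and $P'$ (simple), and let $M=P(\Lambda)=P\amalg P'$. The left mutation of $M$ at $P$ is the support $\tau$-tilting module $P'$, and the arrow $\mods\Lambda\rightarrow\Gen P'$ is labeled by the unique brick in $(\Gen P')^{\perp}=\add S$, namely $S$; but $\beta(P)=P$ since $\End(P)=K$. (This failure occurs precisely at the vertex $P(\Lambda)$, which the corollary excludes.) What is true, and what you actually need, is the restricted statement: if $M=C\amalg C'$ with $C'\in\Gen C$, then the outgoing arrow is labeled $\beta(C)$. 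You do establish the hypothesis $C'\in\Gen C$, but the restricted statement itself still requires an argument rather than a citation; this is exactly the step the paper supplies (``Since $M_2\in\Gen(M_1)$, this implies $\beta(M_1)=\beta(f_N M_1)$''). Your proposed fallback is the right one and can be completed as follows: $\beta(C)$ is a brick lying in $\Gen M$, and $\Hom(\mu^+(M),\beta(C))=0$ because every nonzero map $C\rightarrow\beta(C)$ is surjective (every such map lifts to $\End(C)$, since $\Ext^1(C,-)$ vanishes on $\Gen C\ni\mathrm{rad}_{\End(C)}C$ by Proposition~\ref{prop:ASExt}), while a surjection from an object of $\Gen C'$ onto $\beta(C)$ would give $\Gen C=\FiltGen(\beta(C))\subseteq\Gen C'$, contradicting Lemma~\ref{lem:twogens}; uniqueness of the label brick then gives $\beta^-(M)=\beta(C)$. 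As written, however, your proof replaces this necessary lemma by an appeal to a statement that is false in general, so the second coordinate is not yet established.
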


\begin{proof}
    Suppose first that $M = S_i^e$. Then, by Theorem~\ref{thm:rank_2_hasse}, we have that $\rho^{-1}(M) = (B,S_i^e)$ for $B$ the unique module which is $\tau_{\J(S_i^e)}$-rigid. In particular, $\beta(B)$ is the unique brick in $\J(S_i^e)$, and so $\beta(B) = \beta^+(S_i^e)$ by Proposition~\ref{prop:brick_tau_seq}. Finally, note that $\beta(S_i^e) = S_i$ by the construction of $S_i^e$. This proves the result in this case.

    Now suppose that $M \notin \{S_1^e,S_2^e\}$. It follows that $M$ has two indecomposable direct summands. Moreover, we have that $M \neq P(\Lambda)$ since $M$ is a vertex of $Q$. Thus, by Lemma~\ref{lem:rank2_one_order}, the $\tau$-tilting module $M$ admits a unique TF-ordering $(M_1,M_2)$. This ordering thus satisfies $M_2 = \Pmodns(\Gen M)$ and $M_1 = \Pmods(\Gen M)$. In particular, $M_2 \in \Gen(M_1)$. Thus, by Theorem~\ref{thm:rank_2_hasse} and Definition-Proposition~\ref{defprop:forward}, we have that $\rho^{-1}(M) = \varphi(\Ebm_{M_2}(M_1),M_2) = (\Ebm_{M_1}(M_2)[-1],M_1)$. (Note that $M_2 \notin \P(\Lambda)$ since $M_2 \in \Gen(M_1)$.) It remains to show that $\beta(M_1) = \beta^-(M)$ and that $\beta(\Ebm_{M_1}(M_2)[-1]) = \beta^+(M)$.
    
    We first show that $\beta(M_1) = \beta^-(M)$. Let $N = \mu^-(M)$. Since $M_2 = \Pmodns(\Gen M)$, it follows that $M_2$ is a direct summand of $N$, and in fact that $M_2 = \Pmods(\Gen N)$. Since $M_2 \in \Gen(M_1)$, this implies that $\beta(M_1) = \beta(f_{N} M_1)$; i.e., that $\beta(M_1) = \beta^-(M)$.

    It remains to show that $\beta(\Ebm_{M_1}(M_2)[-1]) = \beta^+(M)$. We do this using a uniqueness argument. Indeed, since $M_1 = \Pmods(\Gen M)$, Proposition~\ref{prop:brick_tau_seq} says that $\beta^+(M)$ is the unique brick in $\J(M_1)$. At the same time, we have that $\beta(\Ebm_{M_1}(M_2)[-1])$ is also a brick in $\J(M_1)$ by the fact that the map $\Ebm_{M_1}$ is a bijection.
\end{proof}


\section{Examples}\label{sec:examples}

In this section, we provide three detailed examples. The first is an algebra of rank two where mutation of $\tau$-exceptional sequences is not transitive, the second is an algebra which contains a left immutable $\tau$-exceptional sequence, and the third is a non-hereditary representation-finite algebra for which mutation of $\tau$-exceptional sequences is transitive but does not satisfy the braid relations. Note that the algebras appearing in Examples~\ref{ex:immutable}, \ref{ex:aslak}, and \ref{ex:preproj} are all examples of gentle algebras~\cite{assk}.

\begin{example}\label{ex:kronecker}
    Let $\Lambda = KQ/I_\mathrm{cyc}$ for
   $$Q=\begin{tikzcd}
        1 \arrow[r,bend left]\arrow[r,bend left,yshift = 0.5em]&2\arrow[l,bend left]\arrow[l,bend left,yshift=-0.5em]
    \end{tikzcd}$$
    and $I_\mathrm{cyc}$ the ideal generated by all $2$-cycles. We also let $Q_L$ (resp. $Q_R$) be the Kronecker quiver obtained by deleting the two right-pointing (resp. left-pointing) arrows of $Q$. It is proved in \cite[Thm.~5.1]{terland} that the quiver $\Hasse(\stt \Lambda)$ has two connected components. Thus, by Corollary~\ref{cor:connected}, mutation of $\tau$-exceptional sequences is not a transitive operation for this algebra. We can also verify this more directly. 
    
    The proof of \cite[Thm.~5.1]{terland} establishes that the $\tau$-tilting theory of $\Lambda$ can be described explicitly as a symmetric version of the cluster tilting theory of the Kronecker quiver. More specifically, the only $\tau$-rigid module supported on all 4 arrows is the free module $\Lambda$. Every other $\tau$-rigid module (including the two indecomposable projectives) is a representation of either $Q_L$ or $Q_R$. This means that the $f$-bricks and indecomposable $\tau$-rigid modules coincide. Furthermore, except at the free module $\Lambda$, the right mutation of support $\tau$-tilting modules is inherited from the cluster theories of $Q_L$ and $Q_R$. Explicitly, for $i \in \mathbb{N}$, we denote by $M_i^L$ (resp. $M_i^R$) the preprojective representation of $Q_L$ (resp. preinjective representation of $Q_R$) with dimension vector $(i+1,i)$. Note that $M_1^L = P_2$ and $M_1^R = I_2$ (as modules over $\Lambda$). We then obtain a mutation orbit of $\tau$-exceptional sequences over $\Lambda$ as follows:
    $$ \cdots \xrightarrow{\varphi} (M_2^L,M_3^L) \xrightarrow{\varphi} (M_1^L,M_2^L) \xrightarrow{\varphi} (S_1,M_1^L) \xrightarrow{\varphi} (M_1^R,S_1) \xrightarrow{\varphi} (M_2^R,M_1^R)\xrightarrow{\varphi} (M_3^R,M_2^R) \xrightarrow{\varphi}\cdots.$$
    The other orbit is given by the symmetric construction.
\end{example}

\begin{example}\label{ex:immutable}
    Let $\Lambda = KQ/(ac)$ for
    $$\begin{tikzcd}Q= & 1\arrow[r,yshift=0.25em,"a"above]\arrow[r,yshift=-0.25em,"b"below] & 2\arrow[r,"c" above] & 3.\end{tikzcd}$$
     In \cite[Ex.~3.13]{asai}, Asai showed that the wide subcategory $\add I_3$ is right finite (and is thus a $\tau$-perpendicular subcategory of corank two by Theorem~\ref{thm:WLtorsionclass}), We claim that $(I_3,S_1,S_2)$ is a complete $\tau$-exceptional sequence which is not left $2$-mutable. Equivalently, $(S_1,S_2)$ is a left immutable $\tau$-exceptional pair with $\J(S_1,S_2) = \add I_3$. 

    The fact that $(I_3,S_1,S_2)$ is a $\tau$-exceptional sequence follows from Proposition~\ref{prop:ASExt} and direct computation of the relevant Hom- and Ext-spaces. From this, it follows that $(I_3)$ is a complete $\tau_{\J(S_1,S_2)}$-exceptional sequence. But $(I_3)$ is also a complete $\tau_{\add I_3}$-exceptional sequence, and so $\J(S_1,S_2) = \add I_3$ by \cite[Cor.~10]{ht}.

    We now show that $(S_1,S_2)$ is not left mutable. We already know that $\J(S_1,S_2) = \add I_3$ is not left finite, so we need only show that $(S_1,S_2)$ is left irregular. The torsion class $\FiltGen(S_1\amalg S_2) = \Filt(S_1\amalg S_2)$ is the Serre subcategory of those modules supported only at the vertices $1$ and $2$, and we compute $\Ps(\Filt(S_2\amalg S_1)) = \add(S_2,P_1/P_3)$. By Theorem~\ref{thm:F_map_1}, this yields $\Fbm_{S_2}(S_1) = P_1/P_3 \notin \P(\Lambda)$. Now clearly $S_2 \notin \P(\Lambda) \cup \Gen P_1/P_3$. So, we need to show that $C \in \P({}^\perp \tau(P_1/P_3))$. By Lemma~\ref{lem:fUbijection} and the observation that $f_{(P_1/P_3)} S_2 = S_2$, it suffices to show that $S_2 \in \P(\J(P_1/P_3))$.

    Noting that $S_2\amalg (P_1/P_3)$ is $\tau$-rigid, we know that $S_2 \in \J(P_1/P_3)$, and is thus a simple object in $\J(P_1/P_3)$. Similarly, we have that $\J(S_1,S_2) \subseteq \J(P_1/P_3)$ by Theorem~\ref{thm:J_E}, and so $I_3 \in \J(P_1/P_3)$. The module $I_3$ has two proper quotients ($I_3/P_3$ and $S_1$), and neither of these lies in 
    $(P_1/P_3)^\perp \supseteq \J(P_1/P_3)$. 
    This means $I_3$ is also a simple object in $\J(P_1/P_3)$. Now recall that $\J(P_1/P_3)$ has rank two by Definition-Proposition~\ref{prop:J_def}, and so $S_2$ and $I_3$ are its only simple objects. The fact that $S_2 \in \P(\J(P_1/P_3))$ then follows from the fact that $\Ext^1(S_2,I_3) = 0$.

    We can also say more about the orbit (under $\psi = \psi_2$) containing the $\tau$-exceptional pair $(S_1,S_2)$. By Definition-Propositions~\ref{defprop:backward} and~\ref{defprop:backward2}, we have that $\add I_3 = \J(\psi^k(S_1,S_2))$ for all $k \in \mathbb{N}$. (Note that there is no right immutable $\tau$-exceptional pair in this orbit since $\add I_3$ is right finite.) Thus the orbit lives entirely in the Serre subcategory ${}^\perp I_3$, which can be identified with the 
    module category of the path algebra of the Kronecker quiver $Q'$ obtained by deleting the vertex 3 from $Q$. As with many of the $\tau$-exceptional sequences in Example~\ref{ex:kronecker}, the mutation of $\tau$-exceptional pairs in the orbit of $(S_1,S_2)$ is largely controlled by that of exceptional sequences over the Kronecker. Explicitly, for $i \in \mathbb{N}$, we denote by $M_i$ the preprojective representation of $Q'$ with dimension vector $(i,i+1)$ (viewed as a module over $\Lambda$). The orbit of $(S_1,S_2)$ can then be shown to be
    $$(S_1,S_2) \xrightarrow{\psi} (S_2,M_1) \xrightarrow{\psi} (M_1,M_2) \xrightarrow{\psi} \cdots$$
    Since $\Lambda$ is a gentle algebra, we know that every indecomposable $\tau$-rigid module, and also every f-brick, is a string module. We then verified with \cite{geuenich} that the only $\tau$-exceptional pairs $(X,Y)$ which satisfy $\J(X,Y) = \add I_3$ are those in the orbit above.
\end{example}

\begin{example}\label{ex:aslak}

Let $\Lambda = KQ/(ab)$ for 
$$\begin{tikzcd}
Q = 1 \ar[r, "a" above] & 2 \ar[r, "b" above] & 3.
\end{tikzcd}$$
Then $\mods \Lambda$ has five indecomposable modules, all of which are $\tau$-rigid bricks, with the AR-quiver:
$$\begin{tikzcd}[column sep=1cm, row sep=0.4cm]
& P_2 \ar[dr] & & & \\
P_3 \ar[ur]& & S_2 \ar[ll, dashed] \ar[dr] & & S_1
\ar[ll, dashed] \\
& & & P_1 \ar[ur]& 
\end{tikzcd}$$

The complete (left-)mutation graph is given below, where the action of $\varphi_2$ is indicated by solid arrows, and
the action of $\varphi_1$ is indicated by dashed arrows.
For this algebra, it turns out that all complete $\tau$-exceptional sequences are left and right $i$-regular.

$$\begin{tikzcd}[column sep=2cm, row sep=1cm]
 & & (S_2\, P_3\, P_1) \arrow[rd, dashed] \arrow[lldd,bend right, leftrightarrow]& \\
 & (P_3\, P_1\, S_1) \arrow[d, dashed, leftrightarrow] \arrow[r] & (P_3\, P_2\, P_1) \arrow[u, dashed] \arrow[d]
  &(P_2\, S_2\, P_1) \arrow[l, dashed] \arrow[d, leftrightarrow]\\
(S_2\, P_1\, P_3) \arrow[rrdd, bend right,dashed] & (P_1\, P_3\, S_1) \arrow[d, leftrightarrow]& (P_3\, S_1\, P_2) \arrow[d, leftrightarrow, dashed] \arrow[ul] & (P_2\, S_1\, S_2) \ar[d, leftrightarrow, dashed] \\
 & (P_1\, S_1\, P_3) \arrow[ul,dashed]& (S_1\, P_3\, P_2) \ar[d] & (S_1\, P_2\, S_2) \arrow[l]\\
 & & (S_1\, S_2\, P_3) \arrow[ul,dashed] \arrow[ur]& 
\end{tikzcd}$$
Note in particular that we have
$$\varphi_1\circ \varphi_2\circ \varphi_1(S_1\, S_2\, P_3) = (P_3\, P_1\, S_1) \qquad \text{and} \qquad \varphi_2\circ \varphi_1\circ \varphi_2(S_1\, S_2\, P_3) = (P_2\, S_2\, P_1),$$
showing that the braid relations do not hold.
\end{example}

\begin{example}\label{ex:preproj}
    Let $\Lambda = KQ/I_\mathrm{cyc}$ for
    $$ \begin{tikzcd} Q = & 1\arrow[r,bend right] & 2\arrow[r,bend right]\arrow[l,bend right] & 3\arrow[l,bend right]\end{tikzcd}$$
    and $I_\mathrm{cyc}$ the ideal generated by all $2$-cycles. Then $\mods\Lambda$ has 11 indecomposable modules, all of which are $\tau$-rigid bricks, with the AR quiver: 

    $$\begin{tikzcd}[column sep=1cm, row sep=0.4cm]
&&&P_1 = I_3\arrow[dr]\\
S_1\arrow[dr]&&P_2/S_1\arrow[ll,dashed]\arrow[ur]\arrow[dr]&&P_1/S_3\arrow[ll,dashed]\arrow[dr]&&S_3\arrow[ll,dashed]\\
&P_2\arrow[ur]\arrow[dr]&&S_2\arrow[ll,dashed]\arrow[ur]\arrow[dr]&&I_2\arrow[ll,dashed]\arrow[ur]\arrow[dr]\\
S_3\arrow[ur]&&P_2/S_3\arrow[ll,dashed]\arrow[ur]\arrow[dr]&&P_3/S_1\arrow[ll,dashed]\arrow[ur]&&S_1\arrow[ll,dashed]\\
&&&P_3 = I_1\arrow[ur]
\end{tikzcd}$$
There are three $\tau$-exceptional sequences of the form $(P_3,-,-)$ which fit into the following $\psi_2$-orbit:
$$\begin{tikzcd}[column sep=1cm, row sep=0.4cm]
    (P_3,S_2,P_3/S_1)\arrow[dr] &&(P_3,S_3,P_2/S_1)\arrow[ll]\\
    &(P_3,P_3/S_1,S_3)\arrow[ur]
\end{tikzcd}$$
Indeed, since $P_3 = I_1$, the only $\tau$-exceptional sequences of this form must have their second and third terms supported only on the vertices $2$ and $3$. Combined with the fact that $\Hom(P_3,\tau S_2) \neq 0$, this gives three choices for the rightmost term of the sequence: $P_3/S_1$, $S_3$, and $P_2/S_1$. Theorem~\ref{thm:unique} then implies that knowing the first and third terms of a complete $\tau$-exceptional sequences is enough to uniquely determine the middle term.

The $\tau$-exceptional sequence $(P_3,P_3/S_1,S_3)$ is right $2$-irregular. Indeed, $\J(S_3)$ contains both $P_3$ and $P_3/S_1$, which means $P_3/S_1 \notin \P(\J(S_3))$. Equivalently (by Lemma~\ref{lem:fUbijection}), we have $\Fbm_{S_3}(P_3/S_1) \notin \P({}^\perp \tau S_3)$. We also have that $\Hom(S_3,\tau(P_3/S_1)) = 0$, which tells us that $\Fbm_{S_3}(P_3/S_1) = P_3/S_1$. Since $S_3 \in \Gen(P_3/S_1)$, we conclude that $(P_3,P_3/S_1,S_3)$ is right $2$-irregular.

To demonstrate how to mutate right irregular pairs, we explicitly compute $\psi_2(P_3,P_3/S_1,S_3)$ using Definition-Proposition~\ref{defprop:backward2}. In the notation of Definition-Proposition~\ref{defprop:backward2}, we have already computed $\widetilde{L} = (P_3/S_1) \amalg S_3$. We then compute $\J(\widetilde{L}) = \add P_3$. Since $P_3 = I_1$, this yields $$\Pmods({}^\perp \J(\widetilde{L})) = \Pmods({}^\perp I_1) = (P_2/S_1)\amalg (P_3/S_1).$$
From the AR quiver, we obtain
$$\Cogen(\tau \Pmods({}^\perp \J(\widetilde{L}))) = \add(P_2/S_3, S_1).$$
Then $P_3/S_3$ is the split injective in this torsion-free class, and $\tau^{-1}(P_2/S_3) = P_3/S_1$. We conclude that
$$\psi(P_3,P_3/S_1,S_3) = (P_3,\Ebm_{P_2/S_1}(P_3/S_1),P_2/S_1) = (P_3,S_3,P_2/S_1).$$
It is possible to verify directly that this $\tau$-exceptional sequence is left $2$-irregular, as is implied by Definition-Proposition~\ref{defprop:backward2}.
\end{example}


\begin{thebibliography}{99}
		
		\bibitem{air}
		T.~Adachi, O.~Iyama and I.~Reiten,
		\emph{$\tau$-tilting theory},
		Compos. Math. 150 (2015), no. 3, 415--452.
		
		\bibitem{asai}
		S.~Asai, \emph{Semibricks}, Int. Math. Res. Not. IMRN 2020 (2018), no. 16, 4993--5054.

 \bibitem{ap}
	S.~Asai and C.~Pfeifer, \emph{Wide subcategories and lattices of torsion classes},
	Algebr. Represent. Theory 25 (2022), 1611--1629.

\bibitem{assk}
I.~Assem, A.~Skowro\'{n}ski,
\emph{Iterated tilted algebras of type $\tilde{A}_n$},
Math. Z. 195, no.2, 269--290 (1987).
		
		\bibitem{ASS06}
		I.~Assem, D.~Simson and A.~Skowroński, 
		Elements of the representation theory of associative algebras. Vol. 1.
		Techniques of representation theory
		London Math. Soc. Stud. Texts 65,
		Cambridge University Press, Cambridge, 2006.
		
		\bibitem{as}
		M.~Auslander, S.~O.~Smal\o,
		{\it Almost split sequences in subcategories},
		J. Algebra 69 (1981), no.2, 426--454.
		
		\bibitem{bh2}
		E.~Barnard and E.~J.~Hanson, \emph{Exceptional sequences in semidistributive lattices and the poset topology of wide subcategories}, arXiv:2209.11734. To appear in J. Pure Appl. Algebra.


  \bibitem{bon}
A.~Bondal, \emph{Representations of associative algebras and coherent sheaves}, Math. USSR, Izv. 34 (1990), 23--42.

\bibitem{borve}
    E.~D.~B{\o}rve, \emph{Two-term silting and $\tau$-cluster morphism categories}, arXiv:2110.03472.

\bibitem{bst}
T.~Br\"{u}stle, D.~Smith and H.~Treffinger,
Wall and chamber structure for finite-dimensional algebras.
Adv. Math. 354, 106746, 31 pp, 2019.
  
		\bibitem{bh}
		A.~B.~Buan and E.~J.~Hanson,
		\emph{$\tau$-perpendicular wide subcategories}, Nagoya Math. J. 252 (2023), 959--984.

  \bibitem{bm2}
	A.~B.~Buan and B.~R.~Marsh, {\it A category of wide subcategories}, Int. Math. Res. Not. IMRN 2021 (2019), no. 13, 10278--10338.
		
		\bibitem{bm}
		A.~B.~Buan and B.~R.~Marsh,
		{\it $\tau$-exceptional sequences},
		J. Algebra 585 (2021), 36-68.
		
		\bibitem{bmpreprint}
		A.~B.~Buan and B.~R.~Marsh,
		\emph{Mutating signed $\tau$-exceptional sequences}, Glasg. Math. J, 65 (2023), no. 3, 716--729.

\bibitem{bmrrt}
A.~B.~Buan, B.~R.~Marsh, I.~Reiten, M.~Reineke and G.~Todorov, \emph{Tilting theory and cluster combinatorics}, Adv. Math. 204 (2006), no. 2, 572--618.
  
		\bibitem{brt}
		A.~B.~Buan, I.~Reiten, and H.~Thomas, \emph{Three kinds of mutation}, J. Algebra 339 (2011), no. 1, 97--113.

\bibitem{cb}
W.~Crawley-Boevey, \emph{Exceptional sequences of representations of quivers}, Representations of algebras (Ottawa, ON, 1992), CMS Conf. Proc., vol. 14, Amer. Math. Soc., Providence, RI, 1993, pp. 117--124

\bibitem{dij} L.~Demonet, O.~Iyama, and G.~Jasso, 
\emph{$\tau$-tilting finite algebras, bricks, and 
$g$-vectors}, 
Int. Math. Res. Not. IMRN 2019 (2017), no. 3, 852--892

\bibitem{dirrt} 
L.~Demonet, O.~Iyama, N.~Reading, I.~Reiten and H.~Thomas, \emph{Lattice theory of torsion classes: Beyond $\tau$-tilting theory}, 
Trans. Amer. Math. Soc. Ser. B 10 (2023), 542--612. 

\bibitem{fz}
S.~Fomin and A.~Zelevinsky, \emph{Cluster algebras I. Foundations}, J. Amer. Math. Soc. 15 (2002), no. 2, 497--529.

\bibitem{gl}
W.~Geigle, H.~Lenzing,
\emph{Perpendicular categories with applications to representations and sheaves},
J. Algebra,
144 (1991), no. 2, 273--343.

\bibitem{geuenich}
J.~Geuenich,
\emph{String-applet},
\url{https://www.math.uni-bielefeld.de/~jgeuenich/string-applet}, accessed 2023.

\bibitem{goru}
 A.~L.~Gorodentsev and A.~N.~Rudakov, \emph{Exceptional vector bundles on projective spaces},
 Duke Math. J. 54 (1987), no. 1, 115--130.

\bibitem{happel}
 D.~Happel, \emph{Triangulated categories in the representation of finite dimensional algebras.} London Mathematical Society Lecture Notes Series 119, Cambridge University Press, Cambridge, 1988.
  
		\bibitem{hi_2}
		E.~J.~Hanson and K.~Igusa, \emph{Pairwise compatibility for $2$-simple minded collections}, J. of Pure Appl. Algebra 225 (2021), no. 6

    \bibitem{hi}
  E.~J.~Hanson and K.~Igusa, \emph{$\tau$-cluster morphism categories and picture groups}, Comm. Algebra 49 (2021), no. 10, 4376--4415.

 		\bibitem{h}
		E.~J.~Hanson, {\it $\tau$-exceptional sequences and the shard intersection order in type $A$}, arXiv:2303.11517.
  
		\bibitem{ht}
		E.~J.~Hanson and H.~Thomas, \emph{A uniqueness property of $\tau$-exceptional sequences}, \emph{Algebr. Represent. Theory} (2023).

  \bibitem{igto} 
K.~Igusa and G.~Todorov, \emph{Signed exceptional sequences and the cluster morphism category}, arXiv:1706.02041.

  \bibitem{itw}
  K.~Igusa, G.~Todorov and J.~Weyman, \emph{Picture groups of finite type and cohomology in type $A_n$}, arXiv:1609.02636.

  \bibitem{IngTho} C.~Ingalls and H.~Thomas, \emph{Noncrossing partitions and representations of quivers}, Compos. Math. 145 (2009),
no. 6, 1533–1562.
		
		\bibitem{jasso}
		G.~Jasso,
		\emph{Reduction of $\tau$-tilting modules and torsion pairs},
		Int. Math. Res. Not. IMRN 2015 (2014), no. 16, 7190--7237.

  \bibitem{kaipel}
  M.~Kaipel, \emph{The category of a partitioned fan}, arXiv:2311.05444.

\bibitem{kra}
H.~Krause, \emph{Highest weight categories and recollements}, Ann. Inst. Fourier (Grenoble) vol. 67, no. 6 (2017), 2679--2701.
  
		\bibitem{ms}
		F.~Marks and J.~\v{S}\v{t}ov\'{\i}\v{c}ek,
		\emph{Torsion classes, wide subcategories and localisations},
		Bull. London Math. Soc. 49 (2017), no. 3, 405--416.
		
		\bibitem{mt}
		H.~O.~Mendoza and H.~Treffinger,
		\emph{Stratifying systems through $\tau$-tilting theory},
		Doc. Math. 25 (2020), 701--720.

\bibitem{rin}
C.~M.~Ringel, \emph{The braid group action on the set of exceptional sequences of a hereditary Artin algebra}, Abelian Group Theory and Related Topics (Oberwolfach, 1993), Contemp. Math., vol. 171, Amer. Math. Soc., Providence, RI (1994) pp. 339--352. 

\bibitem{rud}
A.~N.~Rudakov, \emph{Exceptional collections, mutations and helices},
London Math. Soc. Lecture Note Ser., 148,
Cambridge University Press, Cambridge, 1990, 1--6.

\bibitem{sch}
A.~Schofield, \emph{Semi-invariants of quivers}, 
Journal of the London Mathematical Society, s2-43 (1991), 385--395.

\bibitem{sttw}
S.~Schroll, A.~Tattar, H.~Treffinger, and N.~J.~Williams, \emph{A geometric perspective on the $\tau$-cluster morphism category}, arXiv:2302.12217.
  
\bibitem{smalo84}
S.~O.~Smal\o, \emph{Torsion theories and tilting modules}, Bull. London Math. Soc. 16 (1984), no. 5, 518--522.

\bibitem{terland}
    H. U. Terland, \emph{Computing $\tau$-rigid modules}. Master's Thesis, Norwegian University of Science and Technology (NTNU), June 2021. Available at \url{https://ntnuopen.ntnu.no/ntnu-xmlui/handle/11250/2778393}.


\end{thebibliography}
\end{document}